\numberwithin{equation}{section}
\numberwithin{figure}{section}
\theoremstyle{plain}
\newtheorem{thm}{\protect\theoremname}[section]
\theoremstyle{definition}
\newtheorem{defn}[thm]{\protect\definitionname}
\theoremstyle{plain}
\newtheorem{assumption}[thm]{\protect\assumptionname}
\theoremstyle{plain}
\newtheorem{prop}[thm]{\protect\propositionname}
\theoremstyle{plain}
\newtheorem{lem}[thm]{\protect\lemmaname}
\theoremstyle{plain}
\newtheorem{cor}[thm]{\protect\corollaryname}
\theoremstyle{remark}
\newtheorem{rem}[thm]{\protect\remarkname}
\theoremstyle{remark}
\newtheorem*{acknowledgement*}{\protect\acknowledgementname}
\newcommand{\mc}[1]{{\mathcal #1}}
\newcommand{\mb}[1]{{\mathbf #1}}
\newcommand{\mf}[1]{{\mathfrak #1}}
\newcommand{\bb}[1]{{\mathbb #1}}
\newcommand{\ms}[1]{{\mathscr #1}}
\let\SF@@footnote\footnote
\def\footnote{\ifx\protect\@typeset@protect
    \expandafter\SF@@footnote
  \else
    \expandafter\SF@gobble@opt
  \fi
}
\def\csname SF@gobble@opt \endcsname{\@ifnextchar[
  \SF@gobble@twobracket
  \@gobble
}
\edef\SF@gobble@opt{\noexpand\protect
  \expandafter\noexpand\csname SF@gobble@opt \endcsname}
\def\SF@gobble@twobracket[#1]#2{}
\providecommand{\leftsquigarrow}{%
  \mathrel{\mathpalette\reflect@squig\relax}%
}
\newcommand{\reflect@squig}[2]{%
  \reflectbox{$\m@th#1\rightsquigarrow$}%
}
\providecommand{\acknowledgementname}{Acknowledgement}
\providecommand{\assumptionname}{Assumption}
\providecommand{\corollaryname}{Corollary}
\providecommand{\definitionname}{Definition}
\providecommand{\lemmaname}{Lemma}
\providecommand{\propositionname}{Proposition}
\providecommand{\theoremname}{Theorem}
\providecommand{\remarkname}{Remark}
\begin{document}

\title[Gamma Expansion of Large Deviation Rate Functional for
Diffusions]{The Gamma Expansion of the Level Two Large Deviation Rate
Functional for Reversible  Diffusion Processes}

\author{Claudio Landim, Jungkyoung
Lee, Mauro Mariani}

\address{Claudio Landim
  \hfill\break\indent IMPA \hfill\break\indent Estrada Dona Castorina
  110, \hfill\break\indent
J. Botanico, 22460 Rio de Janeiro, Brazil\hfill\break\indent
{\normalfont and} \hfill\break\indent
Univ. Rouen Normandie, \hfill\break\indent
CNRS Normandie Univ,  LMRS UMR 6085, \hfill\break\indent
F-76000 Rouen, France.} 
\email{landim@impa.br}

\address{Jungkyoung Lee
  \hfill\break\indent June E Huh Center for Mathematical Challenges,\hfill\break\indent
Korea Institute for Advanced Study, \hfill\break\indent 85, Hoegi-ro, Dongdaemun-gu,  \hfill\break\indent
Seoul, 02455, Republic of Korea.} 
\email{jklee@kias.re.kr}

\address{Mauro Mariani
  \hfill\break\indent Faculty of Mathematics, \hfill\break\indent
National Research University Higher School of Economics, \hfill\break\indent 6 Usacheva St., Moscow, Russia 119048.} 
\email{mmariani@hse.ru}

\begin{abstract}
Fix a smooth Morse function $U\colon \mathbb{R}^{d}\to\mathbb{R}$ with
finitely many critical points, and consider the solution of the stochastic differential
equation 
\begin{equation*}
d\bm{x}_{\epsilon}(t)=-\nabla U(\bm{x}_{\epsilon}(t))\,dt
\,+\,\sqrt{2\epsilon}\, d\bm{w}_{t}\,,
\end{equation*}
where $(\bm{w}_{t})_{t\ge0}$ represents a $d$-dimensional Brownian
motion, and $\epsilon>0$ a small parameter. Denote by
$\mathcal{P}(\mathbb{R}^{d})$ the space of probability measures on
$\bb R^d$, and by
$\mathcal{I}_{\epsilon} \colon
\mathcal{P}(\mathbb{R}^{d})\to[0,\,\infty]$ the Donsker--Varadhan level
two large deviations rate functional. We express $\mc I_\epsilon$ as
$\mc I_\epsilon = \epsilon^{-1} \mc J^{(-1)} + \mc J^{(0)} +
\sum_{1\le p\le \mf q} (1/\theta^{(p)}_\epsilon) \, \mc J^{(p)}$,
where $\mc J^{(p)}\colon \mc P(\bb R^d) \to [0,+\infty]$ stand for
rate functionals independent of $\epsilon$ and $\theta^{(p)}_\epsilon$
for sequences such that $\theta^{(1)}_\epsilon \to\infty$,
$\theta^{(p)}_\epsilon / \theta^{(p+1)}_\epsilon \to 0$ for
$1\le p< \mf q$.  The speeds $\theta^{(p)}_\epsilon$ correspond to the
time-scales at which the diffusion $\bm{x}_{\epsilon}(\cdot)$ exhibits
a metastable behaviour, while the functional $\mc J^{(p)}$ represent
the level two,  large deviations rate functionals of the finite-state,
continuous-time Markov chains which describe the evolution of the
diffusion $\bm{x}_{\epsilon}(\cdot)$ among the wells in the time-scale
$\theta^{(p)}_\epsilon$.
\end{abstract}

\maketitle

\section{Introduction}

The metastable behavior of Markov processes has attracted some
interest in recent years. We refer to the monographs \cite{OV-meta,
BH-Meta, Landim-Gamma}. In this article, we investigate the metastable
behaviour of reversible diffusion processes from an analytical
perspective, by showing that the Donsker--Varadhan level 2 large
deviations rate functional encodes the metastable properties of the
process. The main results explain how to extract from these
functionals the metastable time-scales, states and wells.

Consider a family of diffusion processes in $\mathbb{R}^{d}$ defined
by the stochastic differential equation (SDE)
\begin{equation}
d\bm{x}_{\epsilon}(t)=-\nabla U(\bm{x}_{\epsilon}(t))\,dt
\,+\,\sqrt{2\epsilon}\, d\bm{w}_{t}\,,
\label{e: SDE}
\end{equation}
where $U:\mathbb{R}^{d}\to\mathbb{R}$ is a smooth Morse function with
finitely many critical points, $(\bm{w}_{t})_{t\ge0}$ represents a
$d$-dimensional Brownian motion, and $\epsilon>0$ is a small parameter
standing for the temperature. The process
$\{\bm{x}_{\epsilon}(t)\}_{t\ge0}$ has a unique stationary state, the
probability measure $\pi_{\epsilon}$ given by
\begin{equation}
\pi_{\epsilon}(d\boldsymbol{x})\,=\,
\frac{1}{Z_{\epsilon}}\,e^{-U(\boldsymbol{x})/\epsilon}
\,d\boldsymbol{x}\,,
\label{e_Gibbs}
\end{equation}
where $Z_{\epsilon}:=\int_{\mathbb{R}^{d}}e^{-U(\bm{x})/\epsilon}d\bm{x}$
is a normalization constant, which is finite for all $\epsilon>0$
under suitable conditions (cf. \eqref{e: growth}). In particular,
the process $\{\bm{x}_{\epsilon}(t)\}_{t\ge0}$ is reversible with
respect to $\pi_{\epsilon}$.

Suppose that the function $U$ has multiple local minima, so that the
dynamics \eqref{e: SDE} admits multiple equilibria. In the low
temperature regime $\epsilon\to0$, the drift $-\nabla U$ dominates the
system \eqref{e: SDE}, and the process
$\{\bm{x}_{\epsilon}(t)\}_{t\ge0}$ tends to remain near local
minima. However, due to the small random perturbation, metastable
transitions between local minima occur. Such metastable behavior has
been extensively studied from various perspectives: \cite{FW} obtained
lower and upper bounds for the exit of a domain and described the
metastable behaviour through the quasi-potential; \cite{BEGK}
established the Eyring--Kramers law, providing sharp asymptotics for
the mean transition times between local minima of $U$; \cite{BGK}
derived sharp asymptotics for the small eigenvalues of the
infinitesimal generator (cf. \eqref{e_generator}); and \cite{RS} analyzed
successive transitions between global minima of $U$, described by a
certain Markov chain. 

When $U$ possesses a complicated structure, the corresponding
metastable transitions exhibit a rich hierarchical structure. A
complete characterization of this hierarchy was obtained in
\cite{LLS-1st,LLS-2nd}, where it was shown that there exist multiple
critical time scales \footnote{In this article, for two positive
sequences $(\alpha_{\epsilon})_{\epsilon>0}$ and
$(\beta_{\epsilon})_{\epsilon}$, we denote by
$\alpha_{\epsilon}\prec\beta_{\epsilon}$,
$\beta_{\epsilon}\succ\alpha_{\epsilon}$if
$\lim_{\epsilon\to0}\alpha_{\epsilon}/\beta_{\epsilon}=0$.}
$\theta_{\epsilon}^{(1)}\prec\cdots\prec\theta_{\epsilon}^{(\mathfrak{q})}$.
At each scale, the finite-dimensional distributions (FDD) of the
rescaled process
$\{\bm{x}_{\epsilon}(\theta_{\epsilon}^{(p)}t)\}_{t\ge0}$ converge to
the FDD of a finite-state Markov chain $\{{\bf y}^{(p)}(t)\}_{t\ge0}$
for $p=1,\,\dots,\,\mathfrak{q}$.

For any topological space $\Omega$, let
\textcolor{blue}{$\mathcal{P}(\Omega)$} denote the space of
probability measures on $\Omega$ endowed with the weak topology. The
\emph{empirical measure} \textcolor{blue}{$L_{\epsilon}(t)$} of the process
$\{\bm{x}_{\epsilon}(t)\}_{t\ge0}$ is defined by
\begin{equation}
\label{e_def_emp}
L_{\epsilon}(t):=\frac{1}{t}\int_{0}^{t}\delta_{\bm{x}_{\epsilon}(s)}ds\,,
\end{equation}
where, for $\bm{x}\in\Omega$, ${\color{blue}\delta_{\bm{x}}}\in\mathcal{P}(\Omega)$
denotes the Dirac measure at $\bm{x}$. Since the
process $\{\bm{x}_{\epsilon}(t)\}_{t\ge0}$ is ergodic, $L_{\epsilon}(t)$
converges to $\pi_{\epsilon}$ as $t\to\infty$. We write \textcolor{blue}{$\mathbb{P}_{\boldsymbol{x}}^{\epsilon}$}
and \textcolor{blue}{$\mathbb{E}_{\boldsymbol{x}}^{\epsilon}$} for
the law and expectation, respectively, of the process $\{\boldsymbol{x}_{\epsilon}(t)\}_{t\ge0}$
starting from $\bm{x}\in\mathbb{R}^{d}$.
The Donsker--Varadhan \cite{DV} large deviation principle (cf. \eqref{e_def_DV})
states that for
any $\bm{x}\in\mathbb{R}^{d}$ and $\mu\in\mathcal{P}(\mathbb{R}^{d})$,
\[
\mathbb{P}_{\bm{x}}^{\epsilon}[L_{\epsilon}(t)\sim\mu]\approx e^{-t\mathcal{I}_{\epsilon}(\mu)}\,,\ \ \text{as}\ t\to\infty\,,
\]
where $\mathcal{I}_{\epsilon}:\mathcal{P}(\mathbb{R}^{d})\to[0,\,\infty]$
is the level two large deviations rate functional defined in
\eqref{e_def_I}. A precise statement is given in the next section.

Our main focus is the behavior of $\mathcal{I}_{\epsilon}$ as
$\epsilon\to0$. In \cite{BGL-22DV}, it was shown that, as
$\epsilon\to0$, $\epsilon\mathcal{I}_{\epsilon}$ converges to the
functional
\[
\mathcal{J}^{(-1)}(\mu):=\frac{1}{4}\int_{\mathbb{R}^{d}}|\nabla U|^{2}d\mu\,.
\]
We extend this result showing that the functional $\mathcal{I}_{\epsilon}$
admits a full expansion of the form
\[
\mathcal{I}_{\epsilon}=\frac{1}{\epsilon}\mathcal{J}^{(-1)}+\mathcal{J}^{(0)}+\sum_{p=1}^{\mathfrak{q}}\frac{1}{\theta_{\epsilon}^{(p)}}\mathcal{J}^{(p)}\,,\ \ \text{as}\ \epsilon\to0\,,
\]
where $\mathcal{J}^{(0)}$ is the functional introduced in \eqref{e_J^0}
below, and for each $p\in\llbracket1,\,\mathfrak{q}\rrbracket$, $\mathcal{J}^{(p)}:\mathcal{P}(\mathbb{R}^{d})\to[0,\,\infty]$ is the large deviation
rate functional associated with the limiting chain $\{{\bf y}^{(p)}(t)\}_{t\ge0}$.
Their precise definitions are provided in the next section. Since
the convergence is established via $\Gamma$-convergence (cf. Definition
\ref{def_Gamma}), we refer to this as a full $\Gamma$-expansion,
formally defined in Definition \ref{def_Gamma_exp}.

The investigation of the $\Gamma$-expansion of the level two large
deviations rate functional has been initiated in \cite{DiGM} for the diffusion
\eqref{e: SDE} in the case where all wells have different depth. It
has been derived in the context of finite-state Markov chains
\cite{BGL-22AAP , Landim-Gamma} and for random walks on a potential
field \cite{LMS-Gamma}. It has been extended in \cite{KL25} to the
joint current-empirical measure large deviations rate functional.

Our proof relies on tools from the study of metastability.  To
establish the $\Gamma-\liminf$ inequality, we employ the resolvent
equation approach developed in \cite{LMS-res}. For the
$\Gamma-\limsup$ inequality, we construct sequences of measures
converging to the desired limit, making use of test functions
constructed in\cite{LS-22a}, which approximate equilibrium potentials.

\section{Model and result}

\subsection{Model}

Let $U\in C^{3}(\mathbb{R}^{d})$ be a Morse function (cf. \cite[Definition 1.7]{Nic18})
with finitely many critical points, and assume it satisfies the following growth
condition:\footnote{Throughout the article, $|\cdot|$ will denote either the Euclidean norm for vectors or the cardinality for sets, depending on the context.}
\begin{equation}
\begin{gathered}\lim_{n\to\infty}\inf_{|\boldsymbol{x}|\geq n}\frac{U(\boldsymbol{x})}{|\boldsymbol{x}|}\,=\,\infty\;,\quad\lim_{|\boldsymbol{x}|\to\infty}\frac{\boldsymbol{x}}{|\boldsymbol{x}|}\cdot\nabla U(\boldsymbol{x})\,=\,\infty\,,\\
\lim_{|\boldsymbol{x}|\to\infty}\big\{\,|\nabla U(\boldsymbol{x})|\,-\,2\,\Delta U(\boldsymbol{x})\,\big\}\,=\,\infty\,.
\end{gathered}
\label{e: growth}
\end{equation}
It is well known (cf. \cite{BEGK}) that by the growth condition \eqref{e: growth},
for all $a\in\mathbb{R}$,
\begin{equation}
\int_{\{\bm{x}\in\mathbb{R}^{d}:U(\bm{x})\ge a\}}e^{-U(\bm{x})/\epsilon}d\bm{x}\,\le\,C_{a}e^{-a/\epsilon}\,,\label{e_U_L1}
\end{equation}
where $C_{a}>0$ is a constant depending on $a$. In particular, $Z_{\epsilon}<\infty$
for all $\epsilon>0$. The process $\{\bm{x}_{\epsilon}(t)\}_{t\ge0}$
driven by the SDE \eqref{e: SDE} is reversible with respect to the
unique invariant distribution $\pi_{\epsilon}$ given by \eqref{e_Gibbs}.
The infinitesimal generator \textcolor{blue}{$\mathscr{L}_{\epsilon}$}
associated with the process $\{\boldsymbol{x}_{\epsilon}(t)\}_{t\ge0}$
acts on a dense subset of $L^{2}(d\pi_{\epsilon})$.
It is defined as the extension of the differential operator $\widetilde{\mathscr{L}}_{\epsilon}$
given by
\begin{equation}
\widetilde{\mathscr{L}}_{\epsilon}f=-\nabla U\cdot\nabla f
+\epsilon\Delta f\ \ ;\ \ f\in C_{c}^{2}(\mathbb{R}^{d})\,.\label{e_generator}
\end{equation}
Let \textcolor{blue}{$D(\mathscr{L}_{\epsilon})$} denote the domain
of the generator $\mathscr{L}_{\epsilon}$, which is a dense subset
of $L^{2}(d\pi_{\epsilon})$.

\subsection*{Large deviations}
Recall from \eqref{e_def_emp} the definition of the empirical measure of the process $\{\bm{x}_{\epsilon}(t)\}_{t\ge0}$. The Donsker--Varadhan
\cite{DV} large deviation principle for diffusion process reads as
follows: For any compact set $\mathcal{K}\subset\mathbb{R}^{d}$ and
$\mathcal{A}\subset\mathcal{P}(\mathbb{R}^{d})$,
\begin{equation}
\begin{aligned}-\inf_{\mu\in\mathcal{A}^{o}}\mathcal{I}_{\epsilon}(\mu)\le\liminf_{t\to\infty}\inf_{\bm{x}\in\mathcal{K}}\frac{1}{t}\log\mathbb{P}_{\bm{x}}^{\epsilon}[L_{\epsilon}(t)\in\mathcal{A}]\\
\le\limsup_{t\to\infty}\sup_{\bm{x}\in\mathcal{K}}\frac{1}{t}\log\mathbb{P}_{\bm{x}}^{\epsilon}[L_{\epsilon}(t)\in\mathcal{A}] & \le-\inf_{\pi\in\overline{\mathcal{A}}}\mathcal{I}_{\epsilon}(\mu)\,,
\end{aligned}
\label{e_def_DV}
\end{equation}
where ${\color{blue}\mathcal{I}_{\epsilon}}:\mathcal{P}(\mathbb{R}^{d})\to[0,\,+\infty]$
is the \emph{large deviation rate functional} of the process $\{\bm{x}_{\epsilon}(t)\}_{t\ge0}$
defined by
\begin{equation}
\begin{aligned}\mathcal{I}_{\epsilon}(\mu) & :=\sup_{u>0}\int_{\mathbb{R}^{d}}-\frac{\mathscr{L}_{\epsilon}u}{u}d\mu\\
 & =\sup_{H}\int_{\mathbb{R}^{d}}-e^{-H}\mathscr{L}_{\epsilon}e^{H}d\mu\,.
\end{aligned}
\label{e_def_I}
\end{equation}
In this formula, the supremum is either carried over all positive functions
$u\in D(\mathscr{L}_{\epsilon})$ or equivalently over all $H:\mathbb{R}^{d}\to\mathbb{R}$
such that $e^{H}\in D(\mathscr{L}_{\epsilon})$. For any set $\mathcal{A}$
in a topological space, \textcolor{blue}{$\mathcal{A}^{o}$} and \textcolor{blue}{$\overline{\mathcal{A}}$}
represent its interior and closure, respectively.

Since the process $\{\bm{x}_{\epsilon}(t)\}_{t\ge0}$ is reversible with
respect to the invariant distribution $\pi_{\epsilon}$, \cite[Theorem 5]{DV} yields the variational representation
\begin{equation}
\mathcal{I}_{\epsilon}(\mu)\,=\,\int f_{\epsilon}(-\mathscr{L}_{\epsilon}f_{\epsilon})\,d\pi_{\epsilon}\,=\,\epsilon\int_{\mathbb{R}^{d}}|\nabla f_{\epsilon}|^{2}d\pi_{\epsilon}\,,\label{e_func_max}
\end{equation}
whenever $\mu\in\mathcal{P}(\mathbb{R}^d)$ is absolutely continuous with respect to $\pi_{\epsilon}$
and the Radon--Nikodym derivative $(f_{\epsilon})^{2}=d\mu/d\pi_{\epsilon}$
belongs to $D(\mathscr{L}_{\epsilon})$.

\subsection*{$\Gamma$-convergence}

In this article, we study the $\Gamma$-expansion of the large deviation
rate functional $\mathcal{I}_{\epsilon}$ as $\epsilon\to0$ (see
\cite{Mariani}). Since the convergence is established via $\Gamma$-epansion, we first recall the definition of \emph{$\Gamma$-convergence}.
\begin{defn}
\label{def_Gamma}Fix a Polish space $X$ and a functional $F:X\to[0,\,+\infty]$.
A sequence $(F_{\epsilon})_{\epsilon>0}$ of functionals $F_{\epsilon}:X\to[0,\,+\infty]$
$\Gamma$-converges to the functional $F$ as $\epsilon\to0$ if and only if the two
following conditions hold:
\begin{enumerate}
\item $\Gamma-\liminf$: For each $x\in X$ and each sequence $(x_{\epsilon})_{\epsilon>0}$
such that $\lim_{\epsilon\to0}x_{\epsilon}=x$, $\liminf_{\epsilon\to0}F_{\epsilon}(x_{\epsilon})\ge F(x)$.
\item $\Gamma-\limsup$: For each $x\in X$, there exists a sequence $(x_{\epsilon})_{\epsilon>0}$
in $X$ such that $\lim_{\epsilon\to0}x_{\epsilon}=x$ and $\limsup_{\epsilon\to0}F_{\epsilon}(x_{\epsilon})\le F(x)$.
\end{enumerate}
\end{defn}

The $\Gamma$-convergence of the large deviations rate functional
$\mathcal{I}_{\epsilon}$ as $\epsilon\to0$, in the context of diffusions,
has been examined recently in \cite{BGL-22DV}.

\subsection*{$\Gamma$-expansion}

We now describe a recursive procedure that produces a
\emph{$\Gamma$-expansion} of the large deviation rate functional
$\mathcal{I}_{\epsilon}$.  Suppose that $\mathcal{I}_{\epsilon}$
$\Gamma$-converges to $\mathcal{J}^{(0)}$ as $\epsilon\to0$. If the
$0$-level set of $\mathcal{J}^{(0)}$ is not a singleton (as in the
case when the potential $U$ has multiple local minima), it is natural
to search for a sequence $(\theta_{\epsilon}^{(1)})_{\epsilon>0}$ of
positive numbers such that $1\prec\theta_{\epsilon}^{(1)}$, and the
rescaled functional $\theta_{\epsilon}^{(1)}\mathcal{I}_{\epsilon}$
admits a non-trivial $\Gamma$-limit.

Let $\mathcal{J}^{(1)}$ denote this limit.
Since $\mathcal{J}^{(0)}$ is the $\Gamma$-limit of $\mathcal{I}_{\epsilon}$,
we have:
\begin{itemize}
\item if $\mathcal{J}^{(1)}(\mu)<\infty$ for some $\mu\in\mathcal{P}(\mathbb{R}^{d})$,
then necessarily $\mu$ belongs to the $0$-level set of $\mathcal{J}^{(0)}$,
\item conversely, if $\mu\in\mathcal{P}(\mathbb{R}^{d})$ belongs to the
$0$-level set of $\mathcal{J}^{(0)}$, then $\mathcal{J}^{(1)}(\mu)<\infty$.
\end{itemize}
If this is not the case, there exists a sequence $(\theta_{\epsilon}')_{\epsilon>0}$
of positive numbers such that $1\prec\theta_{\epsilon}'\prec\theta_{\epsilon}^{(1)}$
and $\theta_{\epsilon}'\mathcal{I}_{\epsilon}$ admits a non-trivial
$\Gamma$-limit.

If the $0$-level set of $\mathcal{J}^{(1)}$ is a singleton, the
procedure stops. Otherwise, we repeat the same process to obtain a second scale. This procedure terminates
once we find a sequence $(\theta_{\epsilon}^{(\mathfrak{q})})_{\epsilon>0}$
and a rate functional $\mathcal{J}^{(\mathfrak{q})}$ whose
$0$-level set is a singleton.

We now consider the reverse direction. If, for every sequence $(\varrho_{\epsilon})_{\epsilon>0}$
of positive number such that $\varrho_{\epsilon}\prec1$, the rescaled functional $\varrho_{\epsilon}\mathcal{I}_{\epsilon}$
$\Gamma$-converges to $0$ as $\epsilon\to0$, the expansion is complete.
Otherwise, we can search for a suitable sequence $(\theta_{\epsilon}^{(-1)})_{\epsilon>0}$
of positive numbers such that $\lim_{\epsilon\to0}\theta_{\epsilon}^{(-1)}=0$
and $\theta_{\epsilon}^{(-1)}\mathcal{I}_{\epsilon}$ $\Gamma$-converges
to a functional $\mathcal{J}^{(-1)}$ as $\epsilon\to0$ satisfying
\[
\mathcal{J}^{(-1)}(\mu)=0 \quad \Longleftrightarrow \quad \mathcal{J}^{(0)}(\mu)<\infty \,.
\]
This procedure is iterated until we find a sequence $(\theta_{\epsilon}^{(-\mathfrak{r})})_{\epsilon>0}$
such that $\varrho_{\epsilon}\mathcal{I}_{\epsilon}$ $\Gamma$-converges
to $0$ as $\epsilon\to0$ for all sequences $(\varrho_{\epsilon})_{\epsilon>0}$ of
positive number such that $\varrho_{\epsilon}\prec\theta_{\epsilon}^{(-\mathfrak{r})}$.

Based on the previous discussion, we now define the notion of a full $\Gamma$-expansion
of a sequence $(\mathcal{I}_{\epsilon})_{\epsilon>0}$ of functionals
$\mathcal{I}_{\epsilon}:\mathcal{P}(\mathbb{R}^{d})\to[0,\,\infty)$.
\begin{defn}
\label{def_Gamma_exp}Consider a sequence $(\mathcal{I}_{\epsilon})_{\epsilon>0}$
of functionals $\mathcal{I}_{\epsilon}:\mathcal{P}(\mathbb{R}^{d})\to[0,\,\infty)$.
A full $\Gamma$-expansion of $(\mathcal{I}_{\epsilon})_{\epsilon>0}$
is given by the speeds $(\theta_{\epsilon}^{(p)})_{\epsilon>0}$ and
the functionals $\mathcal{J}^{(p)}:\mathcal{P}(\mathbb{R}^{d})\to[0,\,+\infty]$,
$-\mathfrak{r}\le p\le\mathfrak{q}$, if:
\begin{enumerate}
\item The speeds $\theta_{\epsilon}^{(-\mathfrak{r})},\,\dots,\,\theta_{\epsilon}^{(\mathfrak{q})}$
are sequences such that $\theta_{\epsilon}^{(p)}\prec\theta_{\epsilon}^{(p+1)}$,
$-\mathfrak{r}\le p\le\mathfrak{q}-1$.
\item For each $-\mathfrak{r}\le p\le\mathfrak{q}$, $\theta_{\epsilon}^{(p)}\mathcal{I}_{\epsilon}$
$\Gamma$-converges to $\mathcal{J}^{(p)}$ as $\epsilon\to0$.
\item For $-\mathfrak{r}\le p\le\mathfrak{q}-1$, $\mathcal{J}^{(p+1)}(\mu)$
is finite if, and only if, $\mu$ belongs to $0$-level set of $\mathcal{J}^{(p)}$.
\item For all sequence $(\varrho_{\epsilon})_{\epsilon>0}$ of positive
number such that $\varrho_{\epsilon}\prec\theta_{\epsilon}^{(-\mathfrak{r})}$,
$\varrho_{\epsilon}\mathcal{I}_{\epsilon}$ $\Gamma$-converges to
$0$ as $\epsilon\to0$.
\item The $0$-level set of $\mathcal{J}^{(\mathfrak{q})}$ is a singleton.
\end{enumerate}
\end{defn}

The concept of $\Gamma$-expansion for large deviation rate functionals
has recently been established in various settings: reversible and
non-reversible finite state Markov chains \cite{BGL-22AAP,
Landim-Gamma, KL25}, random walks in a potential field
\cite{LMS-Gamma}, and diffusion processes under generic conditions
\cite{DiGM}.

\subsection{Assumption}

In this subsection, we present the main assumptions. Recall that
$U$ is a Morse function satisfying \eqref{e: growth}. We further
assume that there exists $\epsilon_{0}>0$ such that
\begin{equation}
|\nabla U|^{2},\,\Delta U\in L^{2}(d\pi_{\epsilon})\,\text{for all}\,\epsilon\in(0,\,\epsilon_{0})\,.\label{e_L2}
\end{equation}
In Lemma \ref{l_assuL2}, we show that the above assumption is not
restrictive.

Let \textcolor{blue}{$\mathcal{C}_{0}$} denote the set of critical points
of $U$, and let $\nabla^{2}U(\bm{x})$ be the Hessian of $U$ at $\bm{x}\in\mathbb{R}^{d}$.
Denote by \textcolor{blue}{$\mathcal{M}_{0}$} the set of local minima
of $U$ and assume that $|\mathcal{M}_{0}|\ge2$.

For distinct $\bm{c}_{1},\,\bm{c}_{2}\in\mathcal{C}_{0}$,
a \textit{\textcolor{blue}{heteroclinic orbit}} $\phi$ from $\bm{c}_{1}$
to $\bm{c}_{2}$ is a smooth path $\phi:\mathbb{R}\to\mathbb{R}^{d}$
satisfying
\[
\dot{\phi}(t)=-\nabla U(\phi(t)) \quad \text{for all } t \in\mathbb{R} \, ,
\]
together with the boundary conditions
\[
\lim_{t\to-\infty}\phi(t)\,=\,\bm{c}_{1}\ ,\ \ \lim_{t\to+\infty}\phi(t)\,=\,\bm{c}_{2}\,.
\]

Let $\mathcal{S}_{0}$ be the set of saddle points of $U$. Since $U$ is
a Morse function, $\mathcal{S}_{0}$ consists precisely of those
critical points $\bm{\sigma}\in\mathcal{C}_{0}$ whose Hessian
$\nabla^{2}U(\bm{\sigma})$ has one negative eigenvalue and $d-1$
positive eigenvalues. In particular, by the Hartman-Grobman theorem
(cf. \cite[Section 2.8]{Perko}), for every
$\bm{\sigma}\in\mathcal{S}_0$, there exist exactly two heteroclinic
orbits $\phi$ satisfying $\lim_{t\to-\infty}\phi(t)=\bm{\sigma}$.

The following is the main assumption as in \cite{LLS-1st,LLS-2nd}.
\begin{assumption}
\label{assu: hetero}Fix $\boldsymbol{\sigma}\in\mathcal{S}_{0}$
and let $\phi_{\pm}$ be the two heteroclinic orbits satisfying $\lim_{t\to-\infty}\phi_{\pm}(t)=\bm{\sigma}$.
Then, $\lim_{t\to+\infty}\phi_{\pm}(t)\in\mathcal{M}_{0}$.
\end{assumption}

\subsection{Metastability}

\subsubsection{\label{subsec_tree}Tree structure}

We now introduce the \emph{tree structure} associated with the metastable behavior
of the process $\{\bm{x}_{\epsilon}(t)\}_{t\ge0}$. This structure
consists of a positive integer $\mathfrak{q}\in\mathbb{N}$ and a family of quintuples:\footnote{In this article, for $a<b$, $\llbracket a,\,b\rrbracket:=[a,\,b]\cap\mathbb{Z}$.}
\[
{\color{blue}\Lambda^{(n)}}\,:=\,\left(d^{(n)},\,\mathscr{V}^{(n)},\,\mathscr{N}^{(n)},\,\mathbf{\widehat{y}}^{(n)},\,\mathbf{y}^{(n)}\right)\ \text{for}\ n\in\llbracket1,\,\mathfrak{q}\rrbracket\,.
\]
A rigorous definition is provided in Section \ref{sec_tree_rig}.
\begin{defn}[Tree structure]
\label{def:tree}
A tree structure is specified by:
\begin{enumerate}
\item A positive integer ${\color{blue}\mathfrak{q}\ge1}$ denoting the
number of time scales. 
\item A finite sequence of depths $0<d^{(1)}<\cdots<d^{(\mathfrak{q})}<\infty$
and time-scales
\[
{\color{blue}\theta_{\epsilon}^{(p)}}\,:=\,\exp\frac{d^{(p)}}{\epsilon}\;\;;\ \;p\in\llbracket1,\,\mathfrak{q}\rrbracket\,.
\]
\item A finite sequence of partitions $\mathscr{V}^{(p)}\cup\mathscr{N}^{(p)}$,
$p\in\llbracket1,\,\mathfrak{q}\rrbracket$, of $\mathcal{M}_{0}$.
\item A finite sequences of continuous-time Markov chains $\{\widehat{{\bf y}}^{(p)}(t)\}_{t\ge0}$
and $\{{\bf y}^{(p)}(t)\}_{t\ge0}$, $p\in\llbracket1,\,\mathfrak{q}\rrbracket$,
on $\mathscr{V}^{(p)}\cup\mathscr{N}^{(p)}$ and $\mathscr{V}^{(p)}$,
respectively.
\end{enumerate}
\end{defn}

At the first-scale\footnote{In this article, we sometimes write
$\bm{m}$ for $\{\bm{m}\}$.},
\begin{equation}
{\color{blue}\mathscr{V}^{(1)}}\,:=\,\left\{ \{\bm{m}\}:\bm{m}\in\mathcal{M}_{0}\right\} \;,\;\;{\color{blue}\mathscr{N}^{(1)}}\,:=\,\varnothing\ ,\ \ {\color{blue}\mathscr{S}^{(1)}}:=\mathscr{V}^{(1)}\cup\mathscr{N}^{(1)}\,.\label{e_V_1}
\end{equation}
Let \textcolor{blue}{$d^{(1)}$} be the first depth (precisely defined
below display \eqref{e_def_Xi}), and
\textcolor{blue}{$\{{\bf y}^{(1)}(t)\}_{t\ge0}=\{\widehat{{\bf
y}}^{(1)}(t)\}_{t\ge0}$} be the $\mathscr{V}^{(1)}$-valued Markov
chain defined in Section \ref{subsec: MC1}.  This defines
\textcolor{blue}{$\Lambda^{(1)}$}.

Denote by ${\color{blue}\mathscr{R}_{1}^{(1)},\dots,\mathscr{R}_{\mathfrak{n}_{1}}^{(1)}}$ the irreducible classes of the Markov chain $\{{\bf y}^{(1)}(t)\}_{t\ge0}$,
and by ${\color{blue}\mathscr{T}^{(1)}}$ the set of its transient states.
If $\mathfrak{n}_{1}=1$, then $\mathfrak{q}=1$ and the
construction terminates. If $\mathfrak{n}_{1}>1$, we add a new
layer to the tree, as explained below.

Suppose that the quintuples $\Lambda^{(1)},\,\dots,\,\Lambda^{(p)}$
have already been constructed. Let \textcolor{blue}{$\mathscr{R}_{1}^{(p)},\dots,\mathscr{R}_{\mathfrak{n}_{p}}^{(p)}$ and
$\mathscr{T}^{(p)}$} denote the irreducible classes and transient
states of the Markov chain $\{{\bf y}^{(p)}(t)\}_{t\ge0}$ on $\mathscr{V}^{(p)}$,
respectively. If $\mathfrak{n}_{p}=1$, the procedure stops
and $\mathfrak{q}=p$. If $\mathfrak{n}_{p}\ge2$, we construct a new layer by setting
\begin{equation}
{\color{blue}\mathcal{M}_{i}^{(p+1)}}\,:=\,\bigcup_{\mathcal{M}\in\mathscr{R}_{i}^{(p)}}\mathcal{M}\;\;;\;i\in\llbracket1,\,\mathfrak{n}_{p}\rrbracket\,,\label{e_M_p+1}
\end{equation}
and defining
\begin{equation}
{\color{blue}\mathscr{V}^{(p+1)}}\,:=\,\{\mathcal{M}_{1}^{(p+1)},\,\dots,\,\mathcal{M}_{\mathfrak{n}_{p}}^{(p+1)}\}\;,\ \ {\color{blue}\mathscr{N}^{(p+1)}}\,:=\,\mathscr{N}^{(p)}\cup\mathscr{T}^{(p)}\ ,\ \ {\color{blue}\mathscr{S}^{(p+1)}}:=\mathscr{V}^{(p+1)}\cup\mathscr{N}^{(p+1)}\,.\label{e_V_p+1}
\end{equation}
It follows immediately that if
$\mathscr{S}^{(p)}=\mathscr{V}^{(p)}\cup\mathscr{N}^{(p)}$ is a
partition of $\mathcal{M}_{0}$, then so is $\mathscr{S}^{(p+1)}$.  Let
\textcolor{blue}{$d^{(p+1)}$} be the $(p+1)$-th depth, defined in
display \eqref{e_Xi}, let
\textcolor{blue}{$\{\widehat{{\bf y}}^{(p+1)}(t)\}_{t\ge0}$} be the
$\mathscr{S}^{(p+1)}$-valued Markov chain defined in Section
\ref{subsec: MC2}, and let
\textcolor{blue}{$\{{\bf y}^{(p+1)}(t)\}_{t\ge0}$} denote its trace
process on $\mathscr{V}^{(p+1)}$. This defines
\textcolor{blue}{$\Lambda^{(p+1)}$.}  As
$\mathfrak{n}_{p+1}<\mathfrak{n}_{p}$, this procedure terminates after
finitely many steps. Denote by $\mathfrak{q}$ the total number of
constructed quintuples $\Lambda^{(p)}$.

\subsubsection{Metastability}

For $H\in\mathbb{R}$, define the level sets
\begin{equation}
{\color{blue}\{U<H\}}:=\left\{ \boldsymbol{x}\in\mathbb{R}^{d}:U(\boldsymbol{x})<H\right\} \;\text{and}\ \ {\color{blue}\{U\le H\}}:=\left\{ \boldsymbol{x}\in\mathbb{R}^{d}:U(\boldsymbol{x})\le H\right\} \,.\label{eq:level}
\end{equation}
For each $\boldsymbol{m}\in\mathcal{M}_{0}$ and $r>0$, denote by
${\color{blue}\mathcal{W}^{r}(\boldsymbol{m}}\mathclose{\color{blue})}$
the connected component of $\{U\le U(\boldsymbol{m})+r\}$
containing $\boldsymbol{m}$. Take ${\color{blue}r_{0}}>0$ small
enough so that the conditions (a)-(e) in Appendix \ref{subsec_valley}
hold. In particular, $\boldsymbol{m}$ is the unique critical
point of $U$ in $\mathcal{W}^{3r_{0}}(\boldsymbol{m})$.

Define the\emph{
valley }around $\boldsymbol{m}$ as 
\begin{equation}
{\color{blue}\mathcal{E}(\boldsymbol{m}}):=\mathcal{W}^{r_{0}}(\boldsymbol{m})\,.\label{e_Em}
\end{equation}
For $\mathcal{M}\subset\mathcal{M}_{0}$, write $\mathcal{E}(\mathcal{M})$
for the union of the valleys around local minima of $\mathcal{M}$:
\begin{equation}
{\color{blue}\mathcal{E}(\mathcal{M}}\mathclose{\color{blue})}:=\bigcup_{\boldsymbol{m}\in\mathcal{M}}\mathcal{E}(\boldsymbol{m})\,,\label{e_E_M}
\end{equation}
and define
\[
{\color{blue}\mathcal{E}^{(p)}}:=\bigcup_{\mathcal{M}\in\mathscr{V}^{(p)}}\mathcal{E}(\mathcal{M})\ ;\ p\in\llbracket1,\,\mathfrak{q}\rrbracket\,.
\]
For $\mathcal{M}\in\mathscr{V}^{(p)}$, denote by \textcolor{blue}{$\mathcal{Q}_{\mathcal{M}}^{(p)}$}
the law of $\{{\bf y}^{(p)}(t)\}_{t\ge0}$ starting from $\mathcal{M}$
and the corresponding expectation.

The following theorem is the main result of \cite{LLS-1st,LLS-2nd}.

\begin{thm}
Fix $p\in\llbracket1,\,\mathfrak{q}\rrbracket$ and $\mathcal{M}\in\mathscr{V}^{(p)}$.
Then, for all $t>0$, $\bm{x}\in\mathcal{E}(\mathcal{M})$, and $\mathcal{M}'\in\mathscr{V}^{(p)}$,
\[
\lim_{\epsilon\to0}\mathbb{P}_{\bm{x}}^{\epsilon}\left[\bm{x}_{\epsilon}(\theta_{\epsilon}^{(p)}t)\in\mathcal{E}(\mathcal{M}')\right]=\mathcal{Q}_{\mathcal{M}}^{(p)}\left[{\bf y}^{(p)}(t)=\mathcal{M}'\right]\,.
\]
In other words, the behavior of
$\{\bm{x}_{\epsilon}(\theta_{\epsilon}^{(p)}t)\}_{t\ge0}$ in the time
scale $\theta_{\epsilon}^{(p)}$ is described by the Markov chain
$\{{\bf y}^{(p)}(t)\}_{t\ge0}$.
\end{thm}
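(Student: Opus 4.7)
The plan is to proceed by induction on $p \in \llbracket 1, \mathfrak{q}\rrbracket$, using the recursive tree construction of Section \ref{subsec_tree} together with the potential-theoretic approach to metastability. For the base case $p=1$ the goal is to show that on the Arrhenius scale $\theta_\epsilon^{(1)} = e^{d^{(1)}/\epsilon}$, the process $\bm{x}_\epsilon(\cdot)$ jumps between the valleys $\mathcal{E}(\bm{m})$, $\bm{m}\in\mathcal{M}_0$, with the jump rates of $\mathbf{y}^{(1)}$. I would rely on sharp Eyring--Kramers asymptotics for the Newtonian capacities $\mathrm{cap}_\epsilon(\mathcal{E}(\bm{m}), \mathcal{E}(\mathcal{M}_0\setminus\bm{m}))$ associated with the Dirichlet form $\mathscr{E}_\epsilon(f) = \epsilon \int |\nabla f|^2 \, d\pi_\epsilon$, available from \cite{BEGK, BGK}. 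Assumption \ref{assu: hetero} on heteroclinic orbits, combined with the usual saddle-by-saddle capacity computation, identifies precisely which pairs of wells are connected by a single saddle at height $d^{(1)}$ above its endpoints and supplies the rate prefactors.

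To upgrade these one-step statements into the desired convergence of one-dimensional distributions, I would adopt the Beltr\'an--Landim trace process scheme. Introducing the trace $\bm{x}_\epsilon^{\mathrm{tr}}(\cdot)$ of $\bm{x}_\epsilon(\cdot)$ on $\mathcal{E}^{(1)}$ and its valley projection $\Psi(\bm{x})=\mathcal{M}$ on $\mathcal{E}(\mathcal{M})$, one first shows that the holding times in each well are asymptotically exponential with the correct parameter and that the embedded jump chain has the right transition probabilities. Both follow from the capacity asymptotics together with the approximate equilibrium potentials $\widetilde h_{\mathcal{M},\mathcal{M}'}$ built in \cite{LS-22a}, which are essentially constant on each valley and whose Dirichlet energy matches the capacity up to $o(1)$; these are precisely the test functions the introduction announces for the $\Gamma$-$\limsup$. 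A separate ingredient is that the Lebesgue measure of $\{s \in [0,\theta_\epsilon^{(1)}t] : \bm{x}_\epsilon(s) \notin \mathcal{E}^{(1)}\}$ is $o(\theta_\epsilon^{(1)})$ in probability, which lets one pass from the trace process to the ambient process and conclude the case $p=1$.

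For the inductive step $p \mapsto p+1$ one exploits the construction \eqref{e_V_p+1}: within each irreducible class $\mathscr{R}_i^{(p)}$ the chain $\mathbf{y}^{(p)}$ mixes on a time much shorter than $\theta_\epsilon^{(p+1)}$, so before any $\theta_\epsilon^{(p+1)}$-transition occurs, the diffusion has equilibrated inside the super-well $\mathcal{E}(\mathcal{M}_i^{(p+1)})$ to the stationary distribution of $\mathscr{R}_i^{(p)}$. One then repeats the capacity/quasi-equilibrium argument of the base case at the new scale $\theta_\epsilon^{(p+1)}$, using the new depth $d^{(p+1)}$ and the saddle heights that connect distinct super-wells; transient wells in $\mathscr{T}^{(p)}$ are absorbed into $\mathscr{N}^{(p+1)}$ and are invisible to the trace process on $\mathscr{V}^{(p+1)}$.

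The main technical obstacle I expect is the uniformity of these statements in the starting point $\bm{x} \in \mathcal{E}(\mathcal{M})$, and more specifically the two-scale relaxation inside a super-well at layers $p \ge 2$. One must show that on some time scale intermediate between $\theta_\epsilon^{(p)}$ and $\theta_\epsilon^{(p+1)}$, the distribution of $\bm{x}_\epsilon$ started from any point of $\mathcal{E}(\mathcal{M}_i^{(p+1)})$ has approached the quasi-stationary measure of that super-well, even though the super-well itself contains several $\theta_\epsilon^{(p)}$-wells connected by saddles of intermediate depth. Bridging this gap between scales — and doing so uniformly in $\bm{x}$, not merely for a quasi-stationary initial condition — is the delicate part; it would be addressed by combining the inductive description of $\mathbf{y}^{(p)}$ with spectral-gap estimates for the internal dynamics of each irreducible class, feeding them back into the capacity estimates used to extract the rates of $\mathbf{y}^{(p+1)}$.
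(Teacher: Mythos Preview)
The paper does not contain a proof of this theorem: immediately before the statement it says ``The following theorem is the main result of \cite{LLS-1st,LLS-2nd}.'' The result is imported wholesale from those references and used as a black box; there is nothing in the present paper to compare your proposal against.

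That said, your outline is a reasonable sketch of the classical Beltr\'an--Landim trace-process route to such a result. It is worth noting, however, that the cited works \cite{LLS-1st,LLS-2nd} (and the companion \cite{LMS-res}) proceed instead through the resolvent equation: one shows that the solution $F_\epsilon$ of $(\lambda-\theta_\epsilon^{(p)}\mathscr{L}_\epsilon)F_\epsilon=G$ converges uniformly on each $\mathcal{E}(\mathcal{M})$ to the solution of the reduced equation $(\lambda-\mathfrak{L}^{(p)})\mathbf{f}=\mathbf{g}$, which is exactly Theorem~\ref{t_res} quoted later in the paper. Convergence of resolvents yields convergence of semigroups, hence of one-dimensional marginals, and the uniformity in the starting point that you flag as the hard step is obtained directly from the $L^\infty$ control on $F_\epsilon$ rather than from a separate two-scale relaxation argument. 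Your inductive trace-process strategy would also work in principle, but the resolvent route is what the cited papers actually carry out, and it sidesteps the mixing-inside-a-super-well issue you identify by packaging it into the analysis of the resolvent on the full space.
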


\subsection{Measures}

For each $\bm{m}\in\mathcal{M}_{0}$ and $\mathcal{M}\subset\mathcal{M}_{0}$,
define
\begin{equation}
{\color{blue}\nu(\bm{m}}\mathclose{\color{blue})}:=\frac{1}{\sqrt{\det\nabla^{2}U(\bm{m})}}\,,\ {\color{blue}\nu(\mathcal{M}}\mathclose{\color{blue})}:=\sum_{\bm{m}'\in\mathcal{M}}\nu(\bm{m}')\,,\ {\color{blue}\nu_{\star}}:=\nu(\mathcal{M}_{\star})\,,\label{e_def_nu}
\end{equation}
where \textcolor{blue}{$\mathcal{M}_{\star}$} denotes the set of
global minima of $U$.

Recall that for each $p\in\llbracket1,\,\mathfrak{q}\rrbracket$,
$\mathscr{R}_{1}^{(p)},\,\dots,\,\mathscr{R}_{\mathfrak{n}_{p}}^{(p)}$
are the irreducible classes of the Markov chain $\{{\bf y}^{(p)}(t)\}_{t\ge0}$.
For $i\in\llbracket1,\mathfrak{n}_{p}\rrbracket$, define the probability
measure ${\color{blue}\nu_{i}^{(p)}}\in\mathcal{P}(\mathscr{R}_{i}^{(p)})$
by
\begin{equation}
\nu_{i}^{(p)}(\mathcal{M}):=\frac{\nu(\mathcal{M})}{\nu(\mathcal{M}_{i}^{(p+1)})}\quad;\quad\mathcal{M}\in\mathscr{R}_{i}^{(p)}\,,\label{e_nu_i}
\end{equation}
where $\mathcal{M}_{i}^{(p+1)}$ is defined in \eqref{e_M_p+1}.
By \cite[Proposition 12.7]{LLS-2nd}, $\nu_{i}^{(p)}$ is the unique
stationary distribution of $\{{\bf y}^{(p)}(t)\}_{t\ge0}$ restricted
to $\mathscr{R}_{i}^{(p)}$. Moreover, since $\{{\bf y}^{(p)}(t)\}_{t\ge0}$ has only finitely many
irreducible classes $\mathscr{R}_{1}^{(p)},\,\dots,\,\mathscr{R}_{\mathfrak{n}_{p}}^{(p)}$,
all stationary distributions of $\{{\bf y}^{(p)}(t)\}_{t\ge0}$ are convex combinations of $\nu_{i}^{(p)},\,\dots,\,\nu_{\mathfrak{n}_{p}}^{(p)}$.

For $p\in\llbracket1,\,\mathfrak{q}\rrbracket$ and $\mathcal{M}\in\mathscr{V}^{(p)}$,
define a probability measure ${\color{blue}\pi_{\mathcal{M}}}\in\mathcal{P}(\mathcal{M})\subset\mathcal{P}(\mathbb{R}^{d})$
by
\[
\pi_{\mathcal{M}}:=\sum_{\bm{m}\in\mathcal{M}}\frac{\nu(\bm{m})}{\nu(\mathcal{M})}\delta_{\bm{m}}\,.
\]
Note that $\pi_{\bm{m}}=\delta_{\bm{m}}$ for
$\bm{m}\in\mathscr{V}^{(1)}$.  Clearly, for
$p\in\llbracket2,\,\mathfrak{q}\rrbracket$ and
$\mathcal{M}\in\mathscr{V}^{(p)}$,
\begin{equation}
\pi_{\mathcal{M}}=\sum_{\mathcal{M}'\in\mathscr{R}^{(p-1)}(\mathcal{M})}\frac{\nu(\mathcal{M}')}{\nu(\mathcal{M})}\pi_{\mathcal{M}'}\,,
\label{e_pi}
\end{equation}
where \textcolor{blue}{$\mathscr{R}^{(p-1)}(\mathcal{M})$} is the
irreducible class of $\{{\bf y}^{(p)}(t)\}_{t\ge0}$ such that
\[
\mathcal{M}=\bigcup_{\mathcal{M}'\in\mathscr{R}^{(p-1)}(\mathcal{M})}\mathcal{M}'\,.
\]

\subsection{Main result}

To state the main result, we start defining the limiting functionals.
For each $p\in\llbracket1,\,\mathfrak{q}\rrbracket$, let
$\mathfrak{L}^{(p)}$ denote the infinitesimal generator of the Markov
chain $\{{\bf y}^{(p)}(t)\}_{t\ge0}$. The level two large deviation
rate functional
${\color{blue}\mathfrak{J}^{(p)}}:\mathcal{P}(\mathscr{V}^{(p)})\to[0,\,\infty]$
associated with the chain $\{{\bf y}^{(p)}(t)\}_{t\ge0}$ is defined by
\[
\mathfrak{J}^{(p)}(\omega):=\sup_{{\bf u}>0}\sum_{\mathcal{M}\in\mathscr{V}^{(p)}}-\omega(\mathcal{M})\frac{\mathfrak{L}^{(p)}{\bf u}(\mathcal{M})}{{\bf u}(\mathcal{M})}\,,
\]
where the supremum is carried over all positive functions ${\bf u}:\mathscr{V}^{(p)}\to(0,\,\infty)$.
The lifting ${\color{blue}\mathcal{J}^{(p)}}:\mathcal{P}(\mathbb{R}^{d})\to[0,\,+\infty]$
of the functional $\mathfrak{J}^{(p)}$ on $\mathcal{P}(\mathbb{R}^{d})$
is defined by
\begin{equation}
\mathcal{J}^{(p)}(\mu):=\begin{cases}
\mathfrak{J}^{(p)}(\omega) &
\text{if}\;\; \mu=\sum_{\mathcal{M}\in\mathscr{V}^{(p)}}
\omega(\mathcal{M})\pi_{\mathcal{M}}\ ,\
\omega\in\mathcal{P}(\mathscr{V}^{(p)})\,,\\
\infty & \text{otherwise}\,.
\end{cases}\label{e_J^p}
\end{equation}

For $\bm{x}\in\mathcal{C}_{0}$, define
\begin{equation}
\zeta(\bm{x}):=\sum_{k=1}^{d}-\min\{\,\lambda_{k}(\bm{x}),\,0\,\}\,,\label{e_zeta}
\end{equation}
where $\lambda_{1}(\bm{x}),\,\dots,\,\lambda_{d}(\bm{x})$ are the
eigenvalues of $\nabla^{2}U(\bm{x})$.
Equivalently, $\zeta(\bm{x})$ is the sum of the absolute values of the negative eigenvalues of $\nabla^{2}U(\bm{x})$; positive eigenvalues do not contribute.
Define ${\color{blue}\mathcal{J}^{(0)}}:\mathcal{P}(\mathbb{R}^{d})\to[0,\,\infty]$
by
\begin{equation}
\mathcal{J}^{(0)}(\mu):=\begin{cases}
\sum_{\bm{x}\in\mathcal{C}_{0}}\omega(\bm{x})\,\zeta(\bm{x}) & \text{if}\;\; \mu=\sum_{\bm{x}\in\mathcal{C}_{0}}\omega(\bm{x})\delta_{\bm{x}}\ ,\
\omega\in\mathcal{P}(\mathcal{C}_0)\,,\\
\infty & \text{otherwise}\,.
\end{cases}\label{e_J^0}
\end{equation}

Finally, define ${\color{blue}\mathcal{J}^{(-1)}}:\mathcal{P}(\mathbb{R}^{d})\to[0,\,+\infty]$
by
\[
\mathcal{J}^{(-1)}(\mu):=\frac{1}{4}\int_{\mathbb{R}^{d}}|\nabla U|^{2}d\mu\,.
\]

Set ${\color{blue}\theta_{\epsilon}^{(-1)}}:=\epsilon$ and ${\color{blue}\theta_{\epsilon}^{(0)}}:=1$,
and recall from Definition \ref{def:tree} the time
scales $\theta_{\epsilon}^{(p)}$ for $1\le p\le\mathfrak{q}$. The
main result of this article reads as follows.

\begin{thm}
\label{t_main}
Assume that conditions \eqref{e: growth}, \eqref{e_L2}, and Assumption
\ref{assu: hetero} are in force. Then, the full $\Gamma$-expansion of
$(\mathcal{I}_{\epsilon})_{\epsilon>0}$, as in Definition
\ref{def_Gamma_exp}, is given by the speeds
$(\theta_{\epsilon}^{(p)},\,\epsilon>0)$ and the functionals
$\mathcal{J}^{(p)}:\mathcal{P}(\mathbb{R}^{d})\to[0,\,+\infty]$,
$-1\le p\le\mathfrak{q}$.
\end{thm}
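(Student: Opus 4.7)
The plan is to verify the five conditions of Definition \ref{def_Gamma_exp} separately. Conditions (1) and (5) follow directly from the tree construction in Section \ref{subsec_tree}: the strict monotonicity $0<d^{(1)}<\cdots<d^{(\mathfrak{q})}$ yields $\theta_\epsilon^{(p)}\prec\theta_\epsilon^{(p+1)}$, and the recursion stops exactly when $\{\mathbf{y}^{(\mathfrak{q})}(t)\}_{t\ge 0}$ is irreducible, so $\mathfrak{J}^{(\mathfrak{q})}$ and therefore $\mathcal{J}^{(\mathfrak{q})}$ admit a unique zero. Condition (3) reduces to an algebraic identity: combining \eqref{e_M_p+1}, \eqref{e_nu_i} and \eqref{e_pi} with the fact that every stationary distribution of $\{\mathbf{y}^{(p)}(t)\}_{t\ge 0}$ is a convex combination of $\nu_1^{(p)},\ldots,\nu_{\mathfrak{n}_p}^{(p)}$, one checks that the $0$-level set of $\mathcal{J}^{(p)}$ equals the family of measures $\sum_{\mathcal{M}'\in\mathscr{V}^{(p+1)}}\omega'(\mathcal{M}')\pi_{\mathcal{M}'}$, which is the finiteness domain of $\mathcal{J}^{(p+1)}$. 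Condition (4) follows from the $p=-1$ result by a diagonal mollification argument, using $\varrho_\epsilon/\epsilon\to 0$.

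\textbf{$\Gamma$-liminf at scale $p$.} The heart of the proof is condition (2). The case $p=-1$ is already contained in \cite{BGL-22DV}. For $p\ge 0$, I would adopt the resolvent-equation strategy of \cite{LMS-res}: insert in the variational formula the test function $u_\epsilon=(\lambda-\theta_\epsilon^{(p)}\mathscr{L}_\epsilon)^{-1}g$ for a parameter $\lambda>0$ and a smooth positive $g$. Using the potential-theoretic capacity estimates of \cite{LLS-1st,LLS-2nd} and the equilibrium-potential approximations of \cite{LS-22a}, the goal is to show that $u_\epsilon$ becomes asymptotically constant on each valley $\mathcal{E}(\mathcal{M})$, $\mathcal{M}\in\mathscr{V}^{(p)}$, and that its valley values converge to $(\lambda-\mathfrak{L}^{(p)})^{-1}\bar g$, where $\bar g(\mathcal{M})=\int g\,d\pi_\mathcal{M}$. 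Passing to the limit along $\mu_\epsilon\to\mu$ and then taking the supremum over $g$ produces the bound $\liminf_\epsilon\theta_\epsilon^{(p)}\mathcal{I}_\epsilon(\mu_\epsilon)\ge\mathfrak{J}^{(p)}(\omega)$ whenever $\mu=\sum_\mathcal{M}\omega(\mathcal{M})\pi_\mathcal{M}$, and $+\infty$ otherwise. For $p=0$ the argument is local: a quadratic test function diagonalising $\nabla^2 U(\bm{c})$ near each critical point $\bm{c}\in\mathcal{C}_0$ extracts the sum $\zeta(\bm{c})$ of absolute values of the negative Hessian eigenvalues.

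\textbf{$\Gamma$-limsup at scale $p$.} For the upper bound I would use the reversibility identity \eqref{e_func_max}. Given $\mu=\sum_{\mathcal{M}\in\mathscr{V}^{(p)}}\omega(\mathcal{M})\pi_\mathcal{M}$ in the finiteness domain of $\mathcal{J}^{(p)}$, the recovery sequence $\mu_\epsilon$ is constructed by redistributing mass $\omega(\mathcal{M})$ inside each valley $\mathcal{E}(\mathcal{M})$ according to the local Gibbs measure. The square root $f_\epsilon$ of $d\mu_\epsilon/d\pi_\epsilon$ is then expressed as a combination of the equilibrium potentials of \cite{LS-22a}, essentially constant inside each $\mathcal{E}(\mathcal{M})$ and concentrating their gradient near the saddle points of depth $d^{(p)}$ connecting different elements of $\mathscr{V}^{(p)}$. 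A Laplace asymptotic on thin tubes through each such saddle evaluates $\epsilon\int|\nabla f_\epsilon|^2\,d\pi_\epsilon$ to $(1/\theta_\epsilon^{(p)})\mathfrak{J}^{(p)}(\omega)$ at leading order, which is the desired bound. For $p=-1$ one mollifies $\mu$; for $p=0$ one places a Gaussian around each critical point with covariance of order $\epsilon(\nabla^2 U)^{-1}$.

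\textbf{Main obstacle.} The hardest step is the $\Gamma$-liminf at the metastable scales $1\le p\le\mathfrak{q}$. A priori, a sequence $\mu_\epsilon\to\mu$ with $\theta_\epsilon^{(p)}\mathcal{I}_\epsilon(\mu_\epsilon)$ bounded may charge the transient set $\mathscr{N}^{(p)}$ or neighbourhoods of saddles of depth exceeding $d^{(p)}$, and the resolvent $u_\epsilon$ may vary rapidly across such saddles. Establishing the equicontinuity of $u_\epsilon$ on each $\mathcal{E}(\mathcal{M})$ together with the sharp drop across the saddles of depth precisely $d^{(p)}$ is the technically most demanding ingredient, and is the place where the full strength of the metastability results of \cite{LLS-1st,LLS-2nd} and the test-function constructions of \cite{LS-22a} enters the argument.
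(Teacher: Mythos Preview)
Your proposal is correct and follows essentially the same architecture as the paper: verify the five conditions of Definition~\ref{def_Gamma_exp}, with the substantive work being the $\Gamma$-convergence at each scale (Proposition~\ref{p_Gamma}); use the resolvent approach of \cite{LMS-res} for the $\Gamma$-liminf at metastable scales and the reversibility formula \eqref{e_func_max} together with equilibrium-potential test functions from \cite{LS-22a} for the $\Gamma$-limsup.

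One point of emphasis differs, however. You identify the metastable $\Gamma$-liminf as the main obstacle, worrying about mass on $\mathscr{N}^{(p)}$ or higher saddles and about equicontinuity of the resolvent. In the paper this step is in fact short (Lemma~\ref{l_liminf}): Theorem~\ref{t_res} from \cite{LLS-2nd} gives uniform convergence of $F_\epsilon$ on each $\mathcal{E}(\mathcal{M})$ as a black box, and the variational lower bound only needs $G\ge 0$ and $F_\epsilon/(F_\epsilon+e^{a/\epsilon})\le 1$ off the valleys, so no control of $\mu_\epsilon$ outside $\mathcal{E}^{(p)}$ is required. The genuinely heavier part is the $\Gamma$-limsup at scales $p\ge 1$: one must (i) decompose $\omega$ over equivalence classes of $\{\mathbf{y}^{(p)}(t)\}$ (Lemma~\ref{l_DV_equiv-2}), (ii) prove that the restricted chain $\{\mathbf{y}_{\mathfrak{D}}^{(p)}(t)\}$ is \emph{reversible} with respect to $\nu_{\mathfrak{D}}$ (Proposition~\ref{p_rev}) so that the Donsker--Varadhan formula \eqref{e_func_max} on the chain level applies, and (iii) build test functions $h_{\mathcal{M}}^\epsilon$ whose cross-Dirichlet forms reproduce the off-diagonal rates (Proposition~\ref{p_test}, Proposition~\ref{p_cap}). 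Your sketch of the limsup does not mention (i) or (ii), and these are not automatic: $\{\mathbf{y}^{(p)}(t)\}$ is in general not reversible on all of $\mathscr{V}^{(p)}$, only on each equivalence class, and establishing this uses the landscape structure of Section~\ref{sec_tree_rig}.
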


\begin{rem}
As noted above, the functional $\mc J^{(p)}$ represent the level two,
large deviations rate functional of the Markov chain
$\{{\bf y}^{(p)}(t)\}_{t\ge0}$ which describes the evolution of the
diffusion $\bm{x}_{\epsilon}(\cdot)$ among the wells in the time-scale
$\theta^{(p)}_\epsilon$.  According to \cite[Corollary 5.3]{KL25}, it
is possible to recover the generator of a reversible, finite-state,
continuous time Markov chain from its level two large deviations rate
functional. Therefore, the large deviations rate functional
$\mc I_\epsilon(\cdot)$ encapsulates not only the large deviations
rate functional of $\{{\bf y}^{(p)}(t)\}_{t\ge0}$, but also its
generator.
\end{rem}

From now on in this article, we always assume that conditions
\eqref{e: growth}, \eqref{e_L2}, and Assumption \ref{assu: hetero} are
satisfied.

\subsection{Outline of the article }

We prove Theorem \ref{t_main} in Section \ref{sec_pf_t_main}, assuming
Proposition \ref{p_Gamma} on the $\Gamma$-convergence. Sections
\ref{sec_pre} and \ref{sec_meta} establish $\Gamma$-convergence for
the pre-metastable ($p=-1,\,0$) and metastable time scales
($p\in\llbracket1,\,\mathfrak{q}\rrbracket$), respectively.  The proof
of the $\Gamma-\limsup$ inequality in the metastable time scales
relies on constructing a family of density functions of probability
measures converging to the target measure. This construction, stated
in Proposition \ref{p_test}, is carried out in Section
\ref{sec_pf_equi_pot}.  For this purpose, we recall several notions
from \cite{LLS-1st,LLS-2nd} and provide the rigorous construction of
the tree structure in Section \ref{sec_tree_rig}.  Finally, Section
\ref{sec_pf_last} contains the proofs of Propositions \ref{p_rev} and
\ref{p_cap}, which involve technical arguments.

\section{\label{sec_pf_t_main}Proof of Theorem \ref{t_main}}

In this section, we prove Theorem \ref{t_main} assuming Proposition \ref{p_Gamma} below together with some general properties of the Donsker--Varadhan rate functionals, recalled in Appendix \ref{app_DV}.
\begin{prop}
\label{p_Gamma} We have that
\begin{enumerate}
\item For any sequence $(\varrho_{\epsilon})_{\epsilon>0}$ of positive
numbers satisfying $\varrho_{\epsilon}\prec\epsilon$, $\varrho_{\epsilon}\mathcal{I}_{\epsilon}$
$\Gamma$-converges to $0$ as $\epsilon\to0$.
\item $\epsilon\mathcal{I}_{\epsilon}$ $\Gamma$-converges to $\mathcal{J}^{(-1)}$ as $\epsilon\to0$.
\item $\mathcal{I}_{\epsilon}$ $\Gamma$-converges to $\mathcal{J}^{(0)}$ as $\epsilon\to0$.
\item For $p\in\llbracket1,\,\mathfrak{q}\rrbracket$, $\theta_{\epsilon}^{(p)}\mathcal{I}_{\epsilon}$
$\Gamma$-converges to $\mathcal{J}^{(p)}$ as $\epsilon\to0$.
\end{enumerate}
\end{prop}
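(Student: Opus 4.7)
The plan is to prove each of the four parts separately. Parts~(1) and~(2) are the sub-Kramers pre-metastable statements, part~(3) captures the critical-point level, and part~(4) encodes the metastable dynamics at each time scale $\theta_\epsilon^{(p)}$. The common strategy is: for the $\Gamma$-liminf, insert test functions into the variational formula for $\mathcal{I}_\epsilon$ and pass to the limit $\epsilon\to 0$; for the $\Gamma$-limsup, exhibit an explicit family of densities $\rho_\epsilon$ with $\rho_\epsilon\,d\bm{x}\to\mu$ whose rate can be evaluated to leading order via the Dirichlet-form identity $\mathcal{I}_\epsilon(\rho_\epsilon\,d\bm{x})=\epsilon\int|\nabla\sqrt{\rho_\epsilon/\pi_\epsilon}|^2\,d\pi_\epsilon$.

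For part~(2), starting from the second formula in~\eqref{e_def_I} and using the substitution $H\mapsto H/\epsilon$, one obtains
\[
\epsilon\,\mathcal{I}_\epsilon(\mu)\,=\,\sup_H\int_{\mathbb{R}^d}\bigl[\,\nabla U\cdot\nabla H\,-\,\epsilon\,\Delta H\,-\,|\nabla H|^2\,\bigr]\,d\mu\,.
\]
Restricting to smooth $H$ with controlled growth and letting $\epsilon\to 0$, the $\epsilon\,\Delta H$ term vanishes under weak convergence $\mu_\epsilon\to\mu$, and pointwise optimization at $\nabla H=\nabla U/2$ yields the $\Gamma$-liminf bound $(1/4)\int|\nabla U|^2\,d\mu$. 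The $\Gamma$-limsup is obtained by choosing $\mu_\epsilon$ as a regularization of $\mu$ whose Radon--Nikodym derivative against $\pi_\epsilon$ belongs to $D(\mathscr{L}_\epsilon)$, and then plugging into the identity above; this is essentially the content of~\cite{BGL-22DV}. Part~(1) follows at once: the $\Gamma$-liminf $\geq 0$ is trivial, and the recovery sequence from part~(2) gives $\varrho_\epsilon\,\mathcal{I}_\epsilon(\mu_\epsilon)=(\varrho_\epsilon/\epsilon)\bigl[\epsilon\mathcal{I}_\epsilon(\mu_\epsilon)\bigr]\to 0$, with a diagonal truncation $\mu^{(n)}\to\mu$ handling the case $\mathcal{J}^{(-1)}(\mu)=+\infty$.

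For part~(3), the $\Gamma$-liminf first forces any target $\mu$ for which $\liminf\mathcal{I}_\epsilon(\mu_\epsilon)<+\infty$ to be concentrated on $\mathcal{C}_0$: otherwise $\epsilon\mathcal{I}_\epsilon(\mu_\epsilon)\to\mathcal{J}^{(-1)}(\mu)>0$ by part~(2), forcing $\mathcal{I}_\epsilon(\mu_\epsilon)\to+\infty$. Assuming $\mu=\sum_{\bm{x}\in\mathcal{C}_0}\omega(\bm{x})\delta_{\bm{x}}$, we localize near each $\bm{x}$ and feed into the variational formula the $\epsilon$-dependent test functions $H_\epsilon(\bm{y})=h_{\bm{x}}\bigl((\bm{y}-\bm{x})/\sqrt{\epsilon}\bigr)$ with $h_{\bm{x}}$ designed so that $\nabla h_{\bm{x}}$ matches $(1/2)\nabla^2 U(\bm{x})\bm{z}$ along the stable eigenspace of $\nabla^2 U(\bm{x})$ and the \emph{opposite} sign along the unstable eigenspace. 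A second-order Taylor expansion then produces a contribution equal to the absolute value of each negative eigenvalue, summing to $\omega(\bm{x})\zeta(\bm{x})$. The matching $\Gamma$-limsup is realized by a convex combination of anisotropic Gaussians centered at the $\bm{x}\in\mathcal{C}_0$, with covariances dictated by $|\nabla^2 U(\bm{x})|^{-1}$, on which the rate is computed explicitly in the Ornstein--Uhlenbeck approximation and reproduces $\zeta(\bm{x})$.

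Part~(4) is the hardest. For the $\Gamma$-liminf we employ the resolvent equation method of~\cite{LMS-res}: given $\mu_\epsilon\to\mu=\sum_{\mathcal{M}\in\mathscr{V}^{(p)}}\omega(\mathcal{M})\pi_{\mathcal{M}}$, we test $\mathcal{I}_\epsilon$ against $u_\epsilon=1+\theta_\epsilon^{(p)}(\lambda-\theta_\epsilon^{(p)}\mathscr{L}_\epsilon)^{-1}\widetilde g_\epsilon$ for a smoothing $\widetilde g_\epsilon$ of an arbitrary $g\colon\mathscr{V}^{(p)}\to\mathbb{R}$, and invoke the sharp asymptotics of the rescaled resolvent which, through the trace-process construction of Section~\ref{subsec: MC2}, encode the generator $\mathfrak{L}^{(p)}$ of $\{{\bf y}^{(p)}(t)\}_{t\ge0}$; passing to the limit recovers the variational formula defining $\mathfrak{J}^{(p)}$. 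For the $\Gamma$-limsup we rely on Proposition~\ref{p_test}, which for each $\omega\in\mathcal{P}(\mathscr{V}^{(p)})$ constructs densities $\rho_\epsilon$, assembled from the equilibrium-potential approximations of~\cite{LS-22a} glued to local Gibbs profiles inside each valley, such that $\rho_\epsilon\,d\bm{x}\to\sum_{\mathcal{M}}\omega(\mathcal{M})\pi_{\mathcal{M}}$ and $\theta_\epsilon^{(p)}\mathcal{I}_\epsilon(\rho_\epsilon\,d\bm{x})\to\mathfrak{J}^{(p)}(\omega)$. The principal obstacle is to certify that the errors generated by gluing the equilibrium potentials across saddles, by the cutoff at infinity, and by Taylor remainders near critical points contribute only $o(1/\theta_\epsilon^{(p)})$ to $\mathcal{I}_\epsilon$, so that the Eyring--Kramers-type sharp constants entering the jump rates of $\mathfrak{L}^{(p)}$ survive exactly in the limit; this is precisely where Assumption~\ref{assu: hetero} and the capacity estimates of Propositions~\ref{p_rev} and~\ref{p_cap} become essential.
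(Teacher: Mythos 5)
Your overall architecture matches the paper's: parts (1)--(2) via exponential test functions plus an explicit recovery family and a diagonal argument, part (3) via localization at critical points, and part (4) via the resolvent method for the liminf and Proposition \ref{p_test} for the limsup. However, there is a genuine gap in the $\Gamma$-liminf of part (3): your test function near a critical point $\bm{c}$ is mis-specified, and with it the computation does not produce $\zeta(\bm{c})$. Writing the test function in exponential form $u=e^{F/\epsilon}$ with $\nabla F=\mathbb{A}(\bm{y}-\bm{c})$, the lower bound reads $\tfrac1\epsilon\int\bm{z}\cdot\mathbb{A}(\mathbb{H}^{\bm{c}}-\mathbb{A})\bm{z}\,d\mu_\epsilon-\int\Delta F\,d\mu_\epsilon$ up to Taylor remainders. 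Since the admissible sequences $\mu_\epsilon$ only satisfy $\liminf_\epsilon\tfrac1\epsilon\int|\bm{z}|^2d\mu_\epsilon<\infty$ (and one does not know the value of this quantity), the quadratic term must vanish \emph{identically}, which forces $\mathbb{A}(\mathbb{H}^{\bm{c}}-\mathbb{A})=0$, i.e.\ $\mathbb{A}$ diagonal in the eigenbasis with entries $a_i\in\{0,\lambda_i\}$; maximizing $-\operatorname{Tr}\mathbb{A}$ then selects $a_i=\lambda_i$ exactly on the unstable directions and $a_i=0$ on the stable ones, which is the choice $\mathbb{A}=\widetilde{\mathbb{H}^{\bm{c}}}$ and yields $-\operatorname{Tr}\widetilde{\mathbb{H}^{\bm{c}}}=\zeta(\bm{c})$. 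Your prescription ($\nabla h=\tfrac12\mathbb{H}^{\bm{c}}\bm{z}$ on the stable eigenspace and $-\tfrac12\mathbb{H}^{\bm{c}}\bm{z}$ on the unstable one) gives $\mathbb{A}(\mathbb{H}^{\bm{c}}-\mathbb{A})$ with diagonal entries $\lambda_i^2/4$ on stable directions and $-3\lambda_i^2/4$ on unstable ones, so the $1/\epsilon$-term neither vanishes nor has a sign that helps, and $-\operatorname{Tr}\mathbb{A}$ is not $\zeta(\bm{c})$; the asserted "contribution equal to the absolute value of each negative eigenvalue" does not come out. Relatedly, you cannot simply "localize near each $\bm{x}$" without first establishing the a priori concentration estimates that $\liminf\mathcal{I}_\epsilon(\mu_\epsilon)<\infty$ implies $\liminf\tfrac1\epsilon\mu_\epsilon(B_R\setminus B_r(\mathcal{C}_0))<\infty$ and $\liminf\tfrac1\epsilon\int_{B_r(\bm{c})}|\bm{x}-\bm{c}|^2d\mu_\epsilon<\infty$ (obtained by testing with $e^{U/3\epsilon}$); these are what control both the mass away from $\mathcal{C}_0$ and the cubic Taylor remainders.

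A smaller structural omission concerns part (4): you prove the $\Gamma$-liminf only for targets of the form $\mu=\sum_{\mathcal{M}\in\mathscr{V}^{(p)}}\omega(\mathcal{M})\pi_{\mathcal{M}}$, but the $\Gamma$-liminf must also give $\liminf\theta_\epsilon^{(p)}\mathcal{I}_\epsilon(\mu_\epsilon)=\infty$ whenever $\mathcal{J}^{(p)}(\mu)=\infty$. This is not automatic; it is obtained by induction on $p$, using the level-$(p-1)$ liminf together with the characterization of the zero set of $\mathcal{J}^{(p-1)}$ (Lemma \ref{l42}) to conclude that $\liminf\theta_\epsilon^{(p-1)}\mathcal{I}_\epsilon(\mu_\epsilon)>0$ and hence the level-$p$ liminf diverges. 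With these two points repaired, the remainder of your outline (the recovery sequences for (2), the diagonal argument for (1), the resolvent liminf and the Proposition \ref{p_test}-based limsup for (4), together with the reduction to single equivalence classes by convexity) coincides with the paper's proof.
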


The proof is presented in Sections \ref{sec_pre} and \ref{sec_meta}.

The following lemma shows that $\mathcal{J}^{(p)}$ is finite precisely on convex combinations of the measures $\pi_{\mathcal{M}}$, and its zero level set corresponds to the convex combinations of the next level. This result plays a key role in establishing Proposition \ref{p_Gamma}
and hence Theorem \ref{t_main}.
\begin{lem}
\label{l42}We have that
\begin{enumerate}
\item Fix $p\in\llbracket1,\,\mathfrak{q}\rrbracket$ and $\mu\in\mathcal{P}(\mathbb{R}^{d})$.
Then, $\mathcal{J}^{(p)}(\mu)<\infty$ if and only if $\mu=\sum_{\mathcal{M}\in\mathscr{V}^{(p)}}\omega(\mathcal{M})\,\pi_{\mathcal{M}}$
for some $\omega\in\mathcal{P}(\mathscr{V}^{(p)})$.
\item Fix $p\in\llbracket1,\,\mathfrak{q}-1\rrbracket$ and $\mu\in\mathcal{P}(\mathbb{R}^{d})$.
Then, $\mathcal{J}^{(p)}(\mu)=0$ if and only if $\mu=\sum_{\mathcal{M}\in\mathscr{V}^{(p+1)}}\omega(\mathcal{M})\,\pi_{\mathcal{M}}$
for some $\omega\in\mathcal{P}(\mathscr{V}^{(p+1)})$.
\end{enumerate}
\end{lem}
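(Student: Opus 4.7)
Part (1) is close to a direct consequence of the definition \eqref{e_J^p}. The ``only if'' direction is immediate: by definition, $\mathcal{J}^{(p)}(\mu) = \infty$ whenever $\mu$ is not of the form $\sum_{\mathcal{M} \in \mathscr{V}^{(p)}} \omega(\mathcal{M}) \pi_{\mathcal{M}}$. For the ``if'' direction, I must verify that $\mathfrak{J}^{(p)}(\omega) < \infty$ for every $\omega \in \mathcal{P}(\mathscr{V}^{(p)})$. Writing the variational formula in terms of the jump rates $r^{(p)}(\mathcal{M}, \mathcal{M}')$ of $\{\mathbf{y}^{(p)}(t)\}$ and using $1 - \mathbf{u}(\mathcal{M}')/\mathbf{u}(\mathcal{M}) \le 1$ for $\mathbf{u} > 0$, I obtain the uniform bound
\[
\mathfrak{J}^{(p)}(\omega) \;\le\; \max_{\mathcal{M} \in \mathscr{V}^{(p)}} \sum_{\mathcal{M}' \ne \mathcal{M}} r^{(p)}(\mathcal{M}, \mathcal{M}') \;<\; \infty,
\]
since $\mathscr{V}^{(p)}$ is finite.

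For part (2), I first apply part (1) to reduce $\mathcal{J}^{(p)}(\mu) = 0$ to the assertion that $\mu = \sum_{\mathcal{M}' \in \mathscr{V}^{(p)}} \alpha(\mathcal{M}')\, \pi_{\mathcal{M}'}$ for a uniquely determined $\alpha \in \mathcal{P}(\mathscr{V}^{(p)})$ satisfying $\mathfrak{J}^{(p)}(\alpha) = 0$; uniqueness of $\alpha$ comes from the fact that distinct elements of $\mathscr{V}^{(p)}$ are disjoint subsets of $\mathcal{M}_0$, so the measures $\pi_{\mathcal{M}'}$ have disjoint supports. The standard Donsker--Varadhan identification, recalled in Appendix \ref{app_DV}, asserts that $\mathfrak{J}^{(p)}(\alpha) = 0$ if and only if $\alpha$ is invariant for $\{\mathbf{y}^{(p)}(t)\}$; if needed, this can be verified directly by testing the variational formula at $\mathbf{u} = e^{\eta v}$ and expanding in $\eta \to 0$. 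By the discussion following \eqref{e_nu_i}, the invariant measures of $\{\mathbf{y}^{(p)}(t)\}$ are exactly the convex combinations $\alpha = \sum_{i=1}^{\mathfrak{n}_p} \beta_i \nu_i^{(p)}$.

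It then remains to translate between representations over $\mathscr{V}^{(p)}$ and over $\mathscr{V}^{(p+1)}$. Since $\mathscr{R}^{(p)}(\mathcal{M}_i^{(p+1)}) = \mathscr{R}_i^{(p)}$ by \eqref{e_M_p+1}, identity \eqref{e_pi} yields $\pi_{\mathcal{M}_i^{(p+1)}} = \sum_{\mathcal{M}' \in \mathscr{R}_i^{(p)}} \nu_i^{(p)}(\mathcal{M}')\, \pi_{\mathcal{M}'}$. Substituting the above form of $\alpha$ then gives
\[
\mu \;=\; \sum_{i=1}^{\mathfrak{n}_p} \beta_i \sum_{\mathcal{M}' \in \mathscr{R}_i^{(p)}} \nu_i^{(p)}(\mathcal{M}')\, \pi_{\mathcal{M}'} \;=\; \sum_{i=1}^{\mathfrak{n}_p} \beta_i\, \pi_{\mathcal{M}_i^{(p+1)}},
\]
which is exactly the decomposition over $\mathscr{V}^{(p+1)}$ with $\omega(\mathcal{M}_i^{(p+1)}) = \beta_i$. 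The converse reverses this identity: starting from $\mu = \sum_{\mathcal{M} \in \mathscr{V}^{(p+1)}} \omega(\mathcal{M}) \pi_{\mathcal{M}}$, expand each $\pi_{\mathcal{M}_i^{(p+1)}}$ via \eqref{e_pi} to produce a representation over $\mathscr{V}^{(p)}$ with weights $\alpha = \sum_i \omega(\mathcal{M}_i^{(p+1)})\, \nu_i^{(p)}$, which is invariant, so $\mathfrak{J}^{(p)}(\alpha) = 0$ and hence $\mathcal{J}^{(p)}(\mu) = 0$.

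Once part (1) and the Donsker--Varadhan characterization of zeros are in hand, the argument is essentially bookkeeping, and I do not anticipate any serious obstacle; in particular, no direct analysis of the diffusion $\{\bm{x}_{\epsilon}(t)\}$, nor of the finer tree structure beyond the already-established identity \eqref{e_pi}, should be required.
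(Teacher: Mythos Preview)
Your proposal is correct and follows essentially the same approach as the paper: part (1) via the definition \eqref{e_J^p} together with the elementary bound $\mathfrak{J}^{(p)}(\omega)<\infty$ on a finite state space (the paper packages this as Lemma \ref{l_MC_DV_finite}), and part (2) via the characterization of the zero set of $\mathfrak{J}^{(p)}$ as convex combinations of the $\nu_i^{(p)}$ (Lemma \ref{l_MC_DV_irred}) combined with the identity \eqref{e_pi} to pass between representations over $\mathscr{V}^{(p)}$ and $\mathscr{V}^{(p+1)}$. Your remark on uniqueness of $\alpha$ via disjoint supports is correct and slightly more explicit than the paper, but otherwise the arguments coincide.
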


\begin{proof}
By Lemma \ref{l_MC_DV_finite}, $\mathfrak{J}^{(p)}(\omega)<\infty$
for all $p\in\llbracket1,\,\mathfrak{q}\rrbracket$ and $\omega\in\mathcal{P}(\mathscr{V}^{(p)})$.
Thus, the first assertion follows directly from the definition
\eqref{e_J^p} of $\mathcal{J}^{(p)}$.

We now prove the second assertion.
Let $\mu\in\mathcal{P}(\mathbb{R}^{d})$ satisfy $\mathcal{J}^{(p)}(\mu)=0$.
By definition \eqref{e_J^p} of $\mathcal{J}^{(p)}$,
\[
\mu=\sum_{\mathcal{M}\in\mathscr{V}^{(p)}}\omega(\mathcal{M})\,\pi_{\mathcal{M}} \, ,
\]
for some $\omega\in\mathcal{P}(\mathscr{V}^{(p)})$ such that $\mathfrak{J}^{(p)}(\omega)=0$.
By Lemma \ref{l_MC_DV_irred}, 
\[
\omega=\sum_{i\in\llbracket1,\,\mathfrak{n}_{p}\rrbracket}a_{i}\nu_{i}^{(p)} \,,
\]
for some coefficients $(a_{i})_{i\in\llbracket1,\,\mathfrak{n}_{p}\rrbracket}$
such that $a_{i}\ge0$ and $\sum_{i\in\llbracket1,\,\mathfrak{n}_{p}\rrbracket}a_{i}=1$.
Therefore,
\[
\begin{aligned}\mu & =\sum_{\mathcal{M}\in\mathscr{V}^{(p)}}\omega(\mathcal{M})\,\pi_{\mathcal{M}}\\
 & =\sum_{\mathcal{M}\in\mathscr{V}^{(p)}}\sum_{i\in\llbracket1,\,\mathfrak{n}_{p}\rrbracket}a_{i}\nu_{i}^{(p)}(\mathcal{M})\,\pi_{\mathcal{M}}\\
 & =\sum_{i\in\llbracket1,\,\mathfrak{n}_{p}\rrbracket}a_{i}\sum_{\mathcal{M}\in\mathscr{R}_{i}^{(p)}}\nu_{i}^{(p)}(\mathcal{M})\,\pi_{\mathcal{M}}\,,
\end{aligned}
\]
where the last equality holds since the support of $\nu_{i}^{(p)}$
is $\mathscr{R}_{i}^{(p)}$. By the definition \eqref{e_nu_i} of $\nu_{i}^{(p)}$,
the last term is equal to
\[
\begin{aligned} & \sum_{i\in\llbracket1,\,\mathfrak{n}_{p}\rrbracket}a_{i}\sum_{\mathcal{M}\in\mathscr{R}_{i}^{(p)}}\frac{\nu(\mathcal{M})}{\nu(\mathcal{M}_{i}^{(p+1)})}\sum_{\bm{m}\in\mathcal{M}}\frac{\nu(\bm{m})}{\nu(\mathcal{M})}\delta_{\bm{m}}\\
 & =\sum_{i\in\llbracket1,\,\mathfrak{n}_{p}\rrbracket}a_{i}\sum_{\bm{m}\in\mathcal{M}_{i}^{(p+1)}}\frac{\nu(\bm{m})}{\nu(\mathcal{M}_{i}^{(p+1)})}\delta_{\bm{m}}\\
 & =\sum_{i\in\llbracket1,\,\mathfrak{n}_{p}\rrbracket}a_{i}\pi_{\mathcal{M}_{i}^{(p+1)}}\,.
\end{aligned}
\]
In other words,
\[
\mu=\sum_{\mathcal{M}\in\mathscr{V}^{(p+1)}}\alpha(\mathcal{M})\,\pi_{\mathcal{M}}
\]
where $\alpha(\mathcal{M}_{i}^{(p+1)})=a_{i}$ for each $i\in\llbracket1,\,\mathfrak{n}_{p}\rrbracket$.

Conversely, suppose $\mu=\sum_{\mathcal{M}\in\mathscr{V}^{(p+1)}}\omega(\mathcal{M})\,\pi_{\mathcal{M}}$
for some $\omega\in\mathcal{P}(\mathscr{V}^{(p+1)})$. By \eqref{e_M_p+1}
and \eqref{e_pi},
\[
\begin{aligned}\mu & =\sum_{\mathcal{M}\in\mathscr{V}^{(p+1)}}\omega(\mathcal{M})\sum_{\mathcal{M}'\in\mathscr{R}^{(p)}(\mathcal{M})}\frac{\nu(\mathcal{M}')}{\nu(\mathcal{M})}\,\pi_{\mathcal{M}'}\\
 & =\sum_{i\in\llbracket1,\,\mathfrak{n}_{p}\rrbracket}\omega(\mathcal{M}_{i}^{(p+1)})\sum_{\mathcal{M}'\in\mathscr{R}_{i}^{(p)}}\frac{\nu(\mathcal{M}')}{\nu(\mathcal{M}_{i}^{(p)})}\,\pi_{\mathcal{M}'}\\
 & =\sum_{i\in\llbracket1,\,\mathfrak{n}_{p}\rrbracket}\sum_{\mathcal{M}'\in\mathscr{R}_{i}^{(p)}}\omega(\mathcal{M}_{i}^{(p+1)})\,\nu_{i}^{(p)}(\mathcal{M}')\,\pi_{\mathcal{M}'}\,.
\end{aligned}
\]
Therefore,
\[
\mu=\sum_{\mathcal{M}'\in\mathscr{V}^{(p)}}\alpha(\mathcal{M}')\,\pi_{\mathcal{M}'} \, ,
\]
where
\[
\alpha=\sum_{i\in\llbracket1,\,\mathfrak{n}_{p}\rrbracket}\omega(\mathcal{M}_{i}^{(p+1)})\,\nu_{i}^{(p)}\in\mathcal{P}(\mathscr{V}^{(p)}) \,.
\]
By Lemma \ref{l_MC_DV_irred}, we conclude $\mathcal{J}^{(p)}(\mu)=\mathfrak{J}^{(p)}(\alpha)=0$.
\end{proof}

The following corollary proves the third condition of the definition of $\Gamma$-expansion.

\begin{cor}
\label{c43}For $p\in\llbracket-1,\,\mathfrak{q}-1\rrbracket$,
\[
\mathcal{J}^{(p)}(\mu)=0\ \ \text{if and only if}\ \ \mathcal{J}^{(p+1)}(\mu)<\infty\,.
\]
\end{cor}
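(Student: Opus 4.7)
The proof splits into three cases according to the value of $p$, and in each case the statement reduces to identifying the relevant measures explicitly.

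\textbf{Case $p \in \llbracket 1, \mathfrak{q}-1 \rrbracket$.} This is essentially immediate from Lemma \ref{l42}. By part (2) of that lemma, $\mathcal{J}^{(p)}(\mu) = 0$ is equivalent to $\mu = \sum_{\mathcal{M} \in \mathscr{V}^{(p+1)}} \omega(\mathcal{M}) \pi_{\mathcal{M}}$ for some $\omega \in \mathcal{P}(\mathscr{V}^{(p+1)})$, and by part (1) applied at level $p+1$, this condition is in turn equivalent to $\mathcal{J}^{(p+1)}(\mu) < \infty$. Nothing more is needed here.

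\textbf{Case $p = -1$.} I would observe that since $|\nabla U|^2 \geq 0$ vanishes precisely on $\mathcal{C}_0$, the equality $\mathcal{J}^{(-1)}(\mu) = \frac{1}{4}\int |\nabla U|^2 \, d\mu = 0$ holds if and only if $\mu$ is supported on the finite set $\mathcal{C}_0$, i.e.\ $\mu = \sum_{\bm{x} \in \mathcal{C}_0} \omega(\bm{x}) \delta_{\bm{x}}$ for some $\omega \in \mathcal{P}(\mathcal{C}_0)$. By the definition \eqref{e_J^0} of $\mathcal{J}^{(0)}$, this is exactly the finiteness condition for $\mathcal{J}^{(0)}(\mu)$.

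\textbf{Case $p = 0$.} Here the key point is to identify the zero set of $\zeta$ restricted to $\mathcal{C}_0$. Since $U$ is a Morse function, the Hessian $\nabla^2 U(\bm{x})$ is non-degenerate at every $\bm{x} \in \mathcal{C}_0$, so no eigenvalue vanishes. Hence $\zeta(\bm{x}) = 0$ if and only if every eigenvalue of $\nabla^2 U(\bm{x})$ is strictly positive, which holds precisely when $\bm{x} \in \mathcal{M}_0$. Using \eqref{e_J^0}, $\mathcal{J}^{(0)}(\mu) = 0$ is therefore equivalent to $\mu = \sum_{\bm{m} \in \mathcal{M}_0} \omega(\bm{m}) \delta_{\bm{m}}$ for some $\omega \in \mathcal{P}(\mathcal{M}_0)$. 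On the other hand, recalling that $\mathscr{V}^{(1)} = \{\{\bm{m}\} : \bm{m} \in \mathcal{M}_0\}$ and $\pi_{\{\bm{m}\}} = \delta_{\bm{m}}$, Lemma \ref{l42}(1) at $p = 1$ states that $\mathcal{J}^{(1)}(\mu) < \infty$ iff $\mu = \sum_{\bm{m} \in \mathcal{M}_0} \omega(\bm{m}) \delta_{\bm{m}}$. The two descriptions coincide, which closes this case.

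There is no serious obstacle: the content of the statement is entirely contained in Lemma \ref{l42} once the base cases $p = -1, 0$ are unpacked. The only point requiring a small observation is the Morse non-degeneracy used in the case $p = 0$ to exclude critical points with a zero eigenvalue, ensuring that $\zeta^{-1}(0) \cap \mathcal{C}_0$ coincides exactly with $\mathcal{M}_0$ rather than with some larger subset of $\mathcal{C}_0$.
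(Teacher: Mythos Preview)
Your proof is correct and follows essentially the same approach as the paper: both split into the three cases $p=-1$, $p=0$, and $p\ge 1$, and both reduce the last case directly to Lemma~\ref{l42}. Your treatment of $p=0$ is slightly more explicit than the paper's in spelling out why the Morse condition forces $\zeta^{-1}(0)\cap\mathcal{C}_0=\mathcal{M}_0$, but the argument is the same.
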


\begin{proof}
For $p=-1$, note that $\mathcal{J}^{(-1)}(\mu)=0$ if and only if $\mu=\sum_{\bm{c}\in\mathcal{C}_{0}}a_{\bm{c}}\delta_{\bm{c}}$
for some $(a_{\bm{c}})_{\bm{c}\in\mathcal{C}_{0}}$ such that $a_{\bm{c}}\ge0$
and $\sum_{\bm{c}\in\mathcal{C}_{0}}a_{\bm{c}}=1$. 
This is necessary and sufficient condition for $\mathcal{J}^{(0)}(\mu)<\infty$.

For $p=0$, observe that $\mathcal{J}^{(0)}(\mu)=0$ if and only if $\mu$
is supported on $\mathcal{M}_{0}$. Since the state space of the
first limiting Markov chain $\{{\bf y}^{(1)}(t)\}_{t\ge0}$ is $\mathscr{V}^{(1)}=\mathcal{M}_{0}$,
this is necessary and sufficient condition for $\mathcal{J}^{(1)}(\mu)<\infty$
by Lemma \ref{l42}-(1).

For $p\ge1$, let $\mu\in\mathcal{P}(\mathbb{R}^{d})$. By Lemma
\ref{l42}-(2), $\mathcal{J}^{(p)}(\mu)=0$ if and only if
$\mu=\sum_{\mathcal{M}\in\mathscr{V}^{(p+1)}}\omega(\mathcal{M})\,\pi_{\mathcal{M}}$
for some $\omega\in\mathcal{P}(\mathscr{V}^{(p+1)})$. By Lemma
\ref{l42}-(1), this is equivalent to
$\mathcal{J}^{(p+1)}(\mu)<\infty$.
\end{proof}
We are now ready to prove Theorem \ref{t_main}.
\begin{proof}[Proof of Theorem \ref{t_main}]
The first condition of Definition \ref{def_Gamma_exp}
follows immediately from the definitions of time scales. The second and fourth conditions
are direct consequences of Proposition \ref{p_Gamma}. The third
condition is exactly Corollary \ref{c43}. For the last condition, suppose $\mu\in\mathcal{P}(\mathbb{R}^{d})$
satisfies $\mathcal{J}^{(\mathfrak{q})}(\mu)=0$. By definition
\eqref{e_J^p} of $\mathcal{J}^{(\mathfrak{q})}$, $\mu=\sum_{\mathcal{M}\in\mathscr{V}^{(\mathfrak{q})}}\omega(\mathcal{M})\,\pi_{\mathcal{M}}^{(\mathfrak{q})}$
for some $\omega\in\mathcal{P}(\mathscr{V}^{(\mathfrak{q})})$ such
that $\mathfrak{J}^{(\mathfrak{q})}(\omega)=0$. By Lemma \ref{l_MC_DV_irred},
$\omega$ must be a stationary distribution of the chain $\{{\bf y}^{(\mathfrak{q})}(t)\}_{t\ge0}$.
Since this chain has a unique irreducible
class, it has a unique stationary distribution. Hence, there exists exactly
one $\mu\in\mathcal{P}(\mathbb{R}^{d})$ satisfying $\mathcal{J}^{(\mathfrak{q})}(\mu)=0$.
\end{proof}

\section{\label{sec_pre}Pre-metastable scale}

In this section, we prove the $\Gamma$-convergence of $\theta_{\epsilon}^{(p)}\mathcal{I}_{\epsilon}$ as $\epsilon\to0$
for $p=-1,\,0$.

We begin with a lemma showing that certain functions belong to the domain $D(\mathscr{L}_{\epsilon})$
of the infinitesimal generator.
\begin{lem}
\label{l_gen}Constant functions and $C_{c}^{2}$ functions belong to $D(\mathscr{L}_{\epsilon})$. Moreover, for all $a>2$ and
$\epsilon\in(0,\,\epsilon_{0})$, $e^{U/(a\epsilon)}\in D(\mathscr{L}_{\epsilon})$
.
\end{lem}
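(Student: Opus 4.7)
The plan is to use the fact that $\mathscr{L}_\epsilon$ is the closure of $\widetilde{\mathscr{L}}_\epsilon : C_c^2(\mathbb{R}^d) \to L^2(d\pi_\epsilon)$, so that to show $f\in D(\mathscr{L}_\epsilon)$ it suffices to exhibit $(f_n)\subset C_c^2$ with $f_n\to f$ and $(\widetilde{\mathscr{L}}_\epsilon f_n)$ convergent in $L^2(d\pi_\epsilon)$. The assertion for $C_c^2$ functions is then immediate, since these already lie in the domain of the pre-generator $\widetilde{\mathscr{L}}_\epsilon$. For the constant function $f\equiv 1$ I would fix a cutoff $\chi\in C_c^\infty(\mathbb{R}^d)$ with $\chi\equiv 1$ on $B(0,1)$ and support in $B(0,2)$, and set $\chi_n(\bm x):=\chi(\bm x/n)$, so that $|\nabla\chi_n|\le C/n$ and $|\Delta\chi_n|\le C/n^2$ with support in the annulus $\{n\le|\bm x|\le 2n\}$. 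Then $\chi_n\to 1$ in $L^2(d\pi_\epsilon)$ by dominated convergence, while
\[
|\widetilde{\mathscr{L}}_\epsilon \chi_n| \;\le\; \frac{C}{n}\,|\nabla U| \;+\; \frac{C\epsilon}{n^2}
\]
tends to $0$ in $L^2(d\pi_\epsilon)$ because $|\nabla U|^2\in L^2(d\pi_\epsilon)\subset L^1(d\pi_\epsilon)$ by \eqref{e_L2}, so that the annulus tails vanish. Hence $1\in D(\mathscr{L}_\epsilon)$ and $\mathscr{L}_\epsilon 1 = 0$.

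For $g_\epsilon := e^{U/(a\epsilon)}$, set $\beta:=1-2/a\in(0,1)$, positive since $a>2$. The direct computation
\[
\widetilde{\mathscr{L}}_\epsilon g_\epsilon \;=\; \frac{g_\epsilon}{a}\Bigl[\,\Delta U - \frac{1-1/a}{\epsilon}\,|\nabla U|^2\,\Bigr]
\]
together with the identity $g_\epsilon^2\,d\pi_\epsilon = Z_\epsilon^{-1} e^{-\beta U/\epsilon}\,d\bm x$ reduces the $L^2(d\pi_\epsilon)$ bounds on $g_\epsilon$ and $\widetilde{\mathscr{L}}_\epsilon g_\epsilon$ to integrability of $1$, $|\nabla U|^4$ and $(\Delta U)^2$ against the rescaled Gibbs weight $e^{-\beta U/\epsilon}d\bm x$; these follow from \eqref{e_U_L1} and \eqref{e_L2} applied at the rescaled temperature $\epsilon/\beta$, which I would justify via Lemma \ref{l_assuL2} so as to avoid any shrinking of $\epsilon_0$. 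Setting $g_n:=\chi_n g_\epsilon\in C_c^2$ and expanding
\[
\widetilde{\mathscr{L}}_\epsilon(\chi_n g_\epsilon) \;=\; \chi_n\,\widetilde{\mathscr{L}}_\epsilon g_\epsilon \,+\, g_\epsilon\,\widetilde{\mathscr{L}}_\epsilon \chi_n \,+\, 2\epsilon\,\nabla g_\epsilon\cdot\nabla\chi_n,
\]
the first summand converges to $\widetilde{\mathscr{L}}_\epsilon g_\epsilon$ in $L^2(d\pi_\epsilon)$ by dominated convergence (majorant $|\widetilde{\mathscr{L}}_\epsilon g_\epsilon|\in L^2(d\pi_\epsilon)$), while the remaining two are supported in $\{|\bm x|\ge n\}$ and carry an additional factor of $1/n$, so they vanish in $L^2(d\pi_\epsilon)$ by the same tail argument applied to the finite measure $g_\epsilon^2\,d\pi_\epsilon$.

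The main obstacle is taming the exponential growth of $g_\epsilon$ inside the $L^2(d\pi_\epsilon)$ norm of $\widetilde{\mathscr{L}}_\epsilon g_\epsilon$. The saving feature is purely algebraic: $g_\epsilon^2$ combined with the Gibbs weight $e^{-U/\epsilon}$ produces $e^{-\beta U/\epsilon}$ with $\beta>0$ precisely when $a>2$, so that the growth condition \eqref{e: growth} still controls, via \eqref{e_U_L1} and \eqref{e_L2}, the polynomial-type factors $|\nabla U|^2$ and $\Delta U$ appearing in $\widetilde{\mathscr{L}}_\epsilon g_\epsilon$. The threshold $a>2$ is therefore sharp at this level of the argument.
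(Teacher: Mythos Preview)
Your approach is essentially the paper's: the cutoff approximation you describe is exactly the content of Proposition~\ref{p_gen}-(2), which the paper then invokes as a black box. So the core argument is correct and matches.

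The one genuine weak spot is your appeal to Lemma~\ref{l_assuL2} ``so as to avoid any shrinking of $\epsilon_0$''. Two problems. First, Lemma~\ref{l_assuL2} carries the extra hypothesis \eqref{e_Delta_nabla}, which is \emph{not} among the standing assumptions of the paper (it is only introduced in the appendix to argue that \eqref{e_L2} is mild); you cannot invoke it inside the proof of Lemma~\ref{l_gen}. Second, even granting \eqref{e_Delta_nabla}, Lemma~\ref{l_assuL2} does not give \eqref{e_L2} for all $\epsilon>0$ but only for $\epsilon$ in some interval $(0,\epsilon_0)$; applying it at the rescaled temperature $\epsilon/\beta$ still forces $\epsilon/\beta<\epsilon_0$, i.e.\ $\epsilon<\beta\epsilon_0$, so no shrinking is avoided. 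The honest fix is simply to accept that the argument yields $e^{U/(a\epsilon)}\in D(\mathscr{L}_\epsilon)$ for $\epsilon\in(0,(1-2/a)\,\epsilon_0)$; since every application in the paper takes $\epsilon\to0$ with $a$ fixed (in fact $a=3$), this is harmless. The paper itself is loose on exactly this point, writing only ``by \eqref{e_L2}'' without comment on the rescaling.
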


\begin{proof}
By Proposition \ref{p_gen}-(2), constant functions and $C_{c}^{2}$
functions lie in $D(\mathscr{L}_{\epsilon})$.

Now fix $a>2$. It follows from \eqref{e_U_L1} that
$e^{U/(a\epsilon)}\in L^{2}(d\pi_{\epsilon})$ for all $\epsilon>0$.
Recall the definition of the differential operator
$\widetilde{\ms L_\epsilon}$ introduced in display
\eqref{e_generator}. By \eqref{e_L2},
\[
\widetilde{\mathscr{L}_{\epsilon}}
\left(e^{U/(a\epsilon)}\right)=e^{U/(a\epsilon)}\left(\frac{a-1}{a^{2}\epsilon}|\nabla U|^{2}-\frac{1}{a}\Delta U\right)\in L^{2}(d\pi_{\epsilon}) \, , \ 
\text{for } \epsilon\in(0,\,\epsilon_{0})\,.
\]
Thus by Proposition \ref{p_gen}-(2),
$e^{U/(a\epsilon)}\in D(\mathscr{L}_{\epsilon})$ for $\epsilon\in(0,\,\epsilon_0)$.
\end{proof}

\subsection{First pre-metastable scale}

We first establish the $\Gamma$-convergence at the time scale $\theta_{\epsilon}^{(-1)}=\epsilon$.

\subsubsection{$\Gamma-\liminf$}
\begin{proof}[Proof of $\Gamma-\liminf$ for Proposition \ref{p_Gamma}-(2)]
Fix $\mu\in\mathcal{P}(\mathbb{R}^{d})$ and let
$(\mu_{\epsilon})_{\epsilon>0}$ be a sequence in
$\mathcal{P}(\mathbb{R}^{d})$ such that $\mu_{\epsilon}\to\mu$ weakly.
For $f\in C^{2}(\mathbb{R}^{d})$, a direct computation yields
\begin{equation*}
\begin{aligned}\nabla e^{f/\epsilon} & =\frac{1}{\epsilon}e^{f/\epsilon}\nabla f\,,\\
\Delta e^{f/\epsilon} & =e^{f/\epsilon}(\frac{1}{\epsilon^{2}}|\nabla f|^{2}+\frac{1}{\epsilon}\Delta f)\,,
\end{aligned}
\end{equation*}
so that
\begin{equation}
\mathscr{L}_{\epsilon}e^{f/\epsilon}=e^{f/\epsilon}(-\frac{1}{\epsilon}\nabla U\cdot\nabla f+\frac{1}{\epsilon}|\nabla f|^{2}+\Delta f)\,.
\label{e_e^f}
\end{equation}

Let 
\[
D_{+}(\mathscr{L}_{\epsilon}):=\{f\in D(\mathscr{L}_{\epsilon}):f>0\}\,.
\]
For $f\in C_{c}^{2}(\mathbb{R}^{d})$, note that $e^{f/\epsilon}-1\in C_{c}^{2}(\mathbb{R}^{d})\subset D(\mathscr{L}_{\epsilon})$. Moreover, since $1\in D(\mathscr{L}_{\epsilon})$ by Lemma
\ref{l_gen}, $e^{f/\epsilon}\in D_{+}(\mathscr{L}_{\epsilon})$.
Therefore, since $\mu_{\epsilon}\to\mu$, for all $f\in C_{c}^{2}(\mathbb{R}^{d})$,
\[
\begin{aligned}\liminf_{\epsilon\to0}\epsilon\mathcal{I}_{\epsilon}(\mu_{\epsilon}) & =\liminf_{\epsilon\to0}\sup_{u\in D_{+}(\mathscr{L}_{\epsilon})}-\epsilon\int_{\mathbb{R}^{d}}\frac{\mathscr{L}_{\epsilon}u}{u}d\mu_{\epsilon}\\
 & \ge\liminf_{\epsilon\to0}-\epsilon\int_{\mathbb{R}^{d}}\frac{\mathscr{L}_{\epsilon}e^{f/\epsilon}}{e^{f/\epsilon}}d\mu_{\epsilon}\\
 & =\liminf_{\epsilon\to0}\int_{\mathbb{R}^{d}}\Big(\nabla U\cdot\nabla f-|\nabla f|^{2}-\epsilon\Delta f\Big)d\mu_{\epsilon}\\
 & =\int_{\mathbb{R}^{d}}\Big(\nabla U\cdot\nabla f-|\nabla f|^{2}\Big)d\mu\\
 & =\frac{1}{4}\int_{\mathbb{R}^{d}}|\nabla U|^{2}d\mu-\int_{\mathbb{R}^{d}}|\nabla f-\frac{1}{2}\nabla U|^{2}d\mu\,.
\end{aligned}
\]
Optimizing over $f\in C_{c}^{2}(\mathbb{R}^{d})$ gives
\[
\liminf_{\epsilon\to0}\epsilon\mathcal{I}_{\epsilon}(\mu_{\epsilon})\ge\frac{1}{4}\int_{\mathbb{R}^{d}}|\nabla U|^{2}d\mu=\mathcal{J}^{(-1)}(\mu)\,.
\]
\end{proof}

\subsubsection{$\Gamma-\limsup$}

We begin by constructing a sequence of measures that approximate a Dirac measure.
\begin{lem}
\label{l_41}For $\bm{x}\in\mathbb{R}^{d}$, let $\delta_{\bm{x}}$
denote the Dirac measure at $\bm{x}$. Then, there exists a sequence
$(\mu_{\epsilon}^{\bm{x}})_{\epsilon>0}$ in $\mathcal{P}(\mathbb{R}^{d})$
such that $\mu_{\epsilon}^{\bm{x}}\to\delta_{\bm{x}}$ as $\epsilon\to0$
and
\[
\lim_{\epsilon\to0}\epsilon\mathcal{I}(\mu_{\epsilon}^{\bm{x}})=\mathcal{J}^{(-1)}(\delta_{\bm{x}})\,.
\]
\end{lem}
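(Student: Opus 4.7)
The plan is to realize $\mu_{\epsilon}^{\bm{x}}$ as a Gaussian-tilted version of the Gibbs measure $\pi_\epsilon$ centered at $\bm{x}$, and then to evaluate $\epsilon \mathcal{I}_\epsilon(\mu_\epsilon^{\bm{x}})$ via the variational identity \eqref{e_func_max} combined with Laplace asymptotics. Fix a scale $\alpha_\epsilon$ with $\epsilon \prec \alpha_\epsilon \prec 1$ (e.g. $\alpha_\epsilon = \sqrt{\epsilon}$), and set
\[
\mu_\epsilon^{\bm{x}}(d\bm{y}) \,:=\, \frac{1}{Z'_\epsilon}\, \exp\!\Big(-\frac{U(\bm{y})}{\epsilon} - \frac{|\bm{y}-\bm{x}|^2}{2\alpha_\epsilon \epsilon}\Big)\, d\bm{y}\,,
\]
where $Z'_\epsilon$ is the normalization constant. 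The Radon--Nikodym derivative $(f_\epsilon)^2 := d\mu_\epsilon^{\bm{x}}/d\pi_\epsilon = (Z_\epsilon/Z'_\epsilon)\,\exp\!\bigl(-|\bm{y}-\bm{x}|^2/(2\alpha_\epsilon\epsilon)\bigr)$ is smooth, strictly positive and of Gaussian decay. Using this decay together with \eqref{e_L2}, one checks along the lines of Lemma \ref{l_gen} that $(f_\epsilon)^2 \in D(\mathscr{L}_\epsilon)$, so the variational identity \eqref{e_func_max} is available.

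The first step is the weak convergence $\mu_\epsilon^{\bm{x}} \to \delta_{\bm{x}}$. The tilted potential $V_\epsilon(\bm{y}) := U(\bm{y}) + |\bm{y}-\bm{x}|^2/(2\alpha_\epsilon)$ has a unique global minimizer $\bm{x}_\epsilon = \bm{x} - \alpha_\epsilon \nabla U(\bm{x}) + o(\alpha_\epsilon)$, which tends to $\bm{x}$ as $\alpha_\epsilon \to 0$. Since $\nabla^2 V_\epsilon = \alpha_\epsilon^{-1} I + \nabla^2 U$, the Hessian at $\bm{x}_\epsilon$ is bounded below by $(2\alpha_\epsilon)^{-1} I$ for $\epsilon$ small, so standard Laplace asymptotics yield Gaussian concentration of $\mu_\epsilon^{\bm{x}}$ near $\bm{x}_\epsilon$ with fluctuations of order $\sqrt{\alpha_\epsilon\epsilon} \to 0$, whence $\mu_\epsilon^{\bm{x}} \to \delta_{\bm{x}}$.

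The core of the argument is the computation of $\epsilon \mathcal{I}_\epsilon(\mu_\epsilon^{\bm{x}})$. From \eqref{e_func_max} applied with this choice of $f_\epsilon$, and using the identity $\nabla V_\epsilon - \nabla U = (\bm{y}-\bm{x})/\alpha_\epsilon$, a direct calculation gives
\[
\epsilon\, \mathcal{I}_\epsilon(\mu_\epsilon^{\bm{x}}) \,=\, \frac{1}{4}\int_{\mathbb{R}^d} |\nabla V_\epsilon - \nabla U|^{2}\, d\mu_\epsilon^{\bm{x}} \,=\, \frac{1}{4\alpha_\epsilon^{2}} \int_{\mathbb{R}^d}|\bm{y} - \bm{x}|^2\, d\mu_\epsilon^{\bm{x}}\,.
\]
Rescaling $\bm{y} = \bm{x} + \alpha_\epsilon \bm{w}$, the image measure $\tilde{\mu}_\epsilon$ has density proportional to $\exp\!\bigl(-(\alpha_\epsilon/\epsilon)\bigl[\tfrac{1}{2}|\bm{w} + \nabla U(\bm{x})|^2 - \tfrac{1}{2}|\nabla U(\bm{x})|^2\bigr] + o(\alpha_\epsilon/\epsilon)\bigr)$ after Taylor-expanding $U$ at $\bm{x}$. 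Since $\alpha_\epsilon/\epsilon \to \infty$, Laplace's method identifies the weak limit of $\tilde{\mu}_\epsilon$ with $\delta_{-\nabla U(\bm{x})}$, with vanishing quadratic fluctuations; hence $\int |\bm{w}|^2\, d\tilde{\mu}_\epsilon \to |\nabla U(\bm{x})|^2$, and therefore
\[
\epsilon\, \mathcal{I}_\epsilon(\mu_\epsilon^{\bm{x}}) \,=\, \frac{1}{4}\int_{\mathbb{R}^d}|\bm{w}|^2\, d\tilde{\mu}_\epsilon \,\longrightarrow\, \frac{1}{4}|\nabla U(\bm{x})|^2 \,=\, \mathcal{J}^{(-1)}(\delta_{\bm{x}})\,.
\]

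The main obstacle is the rigorous justification of the two Laplace-type asymptotic statements used above, in particular the uniform control of the contribution from $\{\bm{y} : |\bm{y}-\bm{x}| \geq r\}$ for fixed $r>0$, where the Taylor expansion of $U$ no longer applies. Here one combines the super-exponential suppression provided by the quadratic penalty $|\bm{y}-\bm{x}|^2/(2\alpha_\epsilon\epsilon)$ (which dominates once $|\bm{y}-\bm{x}|^2 \gg \alpha_\epsilon\epsilon$) with the growth bound \eqref{e_U_L1} to ensure that neither the normalization $Z'_\epsilon$ nor the second moment $\int|\bm{y}-\bm{x}|^2\, d\mu_\epsilon^{\bm{x}}$ receives contributions from the tail; everything else reduces to a completion of the square and standard Gaussian moment computations.
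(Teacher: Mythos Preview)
Your argument is correct, and it takes a genuinely different route from the paper's proof. The paper fixes an $\epsilon$-independent potential $V_{\bm{x}}\in C^2(\mathbb{R}^d)$ that equals $|\bm{y}-\bm{x}|^2$ near $\bm{x}$, has $\bm{x}$ as its unique global minimum, and is engineered at infinity so that $V_{\bm{x}}(\bm{y})\ge |\bm{y}|^2+|\nabla U(\bm{y})|^2+|\Delta U(\bm{y})|$. It then sets $\mu_\epsilon^{\bm{x}}(d\bm{y})\propto e^{-V_{\bm{x}}(\bm{y})/\epsilon}d\bm{y}$, so that $d\mu_\epsilon^{\bm{x}}/d\pi_\epsilon\propto e^{-(V_{\bm{x}}-U)/\epsilon}$ and the same computation you carry out with \eqref{e_func_max} yields
\[
\epsilon\,\mathcal{I}_\epsilon(\mu_\epsilon^{\bm{x}})
=\frac{1}{4}\int |\nabla V_{\bm{x}}-\nabla U|^{2}\,d\mu_\epsilon^{\bm{x}}\,.
\]
Since $V_{\bm{x}}$ is fixed and has its unique minimum at $\bm{x}$, a single application of Laplace's method evaluates the integrand at $\bm{x}$, giving $\tfrac14|\nabla U(\bm{x})|^2$ directly because $\nabla V_{\bm{x}}(\bm{x})=0$.

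Your construction instead keeps the tilt purely Gaussian but makes its variance $\epsilon$-dependent through the intermediate scale $\alpha_\epsilon$. The price is a two-scale Laplace analysis: the minimiser $\bm{x}_\epsilon$ of $V_\epsilon$ drifts away from $\bm{x}$ by $\alpha_\epsilon\nabla U(\bm{x})$, and you recover $|\nabla U(\bm{x})|^2$ only after the rescaling $\bm{y}=\bm{x}+\alpha_\epsilon\bm{w}$ and the observation that $\tilde\mu_\epsilon$ concentrates at $\bm{w}=-\nabla U(\bm{x})$. The gain is that you avoid the somewhat ad hoc construction of $V_{\bm{x}}$ with its tailored growth at infinity; the loss is that the error term you write as $o(\alpha_\epsilon/\epsilon)$ is really $O((\alpha_\epsilon/\epsilon)\cdot\alpha_\epsilon|\bm{w}|^2)$, so justifying the second-moment convergence $\int|\bm{w}|^2\,d\tilde\mu_\epsilon\to|\nabla U(\bm{x})|^2$ requires the uniform-integrability/tail argument you sketch in your last paragraph rather than a one-line Laplace evaluation. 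Both approaches are valid; the paper's is tidier analytically, yours is more elementary in its input.
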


\begin{proof}
Fix $\bm{x}\in\mathbb{R}^{d}$. Let $V_{\bm{x}}\in C^{2}(\mathbb{R}^{d})$
satisfy 
\begin{itemize}
\item $V_{\bm{x}}(\bm{x})=0$ and $V_{\bm{x}}(\bm{y})>0$ for all $\bm{y}\ne\bm{x}$.
\item There exists $a>0$ such that for all $|\bm{y}|\le a$,
\[
V_{\bm{x}}(\bm{y})=|\bm{y}-\bm{x}|^{2}\,.
\]
\item For $|\bm{y}|$ large enough,
\[
V_{\bm{x}}(\bm{y})\ge|\bm{y}|^{2}+|\nabla U(\bm{y})|^{2}+|\Delta U(\bm{y})|\,.
\]
\item $\nabla\left(e^{-V_{\bm{x}}(\bm{y})}\right)\in L^{2}(d\bm{x})$.
\end{itemize}
The existence of a such function is ensured by Step 1 in the proof of
(2.13) in page 3066 of \cite{DiGM}.

Define
\[
\mu_{\epsilon}^{\bm{x}}(d\bm{y}):=\frac{1}{\int_{\mathbb{R}^{d}}e^{-V_{\bm{x}}(\bm{z})/\epsilon}d\bm{z}}e^{-V_{\bm{x}}(\bm{y})/\epsilon}d\bm{y}\,.
\]
Since $V_{\bm{x}}(\bm{y})\ge|\bm{y}|^{2}$ for $|\bm{y}|$ large enough,
$\int_{\mathbb{R}^{d}}e^{-V_{\bm{x}}(\bm{z})/\epsilon}d\bm{z}<\infty$ and $\mu_{\epsilon}^{\bm{x}}\to\delta_{\bm{x}}$ as $\epsilon\to0$.

It remains to estimate $\epsilon\mathcal{I}_{\epsilon}(\mu_{\epsilon}^{\bm{x}})$.
By definition,
\[
\frac{d\mu_{\epsilon}^{\bm{x}}}{d\pi_{\epsilon}}(\bm{y})=\frac{Z_{\epsilon}}{\int_{\mathbb{R}^{d}}e^{-V_{\bm{x}}(\bm{z})/\epsilon}d\bm{z}}e^{-[V_{\bm{x}}(\bm{y})-U(\bm{y})]/\epsilon} \, ,
\]
so that
\[
\nabla\sqrt{\frac{d\mu_{\epsilon}^{\bm{x}}}{d\pi_{\epsilon}}}=\sqrt{\frac{Z_{\epsilon}}{\int_{\mathbb{R}^{d}}e^{-V_{\bm{x}}(\bm{z})/\epsilon}d\bm{z}}}e^{-[V_{\bm{x}}-U]/2\epsilon}\frac{1}{2\epsilon}(\nabla V_{\bm{x}}-\nabla U)\,.
\]
Therefore, by \eqref{e_Gibbs} and \eqref{e_func_max},
\[
\begin{aligned}\epsilon\mathcal{I}_{\epsilon}(\mu_{\epsilon}^{\bm{x}}) & =\epsilon^{2}\int_{\mathbb{R}^{d}}\left|\nabla\sqrt{\frac{d\mu_{\epsilon}^{\bm{x}}}{d\pi_{\epsilon}}}\right|^{2}d\pi_{\epsilon}\\
 & =\frac{Z_{\epsilon}}{4\int_{\mathbb{R}^{d}}e^{-V_{\bm{x}}(\bm{z})/\epsilon}d\bm{z}}\int_{\mathbb{R}^{d}}|\nabla V_{\bm{x}}(\bm{y})-\nabla U(\bm{y})|^{2}e^{-[V_{\bm{x}}(\bm{y})-U(\bm{y})]/\epsilon}d\pi_{\epsilon}\\
 & =\frac{1}{4\int_{\mathbb{R}^{d}}e^{-V_{\bm{x}}(\bm{z})/\epsilon}d\bm{z}}\int_{\mathbb{R}^{d}}|\nabla V_{\bm{x}}(\bm{y})-\nabla U(\bm{y})|^{2}e^{-V_{\bm{x}}(\bm{y})/\epsilon}d\bm{y}\,.
\end{aligned}
\]
Since $\bm{x}$ is the unique minimizer of $V_{\bm{x}}$, the Laplace's method yields
\[
\lim_{\epsilon\to0}\epsilon\mathcal{I}_{\epsilon}(\mu_{\epsilon}^{\bm{x}})=\frac{1}{4}|\nabla U(\bm{x})|^{2}=\frac{1}{4}\int_{\mathbb{R}^{d}}|\nabla U|^{2}\,d\delta_{\bm{x}}=\mathcal{J}^{(-1)}(\delta_{\bm{x}})\,,
\]
as claimed.
\end{proof}
To apply the previous lemma, we need the following auxiliary lemma.
\begin{lem}
\label{l_42}Let $(X,\, d)$ be a metric space.
Let $g:X\to[0,\,+\infty]$ and let $(f_{\epsilon})_{\epsilon>0}$ be a family
of functions $f_{\epsilon}:X\to[0,\,+\infty]$. Let $a\in X$
and a sequence  $(x_{n})_{n\ge1}$ be such that
\begin{equation}
\lim_{n\to\infty}x_{n}=a\ ,\ \ \limsup_{n\to\infty}g(x_{n})\le g(a)\,.\label{e_l42-1}
\end{equation}
Suppose that for each $n\in\mathbb{N}$, there exists a sequence $(y_{n,\,\epsilon})_{\epsilon>0}$
in $X$ such that
\begin{equation}
\lim_{\epsilon\to0}y_{n,\,\epsilon}=x_{n}\ ,\ \ \limsup_{\epsilon\to0}f_{\epsilon}(y_{n,\,\epsilon})\le g(x_{n})\,.\label{e_l42-2}
\end{equation}
Then, there exists a sequence $(z_{\epsilon})_{\epsilon>0}$ in $X$
such that
\begin{equation}
\lim_{\epsilon\to0}z_{\epsilon}=a\ ,\ \ \limsup_{\epsilon\to0}f_{\epsilon}(z_{\epsilon})\le g(a)\,.\label{e_l42-3}
\end{equation}
\end{lem}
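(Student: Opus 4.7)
The plan is a standard Attouch-type diagonal extraction. First dispose of the trivial case $g(a)=+\infty$: then (\ref{e_l42-3}) holds for any sequence $z_\epsilon \to a$, and we can for instance take $z_\epsilon := y_{1,\epsilon}$ (or any sequence converging to $a$ obtained from the $y_{n,\epsilon}$'s). So from now on assume $g(a) < \infty$, hence in particular $g(x_n) < \infty$ for all $n$ large enough by (\ref{e_l42-1}).

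Next, using hypothesis (\ref{e_l42-2}), for each $n \in \mathbb{N}$ pick some $\eta_n > 0$ such that
\[
d(y_{n,\epsilon},x_n) \,<\, \tfrac{1}{n} \qquad \text{and} \qquad f_\epsilon(y_{n,\epsilon}) \,\le\, g(x_n) + \tfrac{1}{n}
\]
for every $\epsilon \in (0, \eta_n)$. Replacing $\eta_n$ by $\min(\eta_n, \eta_{n-1}/2)$ we may assume that $(\eta_n)$ is strictly decreasing and $\eta_n \to 0$. Then for each $\epsilon \in (0, \eta_1)$ there is a unique integer $n(\epsilon) \ge 1$ such that $\epsilon \in [\eta_{n(\epsilon)+1}, \eta_{n(\epsilon)})$, and clearly $n(\epsilon) \to \infty$ as $\epsilon \to 0$. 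Define
\[
z_\epsilon \,:=\, y_{n(\epsilon),\, \epsilon} \qquad \text{for } \epsilon \in (0, \eta_1),
\]
and $z_\epsilon := y_{1,\epsilon}$ (say) for $\epsilon \ge \eta_1$.

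Finally, verify (\ref{e_l42-3}). For convergence,
\[
d(z_\epsilon, a) \,\le\, d(y_{n(\epsilon),\epsilon}, x_{n(\epsilon)}) + d(x_{n(\epsilon)}, a) \,<\, \tfrac{1}{n(\epsilon)} + d(x_{n(\epsilon)}, a)\,,
\]
which tends to $0$ by the choice of $\eta_n$ and by (\ref{e_l42-1}). For the energy estimate,
\[
f_\epsilon(z_\epsilon) \,=\, f_\epsilon(y_{n(\epsilon), \epsilon}) \,\le\, g(x_{n(\epsilon)}) + \tfrac{1}{n(\epsilon)}\,,
\]
so that, using (\ref{e_l42-1}),
\[
\limsup_{\epsilon \to 0} f_\epsilon(z_\epsilon) \,\le\, \limsup_{n \to \infty} \Big( g(x_n) + \tfrac{1}{n} \Big) \,\le\, g(a)\,.
\]

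There is no genuine obstacle here; the only point requiring a minimum of care is to make the thresholds $\eta_n$ strictly decreasing to $0$, which is why we replaced them by $\min(\eta_n, \eta_{n-1}/2)$. The argument is entirely metric-topological and does not use any specific feature of $\mathcal{P}(\mathbb{R}^d)$ or of the rate functionals; it is the classical tool that allows one, in $\Gamma$-limsup arguments, to upgrade a recovery sequence for a dense class of targets to a recovery sequence for an arbitrary target.
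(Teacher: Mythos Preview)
Your diagonal argument is correct and is essentially the same as the paper's: both pick, for each level $k$ (your $n$), a threshold below which $y_{k,\epsilon}$ is $1/k$-close to $x_k$ and $f_\epsilon(y_{k,\epsilon})$ is within $1/k$ of $g(x_k)$, arrange the thresholds to be decreasing to $0$, and then define $z_\epsilon$ by the index of the interval into which $\epsilon$ falls. One small slip: in the trivial case $g(a)=+\infty$ you propose $z_\epsilon:=y_{1,\epsilon}$, but this converges to $x_1$, not to $a$; the simplest fix (and what the paper does) is to take $z_\epsilon\equiv a$.
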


\begin{proof}
If $g(a)=\infty$, we can take $z_{\epsilon}=a$ for all $\epsilon>0$. Hence assume
$g(a)<\infty$. By \eqref{e_l42-1}, for each $k\in\mathbb{N}$, there
exists $N_{k}\in\mathbb{N}$ such that
\begin{equation}
n\ge N_{k}\ \Longrightarrow\ d(x_{n},\,a)\,,\ g(x_{n})-g(a)\,\le\,\frac{1}{k}\,.\label{e_pf_l42-1}
\end{equation}
By \eqref{e_l42-2} we may choose $M_{k}\in\mathbb{N}$ such that $M_{k}<M_{k+1}$,
$k\in\mathbb{N}$, and
\begin{equation}
\epsilon\le\frac{1}{M_{k}}\ \Longrightarrow \ d(y_{N_{k},\,\epsilon},\,x_{N_{k}})\,,\ f_{\epsilon}(y_{N_{k},\,\epsilon})-g(x_{N_{k}})\,\le\,\frac{1}{k}\,.\label{e_pf_l42-2}
\end{equation}
Define
\[
z_{\epsilon}:=y_{N_{k},\,\epsilon}\ ;\ \ \epsilon\in\left(\frac{1}{M_{k+1}},\,\frac{1}{M_{k}}\right]\,.
\]

We claim that the sequence $(z_{\epsilon})_{\epsilon>0}$ satisfies \eqref{e_l42-3}. 
Let $\delta>0$ and pick $k\in\mathbb{N}$ satisfying $k\ge2/\delta$.
For $\epsilon\in\left(0,\,\frac{1}{M_{k}}\right]$, since $\epsilon\in\left(\frac{1}{M_{l+1}},\,\frac{1}{M_{l}}\right]$
for some $l\ge k$, by \eqref{e_pf_l42-1}
and \eqref{e_pf_l42-2},
\[
d(z_{\epsilon},\,a)=d(y_{N_{l},\,\epsilon},\,a)\le d(y_{N_{l},\,\epsilon},\,x_{N_{l}})+d(x_{N_{l}},a)\le\frac{2}{l}\le\delta\,,
\]
which shows $\lim_{\epsilon\to0}z_{\epsilon}=a$. Similarly, by \eqref{e_pf_l42-1} and \eqref{e_pf_l42-2},
\[
f_{\epsilon}(z_{\epsilon})=f(y_{N_{l},\,\epsilon})=f_{\epsilon}(y_{N_{l},\,\epsilon})-g(x_{N_{l}})+g(x_{N_{l}})\le g(a)+\frac{2}{l}\le g(a)+\delta\,.
\]
Taking $\limsup_{\epsilon\to0}$ yields $\limsup_{\epsilon\to0}f_{\epsilon}(z_{\epsilon})\le g(a)$,
as claimed.
\end{proof}

For $r>0$ and $\bm{x}\in\mathbb{R}^{d}$, denote by $B_{r}(\bm{x})$
the closed ball with radius $r$ centered at $\bm{x}$:
\[
{\color{blue}B_{r}(\bm{x}}\mathclose{\color{blue})}:=\{\bm{y}:|\bm{x}-\bm{y}|\le r\}\,.
\]
When the center is the origin, we simply write ${\color{blue}B_{r}}:=B_r(\bm{0})$.

We are now ready to prove $\Gamma-\limsup$ of the sequence $(\epsilon\mathcal{I}_{\epsilon})_{\epsilon>0}$.

\begin{proof}[Proof of $\Gamma-\limsup$ for Proposition \ref{p_Gamma}-(2)]
Let $\mu\in\mathcal{P}(\mathbb{R}^{d})$.

\noindent\textbf{Step 1.} Dirac measure

If $\mu=\delta_{\bm{x}}$
for some $\bm{x}\in\mathbb{R}^{d}$, we can take the sequence $(\mu_{\epsilon}^{\bm{x}})_{\epsilon>0}$
introduced in Lemma \ref{l_41}.

\noindent\textbf{Step 2.} Finite convex combinations of Dirac measures

Suppose that
\[
\mu=\sum_{\bm{x}\in\mathcal{A}}a_{\bm{x}}\delta_{\bm{x}}
\]
for some finite set $\mathcal{A}\subset\mathbb{R}^{d}$ and positive
weights $a_{\bm{x}}$ such that $\sum_{\bm{x}\in\mathcal{A}}a_{\bm{x}}=1$.
Let $\mu_{\epsilon}=\sum_{\bm{x}\in\mathcal{A}}a_{\bm{x}}\mu_{\epsilon}^{\bm{x}}$.
By convexity of $\mathcal{I}_{\epsilon}$ and Lemma \ref{l_41},
\[
\limsup_{\epsilon\to0}\epsilon\mathcal{I}_{\epsilon}(\mu_{\epsilon})\le\sum_{\bm{x}\in\mathcal{A}}a_{\bm{x}}\limsup_{\epsilon\to0}\epsilon\mathcal{I}_{\epsilon}(\mu_{\epsilon}^{\bm{x}})\le\sum_{\bm{x}\in\mathcal{A}}\frac{a_{\bm{x}}}{4}|\nabla U(\bm{x})|^{2}=\frac{1}{4}\int_{\mathbb{R}^{d}}|\nabla U|^{2}d\mu\,.
\]

\noindent\textbf{Step 3.} General measures

If $\mathcal{J}^{(-1)}(\mu)=\infty$,
there is nothing to prove. Assume therefore, that $\nabla U\in L^{2}(d\mu)$,
i.e.,
\[
\mathcal{J}^{(-1)}(\mu)=\frac{1}{4}\int_{\mathbb{R}^{d}}|\nabla U|^{2}d\mu<\infty\,.
\]
For $n\in\mathbb{N}$, let $\mu_{n}$ be the measure $\mu$ conditioned on
$B_{n}$. Clearly,
\[
\mu_{n}\to\mu\ ,\ \ \mathcal{J}^{(-1)}(\mu_{n})\to\mathcal{J}^{-1}(\mu)\ \text{as}\ n\to\infty\,.
\]

Since $B_{n}$ is compact, the space of convex combinations of Dirac
measures supported on $B_{n}$ is dense in $\mathcal{P}(B_{n})$.
Hence, there exist finite sets $\mathcal{A}(n,\,k)$ and
coefficients $a_{\bm{x}}^{n,\,k}\ge0$
such that measures
\[
\nu_{n,\,k}:=\sum_{\bm{x}\in\mathcal{A}(n,\,k)}a_{\bm{x}}^{n,\,k}\delta_{\bm{x}}\in\mathcal{P}(B_{n})
\]
satisfy $\nu_{n,\,k}\to\mu_{n}$
and $\mathcal{J}^{(-1)}(\nu_{n,\,k})\to\mathcal{J}^{(-1)}(\mu_{n})$
as $k\to\infty$.

Since weak-$\ast$ topology on $\mathcal{P}(\mathbb{R}^{d})$ is metrizable,
the diagonal argument yields a subsequence $(\nu_{n,\,k(n)})_{n\ge1}$
such that
\[
\nu_{n,\,k(n)}\to\mu \ \text{and} \  \mathcal{J}^{(-1)}(\nu_{n,\,k(n)})\to\mathcal{J}^{(-1)}(\mu), \ 
\text{as}\ n\to\infty \, .
\]
For each $n\in\mathbb{N}$, let $(\mu_{n,\,\epsilon})_{\epsilon>0}$
be the sequence of measure constructed in step 2 such that 
\[
\mu_{n,\,\epsilon}\to\nu_{n,\,k(n)}\ \text{as } \epsilon\to0\, , \quad
\limsup_{\epsilon\to0}\epsilon\mathcal{I}(\mu_{n,\,\epsilon})\le\frac{1}{4}\int_{\mathbb{R}^{d}}|\nabla U|^{2}d\nu_{n,\,k(n)}\,.
\]
Finally, apply Lemma \ref{l_42} with
\[
X=\mathcal{P}(\mathbb{R}^{d}) \, ,\ 
f_{\epsilon}=\epsilon\mathcal{I}_{\epsilon} \, ,\ g=\mathcal{J}^{(-1)} \, ,\ 
a=\mu \, , \ x_{n}=\nu_{n,\,k(n)} \,, \  \text{and } \  y_{n,\,\epsilon}=\mu_{n,\,\epsilon}\,.
\]
\end{proof}

The next result shows that $\theta_{\epsilon}^{(-1)}=\epsilon$ is
the first time scale in the $\Gamma$-expansion of $\mathcal{I}_{\epsilon}$. 

\begin{proof}[Proof of Proposition \ref{p_Gamma}-(1)]
It suffices to consider the $\Gamma-\limsup$. Let
$(\varrho_{\epsilon})_{\epsilon>0}$ be a sequence of positive numbers such that
\[
\lim_{\epsilon\to0}\frac{\varrho_{\epsilon}}{\epsilon}=0 \,.
\]
By Lemma \ref{l_41}, for every $\bm{x}\in\mathbb{R}^d$, 
\[
\limsup_{\epsilon\to0}\varrho_{\epsilon}\mathcal{I}(\mu_{\epsilon}^{\bm{x}})=0\,,
\]
where $\mu_{\epsilon}^{\bm{x}}$ is the measure constructed in the
proof of Lemma \ref{l_41}.

Now fix $\mu\in\mathcal{P}(\mathbb{R}^{d})$
and apply Lemma \ref{l_42} with $g=0$.
By the same argument of the proof of the $\Gamma-\limsup$
of Proposition \ref{p_Gamma}-(2), we conclude that there exists a sequence $(\mu_{\epsilon})_{\epsilon>0}$ in $\mathcal{P}(\mathbb{R}^d)$
such that 
\[
\mu_{\epsilon}\to\mu\ \text{as}\ \epsilon\to 0 \,, \ \text{and}\ \limsup_{\epsilon\to0}\varrho_{\epsilon}\mathcal{I}_{\epsilon}(\mu_{\epsilon})=0\,.
\]
This completes the proof.
\end{proof}

\subsection{Second pre-metastable scale}

In this subsection, we prove the $\Gamma$-convergence in time scale $\theta_\epsilon^{(0)}=1$.
Recall from \eqref{e_zeta} and \eqref{e_J^0} the definitions of
$\zeta:\mathcal{C}_{0}\to\mathbb{R}$ and $\mathcal{J}^{(0)}:\mathcal{P}(\mathbb{R}^{d})\to[0,\,+\infty]$.

\subsubsection{$\Gamma-\liminf$}

Fix $R_{0}>0$ such that

\begin{equation}
|\nabla U(\bm{x})|>1\,,\ |\nabla U(\boldsymbol{x})|-2\Delta U(\boldsymbol{x})\ge0\ \ \text{for all}\ |\bm{x}|\ge R_{0}\,.\label{e_R0}
\end{equation}
The existence of such $R_{0}$ follows from the growth condition \eqref{e: growth}.
For $|\bm{x}|\ge R_{0}$ and $\epsilon\in(0,\,1)$, we distinguish two cases:

\smallskip
\noindent
$\bullet$ If $\Delta U(\bm{x})\ge0$, then since $|\nabla U(\bm{x})|>1$ and $\epsilon<\frac{4}{3}$, by the second inequality in \eqref{e_R0},
\[
\frac{2}{9}|\nabla U(\bm{x})|^{2}-\frac{\epsilon}{3}\Delta U(\bm{x})\ge\frac{2}{9}|\nabla U(\bm{x})|-\frac{4}{9}\Delta U(\bm{x})>0\,.
\]

\noindent
$\bullet$ If $\Delta U(\bm{x})<0$, then
\[
\frac{2}{9}|\nabla U(\bm{x})|^{2}-\frac{\epsilon}{3}\Delta U(\bm{x})\ge\frac{2}{9}|\nabla U(\bm{x})|^{2}>0\,.
\]

\smallskip
In summary,
\begin{equation}
\frac{2}{9}|\nabla U(\bm{x})|^{2}-\frac{\epsilon}{3}\Delta U(\bm{x})>0\ ;\ \epsilon\in(0,\,1)\,,\ |\bm{x}|\ge R_{0}\,.\label{e_growth-2}
\end{equation}
By enlarging $R_{0}>0$ if necessary, we may assume $R_{0}\ge1$
and that there is no critical point $\bm{c}\in\mathcal{C}_{0}$ lies in
$|\bm{c}|\ge R_{0}/2$.

\medskip
For $\mathcal{A}\subset\mathbb{R}^{d}$ and $r>0$, define 
\[
{\color{blue}B_{r}(\mathcal{A}}\mathclose{\color{blue})}:=\bigcup_{\bm{x}\in\mathcal{A}}B_{r}(\bm{x})\,.
\]

By Lemma \ref{l_gen}, $e^{U/(a\epsilon)}\in D(\mathscr{L}_{\epsilon})$
for all $a>2$ and small $\epsilon>0$. Hence, by \eqref{e_e^f},
for every $\mu\in\mathcal{P}(\mathbb{R}^{d})$,
\[
\begin{aligned}\mathcal{I}_{\epsilon}(\mu) & \ge-\int_{\mathbb{R}^{d}}\frac{\mathscr{L}_{\epsilon}e^{U/(3\epsilon)}}{e^{U/(3\epsilon)}}d\mu\\
 & =\int_{\mathbb{R}^{d}}\left(\frac{2}{9\epsilon}|\nabla U|^{2}-\frac{1}{3}\Delta U\right)d\mu\,.
\end{aligned}
\]
Therefore, by \eqref{e_growth-2}, for all $R>R_{0}$, $\mu\in\mathcal{P}(\mathbb{R}^{d})$, and small $\epsilon>0$,
\begin{equation}
\mathcal{I}_{\epsilon}(\mu)\ge\int_{B_{R}}\left(\frac{2}{9\epsilon}|\nabla U|^{2}-\frac{1}{3}\Delta U\right)d\mu\,.\label{e_bound_I}
\end{equation}

The next lemma provides the key estimate needed for the proof of the $\Gamma-\liminf$
of the sequence $(\mathcal{I}_{\epsilon})_{\epsilon>0}$.
\begin{lem}
\label{l_pre2}
Fix $\mu\in\mathcal{P}(\mathbb{R}^{d})$ and let $(\mu_{\epsilon})_{\epsilon>0}$
be a sequence in $\mathcal{P}(\mathbb{R}^{d})$ such that $\mu_{\epsilon}\to\mu$
and
\begin{equation}
\liminf_{\epsilon\to0}\mathcal{I}_{\epsilon}(\mu_{\epsilon})<\infty\,.\label{e_l_pre2}
\end{equation}
Then, for all $R>R_{0}$ and $r>0$, 
\begin{equation}
\liminf_{\epsilon\to0}\frac{1}{\epsilon}\mu_{\epsilon}\Big(B_{R}\setminus B_{r}(\mathcal{C}_{0})\Big)<\infty\,.\label{e_l_pre2-1}
\end{equation}
Moreover, for all sufficiently small $r>0$,
\begin{equation}
\liminf_{\epsilon\to0}\sum_{\bm{c}\in\mathcal{C}_{0}}\frac{1}{\epsilon}\int_{B_{r}(\bm{c})}|\bm{x}-\bm{c}|^{2}d\mu_{\epsilon}<\infty\,.\label{e_l_pre-2}
\end{equation}
\end{lem}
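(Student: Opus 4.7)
The plan is to use the key lower bound \eqref{e_bound_I}, namely
$\mathcal{I}_\epsilon(\mu_\epsilon)\ge \int_{B_R}\bigl(\tfrac{2}{9\epsilon}|\nabla U|^2-\tfrac13\Delta U\bigr)\,d\mu_\epsilon$,
as the sole source of quantitative information, and then exploit the fact that $|\nabla U|$ is bounded below away from $\mathcal{C}_0$ on any compact set, and, near each Morse critical point, controls the squared distance to that point.

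For part (1), fix $R>R_0$ and $r>0$. Since $\mathcal{C}_0$ is finite, the set $K:=B_R\setminus B_r(\mathcal{C}_0)$ is compact and disjoint from $\mathcal{C}_0$, so $c_{R,r}:=\inf_{K}|\nabla U|^2>0$. Moreover, $\Delta U$ is continuous, hence bounded on $B_R$ by some constant $C_R$. Dropping the positive contribution from $B_R\setminus K$ in \eqref{e_bound_I} and using these bounds gives
\[
\mathcal{I}_\epsilon(\mu_\epsilon)\;\ge\;\frac{2c_{R,r}}{9\epsilon}\,\mu_\epsilon(K)\;-\;\frac{C_R}{3}\,.
\]
Rearranging yields $\tfrac{1}{\epsilon}\mu_\epsilon(K)\le \tfrac{9}{2c_{R,r}}\bigl(\mathcal{I}_\epsilon(\mu_\epsilon)+C_R/3\bigr)$, and taking $\liminf_{\epsilon\to0}$ together with \eqref{e_l_pre2} gives \eqref{e_l_pre2-1}.

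For part (2), the input is the Morse hypothesis: at each $\bm c\in\mathcal{C}_0$ the Hessian $\nabla^2 U(\bm c)$ is invertible, so $\nabla U(\bm x)=\nabla^2 U(\bm c)(\bm x-\bm c)+O(|\bm x-\bm c|^2)$, and there exist $r_{\bm c}>0$ and $\kappa_{\bm c}>0$ such that
\[
|\nabla U(\bm x)|^2\,\ge\,\kappa_{\bm c}\,|\bm x-\bm c|^2 \quad\text{for all }\bm x\in B_{r_{\bm c}}(\bm c)\,.
\]
Choose $r>0$ smaller than each $r_{\bm c}$ and small enough that the balls $B_r(\bm c)$, $\bm c\in\mathcal{C}_0$, are pairwise disjoint; this is possible since $\mathcal{C}_0$ is finite. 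Fix $R>R_0$ large enough so that $B_R\supset \bigcup_{\bm c\in\mathcal{C}_0}B_r(\bm c)$ (which holds automatically, since by our choice of $R_0$ no critical point lies outside $B_{R_0/2}$). Restricting the integral in \eqref{e_bound_I} to $\bigsqcup_{\bm c}B_r(\bm c)$ and again bounding $|\Delta U|$ by $C_R$ on $B_R$ yields
\[
\mathcal{I}_\epsilon(\mu_\epsilon)\;\ge\;\sum_{\bm c\in\mathcal{C}_0}\frac{2\kappa_{\bm c}}{9\epsilon}\int_{B_r(\bm c)}|\bm x-\bm c|^2\,d\mu_\epsilon\;-\;\frac{C_R}{3}\,,
\]
and \eqref{e_l_pre-2} follows after dividing by $\min_{\bm c}\tfrac{2\kappa_{\bm c}}{9}$ and taking $\liminf_{\epsilon\to0}$, invoking \eqref{e_l_pre2}.

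There is no real obstacle: the whole argument reduces to the lower bound \eqref{e_bound_I} plus two elementary facts about Morse functions (positivity of $|\nabla U|^2$ away from $\mathcal{C}_0$ on compacts, and quadratic lower bound on $|\nabla U|^2$ near each critical point). The only mildly delicate point is making sure $r$ is chosen small enough that the balls $B_r(\bm c)$ are disjoint and each lies within the region where the quadratic lower bound holds, which is handled by the finiteness of $\mathcal{C}_0$.
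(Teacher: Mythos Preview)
Your proof is correct and follows essentially the same route as the paper: both arguments use \eqref{e_bound_I} as the only quantitative input, bound $|\Delta U|$ by its sup-norm on $B_R$, and then exploit (i) the strict positivity of $|\nabla U|^2$ on the compact set $B_R\setminus B_r(\mathcal C_0)$ for part~(1), and (ii) the Morse quadratic lower bound $|\nabla U(\bm x)|^2\ge \kappa_{\bm c}|\bm x-\bm c|^2$ near each $\bm c$ for part~(2). The paper's only organizational difference is that it first records the intermediate estimate $\liminf_{\epsilon\to0}\tfrac{1}{\epsilon}\int_{B_R}|\nabla U|^2\,d\mu_\epsilon<\infty$ and deduces both conclusions from it, whereas you derive each directly; the content is the same. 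One small remark on wording: when you write ``restricting the integral in \eqref{e_bound_I} to $\bigsqcup_{\bm c}B_r(\bm c)$'', this is legitimate only because you separately handle the $\Delta U$ term over all of $B_R$ (as you do) and the dropped $|\nabla U|^2$ contribution is nonnegative---the full integrand in \eqref{e_bound_I} is not pointwise nonnegative near critical points, so the restriction is of the gradient term alone.
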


\begin{proof}
By \eqref{e_bound_I}, for all $R>R_{0}$ and small $\epsilon>0$,
\[
\mathcal{I}_{\epsilon}(\mu_{\epsilon})\ge\int_{B_{R}}\frac{2}{9\epsilon}|\nabla U|^{2}d\mu_{\epsilon}-\frac{1}{3}\|\Delta U\|_{L^{\infty}(B_{R})}\,,
\]
so that by \eqref{e_l_pre2},
\begin{equation}
\liminf_{\epsilon\to0}\int_{B_{R}}\frac{2}{9\epsilon}|\nabla U|^{2}d\mu_{\epsilon}<\infty\,.\label{pf_e_l_pre}
\end{equation}

Recall that $R_{0}$ was chosen large enough so that $B_{R_{0}}$
contains all critical points of $U$. Define
\[
b_{0}:=\inf_{\bm{x}\in B_{R}\setminus B_{r}(\mathcal{C}_{0})}|\nabla U(\bm{x})|^{2}>0\,,
\]
 so that for all $r>0$ such that $B_{r}(\mathcal{C}_{0})\subset B_{R}$,
\[
\frac{2b_{0}}{9\epsilon}\mu_{\epsilon}(B_{R}\setminus B_{r}(\mathcal{C}_{0}))\le\int_{B_{R}\setminus B_{r}(\mathcal{C}_{0})}\frac{2}{9\epsilon}|\nabla U|^{2}d\mu_{\epsilon}\,.
\]
Combining this with \eqref{pf_e_l_pre} yields \eqref{e_l_pre2-1}.

For the second assertion, note that for each $\bm{c}\in\mathcal{C}_{0}$ and small $r>0$, the nondegeneracy of $U$ implies the existence of $b_{\bm{c}}>0$ such that
\[
|\nabla U(\bm{x})|^{2}\ge b_{\bm{c}}|\bm{x}-\bm{c}|^{2}\ \ ;\ \ \bm{x}\in B_{r}(\bm{c})\,.
\]
Hence, by \eqref{pf_e_l_pre}, and since all critical points lie in $B_{R}$,
\[
\liminf_{\epsilon\to0}\frac{1}{\epsilon}\int_{B_{r}(\bm{c})}|\bm{x}-\bm{c}|^{2}d\mu_{\epsilon}\le\liminf_{\epsilon\to0}\frac{1}{\epsilon}\int_{B_{r}(\bm{c})}\frac{1}{b_{\bm{c}}}|\nabla U|^{2}d\mu_{\epsilon}<\infty\,,
\]
which establishes \eqref{e_l_pre-2}.
\end{proof}

In words, Lemma \ref{l_pre2} shows that under the bounded assumption \eqref{e_l_pre2}, the measures $\mu_{\epsilon}$ concentrate near the critical points $\mathcal{C}_{0}$, and the amount of spread is controlled at order $\epsilon$.

\begin{cor}
\label{c_pre2}Fix $\mu\in\mathcal{P}(\mathbb{R}^{d})$ and let $(\mu_{\epsilon})_{\epsilon>0}$
be a sequence in $\mathcal{P}(\mathbb{R}^{d})$ satisfying $\mu_{\epsilon}\to\mu$
and \eqref{e_l_pre2}. Then, for all $\bm{c}\in\mathcal{C}_{0}$,
\[
\limsup_{r\to0}\liminf_{\epsilon\to0}\frac{1}{\epsilon}\int_{B_{r}(\bm{c})}|\bm{x}-\bm{c}|^{3}d\mu_{\epsilon}=0\,.
\]
\end{cor}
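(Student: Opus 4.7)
The plan is to reduce the cubic estimate to the quadratic one already obtained in Lemma \ref{l_pre2}, exploiting the elementary pointwise bound $|\bm x-\bm c|^{3}\le r\,|\bm x-\bm c|^{2}$ valid on the ball $B_{r}(\bm c)$. Integrating this against $\mu_\epsilon$ and dividing by $\epsilon$, monotonicity of the liminf gives
\begin{equation*}
\liminf_{\epsilon\to0}\frac{1}{\epsilon}\int_{B_{r}(\bm c)}|\bm x-\bm c|^{3}\,d\mu_{\epsilon}\,\le\,r\cdot\liminf_{\epsilon\to0}\frac{1}{\epsilon}\int_{B_{r}(\bm c)}|\bm x-\bm c|^{2}\,d\mu_{\epsilon}\,,
\end{equation*}
so it suffices to show that the right-hand side of this inequality tends to $0$ as $r\to 0$, which requires controlling the quadratic liminf by a constant \emph{independent of} $r$.

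The key observation is that the constant produced by the proof of Lemma \ref{l_pre2} is in fact uniform in small $r$. Indeed, the local nondegeneracy of the Morse function $U$ at a critical point $\bm c\in\mathcal{C}_{0}$ yields $b_{\bm c}>0$ and $r_{\bm c}>0$ such that $|\nabla U(\bm x)|^{2}\ge b_{\bm c}\,|\bm x-\bm c|^{2}$ for all $\bm x\in B_{r_{\bm c}}(\bm c)$; the threshold $r_{\bm c}$ depends only on the geometry of $U$ near $\bm c$ and not on the sequence $(\mu_\epsilon)$. Consequently, for any $r\in(0,r_{\bm c}]$ and any $R>R_{0}$ large enough so that $B_{r}(\bm c)\subset B_{R}$,
\begin{equation*}
\frac{1}{\epsilon}\int_{B_{r}(\bm c)}|\bm x-\bm c|^{2}\,d\mu_{\epsilon}\,\le\,\frac{1}{b_{\bm c}\,\epsilon}\int_{B_{R}}|\nabla U|^{2}\,d\mu_{\epsilon}\,.
\end{equation*}
By the estimate \eqref{pf_e_l_pre} established in the course of proving Lemma \ref{l_pre2}, the liminf as $\epsilon\to 0$ of the right-hand side is finite; call it $C_{\bm c}$. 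Crucially, $C_{\bm c}$ depends only on $\bm c$ and on the fixed choice of $R$, not on $r$.

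Combining these steps, for every $r\in(0,r_{\bm c}]$,
\begin{equation*}
\liminf_{\epsilon\to0}\frac{1}{\epsilon}\int_{B_{r}(\bm c)}|\bm x-\bm c|^{3}\,d\mu_{\epsilon}\,\le\,r\,C_{\bm c}\,,
\end{equation*}
and letting $r\to 0$ yields the claim. The only subtle point in the argument is the one emphasized above, namely tracing through the proof of Lemma \ref{l_pre2} to verify that the quadratic bound it provides is uniform in $r$; once this is noted, the remaining reasoning is a one-line comparison and a limit.
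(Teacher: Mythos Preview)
Your proof is correct and follows the same approach as the paper: both use the pointwise bound $|\bm x-\bm c|^{3}\le r\,|\bm x-\bm c|^{2}$ on $B_r(\bm c)$ and then appeal to the quadratic estimate from Lemma~\ref{l_pre2}. You are more explicit than the paper about the one nontrivial point, namely that the bound on $\liminf_{\epsilon\to0}\epsilon^{-1}\int_{B_r(\bm c)}|\bm x-\bm c|^2\,d\mu_\epsilon$ coming from the proof of Lemma~\ref{l_pre2} is uniform in small $r$; the paper's one-line argument tacitly uses this uniformity without spelling it out.
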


\begin{proof}
Since $|\bm{x}-\bm{c}|^{3}\le r\,|\bm{x}-c|^{2}$ for $\bm{x}\in B_{r}(\bm{c})$,
it follows from \eqref{e_l_pre2-1} that
\[
\limsup_{r\to0}\liminf_{\epsilon\to0}\frac{1}{\epsilon}\int_{B_{r}(\bm{c})}|\bm{x}-\bm{c}|^{3}d\mu_{\epsilon}\le\limsup_{r\to0}r\liminf_{\epsilon\to0}\frac{1}{\epsilon}\int_{B_{r}(\bm{c})}|\bm{x}-\bm{c}|^{2}d\mu_{\epsilon}=0\,.
\]
\end{proof}

We now prove the $\Gamma-\liminf$. For $\mathcal{A}\subset\mathbb{R}^{d}$,
denote by \textcolor{blue}{$\chi_{\mathcal{A}}$} the indicator function
on $\mathcal{A}$.

\begin{proof}[Proof of $\Gamma-\liminf$ for Proposition \ref{p_Gamma}-(3)]
Suppose that $\mu\in\mathcal{P}(\mathbb{R}^{d})$ is not a convex combination of $\delta_{\bm{c}}$, $\bm{c}\in\mathcal{C}_{0}$.
Let $(\mu_{\epsilon})_{\epsilon>0}$ be a sequence in $\mathcal{P}(\mathbb{R}^{d})$
such that $\mu_{\epsilon}\to\mu$. By Proposition \ref{p_Gamma}-(2),
\[
\liminf_{\epsilon\to0}\epsilon\mathcal{I}_{\epsilon}(\mu_{\epsilon})=\mathcal{J}^{(-1)}(\mu)>0\,,
\]
so that
\[
\liminf_{\epsilon\to0}\mathcal{I}_{\epsilon}(\mu_{\epsilon})=\infty\,.
\]

Now assume that
\begin{equation}
\mu=\sum_{\bm{c}\in\mathcal{C}_{0}}\omega(\bm{c})\delta_{\bm{c}} \quad \text{for some}\quad \omega\in\mathcal{P}(\mathcal{C}_{0})\,.
\label{e_pf_p_Gamma2-1}
\end{equation}
Let $(\mu_{\epsilon})_{\epsilon>0}$ be a sequence in $\mathcal{P}(\mathbb{R}^{d})$
such that $\mu_{\epsilon}\to\mu$. Since $\mathcal{J}^{(0)}(\mu)<\infty$,
we may assume that the condition \eqref{e_l_pre2} holds; otherwise, there is nothing to prove.
By \eqref{e_l_pre2-1}, for all $R>R_{0}$ and $r>0$,
\begin{equation}
\liminf_{\epsilon\to0}\frac{1}{\epsilon}\mu_{\epsilon}\Big(B_{R}\setminus B_{r}(\mathcal{C}_{0})\Big)<\infty\,. \label{e_413}
\end{equation}
We now divide the proof into three parts.

\medskip
\noindent\textbf{Step 1.} Test function

For each $\bm{c}\in\mathcal{C}_{0}$, let ${\color{blue}\mathbb{H}^{\bm{c}}}:=\nabla^{2}U(\bm{c})$
and let $\lambda_{1}^{\bm{c}},\,\dots,\,\lambda_{d}^{\bm{c}}$ be
the eigenvalues of $\mathbb{H}^{\bm{c}}$.
Define
\[
{\color{blue}\mathbb{D}^{\bm{c}}}:={\rm diag}(\lambda_{1}^{\bm{c}},\,\dots,\,\lambda_{d}^{\bm{c}})\,,
\]
and let \textcolor{blue}{$\mathbb{U}^{\bm{c}}$} be the unitary matrix such that
\[
\mathbb{H}^{\bm{c}}=\mathbb{U}^{\bm{c}}\mathbb{D}^{\bm{c}}(\mathbb{U}^{\bm{c}})^{-1}\,.
\]
Let $\widetilde{\mathbb{D}^{\bm{c}}}$
be the diagonal matrix defined by
\[
\widetilde{\mathbb{D}^{\bm{c}}}:={\rm diag}(\varLambda_{1}^{\bm{c}},\,\dots,\,\Lambda_{d}^{\bm{c}})\,,
\]
where $\Lambda_{i}^{\bm{c}}:=\min\{\lambda_{i}^{\bm{c}},\,0\}$. Note
that only the negative eigenvalues are present in $\widetilde{\mathbb{D}^{\bm{c}}}$.
Define the quadratic form
\[
{\color{blue}G_{\bm{c}}(\bm{x})}:=\frac{1}{2}(\bm{x}-\bm{c})\cdot\widetilde{\mathbb{H}^{\bm{c}}}(\bm{x}-\bm{c})\,,
\]
where ${\color{blue}\widetilde{\mathbb{H}^{\bm{c}}}}:=\mathbb{U}^{\bm{c}}\widetilde{\mathbb{D}^{\bm{c}}}(\mathbb{U}^{\bm{c}})^{-1}$.
A direct computation gives
\begin{equation}
\Delta G_{\bm{c}}(\bm{c})={\rm Tr}\widetilde{\mathbb{H}^{\bm{c}}}={\rm Tr}\widetilde{\mathbb{D}^{\bm{c}}}=-\zeta(\bm{c})\,,\label{e_2nd_zeta}
\end{equation}
where $\zeta:\mathcal{C}_{0}\to\mathbb{R}$ was introduced in
\eqref{e_zeta}.

Fix $a_{0}>0$ so small that $B_{3a_{0}}(\bm{c})\cap B_{3a_{0}}(\bm{c}')=\varnothing$ whenever $\bm{c},\,\bm{c}'\in\mathcal{C}_{0}$ are distinct.
 For $r\in(0,\,a_{0})$, let $\psi_{r}\in C_{c}^{\infty}(\mathbb{R}^{d})$
be such that
\begin{equation}
\mathcal{X}_{B_{r}}\le\psi_{r}<\mathcal{X}_{B_{2r}}\,,\ \text{and}\ \|\nabla\psi_{r}\|_{L^{\infty}(B_{2r})}\le\frac{C^{(1)}}{r}\,,\label{e_smoothcut}
\end{equation}
for some constant $C^{(1)}>0$ independent of $r>0$. Define the localized test function
\[
{\color{blue}F_{r}(\bm{x})}:=\sum_{\bm{c}\in\mathcal{C}_{0}}\psi_{r}(\bm{x}-\bm{c})G_{\bm{c}}(\bm{x})\,.
\]

\smallskip
\noindent\textbf{Step 2.} Lower bound

By \eqref{e_e^f}, for all $r\in(0,\,a_{0})$,
\begin{equation}
\begin{aligned}\mathcal{I}_{\epsilon}(\mu_{\epsilon}) & \ge-\int_{\mathbb{R}^{d}}e^{-F_{r}(\bm{x})/\epsilon}\mathscr{L}_{\epsilon}e^{F_{r}(\bm{x})/\epsilon}d\mu_{\epsilon}\\
 & =\frac{1}{\epsilon}\int_{\mathbb{R}^{d}}\nabla F_{r}\cdot(\nabla U-\nabla F_{r})d\mu_{\epsilon}-\int_{\mathbb{R}^{d}}\Delta F_{r}d\mu_{\epsilon}\\
 & =\sum_{\bm{c}\in\mathcal{C}_{0}}\left\{ \frac{1}{\epsilon}\int_{B_{2r}(\bm{c})}\nabla F_{r}\cdot(\nabla U-\nabla F_{r})d\mu_{\epsilon}-\int_{B_{2r}(\bm{c})}\Delta F_{r}d\mu_{\epsilon}\right\} \,.
\end{aligned}
\label{e_414}
\end{equation}

Consider first the second term. Note that $\Delta F_{r}=\Delta\psi_{r}G_{\bm{c}}+2\nabla\psi_{r}\cdot\nabla G_{\bm{c}}+\psi_{r}\Delta G_{\bm{c}}$
is continuous on $B_{2r}(\bm{c})$. Since $\mu_{\epsilon}\to\omega(\bm{c})\delta_{\bm{c}}$
and $G_{\bm{c}}(\bm{c})=\nabla G_{\bm{c}}(\bm{c})=0$, it follows from \eqref{e_2nd_zeta} that
\begin{equation}
\lim_{\epsilon\to0}\int_{B_{2r}(\bm{c})}\Delta F_{r}d\mu_{\epsilon}=\omega(\bm{c})\Delta G_{\bm{c}}(\bm{c})=-\omega(\bm{c})\zeta(\bm{c})\,.\label{e_415}
\end{equation}

We turn to the first term. We claim that
\begin{equation}
\limsup_{r\to0}\liminf_{\epsilon\to0}\frac{1}{\epsilon}\int_{B_{2r}(\bm{c})}\left|\nabla F_{r}\cdot(\nabla U-\nabla F_{r})\right|d\mu_{\epsilon}=0\,.\label{e_pf_p_Gamma2-2}
\end{equation}

For $\bm{x}\in B_{2r}(\bm{c})$, $F_{r}(\bm{x})=\psi_{r}(\bm{x}-\bm{c})G_{\bm{c}}(\bm{x})$
and $\nabla F_{r}(\bm{x})=\nabla\psi_{r}(\bm{x}-\bm{c})G_{\bm{c}}(\bm{x})+\psi_{r}(\bm{x}-\bm{c})\nabla G_{\bm{c}}(\bm{x})$.
Also, there exists $C^{(2)}>0$ such that for all $\bm{x}\in B_{2r}(\bm{c})$,
\begin{equation}
|\nabla U(\bm{x})|,\,|\nabla G_{\bm{c}}(\bm{x})|\le C^{(2)}r\,,\ |G_{\bm{c}}(\bm{x})|\le C^{(2)}r^{2}\,.\label{e_pf_p_Gamma2-3}
\end{equation}
Therefore, by \eqref{e_smoothcut}, for $\bm{x}\in B_{2r}(\bm{c})$,
\[
|\nabla F_{r}(\bm{x})|\le C^{(3)}r\,,
\]
for some $C^{(3)}>0$.

The proof splits into two regions $B_{2r}(\bm{c})\setminus B_{r}(\bm{c})$ and $B_{r}(\bm{c})$.
First, we consider the integration on $B_{2r}(\bm{c})\setminus B_{r}(\bm{c})$.
By the previous observation, for $\bm{x}\in B_{2r}(\bm{c})\setminus B_{r}(\bm{c})$,
\[
\left|\nabla F_{r}\cdot(\nabla U-\nabla F_{r})\right|\le C^{(4)}r^{2}\,,
\]
for some $C^{(4)}>0$. Hence, for $R>R_{0}$,
\[
\frac{1}{\epsilon}\int_{B_{2r}(\bm{c})\setminus B_{r}(\bm{c})}\left|\nabla F_{r}\cdot(\nabla U-\nabla F_{r})\right|d\mu_{\epsilon}\le\frac{C^{(4)}r^{2}}{\epsilon}\mu_{\epsilon}(B_{R}\setminus B_{r}(\bm{c}))\,,
\]
so that by \eqref{e_413},
\begin{equation}
\limsup_{r\to0}\liminf_{\epsilon\to0}\frac{1}{\epsilon}\int_{B_{2r}(\bm{c})\setminus B_{r}(\bm{c})}\left|\nabla F_{r}\cdot(\nabla U-\nabla F_{r})\right|d\mu_{\epsilon}=0\,.\label{e_pf_p_Gamma2-4}
\end{equation}

We turn to the integration on $B_{r}(\bm{c})$. For $\bm{x}\in B_{r}(\bm{c})$,
since $F_{r}(\bm{x})=G_{\bm{c}}(\bm{x})$,
\[
\nabla F_{r}\cdot(\nabla U-\nabla F_{r})=\nabla G_{\bm{c}}\cdot(\nabla U-\nabla G_{\bm{c}})\,.
\]
By the Taylor expansion, there exists $C^{(5)}>0$ such that for $\bm{x}\in B_{r}(\bm{c})$,
\[
|\nabla U(\bm{x})-\mathbb{H}^{\bm{c}}\bm{x}|\le C^{(5)}|\bm{x}-\bm{c}|^{2}\,.
\]
Hence, by the definition of $G_{\bm{c}}$ and the previous bound,
\[
\nabla G_{\bm{c}}(\bm{x})\cdot\left(\nabla U(\bm{x})-\nabla G_{\bm{c}}(\bm{x})\right)=\nabla G_{\bm{c}}(\bm{x})\cdot\left(\mathbb{H}^{\bm{c}}\bm{x}-\nabla G_{\bm{c}}(\bm{x})\right)+R_{\epsilon}(\bm{x})
\]
where $R_{\epsilon}$ satisfies for some $C^{(6)}>0$,
\[
|R_{\epsilon}(\bm{x})|\le C^{(6)}|\bm{x}-\bm{c}|^{3}\,.
\]
By the definitions of $G_{\bm{c}}$, $\mathbb{H}^{\bm{c}}$, and $\widetilde{\mathbb{H}^{\bm{c}}}$,
\[
\nabla G_{\bm{c}}(\bm{x})\cdot\left(\mathbb{H}^{\bm{c}}\bm{x}-\nabla G_{\bm{c}}(\bm{x})\right)=\widetilde{\mathbb{H}^{\bm{c}}}\bm{x}\cdot(\mathbb{H}^{\bm{c}}\bm{x}-\widetilde{\mathbb{H}^{\bm{c}}}\bm{x})=0\,.
\]
Finally, by Corollary \ref{c_pre2},
\begin{equation}
\limsup_{r\to0}\liminf_{\epsilon\to0}\frac{1}{\epsilon}\int_{B_{r}(\bm{c})}\left|\nabla F_{r}\cdot(\nabla U-\nabla F_{r})\right|d\mu_{\epsilon}\le\limsup_{r\to0}\liminf_{\epsilon\to0}\frac{1}{\epsilon}\int_{B_{r}(\bm{c})}C^{(6)}|\bm{x}-\bm{c}|^{3}d\mu_{\epsilon}=0\,.\label{e_pf_p_Gamma2-5}
\end{equation}
Therefore, \eqref{e_pf_p_Gamma2-4} and \eqref{e_pf_p_Gamma2-5} prove
\eqref{e_pf_p_Gamma2-2}.

\smallskip
\noindent\textbf{Step 3.} Conclusion

Since $\mathcal{I}_{\epsilon}(\mu_{\epsilon})$
does not depend on $r\in(0,\,a_{0})$, by \eqref{e_414}-\eqref{e_pf_p_Gamma2-2},
\[
\begin{aligned}\liminf_{\epsilon\to0}\mathcal{I}_{\epsilon}(\mu_{\epsilon}) & \ge\sum_{\bm{c}\in\mathcal{C}_{0}}\omega(\bm{c})\zeta(\bm{c})+\liminf_{\epsilon\to0}\sum_{\bm{c}\in\mathcal{C}_{0}}\frac{1}{\epsilon}\int_{B_{2r}(\bm{c})}\nabla F_{r}\cdot(\nabla U-\nabla F_{r})\,d\mu_{\epsilon}\\
 & \ge\sum_{\bm{c}\in\mathcal{C}_{0}}\omega(\bm{c})\zeta(\bm{c})-\limsup_{r\to0}\liminf_{\epsilon\to0}\sum_{\bm{c}\in\mathcal{C}_{0}}\frac{1}{\epsilon}\int_{B_{2r}(\bm{c})}\left|\nabla F_{r}\cdot(\nabla U-\nabla F_{r})\right|d\mu_{\epsilon}\\
 & =\sum_{\bm{c}\in\mathcal{C}_{0}}\omega(\bm{c})\zeta(\bm{c})=\mathcal{J}^{(0)}(\mu)\,,
\end{aligned}
\]
which completes the proof.
\end{proof}

\subsubsection{$\Gamma-\limsup$}

The proof is based on the next elementary lemma.
\begin{lem}
\label{l_D1}Let $F\in C(\mathbb{R}^{d})\cap L^{1}(d\bm{x})$.
Let $(\varrho_{\epsilon}^{(1)})_{\epsilon>0}$ and
$(\varrho_{\epsilon}^{(2)})_{\epsilon>0}$
sequences of positive numbers such that $\varrho_{\epsilon}^{(1)}\prec1\prec\varrho_{\epsilon}^{(2)}$,
$\varrho_{\epsilon}^{(1)}\varrho_{\epsilon}^{(2)}\le1$. Then, for any
$f,\,g\in C(\mathbb{R}^{d})$,
\[
\lim_{\epsilon\to0}\int_{B_{\varrho_{\epsilon}^{(2)}}}F(\bm{x})\,g(\frac{\bm{x}}{\varrho_{\epsilon}^{(2)}})\,f(\varrho_{\epsilon}^{(1)}\bm{x})\,d\bm{x}\,=\,g(\bm{0})\,f(\bm{0})\,\int_{\mathbb{R}^{d}}F(\bm{x})\,d\bm{x}\,.
\]
\end{lem}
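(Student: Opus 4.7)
The plan is to prove this by the dominated convergence theorem, after rewriting the integral as one over all of $\mathbb{R}^d$ by inserting the characteristic function $\chi_{B_{\varrho_\epsilon^{(2)}}}$. The two key features are that (i) the arguments $\bm{x}/\varrho_\epsilon^{(2)}$ and $\varrho_\epsilon^{(1)}\bm{x}$ both tend to $\bm{0}$ pointwise in $\bm{x}$, so continuity of $g,f$ gives the correct pointwise limit, and (ii) the constraint $\varrho_\epsilon^{(1)}\varrho_\epsilon^{(2)}\le 1$ ensures the two rescaled arguments stay inside a fixed compact set throughout the range of integration, providing a uniform bound.

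First, I would fix $\bm{x}\in\mathbb{R}^d$. Since $\varrho_\epsilon^{(2)}\to\infty$, we have $\bm{x}\in B_{\varrho_\epsilon^{(2)}}$ for all small $\epsilon$, and both $\bm{x}/\varrho_\epsilon^{(2)}\to\bm{0}$ and $\varrho_\epsilon^{(1)}\bm{x}\to\bm{0}$. By continuity of $f$ and $g$, the integrand converges pointwise to $F(\bm{x})\,g(\bm{0})\,f(\bm{0})$.

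Next, I would establish the integrable dominating function. For $\bm{x}\in B_{\varrho_\epsilon^{(2)}}$, clearly $\bm{x}/\varrho_\epsilon^{(2)}\in B_1$, so $|g(\bm{x}/\varrho_\epsilon^{(2)})|\le\|g\|_{L^\infty(B_1)}$. Moreover, using the hypothesis $\varrho_\epsilon^{(1)}\varrho_\epsilon^{(2)}\le 1$,
\[
|\varrho_\epsilon^{(1)}\bm{x}|\;\le\;\varrho_\epsilon^{(1)}\,\varrho_\epsilon^{(2)}\;\le\;1\,,
\]
so that $\varrho_\epsilon^{(1)}\bm{x}\in B_1$ and $|f(\varrho_\epsilon^{(1)}\bm{x})|\le\|f\|_{L^\infty(B_1)}$. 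Both norms are finite by continuity. Hence the integrand is bounded, uniformly in $\epsilon$, by
\[
\|g\|_{L^\infty(B_1)}\,\|f\|_{L^\infty(B_1)}\,|F(\bm{x})|\,,
\]
which lies in $L^1(d\bm{x})$ since $F\in L^1$.

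The dominated convergence theorem then yields the claim. There is no serious obstacle here; the only structural point worth highlighting is that the assumption $\varrho_\epsilon^{(1)}\varrho_\epsilon^{(2)}\le 1$ is exactly what permits the dominating function above to be chosen without any decay hypothesis on $f$ at infinity. Without it, one would need either a global sup bound on $f$ or an additional decay assumption on $F$ to compensate.
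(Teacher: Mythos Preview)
Your proof is correct and actually more streamlined than the paper's. The paper introduces an auxiliary scale $a_\epsilon$ with $1\prec a_\epsilon\prec\varrho_\epsilon^{(2)}$, splits the integral into $B_{a_\epsilon}$ and $B_{\varrho_\epsilon^{(2)}}\setminus B_{a_\epsilon}$, and handles each piece by hand: on the inner ball it uses uniform continuity of $f,g$ at the origin (an explicit $\eta$--$\gamma$ argument), and on the annulus it uses the $L^1$ tail of $F$ together with the same sup-bounds on $B_1$ that you derived. Your route replaces this two-region splitting by a single application of the dominated convergence theorem, which is possible precisely because the bound $\|g\|_{L^\infty(B_1)}\|f\|_{L^\infty(B_1)}|F|$ is uniform in $\epsilon$ and integrable. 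The paper's approach has the minor advantage of making the rate of convergence more visible (the error is controlled by $\int_{|\bm{x}|\ge a_\epsilon}|F|$ plus an $O(\eta)$ term), but for the bare limit statement your argument is cleaner and loses nothing.
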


\begin{proof}
Let $(a_{\epsilon})_{\epsilon>0}$ be a sequence of positive numbers
satisfying $1\prec a_{\epsilon}\prec\varrho_{\epsilon}^{(2)}$ so
that $a_{\epsilon}\varrho_{\epsilon}^{(1)}\prec1$. Then,
\begin{equation}
\begin{aligned} & \left|\int_{B_{\varrho_{\epsilon}^{(2)}}}F(\bm{x})\,g(\frac{\bm{x}}{\varrho_{\epsilon}^{(2)}})\,f(\varrho_{\epsilon}^{(1)}\bm{x})\,d\bm{x}-g(\bm{0})\,f(\bm{0})\,\int_{\mathbb{R}^{d}}F(\bm{x})\,d\bm{x}\right|\\
 & \le\left|\int_{B_{a_{\epsilon}}}F(\bm{x})\,g(\frac{\bm{x}}{\varrho_{\epsilon}^{(2)}})\,f(\varrho_{\epsilon}^{(1)}\bm{x})\,d\bm{x}-g(\bm{0})\,f(\bm{0})\,\int_{\mathbb{R}^{d}}F(\bm{x})\,d\bm{x}\right|\\
 & \ \ \ +\int_{B_{\varrho_{\epsilon}^{(2)}}\setminus B_{a_{\epsilon}}}F(\bm{x})\,g(\frac{\bm{x}}{\varrho_{\epsilon}^{(2)}})\,f(\varrho_{\epsilon}^{(1)}\bm{x})\,d\bm{x}\,.
\end{aligned}
\label{e_l_D1-1}
\end{equation}
Since $\varrho_{\epsilon}^{(1)}\varrho_{\epsilon}^{(2)}\le1$, and
$f,\,g$ are bounded in $B_{1}$, there exists a constant $C_{1}>0$ such that
\begin{equation}
\left|\int_{B_{\varrho_{\epsilon}^{(2)}}\setminus B_{a_{\epsilon}}}F(\bm{x})\,g(\frac{\bm{x}}{\varrho_{\epsilon}^{(2)}})\,f(\varrho_{\epsilon}^{(1)}\bm{x})d\bm{x}\right|\le C_{1}\int_{B_{\varrho_{\epsilon}^{(2)}}\setminus B_{a_{\epsilon}}}|F(\bm{x})|\,d\bm{x}\,.\label{e_l_D1-2}
\end{equation}
This expression converges to zero as $\epsilon\to0$ because $F\in L^{1}(d\bm{x})$.

We turn to the first term of the right-hand side of \eqref{e_l_D1-1}.
Fix $\eta>0$. By continuity, there exists $\gamma>0$ such that
\[
\bm{x}\in B_{\gamma}\ \Rightarrow\ |g(\bm{x})-g(\bm{0})|\,,\ |f(\bm{x})-f(\bm{0})|\,\le\,\eta\,.
\]
 Fix $\epsilon_{1}>0$ such that for all $\epsilon\in(0,\,\epsilon_{1})$,
$a_{\epsilon}/\varrho_{\epsilon}^{(2)}<\gamma$ and $a_{\epsilon}\varrho_{\epsilon}^{(1)}<\gamma$.
Then, for all $\epsilon\in(0,\,\epsilon_{1})$,
\[
\bm{x}\in B_{a_{\epsilon}}\ \Rightarrow\ |\,g(\frac{\bm{x}}{\varrho_{\epsilon}^{(2)}})-g(\bm{0})\,|\,,\ |\,f(\varrho_{\epsilon}^{(1)}\bm{x})-f(\bm{0})\,|\,\le\,\eta\,.
\]
Therefore, there exists a constant $C_{2}>0$ such that for $\epsilon\in(0,\,\epsilon_{1})$,
\begin{equation}
\begin{aligned} & \left|\int_{B_{a_{\epsilon}}}F(\bm{x})\,g(\frac{\bm{x}}{\varrho_{\epsilon}^{(2)}})\,f(\varrho_{\epsilon}^{(1)}\bm{x})\,d\bm{x}-g(\bm{0})\,f(\bm{0})\,\int_{\mathbb{R}^{d}}F(\bm{x})\,d\bm{x}\right|\\
 & \le\left|g(\bm{0})\,f(\bm{0})\right|\left|\int_{B_{a_{\epsilon}}}F(\bm{x})\,d\bm{x}-\int_{\mathbb{R}^{d}}F(\bm{x})\,d\bm{x}\right|+C_{2}(\eta+\eta^{2})\int_{B_{a_{\epsilon}}}|\,F(\bm{x})\,|\,d\bm{x}\,.
\end{aligned}
\label{e_l_D1-3}
\end{equation}
 Since $F\in L^{1}(d\bm{x})$, by \eqref{e_l_D1-1}--\eqref{e_l_D1-3},
\[
\begin{aligned} & \limsup_{\epsilon\to0}\left|\int_{B_{a_{\epsilon}}}F(\bm{x})\,g(\frac{\bm{x}}{\varrho_{\epsilon}^{(2)}})\,f(\varrho_{\epsilon}^{(1)}\bm{x})\,d\bm{x}-g(\bm{0})\,f(\bm{0})\,\int_{\mathbb{R}^{d}}F(\bm{x})\,d\bm{x}\right|\\
 & \ \ \ \le C_{2}(\eta+\eta^{2})\int_{\mathbb{R}^{d}}|\,F(\bm{x})\,|\,d\bm{x}\,.
\end{aligned}
\]
As $\eta>0$ can be arbitrarily small and $F\in L^{1}(d\bm{x})$,
the proof is complete.
\end{proof}

\begin{cor}
\label{c_D1}Let $\mathbb{A}\in\mathbb{R}^{d\times d}$ be a positive-definite
symmetric matrix and let $\delta=\delta(\epsilon)$ satisfy $\epsilon^{1/2}\prec\delta\le1$.
Then,
\begin{enumerate}
\item For all $f,\,g\in C(\mathbb{R}^{d})$,
\[
\lim_{\epsilon\to0}\,\frac{1}{(2\pi\epsilon)^{d/2}}\,\int_{B_{\delta}}e^{-\frac{1}{2\epsilon}\bm{x}\cdot\mathbb{A}\bm{x}}\,g(\frac{\bm{x}}{\delta})\,f(\bm{x})\,d\bm{x}\,=\,\frac{g(\bm{0})\,f(\bm{0})}{\sqrt{\det\mathbb{A}}}\,.
\]
\item For all nonnegative-definite
symmetric matrix $\mathbb{B}\in\mathbb{R}^{d\times d}$ and $g\in C(\mathbb{R}^d)$,
\[
\lim_{\epsilon\to0}\,\frac{1}{\epsilon(2\pi\epsilon)^{d/2}}\,\int_{B_{\delta}}e^{-\frac{1}{2\epsilon}\bm{x}\cdot\mathbb{A}\bm{x}}\,g(\frac{\bm{x}}{\delta})\,\bm{x}\cdot\mathbb{B}\bm{x}\,d\bm{x}\,=\,\frac{g(\bm{0})\,{\rm Tr}(\mathbb{B}\mathbb{A}^{-1})}{\sqrt{\det\mathbb{A}}}\,.
\]
\end{enumerate}
\end{cor}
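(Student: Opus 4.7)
The plan is to reduce both assertions directly to Lemma \ref{l_D1} via the Gaussian rescaling $\bm{x} = \sqrt{\epsilon}\,\bm{y}$. This change of variables normalizes the Gaussian weight so that the $\epsilon$-dependence is pushed entirely into the prefactor of $f$ and $g$, and into the enlarged domain of integration, thereby fitting the form treated by the lemma. Concretely I would set $\varrho_{\epsilon}^{(1)} := \sqrt{\epsilon}$ and $\varrho_{\epsilon}^{(2)} := \delta/\sqrt{\epsilon}$. The hypothesis $\sqrt{\epsilon} \prec \delta \le 1$ gives precisely $\varrho_{\epsilon}^{(1)} \prec 1 \prec \varrho_{\epsilon}^{(2)}$ together with $\varrho_{\epsilon}^{(1)} \varrho_{\epsilon}^{(2)} = \delta \le 1$, so that the hypotheses of Lemma \ref{l_D1} are satisfied.

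For part (1), the substitution $\bm{x} = \sqrt{\epsilon}\bm{y}$ produces a Jacobian $\epsilon^{d/2}$, which cancels the $(2\pi\epsilon)^{d/2}$ against $(2\pi)^{d/2}$, transforms the domain $B_{\delta}$ into $B_{\delta/\sqrt{\epsilon}} = B_{\varrho_\epsilon^{(2)}}$, and rewrites $g(\bm{x}/\delta) = g(\bm{y}/\varrho_\epsilon^{(2)})$ and $f(\bm{x}) = f(\varrho_\epsilon^{(1)} \bm{y})$. Applying Lemma \ref{l_D1} with $F(\bm{y}) := e^{-\frac{1}{2}\bm{y}\cdot \mathbb{A}\bm{y}}$, which lies in $C(\mathbb{R}^d)\cap L^{1}(d\bm{y})$ because $\mathbb{A}$ is positive-definite, and using the standard Gaussian identity $\int_{\mathbb{R}^d} e^{-\frac{1}{2}\bm{y}\cdot \mathbb{A}\bm{y}} d\bm{y} = (2\pi)^{d/2}/\sqrt{\det \mathbb{A}}$, yields the claimed limit.

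For part (2), the same substitution contributes an extra factor $\epsilon$ from $\bm{x}\cdot \mathbb{B}\bm{x} = \epsilon\,\bm{y}\cdot \mathbb{B}\bm{y}$, which cancels the $1/\epsilon$ in the prefactor, again leaving a clean integral over $B_{\varrho_\epsilon^{(2)}}$. I would then invoke Lemma \ref{l_D1} with the choice $F(\bm{y}) := e^{-\frac{1}{2}\bm{y}\cdot \mathbb{A}\bm{y}}\,\bm{y}\cdot \mathbb{B}\bm{y}$, which is continuous and integrable thanks to the Gaussian decay, and with $f \equiv 1$. What remains is the second-moment identity
\[
\frac{1}{(2\pi)^{d/2}}\int_{\mathbb{R}^d} e^{-\frac{1}{2}\bm{y}\cdot \mathbb{A}\bm{y}}\, \bm{y}\cdot \mathbb{B}\bm{y}\, d\bm{y} \,=\, \frac{\mathrm{Tr}(\mathbb{B}\mathbb{A}^{-1})}{\sqrt{\det \mathbb{A}}}\,,
\]
which is the standard expectation of the quadratic form $\bm{Y}\cdot \mathbb{B}\bm{Y}$ under the centred Gaussian $\bm{Y} \sim N(\bm{0}, \mathbb{A}^{-1})$; it is obtained immediately by simultaneously diagonalizing the symmetric matrix $\mathbb{B}$ in the inner product defined by $\mathbb{A}$, or equivalently by a direct change of variables that reduces $\mathbb{A}$ to the identity.

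The argument is essentially bookkeeping: no step presents a genuine obstacle. The only care needed is in matching the powers of $\epsilon$ on both sides and in verifying that, after rescaling, the hypothesis $\sqrt{\epsilon}\prec\delta\le 1$ delivers exactly the two speeds required by Lemma \ref{l_D1}.
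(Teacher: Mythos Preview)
Your proposal is correct and follows essentially the same approach as the paper: the same substitution $\bm{x}=\sqrt{\epsilon}\,\bm{y}$, the same identification $\varrho_\epsilon^{(1)}=\sqrt{\epsilon}$, $\varrho_\epsilon^{(2)}=\delta/\sqrt{\epsilon}$, and the same applications of Lemma~\ref{l_D1} with $F(\bm{y})=e^{-\frac{1}{2}\bm{y}\cdot\mathbb{A}\bm{y}}$ for part~(1) and $F(\bm{y})=e^{-\frac{1}{2}\bm{y}\cdot\mathbb{A}\bm{y}}\,\bm{y}\cdot\mathbb{B}\bm{y}$ with $f\equiv 1$ for part~(2), concluding via the Gaussian second-moment identity $\mathbb{E}[\bm{Y}\cdot\mathbb{B}\bm{Y}]=\mathrm{Tr}(\mathbb{B}\mathbb{A}^{-1})$.
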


\begin{proof}
By the change of variables $\bm{x}=\sqrt{\epsilon}\bm{y}$,
\[
\begin{aligned}\frac{1}{(2\pi\epsilon)^{d/2}}\,\int_{B_{\delta}}e^{-\frac{1}{2\epsilon}\bm{x}\cdot\mathbb{A}\bm{x}}\,g(\frac{\bm{x}}{\delta})\,f(\bm{x})\,d\bm{x} & =\,\frac{1}{(2\pi)^{d/2}}\,\int_{B_{\delta/\sqrt{\epsilon}}}e^{-\frac{1}{2}\bm{y}\cdot\mathbb{A}\bm{y}}\,g(\frac{\sqrt{\epsilon}\bm{y}}{\delta})\,f(\sqrt{\epsilon}\bm{y})\,d\bm{y}\,,\\
\frac{1}{\epsilon(2\pi\epsilon)^{d/2}}\,\int_{B_{\delta}}e^{-\frac{1}{2\epsilon}\bm{x}\cdot\mathbb{A}\bm{x}}\,g(\frac{\bm{x}}{\delta})\,\bm{x}\cdot\mathbb{B}\bm{x}\,d\bm{x} & =\,\frac{1}{(2\pi)^{d/2}}\,\int_{B_{\delta/\sqrt{\epsilon}}}e^{-\frac{1}{2}\bm{y}\cdot\mathbb{A}\bm{y}}\,g(\frac{\sqrt{\epsilon}\bm{y}}{\delta})\,\bm{y}\cdot\mathbb{B}\bm{y}\,d\bm{y}\,.
\end{aligned}
\]
The first identity implies the first assertion by Lemma \ref{l_D1} with $\varrho_{\epsilon}^{(1)}=\sqrt{\epsilon}$,
$\varrho_{\epsilon}^{(2)}=\delta/\sqrt{\epsilon}$, and $F(\bm{x})=(2\pi)^{-d/2}e^{-\frac{1}{2}\bm{x}\cdot\mathbb{A}\bm{x}}$. 

We turn to the second assertion. Let $X$ be a centered Gaussian random vector with
covariance matrix $\mathbb{A}^{-1}$. Then, $\mathbb{E}[X\cdot\mathbb{B}X]={\rm Tr}(\mathbb{B}\mathbb{A}^{-1})$
so that 
\[
\lim_{\epsilon\to0}\,\frac{1}{(2\pi)^{d/2}}\,\int_{B_{\delta/\sqrt{\epsilon}}}e^{-\frac{1}{2}\bm{y}\cdot\mathbb{A}\bm{y}}\,\bm{y}\cdot\mathbb{B}\bm{y}\,d\bm{y}\,=\,\frac{\mathbb{E}[X\cdot\mathbb{B}X]}{\sqrt{\det\mathbb{A}}}\,=\,\frac{{\rm Tr}(\mathbb{B}\mathbb{A}^{-1})}{\sqrt{\det\mathbb{A}}}\,.
\]
Therefore, the second assertion follows from Lemma \ref{l_D1}
with $\varrho_{\epsilon}^{(1)}=\sqrt{\epsilon}$, $\varrho_{\epsilon}^{(2)}=\delta/\sqrt{\epsilon}$,
and $F(\bm{x})=(2\pi)^{-d/2}e^{-\frac{1}{2}\bm{x}\cdot\mathbb{A}\bm{x}}\,\bm{x}\cdot\mathbb{B}\bm{x}$,
and $f\equiv1$.
\end{proof}

Let $h\in C^{1}(\mathbb{R}^{d})$ satisfy $\nabla h(\bm{0})=\bm{0}$.
Applying Corollary \ref{c_D1}-(1), by letting $f\equiv1$
and $g(\bm{x})=|\nabla h(\bm{x})|^{2}$ gives
\begin{equation}
\lim_{\epsilon\to0}\,\frac{\epsilon}{\delta^{2}(2\pi\epsilon)^{d/2}}\,\int_{B_{\delta}}e^{-\frac{1}{2\epsilon}\bm{x}\cdot\mathbb{A}\bm{x}}\,|\nabla h(\frac{\bm{x}}{\delta})|^{2}\,d\bm{x}\,=\,0\,,\label{e_c_D1}
\end{equation}
since $\lim_{\epsilon\to0}\epsilon/\delta^{2}=0$.

We now proceed to the proof of the $\Gamma$-limsup in Proposition \ref{p_Gamma}-(3).

\begin{proof}[Proof of $\Gamma-\limsup$ for Proposition \ref{p_Gamma}-(3)]
By convexity of $\mathcal{I}_{\epsilon}$ and linearity of
$\mathcal{J}^{(0)}$, it suffices to prove the $\Gamma-\limsup$ for
$\mu=\delta_{\bm{c}}$, $\bm{c}\in\mathcal{C}_{0}$. Without loss of generality,
assume that $\bm{c}=\bm{0}$ and $U(\bm{0})=0$. We divde the proof in three steps.

\smallskip
\noindent\textbf{Step 1.} Construction of measures
\smallskip

As in the proof
of the $\Gamma-\liminf$, we can write
\begin{equation}
\nabla^{2}U(\bm{0})=\mathbb{U}\mathbb{D}(\mathbb{U})^{-1}\label{e_422-1}
\end{equation}
for some unitary matrix \textcolor{blue}{$\mathbb{U}$} and diagonal matrix
\[
{\color{blue}\mathbb{D}}:={\rm diag}(\lambda_{1},\,\dots,\,\lambda_{d})
\]
were $\lambda_{1},\,\dots,\,\lambda_{d}$ are eigenvalues of $\nabla^{2}U(\bm{0})$.
Define the diagonal matrix \textcolor{blue}{$\widetilde{\mathbb{D}}$} as
\[
\widetilde{\mathbb{D}}:={\rm diag}(\Lambda_{1},\,\dots,\,\Lambda_{d})\,,
\]
where ${\color{blue}\Lambda_{i}}:=\min\{\lambda_{i},\,0\}$. Let ${\color{blue}G}\in C(\mathbb{R}^{d})$
be given by
\[
G(\bm{x})=\bm{x}\cdot\widetilde{\mathbb{H}}\bm{x}\,,
\]
where
\begin{equation}
{\color{blue}\widetilde{\mathbb{H}}}:=\mathbb{U}\widetilde{\mathbb{D}}(\mathbb{U})^{-1}\,.\label{e_422-2}
\end{equation}

Let ${\color{blue}\varphi}\in C_{c}^{\infty}(\mathbb{R}^{d})$ be such that
\[
\mathcal{X}_{B_{1/2}}\le\varphi\le\mathcal{X}_{B_{1}}\,,
\]
and define ${\color{blue}\varphi_{\epsilon}(\bm{x})}:=\varphi(\bm{x}/\delta)$ for
some $\epsilon^{1/2}\prec\delta=\delta(\epsilon)\prec\epsilon^{1/3}$.
Clearly,
\[
\mathcal{X}_{B_{\sqrt{\epsilon}}}\le\mathcal{X}_{B_{\delta/2}}\le\varphi_{\epsilon}\le\mathcal{X}_{B_{\delta}}\,.
\]
 Let
\[
{\color{blue}g_{\epsilon}(\bm{x})}:=e^{\frac{1}{2\epsilon}G(\bm{x})}\varphi_{\epsilon}(\bm{x})\,.
\]
For $\epsilon>0$, define probability measures
\[
{\color{blue}\mu_{\epsilon}(d\bm{x})}\,:=\,\frac{1}{A_{\epsilon}}\left(g_{\epsilon}(\bm{x})\right)^{2}d\pi_{\epsilon}(d\bm{x})
\]
where
\[
A_{\epsilon}\,:=\,\int_{\mathbb{R}^{d}}\,g_{\epsilon}^{2}\,d\pi_{\epsilon}\,.
\]

\smallskip
\noindent\textbf{Step 2.} Weak convergence of sequence of meausres
\smallskip

By the Taylor expansion, for $\bm{x}\in B_{\delta}$,
\[
U(\bm{x})=U(\bm{0})+\nabla U(\bm{0})\cdot\bm{x}+\frac{1}{2}\bm{x}\cdot\nabla U^{2}(\bm{0})\bm{x}+O(\delta^{3})\,.
\]
As $U(\bm{0})=\nabla U(\bm{0})=0$,
\[
\begin{aligned}\exp\left\{ \frac{1}{\epsilon}(-U(\bm{x})+\bm{x}\cdot\widetilde{\mathbb{H}}\bm{x})\right\}  & =\exp\left\{ \frac{1}{\epsilon}\left[-\frac{1}{2}\bm{x}\cdot\nabla U^{2}(\bm{0})\bm{x}+\bm{x}\cdot\widetilde{\mathbb{H}}\bm{x}+O(\delta^{3})\right]\right\} \\
 & =\exp\left\{ -\frac{1}{2\epsilon}\bm{x}\cdot(\nabla U^{2}(\bm{0})-2\widetilde{\mathbb{H}})\bm{x}+O(\frac{\delta^{3}}{\epsilon})\right\} \,.
\end{aligned}
\]
Since $e^{x}=1+O(x)$ as $x\to0$ and $\delta^{3}/\epsilon\prec1$,
for $\bm{x}\in B_{\delta}$,
\begin{equation}
\exp\left\{ \frac{1}{\epsilon}(-U(\bm{x})+\bm{x}\cdot\widetilde{\mathbb{H}}\bm{x})\right\} =\exp\left\{ -\frac{1}{2\epsilon}\bm{x}\cdot(\nabla U^{2}(\bm{0})-2\widetilde{\mathbb{H}})\bm{x}\right\} \left(1+O(\frac{\delta^{3}}{\epsilon})\right)\,.\label{e_422-3}
\end{equation}
Since $\varphi(\bm{0})=1$ and the matrix $\nabla U^{2}(\bm{0})-2\widetilde{\mathbb{H}}=\mathbb{U}(\mathbb{D}-2\widetilde{\mathbb{D}})(\mathbb{U})^{-1}$
is positive-definite, by the first assertion of Corollary \ref{c_D1},
for all $f\in C(\mathbb{R}^{d})$,
\[
\lim_{\epsilon\to0}\int_{\mathbb{R}^{d}}fd\mu_{\epsilon}=\lim_{\epsilon\to0}\frac{\int_{B_{\delta}}\exp\left\{ -\frac{1}{2\epsilon}\bm{x}\cdot(\nabla U^{2}(\bm{0})-2\widetilde{\mathbb{H}})\bm{x}\right\} (\varphi_{\epsilon}(\bm{x}))^{2}f(\bm{x})d\bm{x}}{\int_{B_{\delta}}\exp\left\{ -\frac{1}{2\epsilon}\bm{x}\cdot(\nabla U^{2}(\bm{0})-2\widetilde{\mathbb{H}})\bm{x}\right\} (\varphi_{\epsilon}(\bm{x}))^{2}d\bm{x}}=f(\bm{0})\,,
\]
so that $\mu_{\epsilon}\to\delta_{\bm{0}}$ as $\epsilon\to0$.

\smallskip
\noindent\textbf{Step 3.} $\Gamma-\limsup$ inequality
\smallskip

By definition of $g_{\epsilon}$ and $\widetilde{\mathbb{H}}$,
\[
\begin{aligned}\nabla g_{\epsilon}(\bm{x}) & =\frac{1}{2\epsilon}e^{\frac{1}{2\epsilon}G(\bm{x})}\varphi_{\epsilon}(\bm{x})\nabla G(\bm{x})+e^{\frac{1}{2\epsilon}G(\bm{x})}\nabla\varphi_{\epsilon}(\bm{x})\\
 & =\frac{1}{\epsilon}e^{\frac{1}{2\epsilon}G(\bm{x})}\varphi_{\epsilon}(\bm{x})\widetilde{\mathbb{H}}\bm{x}+e^{\frac{1}{2\epsilon}G(\bm{x})}\nabla\varphi_{\epsilon}(\bm{x})\,,
\end{aligned}
\]
so that by \eqref{e_func_max},
\[
\begin{aligned}\mathcal{I}_{\epsilon}(\mu_{\epsilon}) & =\epsilon\int_{\mathbb{R}^{d}}\left|\nabla\sqrt{\frac{d\mu_{\epsilon}}{d\pi_{\epsilon}}}\right|^{2}d\pi_{\epsilon}\\
 & =\frac{\epsilon}{A_{\epsilon}}\int_{\mathbb{R}^{d}}|\nabla g_{\epsilon}|^{2}d\pi_{\epsilon}\\
 & =\Phi_{\epsilon}^{(1)}+\Phi_{\epsilon}^{(2)}+\Phi_{\epsilon}^{(3)}
\end{aligned}
\]
where
\[
\begin{aligned}\Phi_{\epsilon}^{(1)} & =\frac{1}{\epsilon A_{\epsilon}}\int_{\mathbb{R}^{d}}e^{\frac{1}{\epsilon}G(\bm{x})}|\varphi_{\epsilon}(\bm{x})|^{2}|\widetilde{\mathbb{H}}\bm{x}|^{2}\pi_{\epsilon}(d\bm{x})\,,\\
\Phi_{\epsilon}^{(2)} & =\frac{\epsilon}{A_{\epsilon}}\int_{\mathbb{R}^{d}}e^{\frac{1}{\epsilon}G(\bm{x})}|\nabla\varphi_{\epsilon}(\bm{x})|^{2}\pi_{\epsilon}(d\bm{x})\,,\\
\Phi_{\epsilon}^{(3)} & =\frac{2}{A_{\epsilon}}\int_{\mathbb{R}^{d}}e^{\frac{1}{\epsilon}G(\bm{x})}\varphi_{\epsilon}(\bm{x})\nabla\varphi_{\epsilon}(\bm{x})\cdot\widetilde{\mathbb{H}}\bm{x}\,\pi_{\epsilon}(d\bm{x})\,.
\end{aligned}
\]
By \eqref{e_422-3} and Corollary \ref{c_D1},
\[
\begin{aligned}\lim_{\epsilon\to0}\Phi_{\epsilon}^{(1)} & =\lim_{\epsilon\to0}\frac{\int_{B_{\delta}}\exp\left\{ -\frac{1}{2\epsilon}\bm{x}\cdot(\nabla U^{2}(\bm{0})-2\widetilde{\mathbb{H}})\bm{x}\right\} (\varphi_{\epsilon}(\bm{x}))^{2}\bm{x}\cdot(\widetilde{\mathbb{H}})^{2}\bm{x}d\bm{x}}{\epsilon\int_{B_{\delta}}\exp\left\{ -\frac{1}{2\epsilon}\bm{x}\cdot(\nabla U^{2}(\bm{0})-2\widetilde{\mathbb{H}})\bm{x}\right\} (\varphi_{\epsilon}(\bm{x}))^{2}d\bm{x}}\\
 & ={\rm Tr}\left((\widetilde{\mathbb{H}})^{2}(\nabla U^{2}(\bm{0})-2\widetilde{\mathbb{H}})^{-1}\right)\,.
\end{aligned}
\]
Using \eqref{e_422-1} and \eqref{e_422-2}, this equals to
\[
\lim_{\epsilon\to0}\Phi_{\epsilon}^{(1)}={\rm Tr}\left((\widetilde{\mathbb{D}})^{2}(\mathbb{D}-2\widetilde{\mathbb{D}})^{-1}\right)=\zeta(\bm{0})=\mathcal{J}^{(0)}(\delta_{\bm{0}})\,.
\]
For the second term $\Phi_\epsilon^{(2)}$, \eqref{e_422-3} and \eqref{e_c_D1} give
\[
\lim_{\epsilon\to0}\Phi_{\epsilon}^{(2)}=\lim_{\epsilon\to0}\frac{\epsilon}{\delta^{2}A_{\epsilon}}\int_{\mathbb{R}^{d}}e^{\frac{1}{\epsilon}G(\bm{x})}|\nabla\varphi(\frac{\bm{x}}{\delta})|^{2}\pi_{\epsilon}(d\bm{x})=0\,.
\]
Finally, the H\"oder's inequality, together with the previous estimate, implies
$\lim_{\epsilon\to0}\Phi_{\epsilon}^{(3)}=0$. Thus,
\[
\lim_{\epsilon\to0}\mathcal{I}(\mu_{\epsilon})=\mathcal{J}^{(0)}(\delta_{\bm{0}}) \,,
\]
which completes the proof
\end{proof}

\section{\label{sec_meta}Metastable scale}

In this section, we prove Proposition \ref{p_Gamma}-(4), namely the $\Gamma$-convergences
at the metastable scales $\theta_{\epsilon}^{(p)}$, $p\in\llbracket1,\,\mathfrak{q}\rrbracket$. 

\subsection{$\Gamma-\liminf$}

Our approach to the $\Gamma-\liminf$ is based on the resolvent approach developed in \cite{LMS-res}.

\subsubsection{Resolvent equation}

For $\lambda>0$, $p\in\llbracket1,\,\mathfrak{q}\rrbracket$, and
$\boldsymbol{g}\colon\mathscr{V}^{(p)}\rightarrow\mathbb{R}$,
Proposition \ref{p_gen}-(1) ensures there exists a unique solution ${\color{blue}F_{\epsilon}=F_{\epsilon}^{p,\boldsymbol{g},\lambda}}\in D(\mathscr{L}_{\epsilon})\subset L^{2}(d\pi_{\epsilon})$
to the resolvent equation 
\begin{equation}
\left(\lambda-\theta_{\epsilon}^{(p)}\mathscr{L}_{\epsilon}\right)\,F_{\epsilon}\,=\,\sum_{\mathcal{M}\in\mathscr{V}^{(p)}}\boldsymbol{g}(\mathcal{M})\,\chi_{_{\mathcal{E}(\mathcal{M})}}\,.\label{e_res}
\end{equation}

The following theorem, due to \cite{LLS-1st,LLS-2nd}, provides the asymptotic behavior of $F_{\epsilon}$.
\begin{thm}[{\cite[Theorem 2.14]{LLS-2nd}}]
\label{t_res}Fix a constant $\lambda>0$, $p\in\llbracket1,\,\mathfrak{q}\rrbracket$
and $\boldsymbol{g}\colon\mathscr{V}^{(p)}\rightarrow\mathbb{R}$.
Then, for all $\mathcal{M}\in\mathscr{V}^{(p)}$, the solution $F_{\epsilon}$
to the resolvent equation \eqref{e_res} satisfies 
\[
\lim_{\epsilon\rightarrow0}\,\sup_{\boldsymbol{x}\in\mathcal{E}(\mathcal{M})}\,\bigg|\,F_{\epsilon}(\boldsymbol{x})\,-\,\boldsymbol{f}(\mathcal{M})\,\bigg|\ =\ 0\;,
\]
where $\boldsymbol{f}:\mathscr{V}^{(p)}\rightarrow\mathbb{R}$ denotes
the unique solution of the reduced resolvent equation 
\begin{equation}
\left(\lambda-\mathfrak{L}^{(p)}\right)\,\boldsymbol{f}\ =\ \boldsymbol{g}\;.\label{eq:res_y}
\end{equation}
\end{thm}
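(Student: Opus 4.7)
The plan is to follow the resolvent-based approach developed in \cite{LMS-res} and refined in \cite{LLS-2nd}. The strategy has three pillars: uniform control on $F_\epsilon$, asymptotic constancy of $F_\epsilon$ on each valley $\mathcal{E}(\mathcal{M})$, and identification of the limiting constants via the reduced generator $\mathfrak{L}^{(p)}$.

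First, I would establish two a priori bounds. The Feynman--Kac representation
\[
F_\epsilon(\boldsymbol{x}) \,=\, \mathbb{E}_{\boldsymbol{x}}^{\epsilon}\!\left[ \int_0^\infty e^{-\lambda t}\,\sum_{\mathcal{M}\in\mathscr{V}^{(p)}}\boldsymbol{g}(\mathcal{M})\,\chi_{\mathcal{E}(\mathcal{M})}\!\big(\boldsymbol{x}_\epsilon(\theta_\epsilon^{(p)} t)\big)\,dt\right]
\]
immediately gives $\|F_\epsilon\|_\infty \le \|\boldsymbol{g}\|_\infty/\lambda$. Multiplying \eqref{e_res} by $F_\epsilon$, integrating against $\pi_\epsilon$, and using the reversibility identity from \eqref{e_func_max}, namely $\int F_\epsilon(-\mathscr{L}_\epsilon F_\epsilon)\,d\pi_\epsilon = \epsilon \int |\nabla F_\epsilon|^2\,d\pi_\epsilon$, yields the energy estimate
\[
\lambda\int F_\epsilon^2 \,d\pi_\epsilon \,+\, \theta_\epsilon^{(p)}\,\epsilon \int |\nabla F_\epsilon|^2 \,d\pi_\epsilon \;\le\; \|\boldsymbol{g}\|_\infty\,\|F_\epsilon\|_\infty \;\le\; \|\boldsymbol{g}\|_\infty^2/\lambda.
\]

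Second, I would prove that $F_\epsilon$ is asymptotically constant on each valley. For $\boldsymbol{x},\boldsymbol{y}\in\mathcal{E}(\mathcal{M})$ with $\mathcal{M}\in\mathscr{V}^{(p)}$, one controls $|F_\epsilon(\boldsymbol{x})-F_\epsilon(\boldsymbol{y})|$ by a Cauchy--Schwarz argument along a path, converting it into the Dirichlet form $\epsilon\int|\nabla F_\epsilon|^2\,d\pi_\epsilon$ divided by the capacity between $\boldsymbol{x}$ and $\boldsymbol{y}$ inside $\mathcal{E}(\mathcal{M})$. By the capacity estimates of \cite{LLS-1st,LLS-2nd}, this intra-valley capacity is of order $\theta_\epsilon^{(p)} \times Z_\epsilon^{-1} e^{-U(\mathcal{M})/\epsilon}$ times a constant bounded away from zero, while the above energy estimate shows $\epsilon\int|\nabla F_\epsilon|^2 d\pi_\epsilon = O(1/\theta_\epsilon^{(p)})$; combining and normalizing by $\pi_\epsilon(\mathcal{E}(\mathcal{M}))$ forces
\[
\sup_{\boldsymbol{x},\boldsymbol{y}\in\mathcal{E}(\mathcal{M})}|F_\epsilon(\boldsymbol{x})-F_\epsilon(\boldsymbol{y})| \;\xrightarrow[\epsilon\to 0]{}\;0.
\]

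Third, having obtained tightness, I would extract a subsequence along which $F_\epsilon|_{\mathcal{E}(\mathcal{M})} \to \boldsymbol{f}(\mathcal{M})$ for each $\mathcal{M}\in\mathscr{V}^{(p)}$, and show that $\boldsymbol{f}$ satisfies \eqref{eq:res_y}. The natural test is to integrate \eqref{e_res} against an equilibrium-potential-like function $h_{\mathcal{M}'}^\epsilon$ that approximates $\chi_{\mathcal{E}(\mathcal{M}')}$ on the wells at scale $\theta_\epsilon^{(p)}$ (such functions are the ones constructed in \cite{LS-22a} and invoked later in Section~\ref{sec_pf_equi_pot} of the present paper). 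By reversibility and the convergence $\theta_\epsilon^{(p)} \mathscr{L}_\epsilon h_{\mathcal{M}'}^\epsilon \to \mathfrak{L}^{(p)}\delta_{\mathcal{M}'}$ in the weak sense against measures concentrated on valleys (this is the content of the metastability analysis of \cite{LLS-1st,LLS-2nd}), one obtains in the limit
\[
\lambda\,\boldsymbol{f}(\mathcal{M}') \,-\, (\mathfrak{L}^{(p)}\boldsymbol{f})(\mathcal{M}') \,=\, \boldsymbol{g}(\mathcal{M}')\qquad\text{for every }\mathcal{M}'\in\mathscr{V}^{(p)}.
\]
Since $\lambda>0$ and $\mathfrak{L}^{(p)}$ is the generator of a finite-state Markov chain, \eqref{eq:res_y} has a unique solution; hence the limit is independent of the subsequence and the full sequence converges.

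The main obstacle is the third step: producing test functions $h_{\mathcal{M}'}^\epsilon$ well-adapted to the scale $\theta_\epsilon^{(p)}$ so that $\theta_\epsilon^{(p)} \mathscr{L}_\epsilon h_{\mathcal{M}'}^\epsilon$ concentrates on the valleys and reproduces the jump rates of $\mathfrak{L}^{(p)}$. This requires the full hierarchical machinery of \cite{LLS-1st,LLS-2nd}, including sharp Eyring--Kramers capacity asymptotics and the identification of the tree structure from Section~\ref{subsec_tree}; the intra-valley rigidity in Step~2 is comparatively routine once the capacity lower bounds are available.
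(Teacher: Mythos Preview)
The paper does not prove Theorem~\ref{t_res}; it is quoted as \cite[Theorem 2.14]{LLS-2nd} and used as an input for the $\Gamma$--$\liminf$ argument (Lemma~\ref{l_liminf}). There is therefore no proof in this paper to compare against, and I can only assess your sketch on its own terms.

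Steps~1 and~3 are sound in outline. Step~2, however, has a genuine gap. First, the stated order of the intra-valley capacity is wrong: the capacity between two small balls inside a connected well is of order $Z_\epsilon^{-1}e^{-U(\mathcal{M})/\epsilon}$ (up to polynomial prefactors), with no factor $\theta_\epsilon^{(p)}$; and for $p\ge 2$, when $\boldsymbol{x}\in\mathcal{E}(\boldsymbol{m})$ and $\boldsymbol{y}\in\mathcal{E}(\boldsymbol{m}')$ lie in distinct components of $\mathcal{E}(\mathcal{M})$, the relevant capacity is $\asymp Z_\epsilon^{-1}e^{-\Theta(\boldsymbol{m},\boldsymbol{m}')/\epsilon}$. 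Second, and more importantly, the global energy bound $\epsilon\int|\nabla F_\epsilon|^2\,d\pi_\epsilon\le C/\theta_\epsilon^{(p)}$ is weighted by $\pi_\epsilon$, which concentrates at the global minima. Combining it with the capacity bound yields
\[
|F_\epsilon(\boldsymbol{x})-F_\epsilon(\boldsymbol{y})|^2 \;\lesssim\; \frac{Z_\epsilon\, e^{\Theta(\boldsymbol{m},\boldsymbol{m}')/\epsilon}}{\theta_\epsilon^{(p)}} \;\asymp\; \epsilon^{d/2}\,e^{(\Theta(\boldsymbol{m},\boldsymbol{m}')-d^{(p)}-\min U)/\epsilon},
\]
and this need not vanish. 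A concrete obstruction: take three minima $\boldsymbol{m}_0$ at height $0$, $\boldsymbol{m}_1,\boldsymbol{m}_2$ at height $h>1$, with $\Theta(\boldsymbol{m}_1,\boldsymbol{m}_2)=h+1$ and $\Theta(\{\boldsymbol{m}_1,\boldsymbol{m}_2\},\boldsymbol{m}_0)=h+2$. Then $d^{(1)}=1$, $d^{(2)}=2$, $\mathcal{M}=\{\boldsymbol{m}_1,\boldsymbol{m}_2\}\in\mathscr{V}^{(2)}$, and the exponent above is $h-1>0$, so the bound blows up.

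The fix is to argue hierarchically: at scale $p$ one should already know, from the analysis at scales $1,\dots,p-1$, that $F_\epsilon$ is asymptotically constant on each $\mathcal{E}(\mathcal{M}')$, $\mathcal{M}'\in\mathscr{V}^{(p-1)}$, and then compare the constants across the components of $\mathcal{E}(\mathcal{M})$ using localized test functions or a coupling of the trajectories (the mixing time inside $\mathcal{E}(\mathcal{M})$ is $O(\theta_\epsilon^{(p-1)})=o(\theta_\epsilon^{(p)})$, so a probabilistic coupling via the representation \eqref{eq:probex} works cleanly). This inductive structure is precisely what the machinery of \cite{LLS-1st,LLS-2nd} provides, and is why the result is imported rather than reproved here.
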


It is well known from \cite[Section 6.5]{Friedman} that $F_{\epsilon}$
admits the probabilistic representation
\begin{equation}
F_{\epsilon}(\boldsymbol{x})=\mathbb{E}_{\boldsymbol{x}}^{\epsilon}\left[\,\int_{0}^{\infty}e^{-\lambda s}\,G(\boldsymbol{x}_{\epsilon}(\theta_{\epsilon}^{(p)}s))\,ds\,\right]\,.\label{eq:probex}
\end{equation}

\subsubsection{Main lemma}

Throughout the article, $o_{\epsilon}(1)$ denotes a remainder term
which vanishes as $\epsilon\to0$. The next result establishes the
$\Gamma-\liminf$ of the sequence
$(\theta_{\epsilon}^{(p)}\mathcal{I}_{\epsilon})_{\epsilon>0}$,
$p\in\llbracket1,\,\mathfrak{q}\rrbracket$, for convex combinations of
the measures $\pi_{\mathcal{M}}$, $\mathcal{M}\in\mathscr{V}^{(p)}$.
The proof of the full $\Gamma-\liminf$ will be given at the end of
this section and relies on the next result.

\begin{lem}
\label{l_liminf}
Fix $p\in\llbracket1,\,\mathfrak{q}\rrbracket$ and
let $\mu=\sum_{\mathcal{M}\in\mathscr{V}^{(p)}}\omega(\mathcal{M})\,\pi_{\mathcal{M}}\in\mathcal{P}(\mathbb{R}^{d})$
for some $\omega\in\mathcal{P}(\mathscr{V}^{(p)})$. Then, for every
sequence $(\mu_{\epsilon})_{\epsilon>0}$ in $\mathcal{P}(\mathbb{R}^{d})$
converging to $\mu$,
\[
\liminf_{\epsilon\to0}\,\theta_{\epsilon}^{(p)}\,\mathcal{I}_{\epsilon}(\mu_{\epsilon})\,\ge\,\mathfrak{J}^{(p)}(\omega)\,.
\]
\end{lem}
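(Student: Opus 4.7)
The plan is to bound $\theta_\epsilon^{(p)}\mathcal{I}_\epsilon(\mu_\epsilon)$ from below by testing the Donsker--Varadhan variational formula \eqref{e_def_I} against a positive function built from the resolvent equation \eqref{e_res}, whose boundary--layer behaviour near the wells is controlled by Theorem \ref{t_res}. Fix an arbitrary $\mathbf{u}\colon\mathscr{V}^{(p)}\to(0,\infty)$ and, exploiting the finiteness of $\mathscr{V}^{(p)}$, choose $\lambda>0$ large enough that
\[
\mathbf{g}(\mathcal{M})\,:=\,\lambda\,\mathbf{u}(\mathcal{M})-\mathfrak{L}^{(p)}\mathbf{u}(\mathcal{M})\,>\,0
\qquad\text{for every } \mathcal{M}\in\mathscr{V}^{(p)}\,.
\]
Then $\mathbf{u}$ is the unique solution of the reduced resolvent equation \eqref{eq:res_y} with source $\mathbf{g}$. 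Denote by $F_\epsilon=F_\epsilon^{p,\mathbf{g},\lambda}\in D(\mathscr{L}_\epsilon)$ the solution of \eqref{e_res} and by $G=\sum_{\mathcal{M}\in\mathscr{V}^{(p)}}\mathbf{g}(\mathcal{M})\chi_{\mathcal{E}(\mathcal{M})}$ its right--hand side. The probabilistic representation \eqref{eq:probex}, together with $0\le G\le\|\mathbf{g}\|_\infty$ and the fact that every trajectory visits $\bigcup_{\mathcal{M}}\mathcal{E}(\mathcal{M})$, gives $0<F_\epsilon\le\|\mathbf{g}\|_\infty/\lambda$, so that $F_\epsilon$ is an admissible test function in \eqref{e_def_I}.

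Substituting $F_\epsilon$ into \eqref{e_def_I} and using the identity $\theta_\epsilon^{(p)}\mathscr{L}_\epsilon F_\epsilon=\lambda F_\epsilon-G$, I would obtain the lower bound
\[
\theta_\epsilon^{(p)}\mathcal{I}_\epsilon(\mu_\epsilon)\,\ge\,\int -\theta_\epsilon^{(p)}\frac{\mathscr{L}_\epsilon F_\epsilon}{F_\epsilon}\,d\mu_\epsilon\,=\,-\lambda+\sum_{\mathcal{M}\in\mathscr{V}^{(p)}}\mathbf{g}(\mathcal{M})\int_{\mathcal{E}(\mathcal{M})}\frac{d\mu_\epsilon}{F_\epsilon}\,,
\]
where the last equality uses that $G$ equals $\mathbf{g}(\mathcal{M})$ on $\mathcal{E}(\mathcal{M})$ and vanishes outside $\bigcup_{\mathcal{M}}\mathcal{E}(\mathcal{M})$. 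The next step is to show that for every $\mathcal{M}\in\mathscr{V}^{(p)}$,
\[
\int_{\mathcal{E}(\mathcal{M})}\frac{d\mu_\epsilon}{F_\epsilon}\,\longrightarrow\,\frac{\omega(\mathcal{M})}{\mathbf{u}(\mathcal{M})}\quad\text{as}\ \epsilon\to0\,.
\]
Theorem \ref{t_res} yields $F_\epsilon\to\mathbf{u}(\mathcal{M})$ uniformly on $\mathcal{E}(\mathcal{M})$, which, combined with $\mathbf{u}(\mathcal{M})>0$, gives $F_\epsilon^{-1}\to\mathbf{u}(\mathcal{M})^{-1}$ uniformly on $\mathcal{E}(\mathcal{M})$. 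It therefore suffices to show that $\mu_\epsilon(\mathcal{E}(\mathcal{M}))\to\omega(\mathcal{M})$. Since the valleys $\mathcal{E}(\mathcal{M}')$, $\mathcal{M}'\in\mathscr{V}^{(p)}$, are pairwise disjoint, each set $\mathcal{M}\subset\mathcal{M}_0$ lies in the interior of $\mathcal{E}(\mathcal{M})$, and $\mu=\sum_{\mathcal{M}'}\omega(\mathcal{M}')\pi_{\mathcal{M}'}$ is supported on $\mathcal{M}_0$, so $\mu(\partial \mathcal{E}(\mathcal{M}))=0$ and $\mu(\mathcal{E}(\mathcal{M}))=\omega(\mathcal{M})$; the Portmanteau theorem applied to $\mu_\epsilon\to\mu$ gives the claim.

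Assembling these ingredients,
\[
\liminf_{\epsilon\to0}\theta_\epsilon^{(p)}\mathcal{I}_\epsilon(\mu_\epsilon)\,\ge\,-\lambda+\sum_{\mathcal{M}\in\mathscr{V}^{(p)}}\mathbf{g}(\mathcal{M})\frac{\omega(\mathcal{M})}{\mathbf{u}(\mathcal{M})}\,=\,-\sum_{\mathcal{M}\in\mathscr{V}^{(p)}}\omega(\mathcal{M})\,\frac{\mathfrak{L}^{(p)}\mathbf{u}(\mathcal{M})}{\mathbf{u}(\mathcal{M})}\,,
\]
where the final equality uses the definition of $\mathbf{g}$ together with $\omega\in\mathcal{P}(\mathscr{V}^{(p)})$. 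Since the bound holds for every positive $\mathbf{u}$, taking the supremum over such $\mathbf{u}$ recovers exactly $\mathfrak{J}^{(p)}(\omega)$, as desired. The step I expect to be most delicate is the control of $F_\epsilon$ \emph{off} the valleys: Theorem \ref{t_res} only gives the asymptotic profile on each $\mathcal{E}(\mathcal{M})$, while $\mu_\epsilon$ may carry mass anywhere in $\mathbb{R}^d$. This is reconciled on the one hand by the global $L^\infty$ bound on $F_\epsilon$ coming from \eqref{eq:probex}, which ensures that the integrand $G/F_\epsilon$ is uniformly bounded and in fact vanishes outside $\bigcup_{\mathcal{M}}\mathcal{E}(\mathcal{M})$, and on the other hand by the concentration of $\mu$ on the interiors of the valleys, which forces the complementary $\mu_\epsilon$-mass to be asymptotically irrelevant.
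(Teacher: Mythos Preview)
Your proof is correct and follows essentially the same resolvent approach as the paper. The only differences are cosmetic: the paper adds $e^{a/\epsilon}$ (with $a<0$) to $F_\epsilon$ to ensure strict positivity rather than invoking ellipticity of the diffusion, and it asserts $\mathbf{g}>0$ via the probabilistic representation of the finite-state resolvent instead of simply taking $\lambda$ large as you do.
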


\begin{proof}
Let $\bm{h}:\mathscr{V}^{(p)}\to(0,\,\infty)$ be a positive function
and define $\bm{g}:\mathscr{V}^{(p)}\to\mathbb{R}$ by
\[
\bm{g}:=(\lambda-\mathfrak{L}^{(p)})\bm{h}\,.
\]
By the probabilistic representation analogous to \eqref{eq:probex}, for all $\mathcal{M}\in\mathscr{V}^{(p)}$,
\[
\bm{g}(\mathcal{M})=\mathcal{Q}_{\mathcal{M}}^{(p)}\left[\,\int_{0}^{\infty}e^{-\lambda s}\,\bm{h}({\bf y}(s))\,ds\,\right]>0\,.
\]
Define $G:\mathbb{R}^{d}\to\mathbb{R}$ by 
\[
G:=\sum_{\mathcal{M}\in\mathscr{V}^{(p)}}\bm{g}(\mathcal{M})\,\mathcal{X}_{\mathcal{E}(\mathcal{M})}
\]
and let $F_{\epsilon}=F_{\epsilon}^{\lambda,\,\bm{g}}$ be the solution
to \eqref{e_res}. Since $G\ge0$, representation \eqref{eq:probex} gives $F_{\epsilon}\ge0$.

Fix $a<0$. By Lemma \ref{l_gen}, $F_{\epsilon}+e^{a/\epsilon}\in D(\mathscr{L}_{\epsilon})$,
and since $F_{\epsilon}+e^{a/\epsilon}>0$,
\[
\theta_{\epsilon}^{(p)}\mathcal{I}_{\epsilon}(\mu_{\epsilon}) = \sup_{u>0}\int_{\mathbb{R}^{d}}-\frac{\theta_{\epsilon}^{(p)}\mathscr{L}_{\epsilon}u}{u}d\mu_{\epsilon}
\ge\int_{\mathbb{R}^{d}}-\frac{\theta_{\epsilon}^{(p)}\mathscr{L}_{\epsilon}F_{\epsilon}}{F_{\epsilon}+e^{a/\epsilon}}d\mu_{\epsilon}\,.
\]
Since $F_{\epsilon}$ is the solution to \eqref{e_res}, the last
term is equal to
\[
\int_{\mathbb{R}^{d}}\frac{G-\lambda F_{\epsilon}}{F_{\epsilon}+e^{a/\epsilon}}d\mu_{\epsilon}=-\lambda\int_{\mathbb{R}^{d}}\frac{F_{\epsilon}}{F_{\epsilon}+e^{a/\epsilon}}d\mu_{\epsilon}+\int_{\mathbb{R}^{d}}\frac{G}{F_{\epsilon}+e^{a/\epsilon}}d\mu_{\epsilon}\,.
\]
Since $\frac{F_{\epsilon}}{F_{\epsilon}+e^{a/\epsilon}}\le1$, $G\ge0$,
and $G=\bm{g}(\mathcal{M})$ on $\mathcal{E}(\mathcal{M})$, the last expression is bounded below by
\begin{equation}
-\lambda+\sum_{\mathcal{M}\in\mathscr{V}^{(p)}}\int_{\mathcal{E}(\mathcal{M})}\,\frac{\bm{g}(\mathcal{M})}{F_{\epsilon}+e^{a/\epsilon}}\,d\mu_{\epsilon}\,.\label{e_res_inf}
\end{equation}

By Theorem \ref{t_res}, since $a<0$, 
\[
\lim_{\epsilon\to0}\,\sup_{\mathcal{M}\in\mathscr{V}^{(p)}}\,\|F_{\epsilon}+e^{a/\epsilon}-\bm{h}(\mathcal{M})\|_{L^{\infty}(\mathcal{E}(\mathcal{M}))}\,=\,0\,.
\]
Hence \eqref{e_res_inf} is bounded below by
\[
-\lambda+\sum_{\mathcal{M}\in\mathscr{V}^{(p)}}[1+o_{\epsilon}(1)]\,\frac{\bm{g}(\mathcal{M})}{\bm{h}(\mathcal{M})}\,\mu_{\epsilon}(\mathcal{E}(\mathcal{M}))\,=-\,\lambda+\sum_{\mathcal{M}\in\mathscr{V}^{(p)}}[1+o_{\epsilon}(1)]\,\frac{(\lambda-\mathfrak{L}^{(p)})\bm{h}(\mathcal{M})}{\bm{h}(\mathcal{M})}\,\mu_{\epsilon}(\mathcal{E}(\mathcal{M}))\,.
\]
Since $\mu_{\epsilon}\to\mu$, $\sum_{\mathcal{M}\in\mathscr{V}^{(p)}}\mu(\mathcal{E}(\mathcal{M}))=1$,
and ${\bm h}$ is bounded, the previous expression is bounded below
by
\[
-\lambda+\sum_{\mathcal{M}\in\mathscr{V}^{(p)}}\frac{(\lambda-\mathfrak{L}^{(p)})\bm{h}(\mathcal{M})}{\bm{h}(\mathcal{M})}\,\mu(\mathcal{E}(\mathcal{M}))+o_{\epsilon}(1)\,=\sum_{\mathcal{M}\in\mathscr{V}^{(p)}}\frac{-\mathfrak{L}^{(p)}\bm{h}(\mathcal{M})}{\bm{h}(\mathcal{M})}\,\omega(\mathcal{M})+o_{\epsilon}(1)\,.
\]
Therefore,
\[
\liminf_{\epsilon\to0}\,\theta_{\epsilon}^{(p)}\mathcal{I}_{\epsilon}(\mu_{\epsilon})\,\ge\,\sum_{\mathcal{M}\in\mathscr{V}^{(p)}}\frac{-\mathfrak{L}^{(p)}\bm{h}(\mathcal{M})}{\bm{h}(\mathcal{M})}\,\omega(\mathcal{M}) \,.
\]
Taking the supremum over all positive $\bm{h}$ yields
\[
\liminf_{\epsilon\to0}\,\theta_{\epsilon}^{(p)}\mathcal{I}_{\epsilon}(\mu_{\epsilon})\,\ge\,\sup_{\bm{u}>0}\,\sum_{\mathcal{M}\in\mathscr{V}^{(p)}}\frac{-\mathfrak{L}^{(p)}\bm{u}(\mathcal{M})}{\bm{u}(\mathcal{M})}\,\omega(\mathcal{M})\,=\,\mathfrak{J}^{(p)}(\omega)\,.
\]
\end{proof}

\subsection{$\Gamma-\limsup$}

For the $\Gamma-\limsup$ at the time scale $\theta_{\epsilon}^{(p)}$,
$p\in\llbracket1,\,\mathfrak{q}\rrbracket$, the convexity of $\mathcal{I}_{\epsilon}$
together with Lemma \ref{l_DV_equiv-2} implies that it suffices to consider measures $\omega\in\mathcal{P}(\mathscr{V}^{(p)})$
supported on a single equivalence class of the chain $\{{\bf y}^{(p)}(t)\}_{t\ge0}$.

\subsubsection{\label{subsec521}Equivalence class}
We first recall the definition of simple sets.
\begin{itemize}
\item If $\mathcal{M}\subset\mathcal{M}_{0}$ satisfies
\[
U(\bm{m})=U(\bm{m}')\ \ \text{for all}\ \bm{m},\,\bm{m}'\in\mathcal{M}\,,
\]
$\mathcal{M}$ is said to be \textit{\textcolor{blue}{simple}} and
we denote by \textcolor{blue}{$U(\mathcal{M})$ }the common value.
\end{itemize}
By Proposition \ref{p: tree}-(2) (cf. property $\mathfrak{P}_{1}$
in \cite{LLS-2nd}), every $\mathcal{M}\in\mathscr{S}^{(n)}$, $n\in\llbracket1,\,\mathfrak{q}\rrbracket$, is simple.
Furthermore, Lemma \ref{l_equi_height} shows that for any $p\in\llbracket1,\,\mathfrak{q}\rrbracket$
and any equivalence class $\mathfrak{D}\subset\mathscr{V}^{(p)}$ of
the limiting Markov chain $\{{\bf y}^{(p)}(t)\}_{t\ge0}$,
\[
U(\mathcal{M})=U(\mathcal{M}')\ \text{for all}\ \mathcal{M},\,\mathcal{M}'\in\mathfrak{D}\,.
\]
We denote this common value by ${\color{blue}H_{\mathfrak{D}}}\in\mathbb{R}^{d}$.

Let \textcolor{blue}{$\{{\bf y}_{\mathfrak{D}}^{(p)}(t)\}_{t\ge0}$} be
the Markov chain restricted to $\mathfrak{D}$, with jump rates
\begin{equation}
{\color{blue}r_{\mathfrak{D}}^{(p)}(\mathcal{M},\,\mathcal{M}')}:=r^{(p)}(\mathcal{M},\,\mathcal{M}')\ \ ;\ \ \mathcal{M},\,\mathcal{M}'\in\mathfrak{D}\,,\label{e_y_D}
\end{equation}
where ${\color{blue}r^{(p)}}:\mathscr{V}^{(p)}\times\mathscr{V}^{(p)}\to[0,\,\infty)$
are the jump rates of $\{{\bf y}^{(p)}(t)\}_{t\ge0}$. We also denote by ${\color{blue}\nu_{\mathfrak{D}}}\in\mathcal{P}(\mathfrak{D})$
the measure $\nu$ conditioned on $\mathfrak{D}$:
\[
\nu_{\mathfrak{D}}(\mathcal{M}):=\frac{\nu(\mathcal{M})}{\sum_{\mathcal{M}'\in\mathfrak{D}}\nu(\mathcal{M}')}\,.
\]

The following result shows that the restricted chain $\{{\bf y}_{\mathfrak{D}}^{(p)}(t)\}_{t\ge0}$
is reversible with respect to $\nu_{\mathfrak{D}}$.
\begin{prop}
\label{p_rev}
Fix $p\in\llbracket1,\,\mathfrak{q}\rrbracket$ and let
$\mathfrak{D}\subset\mathscr{V}^{(p)}$ be an equivalence class of the
limiting chain $\{{\bf y}^{(p)}(t)\}_{t\ge0}$ such that $|\mathfrak{D}|\ge2$.
Then, $\{{\bf y}_{\mathfrak{D}}^{(p)}(t)\}_{t\ge0}$ is reversible
with respect to the conditioned measure $\nu_{\mathfrak{D}}$.
\end{prop}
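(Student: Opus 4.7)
The plan is to reduce the reversibility statement to the detailed balance identity
\begin{equation*}
\nu(\mathcal{M})\,r^{(p)}(\mathcal{M},\mathcal{M}')\,=\,\nu(\mathcal{M}')\,r^{(p)}(\mathcal{M}',\mathcal{M})\qquad\text{for all }\mathcal{M},\mathcal{M}'\in\mathfrak{D}\,.
\end{equation*}
Since $r_{\mathfrak{D}}^{(p)}=r^{(p)}$ on $\mathfrak{D}\times\mathfrak{D}$ by \eqref{e_y_D} and $\nu_{\mathfrak{D}}$ is the restriction of $\nu$ to $\mathfrak{D}$ renormalized by the constant $\nu(\mathfrak{D}):=\sum_{\mathcal{M}\in\mathfrak{D}}\nu(\mathcal{M})$, the above identity is exactly detailed balance for $\{{\bf y}_{\mathfrak{D}}^{(p)}(t)\}_{t\ge0}$ with respect to $\nu_{\mathfrak{D}}$.

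To establish this identity, I would exploit the reversibility of the underlying diffusion $\bm{x}_\epsilon(\cdot)$ with respect to $\pi_\epsilon$ together with the common-height property of equivalence classes recorded in Lemma \ref{l_equi_height}: every $\mathcal{M}\in\mathfrak{D}$ satisfies $U(\mathcal{M})=H_{\mathfrak{D}}$. A Laplace estimate around the wells yields
\begin{equation*}
\pi_\epsilon(\mathcal{E}(\mathcal{M}))\,=\,[1+o_\epsilon(1)]\,\frac{(2\pi\epsilon)^{d/2}}{Z_\epsilon}\,e^{-H_{\mathfrak{D}}/\epsilon}\,\nu(\mathcal{M})\,,
\end{equation*}
so $\nu(\mathcal{M})$ is proportional to the Gibbs mass of $\mathcal{E}(\mathcal{M})$ with a constant of proportionality that depends only on $\mathfrak{D}$. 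The metastable jump rate $r^{(p)}(\mathcal{M},\mathcal{M}')$ can in turn be expressed, at leading order as $\epsilon\to0$ and up to the rescaling $\theta^{(p)}_\epsilon$, as the ratio of a Dirichlet capacity of $\bm{x}_\epsilon(\cdot)$ to $\pi_\epsilon(\mathcal{E}(\mathcal{M}))$. This is precisely the content I expect Proposition \ref{p_cap} to supply. Since capacities of a reversible process are symmetric in their two arguments, the product $\pi_\epsilon(\mathcal{E}(\mathcal{M}))\,r^{(p)}(\mathcal{M},\mathcal{M}')$ is symmetric in $(\mathcal{M},\mathcal{M}')$ to leading order. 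Dividing through by the common prefactor $Z_\epsilon^{-1}(2\pi\epsilon)^{d/2}e^{-H_{\mathfrak{D}}/\epsilon}$ (which depends only on $\mathfrak{D}$) turns this asymptotic symmetry into the claimed equality, since both sides are $\epsilon$-independent.

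The hard part will be matching the limiting chain rates $r^{(p)}$ to symmetric capacities of the diffusion cleanly. The chain $\{{\bf y}^{(p)}(t)\}_{t\ge0}$ is built as the trace on $\mathscr{V}^{(p)}$ of the larger chain $\{\widehat{\bf y}^{(p)}(t)\}_{t\ge0}$, whose state space includes the transient wells in $\mathscr{N}^{(p)}$. These wells lie at strictly lower heights than $H_{\mathfrak{D}}$ and are traced out during the construction, a procedure that could in principle spoil the symmetry of detailed balance. What saves the argument is once again the common-height property of $\mathfrak{D}$: at the scale $\theta^{(p)}_\epsilon$, transitions within $\mathfrak{D}$ are mediated by saddle paths at height $H_{\mathfrak{D}}+d^{(p)}$, and the reversibility of $\bm{x}_\epsilon(\cdot)$ forces these saddles to contribute symmetrically to the effective capacity between any two wells of $\mathfrak{D}$. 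Converting this structural observation into the sharp asymptotic identity will require the capacity estimates of Proposition \ref{p_cap} together with a careful tracking of the Laplace prefactors on both sides.
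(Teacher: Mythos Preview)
Your reduction to detailed balance is correct, and your intuition that reversibility ultimately comes from a geometric symmetry is on target. But the execution has a real gap: Proposition \ref{p_cap} does not supply what you claim. Its first identity gives the \emph{escape rate} $\sum_{\mathcal{M}'}r^{(p)}(\mathcal{M}_i,\mathcal{M}')$, not individual pairwise rates, and its second identity gives only the \emph{symmetrized} combination $\nu(\mathcal{M}_i)r^{(p)}(\mathcal{M}_i,\mathcal{M}_j)+\nu(\mathcal{M}_j)r^{(p)}(\mathcal{M}_j,\mathcal{M}_i)$. Neither lets you isolate $\nu(\mathcal{M}_i)r^{(p)}(\mathcal{M}_i,\mathcal{M}_j)$ alone. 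More fundamentally, the identification ``$r^{(p)}(\mathcal{M},\mathcal{M}')=$ capacity$/$measure'' fails once $|\mathfrak{D}|\ge 3$: the jump rate $r^{(p)}(\mathcal{M},\mathcal{M}')$ of the trace chain counts only transitions that reach $\mathcal{M}'$ \emph{before any other element of $\mathscr{V}^{(p)}$}, whereas a two-set capacity $\mathrm{cap}(\mathcal{E}(\mathcal{M}),\mathcal{E}(\mathcal{M}'))$ does not see the other wells. There is also a factual slip: the transient wells in $\widehat{\mathfrak{D}}\setminus\mathfrak{D}$ sit at heights \emph{strictly above} $H_{\mathfrak{D}}$ (see \eqref{02}), not below.

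The paper's route is purely combinatorial and avoids capacities entirely. It decomposes the level set $\mathcal{K}=\{U\le H_{\mathfrak{D}}+d^{(p)}\}$ into open wells $\mathcal{W}_1,\dots,\mathcal{W}_\ell$ and shows (Lemma \ref{l_rev}, via Lemma \ref{l_83}) two structural facts about $\widehat{\bf y}^{(p)}$ restricted to $\widehat{\mathfrak{D}}$: within each block $\widehat{\mathfrak{D}}\cap\mathscr{S}^{(p)}(\mathcal{W}_i)$ only the bottom state $\mathcal{M}_i$ can exit, and the \emph{aggregated} rates satisfy $\nu(\mathcal{M}_i)\sum_{\mathcal{M}\in\mathscr{S}^{(p)}(\mathcal{W}_j)}\widehat r^{(p)}(\mathcal{M}_i,\mathcal{M})=\omega_{i,j}$, where $\omega_{i,j}=\sum_{\bm\sigma\in\Sigma_{i,j}}\omega(\bm\sigma)$ is symmetric simply because $\Sigma_{i,j}=\Sigma_{j,i}$. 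These are exactly the hypotheses of the abstract Lemma \ref{l_A3}, which then yields reversibility of the reflected trace chain. So the symmetry you sought at the level of diffusion capacities is instead exhibited at the level of saddle weights between wells, and the passage from $\widehat{\bf y}^{(p)}$ to ${\bf y}^{(p)}$ (the step you flagged as ``the hard part'') is handled by a general trace-preserves-reversibility argument rather than by sharp capacity asymptotics.
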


The proof is postponed to Section \ref{subsec_rev}, as it requires
several notions introduced in \cite{LLS-1st,LLS-2nd}.

\subsubsection{Construction of a sequence of measures}

Recall from \eqref{e_def_nu} the definition of $\nu_{\star}$.
\begin{prop}
\label{p_test}
Fix $p\in\llbracket1,\,\mathfrak{q}\rrbracket$ and let
$\mathfrak{D}\subset\mathscr{V}^{(p)}$ be an equivalence class of the
limiting chain $\{{\bf y}^{(p)}(t)\}_{t\ge0}$. Then there exists
a family $\{h_{\mathcal{M}}^{\epsilon}:\mathcal{M}\in\mathfrak{D}\}$
of continuous functions $h_{\mathcal{M}}^{\epsilon}:\mathbb{R}^{d}\to\mathbb{R}$
satisfying the following conditions.
\begin{enumerate}
\item For all $\mathcal{M}\in\mathfrak{D}$,
\[
0\le h_{\mathcal{M}}^{\epsilon}\le1\,,\quad
h_{\mathcal{M}}^{\epsilon}(\bm{x})=1 \ \text{for}\  \bm{x}\in\mathcal{E}(\mathcal{M}) \,,
\]
and
\[
\lim_{\epsilon\to0}e^{H_{\mathfrak{D}}/\epsilon}\int_{\mathbb{R}^{d}\setminus\mathcal{E}(\mathcal{M})}(h_{\mathcal{M}}^{\epsilon})^{2}\,d\pi_{\epsilon}=0\,.
\]
\item For all $\mathcal{M}\in\mathfrak{D}$, 
\begin{equation}
\label{p_test-Diri}
\lim_{\epsilon\to0}\,e^{H_{\mathfrak{D}}/\epsilon}\theta_{\epsilon}^{(p)}\epsilon\int_{\mathbb{R}^{d}}\left|\nabla h_{\mathcal{M}}^{\epsilon}\right|^{2}d\pi_{\epsilon}=\frac{\nu(\mathcal{M})}{\nu_{\star}}\sum_{\mathcal{M}''\in\mathscr{V}^{(p)}}r^{(p)}(\mathcal{M},\,\mathcal{M}'')\,.
\end{equation}
\item If $|\mathfrak{D}|\ge2$, for all distinct $\mathcal{M},\, \mathcal{M}'\in\mathfrak{D}$,
\[
\begin{aligned} & \lim_{\epsilon\to0}\,e^{H_{\mathfrak{D}}/\epsilon}\theta_{\epsilon}^{(p)}\epsilon\int_{\mathbb{R}^{d}}\nabla h_{\mathcal{M}}^{\epsilon}\cdot\nabla h_{\mathcal{M}'}^{\epsilon}\,d\pi_{\epsilon}\\
 & =-\frac{1}{2\nu_{\star}}\left(\nu(\mathcal{M})\,r^{(p)}(\mathcal{M},\,\mathcal{M}')+\nu(\mathcal{M}')\,r^{(p)}(\mathcal{M}',\,\mathcal{M})\right)\,.
\end{aligned}
\]
\end{enumerate}
\end{prop}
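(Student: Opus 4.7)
The plan is to take $h_\mathcal{M}^\epsilon$ to be the smooth approximation of the equilibrium potential between $\mathcal{E}(\mathcal{M})$ and $\mathcal{E}(\mathscr{V}^{(p)}\setminus\{\mathcal{M}\})$ constructed in \cite{LS-22a}. By that construction, $h_\mathcal{M}^\epsilon\colon\mathbb{R}^d\to[0,1]$ is continuous, equals $1$ on $\mathcal{E}(\mathcal{M})$ and $0$ on $\mathcal{E}(\mathcal{M}')$ for every $\mathcal{M}'\in\mathscr{V}^{(p)}\setminus\{\mathcal{M}\}$, and differs from the exact equilibrium potential only on thin neighborhoods of the saddles at depth $d^{(p)}$ separating the valleys of $\mathscr{V}^{(p)}$.

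For condition (1), the bounds $0\le h_\mathcal{M}^\epsilon\le 1$ and $h_\mathcal{M}^\epsilon\equiv 1$ on $\mathcal{E}(\mathcal{M})$ are built in. For the $L^{2}(d\pi_\epsilon)$ estimate, decompose
\[
\mathbb{R}^d\setminus\mathcal{E}(\mathcal{M}) \;=\; \bigcup_{\mathcal{M}'\in\mathscr{V}^{(p)}\setminus\{\mathcal{M}\}}\mathcal{E}(\mathcal{M}')\ \cup\ \bigl(\mathbb{R}^d\setminus\mathcal{E}^{(p)}\bigr).
\]
On the first piece $h_\mathcal{M}^\epsilon\equiv 0$, while on the complement of $\mathcal{E}^{(p)}$ the valley construction of Appendix \ref{subsec_valley} ensures $U \ge H_\mathfrak{D} + \delta$ for some $\delta > 0$, so \eqref{e_U_L1} gives $\pi_\epsilon(\mathbb{R}^d\setminus\mathcal{E}^{(p)}) = O(e^{-(H_\mathfrak{D}+\delta)/\epsilon})$; since $|h_\mathcal{M}^\epsilon|\le 1$, multiplication by $e^{H_\mathfrak{D}/\epsilon}$ yields a quantity vanishing as $\epsilon\to 0$.

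Condition (2) would follow from the sharp Eyring--Kramers type capacity asymptotics stated as Proposition \ref{p_cap}: by the Dirichlet principle and the near-equilibrium property of $h_\mathcal{M}^\epsilon$, the Dirichlet energy $\epsilon\int|\nabla h_\mathcal{M}^\epsilon|^{2}\,d\pi_\epsilon$ agrees asymptotically with the capacity $\mathrm{cap}_\epsilon(\mathcal{E}(\mathcal{M}),\mathcal{E}(\mathscr{V}^{(p)}\setminus\{\mathcal{M}\}))$, and the identification of this capacity with $\nu(\mathcal{M})/\nu_\star$ times the total jump rate out of $\mathcal{M}$ of the chain $\{{\bf y}^{(p)}(t)\}_{t\ge 0}$, at scale $e^{-H_\mathfrak{D}/\epsilon}/\theta_\epsilon^{(p)}$, is the content of \cite{LLS-1st,LLS-2nd}. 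For condition (3) I would use the polarization identity
\[
2\,\nabla h_\mathcal{M}^\epsilon\cdot\nabla h_{\mathcal{M}'}^\epsilon \;=\; \bigl|\nabla(h_\mathcal{M}^\epsilon+h_{\mathcal{M}'}^\epsilon)\bigr|^{2} - \bigl|\nabla h_\mathcal{M}^\epsilon\bigr|^{2} - \bigl|\nabla h_{\mathcal{M}'}^\epsilon\bigr|^{2}.
\]
The key observation is that $h_\mathcal{M}^\epsilon+h_{\mathcal{M}'}^\epsilon$ again lies in $[0,1]$, equals $1$ on $\mathcal{E}(\mathcal{M})\cup\mathcal{E}(\mathcal{M}')$, and vanishes on the remaining valleys; it is therefore itself an approximate equilibrium potential for the merged set. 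Applying the same capacity asymptotics, and noting that merging the states $\mathcal{M}$ and $\mathcal{M}'$ of the Markov chain removes exactly their mutual transitions, the merged Dirichlet limit equals $\nu_\star^{-1}\sum_{\tilde{\mathcal{M}}\in\{\mathcal{M},\mathcal{M}'\}}\nu(\tilde{\mathcal{M}})\sum_{\mathcal{M}''\notin\{\mathcal{M},\mathcal{M}'\}}r^{(p)}(\tilde{\mathcal{M}},\mathcal{M}'')$. Subtracting the two diagonal limits from (2) and dividing by $2$ produces exactly $-\frac{1}{2\nu_\star}\bigl[\nu(\mathcal{M})r^{(p)}(\mathcal{M},\mathcal{M}') + \nu(\mathcal{M}')r^{(p)}(\mathcal{M}',\mathcal{M})\bigr]$.

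The main obstacle is Proposition \ref{p_cap} itself, namely the sharp capacity asymptotics for arbitrary unions of valleys within the common equivalence class $\mathfrak{D}$. The Dirichlet principle supplies the upper bound by using $h_\mathcal{M}^\epsilon$ as a test function, but the matching lower bound, via the dual Thomson principle, requires the construction of an admissible unit flow whose energy matches the conjectured limit. The reversibility of the restricted chain $\{{\bf y}_\mathfrak{D}^{(p)}(t)\}_{t\ge 0}$ on $\mathfrak{D}$ with respect to $\nu_\mathfrak{D}$ (Proposition \ref{p_rev}) is essential here: it both guarantees that the capacity admits a symmetric expression in the jump rates $r^{(p)}$ and permits the combinatorial identification of saddles at depth $d^{(p)}$ with the transitions of the generator $\mathfrak{L}^{(p)}$.
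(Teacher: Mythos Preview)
Your overall architecture matches the paper's: take $h_\mathcal{M}^\epsilon$ to be a mollified approximation of the equilibrium potential between $\mathcal{E}(\mathcal{M})$ and the rest of $\mathcal{E}^{(p)}$, compute its Dirichlet form via Proposition~\ref{p_cap}, and obtain the cross term by polarization (the paper does the polarization at the discrete level in Lemma~\ref{l_85}, you do it at the continuous level; these are equivalent). Two points need correction, however.

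\textbf{Condition (1) has a genuine gap.} Your claim that $U\ge H_{\mathfrak D}+\delta$ on $\mathbb R^d\setminus\mathcal E^{(p)}$ is false. There can be local minima (even global ones) outside the equivalence class $\mathfrak D$ at heights well below $H_{\mathfrak D}$; and even inside the basin of a minimum $\bm m\in\mathcal M'\in\mathscr V^{(p)}$, the region $\mathcal W^{2r_0}(\bm m)\setminus\mathcal E(\bm m)$ lies in $\mathbb R^d\setminus\mathcal E^{(p)}$ but may have $U<H_{\mathfrak D}$. The correct argument, which the paper carries out, uses that the test function is \emph{supported} in a neighborhood $\Omega$ of the level set $\mathcal K=\{U\le H_{\mathfrak D}+d^{(p)}\}$ containing $\widehat{\mathfrak D}$. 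One then splits $\Omega\setminus\mathcal E(\mathcal M)$ into (i) a thin region near $\partial\mathcal K$ where $U>H_{\mathfrak D}+d^{(p)}/2$, (ii) the wells $\mathcal W_k$ with $k\neq i$ on which $h_\mathcal{M}^\epsilon$ is either identically $0$ or where $U(\mathcal M_k)>H_{\mathfrak D}$ (the ``intermediate'' wells in $\widehat{\mathfrak D}\setminus\mathfrak D$), and (iii) $\mathcal W_i\setminus\mathcal E(\mathcal M)$ where $U\ge H_{\mathfrak D}+r_0$. Outside $\Omega$ the function vanishes, so nothing further is needed. Your decomposition ignores the support and therefore does not close.

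\textbf{The obstacle you identify for Proposition~\ref{p_cap} is not the right one.} You frame it as a capacity matching problem requiring a Thomson lower bound. The paper never estimates a capacity here: Proposition~\ref{p_cap} is a \emph{direct} computation of $\int|\Phi_i^\epsilon|^2\,d\pi_\epsilon$. The gradient $\Phi_i^\epsilon$ is supported in the small boxes $\mathcal B_\epsilon^{\bm\sigma}$ around the relevant saddles, the integral over each box is evaluated by Laplace asymptotics (giving $\omega(\bm\sigma)/\nu_\star$), and the result is a sum of the form $\sum_{a<b}[{\bf h}_i^{(p)}(a)-{\bf h}_i^{(p)}(b)]^2\,\omega_{a,b}$. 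The actual work (Lemma~\ref{l_85}) is the combinatorial identity showing this sum equals $\nu(\mathcal M_i)\sum_{\mathcal M'}r^{(p)}(\mathcal M_i,\mathcal M')$, which follows from recognising $k\mapsto{\bf h}_i^{(p)}(k)$ as the harmonic extension of $\delta_{\mathcal M_i}$ and applying a Dirichlet-form identity for the chain $\widehat{\bf y}^{(p)}$ (Lemma~\ref{l_A3}). Reversibility of $\{{\bf y}_{\mathfrak D}^{(p)}\}$ enters only to make that discrete identity available, not to build a flow.

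Finally, you should treat separately the case where $\mathfrak D=\{\mathcal M_1\}$ is an absorbing state; there the right-hand side of \eqref{p_test-Diri} is zero and the test function is a simple cutoff on a slightly enlarged level set (Lemma~\ref{l_test_absobing}).
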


The proof is postponed to Section \ref{sec_pf_equi_pot}.

Define
\[
{\color{blue}\mathcal{E}(\mathfrak{D}}\mathclose{\color{blue})}:=\bigcup_{\mathcal{M}\in\mathfrak{D}}\mathcal{E}(\mathcal{M})\,.
\]
The following is a consequence of Proposition \ref{p_test}.
\begin{lem}
\label{l_limsup-0}
Fix $p\in\llbracket1,\,\mathfrak{q}\rrbracket$
and let $\mathfrak{D}\subset\mathscr{V}^{(p)}$ be an equivalence class of the limiting chain $\{{\bf y}^{(p)}(t)\}_{t\ge0}$.
Let $\{h_{\mathcal{M}}^{\epsilon}:\mathcal{M}\in\mathfrak{D}\}$ be the family of continuous functions defined in Proposition \ref{p_test}.
For ${\bf g}:\mathfrak{D}\to\mathbb{R}$, define $G_{\epsilon}=G_{\epsilon}^{\mathfrak{D},\,{\bf g}}:\mathbb{R}^{d}\to\mathbb{R}$
by
\begin{equation}
G_{\epsilon}(\bm{x})=\sum_{\mathcal{M}\in\mathfrak{D}}e^{H_{\mathfrak{D}}/2\epsilon}{\bf g}(\mathcal{M})\,h_{\mathcal{M}}^{\epsilon}(\bm{x})\,,\label{e_limsup_G}
\end{equation}
Then, for each $\mathcal{M}'\in\mathfrak{D}$, $\bm{m}\in\mathcal{M}'$,
and $\delta>0$ such that $B_{\delta}(\bm{m})\subset\mathcal{E}(\bm{m})$,
\[
\begin{gathered}\lim_{\epsilon\to0}\int_{B_{\delta}(\bm{m})}(G_{\epsilon})^{2}d\pi_{\epsilon}={\bf g}(\mathcal{M}')^{2}\frac{\nu(\bm{m})}{\nu_{\star}}\,,\\
\lim_{\epsilon\to0}\int_{\mathbb{R}^{d}\setminus B_{\delta}(\mathfrak{D})}(G_{\epsilon})^{2}d\pi_{\epsilon}=0\,,
\end{gathered}
\]
where $B_{\delta}(\mathfrak{D}):=\bigcup_{\mathcal{M}\in\mathfrak{D}}\bigcup_{\bm{m}\in\mathcal{M}}B_{\delta}(\bm{m})$.
Moreover,
\[
\lim_{\epsilon\to0}\theta_{\epsilon}^{(p)}\epsilon\int_{\mathbb{R}^{d}}\left|\nabla G_{\epsilon}\right|^{2}d\pi_{\epsilon}=\nu_{\star}^{-1}(A_{1}-A_{2})
\]
where
\[
\begin{aligned}A_{1} & :=\sum_{\mathcal{M}\in\mathfrak{D}}\nu(\mathcal{M}){\bf g}(\mathcal{M})^{2}\sum_{\mathcal{M}'\in\mathscr{V}^{(p)}\setminus\{\mathcal{M}\}}r^{(p)}(\mathcal{M},\,\mathcal{M}')\,,\\
A_{2} & :=\sum_{\mathcal{M}\in\mathfrak{D}}\nu(\mathcal{M}){\bf g}(\mathcal{M})\sum_{\mathcal{M}'\in\mathscr{V}^{(p)}\setminus\{\mathcal{M}\}}{\bf g}(\mathcal{M}')r^{(p)}(\mathcal{M},\,\mathcal{M}')\,.
\end{aligned}
\]
\end{lem}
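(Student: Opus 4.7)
The argument splits naturally into three pieces, one per claimed limit, and in each case the key is to expand $G_\epsilon$ or $G_\epsilon^2$ in the $\mathfrak{D}$-sum and to exploit, term by term, the three properties of $h_\mathcal{M}^\epsilon$ supplied by Proposition \ref{p_test}, together with standard Laplace asymptotics for $\pi_\epsilon$.

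For the first identity, write $G_\epsilon^2 = e^{H_\mathfrak{D}/\epsilon}\sum_{\mathcal{M},\mathcal{N}\in\mathfrak{D}} \mathbf{g}(\mathcal{M})\mathbf{g}(\mathcal{N})\, h_\mathcal{M}^\epsilon h_\mathcal{N}^\epsilon$. Since $B_\delta(\bm{m})\subset\mathcal{E}(\bm{m})\subset\mathcal{E}(\mathcal{M}')$, property (1) of Proposition \ref{p_test} gives $h_{\mathcal{M}'}^\epsilon\equiv 1$ on $B_\delta(\bm{m})$, so the diagonal contribution $\mathcal{M}=\mathcal{N}=\mathcal{M}'$ equals $\mathbf{g}(\mathcal{M}')^2\, e^{H_\mathfrak{D}/\epsilon}\pi_\epsilon(B_\delta(\bm{m}))$. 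A standard Laplace expansion applied to both $\int_{B_\delta(\bm{m})}e^{-U/\epsilon}d\bm{x}$ and $Z_\epsilon$ yields $e^{H_\mathfrak{D}/\epsilon}\pi_\epsilon(B_\delta(\bm{m}))\to \nu(\bm{m})/\nu_\star$, which is the asserted limit. Every remaining term contains some $h_\mathcal{M}^\epsilon$ with $\mathcal{M}\neq\mathcal{M}'$, and since $B_\delta(\bm{m})\subset\mathcal{E}(\mathcal{M}')\subset\mathbb{R}^d\setminus\mathcal{E}(\mathcal{M})$, the Cauchy--Schwarz bound
\[
e^{H_\mathfrak{D}/\epsilon}\!\!\int_{B_\delta(\bm{m})}\!\! h_\mathcal{M}^\epsilon h_\mathcal{N}^\epsilon\,d\pi_\epsilon \le \sqrt{e^{H_\mathfrak{D}/\epsilon}\!\!\int_{\mathbb{R}^d\setminus\mathcal{E}(\mathcal{M})}\!\!(h_\mathcal{M}^\epsilon)^2\,d\pi_\epsilon}\,\sqrt{e^{H_\mathfrak{D}/\epsilon}\!\!\int_{B_\delta(\bm{m})}\!\!(h_\mathcal{N}^\epsilon)^2\,d\pi_\epsilon}
\]
combines property (1) (first factor $\to 0$) with the Laplace $O(1)$ bound on the second factor to yield $o(1)$.

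For the second identity, bound $G_\epsilon^2 \le C\,e^{H_\mathfrak{D}/\epsilon}\sum_{\mathcal{M}\in\mathfrak{D}}(h_\mathcal{M}^\epsilon)^2$ via Cauchy--Schwarz in the $\mathcal{M}$-sum, and for each $\mathcal{M}$ cover $\mathbb{R}^d\setminus B_\delta(\mathfrak{D})$ by $\mathbb{R}^d\setminus\mathcal{E}(\mathcal{M})$ and $\mathcal{E}(\mathcal{M})\setminus B_\delta(\mathcal{M})$ (the latter using that the valleys $\mathcal{E}(\mathcal{M}'')$, $\mathcal{M}''\in\mathfrak{D}$, are pairwise disjoint). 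Property (1) of Proposition \ref{p_test} immediately kills the contribution on $\mathbb{R}^d\setminus\mathcal{E}(\mathcal{M})$. On the second piece, the Morse non-degeneracy of $U$ at each $\bm{m}\in\mathcal{M}$, together with the fact that $\bm{m}$ is the only critical point in $\mathcal{W}^{3r_0}(\bm{m})$ (Appendix \ref{subsec_valley}), forces $U\ge H_\mathfrak{D}+c\delta^2$ for some $c>0$; combined with the Laplace expansion of $Z_\epsilon$ this gives $e^{H_\mathfrak{D}/\epsilon}\pi_\epsilon(\mathcal{E}(\mathcal{M})\setminus B_\delta(\mathcal{M}))\to 0$.

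For the third identity, expand
\[
\theta_\epsilon^{(p)}\epsilon\!\int_{\mathbb{R}^d}\!|\nabla G_\epsilon|^2\,d\pi_\epsilon = \sum_{\mathcal{M},\mathcal{N}\in\mathfrak{D}} \mathbf{g}(\mathcal{M})\mathbf{g}(\mathcal{N})\cdot e^{H_\mathfrak{D}/\epsilon}\theta_\epsilon^{(p)}\epsilon\!\int_{\mathbb{R}^d}\!\nabla h_\mathcal{M}^\epsilon\cdot\nabla h_\mathcal{N}^\epsilon\,d\pi_\epsilon.
\]
Proposition \ref{p_test}-(2) pins down the diagonal contributions $\mathcal{M}=\mathcal{N}$ and, summed over $\mathcal{M}\in\mathfrak{D}$, reproduces exactly $\nu_\star^{-1}A_1$. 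Proposition \ref{p_test}-(3) gives the off-diagonal asymptotics; the two symmetric pieces in its right-hand side merge under the relabeling $(\mathcal{M},\mathcal{N})\leftrightarrow(\mathcal{N},\mathcal{M})$ into $-\nu_\star^{-1}\sum_{\mathcal{M}\neq\mathcal{N},\,\mathcal{M},\mathcal{N}\in\mathfrak{D}}\nu(\mathcal{M})\,\mathbf{g}(\mathcal{M})\,\mathbf{g}(\mathcal{N})\,r^{(p)}(\mathcal{M},\mathcal{N})$, which, after extending $\mathbf{g}$ by zero on $\mathscr{V}^{(p)}\setminus\mathfrak{D}$, is $-\nu_\star^{-1}A_2$. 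Adding the two contributions yields $\nu_\star^{-1}(A_1-A_2)$.

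The main delicate step is the Cauchy--Schwarz book-keeping in the first two parts: the weight $e^{H_\mathfrak{D}/\epsilon}$ must be distributed in a balanced way between the two factors so that property (1) of Proposition \ref{p_test} (which controls the weighted $L^2$-mass of $h_\mathcal{M}^\epsilon$ outside $\mathcal{E}(\mathcal{M})$) applies to one factor and the Laplace $O(1)$ bound to the other; the exponential gap $U\ge H_\mathfrak{D}+c\delta^2$ afforded by the Morse structure is what makes the remaining annular contribution go to zero.
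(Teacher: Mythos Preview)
Your proposal is correct and follows essentially the same approach as the paper: expand $G_\epsilon^2$ (respectively $|\nabla G_\epsilon|^2$) into the double sum over $\mathfrak{D}$, then handle the diagonal and off-diagonal terms separately using Proposition~\ref{p_test}-(1) together with Laplace asymptotics for $\pi_\epsilon$ for the mass identities, and Proposition~\ref{p_test}-(2,3) for the Dirichlet-form identity. The only cosmetic differences are that the paper splits $\mathbb{R}^d\setminus B_\delta(\mathfrak{D})$ into $\mathcal{E}(\mathfrak{D})\setminus B_\delta(\mathfrak{D})$ and $\mathbb{R}^d\setminus\mathcal{E}(\mathfrak{D})$ and applies Cauchy--Schwarz only on the latter, whereas you bound $G_\epsilon^2$ pointwise by $C\,e^{H_\mathfrak{D}/\epsilon}\sum_\mathcal{M}(h_\mathcal{M}^\epsilon)^2$ first and then split for each $\mathcal{M}$; both work equally well.
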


\begin{proof}
Fix $\mathcal{M}\in\mathfrak{D}$, $\bm{m}\in\mathcal{M}$, and $\delta>0$. By
Proposition \ref{p_test}-(1),
\[
\int_{B_{\delta}(\bm{m})}(G_{\epsilon})^{2}d\pi_{\epsilon}=\sum_{\mathcal{M}',\,\mathcal{M}''\in\mathfrak{D}}{\bf g}(\mathcal{M}'){\bf g}(\mathcal{M}'')e^{H_{\mathfrak{D}}/\epsilon}\int_{B_{\delta}(\bm{m})}h_{\mathcal{M}'}^{\epsilon}h_{\mathcal{M}''}^{\epsilon}\,d\pi_{\epsilon}\,.
\]
By the second property of Proposition \ref{p_test}-(1), the overlap with other wells is negligible, so that only the term $\mathcal{M}'=\mathcal{M}''=\mathcal{M}$ contributes in the limit. Since $h_{\mathcal{M}}^{\epsilon}=1$ on $\mathcal{E}(\mathcal{M})$, using the asymptotics of $\pi_{\epsilon}$ near $\bm{m}$,
\[
\pi_{\epsilon}(B_{\delta}(\bm{m}))=[1+o_{\epsilon}(1)]\,\frac{\nu(\bm{m})}{\nu_{\star}}e^{-H_{\mathfrak{D}}/\epsilon}\,,
\]
we obtain
\[
\int_{B_{\delta}(\bm{m})}(G_{\epsilon})^{2}d\pi_{\epsilon}=[1+o_{\epsilon}(1)]\,{\bf g}(\mathcal{M})^{2}\frac{\nu(\bm{m})}{\nu_{\star}}\,.
\]

Next, consider the contribution inside $\mathcal{E}(\mathfrak{D})$ but away from neighborhoods of the minima.
Since ${\bf g}$ and $h_{\mathcal{M}'}^{\epsilon}$ are bounded and the fact that 
\[
\pi_{\epsilon}\big(\mathcal{E}(\mathcal{M}')\setminus\bigcup_{\bm{m}\in\mathcal{M}'}B_{\delta}(\bm{m})\big)=o_{\epsilon}(1)e^{-H_{\mathfrak{D}}/\epsilon}\quad \text{for}\ \ \mathcal{M}'\in\mathfrak{D}\,,
\]
we deduce
\begin{equation}
\lim_{\epsilon\to0}\int_{\mathcal{E}(\mathfrak{D})\setminus B_{\delta}(\mathfrak{D})}(G_{\epsilon})^{2}d\pi_{\epsilon}=0\,.\label{e_57}
\end{equation}

Now consider the contribution outside $\mathcal{E}(\mathfrak{D})$.
By H\"oder's inequality,
\[
\begin{aligned} & \int_{\mathbb{R}^{d}\setminus\mathcal{E}(\mathfrak{D})}(G_{\epsilon})^{2}d\pi_{\epsilon}\\
 & =\sum_{\mathcal{M}',\,\mathcal{M}''\in\mathfrak{D}}{\bf g}(\mathcal{M}'){\bf g}(\mathcal{M}'')e^{H_{\mathfrak{D}}/\epsilon}\int_{\mathbb{R}^{d}\setminus\mathcal{E}(\mathfrak{D})}h_{\mathcal{M}'}^{\epsilon}h_{\mathcal{M}''}^{\epsilon}\,d\pi_{\epsilon}\\
 & \le\sum_{\mathcal{M}',\,\mathcal{M}''\in\mathfrak{D}}{\bf g}(\mathcal{M}'){\bf g}(\mathcal{M}'')\sqrt{e^{H_{\mathfrak{D}}/\epsilon}\int_{\mathbb{R}^{d}\setminus\mathcal{E}(\mathfrak{D})}(h_{\mathcal{M}'}^{\epsilon})^{2}d\pi_{\epsilon}}\sqrt{e^{H_{\mathfrak{D}}/\epsilon}\int_{\mathbb{R}^{d}\setminus\mathcal{E}(\mathfrak{D})}(h_{\mathcal{M}''}^{\epsilon})^{2}d\pi_{\epsilon}}\,.
\end{aligned}
\]
By Proposition \ref{p_test}-(1), each factor inside the square roots vanishes as $\epsilon\to0$.
Together with \eqref{e_57}, this proves
\[
\int_{\mathbb{R}^{d}\setminus B_{\delta}(\mathfrak{D})}(G_{\epsilon})^{2}d\pi_{\epsilon}=0\,.
\]

Finally, we evaluate the Dirichlet form. Since 
\[
\left|\nabla G_{\epsilon}\right|^{2}=e^{H_{\mathfrak{D}}/\epsilon}\sum_{\mathcal{M}',\,\mathcal{M}''\in\mathfrak{D}}{\bf g}(\mathcal{M}'){\bf g}(\mathcal{M}'')\nabla h_{\mathcal{M}'}^{\epsilon}\cdot\nabla h_{\mathcal{M}''}^{\epsilon}\,,
\]
Proposition \ref{p_test}-(2, 3) completes the proof.
\end{proof}

\subsubsection{Main lemma}

\begin{lem}
\label{l_limsup}
Fix $p\in\llbracket1,\,\mathfrak{q}\rrbracket$
and let $\mathfrak{D}\subset\mathscr{V}^{(p)}$ be an equivalence class of the limiting chain $\{{\bf y}^{(p)}(t)\}_{t\ge0}$.
For any $\omega\in\mathcal{P}(\mathfrak{D})$, there exists a sequence
$(\mu_{\epsilon})_{\epsilon>0}$ in $\mathcal{P}(\mathbb{R}^{d})$
such that $\mu_{\epsilon}\to\sum_{\mathcal{M}\in\mathfrak{D}}\omega(\mathcal{M})\pi_{\mathcal{M}}$
as $\epsilon\to0$ and
\[
\limsup_{\epsilon\to0}\theta_{\epsilon}^{(p)}\mathcal{I}(\mu_{\epsilon})\le\mathfrak{J}^{(p)}(\omega)\,.
\]
\end{lem}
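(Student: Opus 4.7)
The plan is to realize $\mu_\epsilon$ as $\mu_\epsilon := Z_\epsilon^{-1} G_\epsilon^2\,d\pi_\epsilon$, where $G_\epsilon$ is the test function from Lemma \ref{l_limsup-0} with $\mathbf{g}:\mathfrak{D}\to[0,\infty)$ chosen as $\mathbf{g}(\mathcal{M}):=\sqrt{\omega(\mathcal{M})/\nu(\mathcal{M})}$ (extended by $0$ outside $\mathfrak{D}$), and $Z_\epsilon := \int G_\epsilon^2\,d\pi_\epsilon$. Since $G_\epsilon\ge 0$ and is not identically zero, $\mu_\epsilon$ is a well-defined probability measure. The concentration estimates of Lemma \ref{l_limsup-0} immediately yield the weak convergence $\mu_\epsilon \to \sum_{\mathcal{M}\in\mathfrak{D}}\omega(\mathcal{M})\pi_\mathcal{M}$: one computes $Z_\epsilon \to \nu_\star^{-1}\sum_{\mathcal{M}\in\mathfrak{D}}\mathbf{g}(\mathcal{M})^2\nu(\mathcal{M}) = \nu_\star^{-1}$, while $\mu_\epsilon(B_\delta(\bm{m}))\to\omega(\mathcal{M}')\nu(\bm{m})/\nu(\mathcal{M}')$ for $\bm{m}\in\mathcal{M}'\in\mathfrak{D}$, and the mass of $\mu_\epsilon$ outside $B_\delta(\mathfrak{D})$ vanishes.

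For the rate functional, \eqref{e_func_max} and Lemma \ref{l_limsup-0} give
\begin{equation*}
\theta_\epsilon^{(p)}\mathcal{I}_\epsilon(\mu_\epsilon) \;=\; \frac{\theta_\epsilon^{(p)}\epsilon}{Z_\epsilon}\int|\nabla G_\epsilon|^2\,d\pi_\epsilon \;\longrightarrow\; A_1 - A_2 \qquad (\epsilon\to 0).
\end{equation*}
I would then decompose $A_1 = A_1^{\mathrm{in}} + A_1^{\mathrm{out}}$ according to whether the inner index lies in $\mathfrak{D}$ or not; since $\mathbf{g}$ vanishes outside $\mathfrak{D}$, the full sum $A_2$ is supported on $\mathfrak{D}\times\mathfrak{D}$. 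When $|\mathfrak{D}|\ge 2$, Proposition \ref{p_rev} supplies the detailed balance $\nu(\mathcal{M})r^{(p)}(\mathcal{M},\mathcal{M}') = \nu(\mathcal{M}')r^{(p)}(\mathcal{M}',\mathcal{M})$ on $\mathfrak{D}$, which by the standard symmetrization trick yields
\begin{equation*}
A_1^{\mathrm{in}} - A_2 \;=\; \tfrac{1}{2}\sum_{\mathcal{M},\mathcal{M}'\in\mathfrak{D}} \nu(\mathcal{M})\,r^{(p)}_\mathfrak{D}(\mathcal{M},\mathcal{M}')\,[\mathbf{g}(\mathcal{M})-\mathbf{g}(\mathcal{M}')]^2.
\end{equation*}
This right-hand side is the reversible form of the Donsker--Varadhan functional of $\{\mathbf{y}^{(p)}_\mathfrak{D}(t)\}_{t\ge 0}$ evaluated at the density $\mathbf{g}^2 = d\omega/d\nu$, hence equals $\mathfrak{J}^{(p)}_\mathfrak{D}(\omega)$. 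The remaining piece is $A_1^{\mathrm{out}} = \sum_{\mathcal{M}\in\mathfrak{D}}\omega(\mathcal{M})\kappa^{(p)}(\mathcal{M})$, with $\kappa^{(p)}(\mathcal{M}) := \sum_{\mathcal{M}'\notin\mathfrak{D}}r^{(p)}(\mathcal{M},\mathcal{M}')$ the exit rate. The singleton case $|\mathfrak{D}|=1$ is immediate: $A_1^{\mathrm{in}}-A_2=0$ and only the exit contribution survives.

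The final step is to recognise $\mathfrak{J}^{(p)}_\mathfrak{D}(\omega)+\sum_{\mathcal{M}\in\mathfrak{D}}\omega(\mathcal{M})\kappa^{(p)}(\mathcal{M})$ as $\mathfrak{J}^{(p)}(\omega)$. Inserting a positive $\mathbf{u}:\mathscr{V}^{(p)}\to(0,\infty)$ into the variational formula for $\mathfrak{J}^{(p)}$ and splitting the inner sum into $\mathcal{M}'\in\mathfrak{D}$ and $\mathcal{M}'\notin\mathfrak{D}$, the outside part equals $\sum_{\mathcal{M}\in\mathfrak{D}}\omega(\mathcal{M})\sum_{\mathcal{M}'\notin\mathfrak{D}}r^{(p)}(\mathcal{M},\mathcal{M}')[1-\mathbf{u}(\mathcal{M}')/\mathbf{u}(\mathcal{M})]$; since the values of $\mathbf{u}$ off $\mathfrak{D}$ are decoupled from the inside part, pushing them to $0^+$ produces the exit term, and then optimizing $\mathbf{u}$ on $\mathfrak{D}$ produces $\mathfrak{J}^{(p)}_\mathfrak{D}(\omega)$. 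Hence $\theta_\epsilon^{(p)}\mathcal{I}_\epsilon(\mu_\epsilon)\to\mathfrak{J}^{(p)}(\omega)$, which gives the required $\limsup$ bound with equality. The main obstacle is the symmetrization in the middle step, which relies decisively on Proposition \ref{p_rev}: without reversibility of the restricted chain, $A_2$ would not collapse into a sum of squares and the limit would disagree with $\mathfrak{J}^{(p)}$. A minor technicality is verifying that $G_\epsilon^2/Z_\epsilon$ lies in the domain required for \eqref{e_func_max}, which follows from the regularity of the $h_\mathcal{M}^\epsilon$ built in Proposition \ref{p_test} (or by a standard smooth approximation combined with lower semicontinuity of $\mathcal{I}_\epsilon$).
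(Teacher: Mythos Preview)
Your proposal is correct and follows essentially the same route as the paper's proof: build $\mu_\epsilon$ from the test function $G_\epsilon$ of Lemma~\ref{l_limsup-0} with $\mathbf{g}=\sqrt{\omega/\nu}$, compute the Dirichlet form limit via that lemma, and identify it with $\mathfrak{J}^{(p)}(\omega)$ using the reversibility of Proposition~\ref{p_rev} together with the decomposition $\mathfrak{J}^{(p)}=\mathfrak{J}^{(p)}_{\mathfrak{D}}+\text{exit rates}$. The only cosmetic differences are that the paper carries an extra factor $\sqrt{\nu_\star}$ in $\mathbf{g}$ (absorbed by your normalization $Z_\epsilon$) and invokes Lemmata~\ref{l_DV_equiv-1} and~\ref{l_DV_equiv-3} from the appendix where you re-derive the same identities by hand.
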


\begin{proof}
Suppose that $|\mathfrak{D}|\ge2$.
By Proposition \ref{p_rev}, the
Markov chain $\{{\bf y}_{\mathfrak{D}}^{(p)}(t)\}_{t\ge0}$, defined
in \eqref{e_y_D}, is reversible with respect to the probability measure
$\nu_{\mathfrak{D}}=\nu/\nu(\mathfrak{D})\in\mathcal{P}(\mathfrak{D})$.
Let $\mathfrak{L}_{\mathfrak{D}}^{(p)}$ be the infinitesimal generator
of $\{{\bf y}_{\mathfrak{D}}^{(p)}(t)\}_{t\ge0}$. By Lemmas
\ref{l_DV_equiv-1} and \ref{l_DV_equiv-3},
\begin{equation}
\begin{aligned}\mathfrak{J}^{(p)}(\omega) & =\mathfrak{J}_{\mathfrak{D}}^{(p)}(\omega)+\sum_{\mathcal{M}\in\mathfrak{D}}\sum_{\mathcal{M}'\in\mathscr{V}^{(p)}\setminus\mathfrak{D}}\omega(\mathcal{M})r^{(p)}(\mathcal{M},\,\mathcal{M}')\\
 & =-\sum_{\mathcal{M}\in\mathfrak{D}}\nu_{\mathfrak{D}}(\mathcal{M}){\bf h}(\mathcal{M})\mathfrak{L}_{\mathfrak{D}}^{(p)}{\bf h}(\mathcal{M})+\sum_{\mathcal{M}\in\mathfrak{D}}\omega(\mathcal{M})\sum_{\mathcal{M}'\in\mathscr{V}^{(p)}\setminus\mathfrak{D}}r^{(p)}(\mathcal{M},\,\mathcal{M}')\,,
\end{aligned}
\label{e_58}
\end{equation}
where $\mathfrak{J}_{\mathfrak{D}}^{(p)}:\mathcal{P}(\mathfrak{D})\to[0,\,\infty]$
denotes the large deviation rate functional of $\{{\bf y}_{\mathfrak{D}}^{(p)}(t)\}_{t\ge0}$,
and
\[
{\bf h}(\mathcal{M})=\sqrt{\frac{\omega(\mathcal{M})}{\nu_{\mathfrak{D}}(\mathcal{M})}}\ \ ;\ \ \mathcal{M}\in\mathfrak{D}\,.
\]
Extend ${\bf h}:\mathfrak{D}\to\mathbb{R}$ to ${\bf h}:\mathscr{V}^{(p)}\to\mathbb{R}$
so that ${\bf h}(\mathcal{M})=0$ for $\mathcal{M}\in\mathscr{V}^{(p)}\setminus\mathfrak{D}$.

Set
\[
\bm{g}(\mathcal{M}):=\sqrt{\frac{\nu_{\star}\omega(\mathcal{M})}{\nu(\mathcal{M})}}=\sqrt{\frac{\nu_{\star}}{\nu(\mathfrak{D})}}\bm{h}(\mathcal{M})\ \ ;\ \ \mathcal{M}\in\mathscr{V}^{(p)}\,,
\]
and define $G_{\epsilon}$ as in \eqref{e_limsup_G}. By Lemma \ref{l_limsup-0},
$\lim_{\epsilon\to0}\int_{\mathbb{R}^{d}}|G_{\epsilon}|^{2}d\pi_{\epsilon}=\sum_{\mathcal{M}\in\mathfrak{D}}\omega(\mathcal{M})=1$.
Let
\begin{equation}
F_{\epsilon}(\bm{x}):=\frac{1}{\sqrt{\int_{\mathbb{R}^{d}}|G_{\epsilon}|^{2}d\pi_{\epsilon}}}|G_{\epsilon}(\bm{x})|=\frac{1}{1+o_{\epsilon}(1)}|G_{\epsilon}(\bm{x})|\,,\label{e_59}
\end{equation}
and set $\mu_{\epsilon}=|F_{\epsilon}|^{2}d\pi_{\epsilon}\in\mathcal{P}(\mathbb{R}^d)$.
Then,
by Lemma \ref{l_limsup-0},
\[
\begin{aligned} & \lim_{\epsilon\to0}\mu_{\epsilon}\Big(\mathbb{R}^{d}\setminus\bigcup_{\mathcal{M}'\in\mathfrak{D}}B_{\delta}(\mathcal{M}')\Big)=0\,,\\
 & \lim_{\epsilon\to0}\mu_{\epsilon}\Big(B_{\delta}(\bm{m})\Big)=\frac{\nu(\bm{m})}{\nu(\mathcal{M})}\omega(\mathcal{M})=\pi_{\mathcal{M}}(\bm{m})\,\omega(\mathcal{M})\,,
\end{aligned}
\]
for all $\delta>0$, $\mathcal{M}\in\mathfrak{D}$, and $\bm{m}\in\mathcal{M}$, so that $\mu_{\epsilon}\to\sum_{\mathcal{M}\in\mathfrak{D}}\omega(\mathcal{M})\pi_{\mathcal{M}}$.

By the definition \eqref{e_59} of $F_{\epsilon}$ and Lemma \ref{l_limsup-0},
\begin{equation}
\begin{aligned} & \lim_{\epsilon\to0}\theta_{\epsilon}^{(p)}\epsilon\int_{\mathbb{R}^{d}}\left|\nabla F_{\epsilon}\right|^{2}d\pi_{\epsilon}\\
 & =\frac{1}{\nu_{\star}}\bigg(\sum_{\mathcal{M}\in\mathfrak{D}}\nu(\mathcal{M}){\bf g}(\mathcal{M})^{2}\sum_{\mathcal{M}'\in\mathscr{V}^{(p)}\setminus\{\mathcal{M}\}}r^{(p)}(\mathcal{M},\,\mathcal{M}')\\
 & \ \ \ \ \ -\sum_{\mathcal{M}\in\mathfrak{D}}\nu(\mathcal{M}){\bf g}(\mathcal{M})\sum_{\mathcal{M}'\in\mathscr{V}^{(p)}\setminus\{\mathcal{M}\}}{\bf g}(\mathcal{M}')\,r^{(p)}(\mathcal{M},\,\mathcal{M}')\bigg)\,.
\end{aligned}
\label{e_511}
\end{equation}
Since
\[
\begin{aligned} & \frac{\nu(\mathcal{M})}{\nu_{\star}}{\bf g}(\mathcal{M})^{2}=\frac{\nu(\mathcal{M})}{\nu(\mathfrak{D})}{\bf h}(\mathcal{M})^{2}=\nu_{\mathfrak{D}}(\mathcal{M}){\bf h}(\mathcal{M})^{2}\,,\\
 & \frac{\nu(\mathcal{M})}{\nu_{\star}}{\bf g}(\mathcal{M}){\bf g}(\mathcal{M}')=\frac{\nu(\mathcal{M})}{\nu(\mathfrak{D})}{\bf h}(\mathcal{M}){\bf h}(\mathcal{M}')=\nu_{\mathfrak{D}}(\mathcal{M}){\bf h}(\mathcal{M}){\bf h}(\mathcal{M}')\,,
\end{aligned}
\]
the right-hand side of \eqref{e_511} is equal to
\[
\begin{aligned} & \sum_{\mathcal{M}\in\mathfrak{D}}\nu_{\mathfrak{D}}(\mathcal{M}){\bf h}(\mathcal{M})^{2}\sum_{\mathcal{M}'\in\mathscr{V}^{(p)}\setminus\{\mathcal{M}\}}r^{(p)}(\mathcal{M},\,\mathcal{M}')\\
 & \ \ \ \ \ -\sum_{\mathcal{M}\in\mathfrak{D}}\nu_{\mathfrak{D}}(\mathcal{M}){\bf h}(\mathcal{M})\sum_{\mathcal{M}'\in\mathscr{V}^{(p)}\setminus\{\mathcal{M}\}}\bm{h}(\mathcal{M}')r^{(p)}(\mathcal{M},\,\mathcal{M}')\\
 & =-\sum_{\mathcal{M}\in\mathfrak{D}}\nu_{\mathfrak{D}}(\mathcal{M}){\bf h}(\mathcal{M})\sum_{\mathcal{M}'\in\mathfrak{D}\setminus\{\mathcal{M}\}}r^{(p)}(\mathcal{M},\,\mathcal{M}')\left({\bf h}(\mathcal{M}')-{\bf h}(\mathcal{M})\right)\\
 & \ \ \ \ \ +\sum_{\mathcal{M}\in\mathfrak{D}}\nu_{\mathfrak{D}}(\mathcal{M}){\bf h}(\mathcal{M})^{2}\sum_{\mathcal{M}'\in\mathscr{V}^{(p)}\setminus\mathfrak{D}}r^{(p)}(\mathcal{M},\,\mathcal{M}')\\
 & =-\sum_{\mathcal{M}\in\mathfrak{D}}\nu_{\mathfrak{D}}(\mathcal{M}){\bf h}(\mathcal{M})\mathfrak{L}_{\mathfrak{D}}^{(p)}\bm{h}(\mathcal{M})+\sum_{\mathcal{M}\in\mathfrak{D}}\omega(\mathcal{M})\sum_{\mathcal{M}'\in\mathscr{V}^{(p)}\setminus\mathfrak{D}}r^{(p)}(\mathcal{M},\,\mathcal{M}')\,,
\end{aligned}
\]
which coincides with \eqref{e_58}. Therefore, by \eqref{e_func_max},
\begin{equation}
\lim_{\epsilon\to0}\theta_{\epsilon}^{(p)}\mathcal{I}_{\epsilon}(\mu_{\epsilon})=\lim_{\epsilon\to0}\theta_{\epsilon}^{(p)}\epsilon\int_{\mathbb{R}^{d}}\left|\nabla F_{\epsilon}\right|^{2}d\pi_{\epsilon}=\mathfrak{J}^{(p)}(\omega)\,.
\end{equation}

If $|\mathfrak{D}|=1$, say $\mathfrak{D}=\{\mathcal{M}^{(0)}\}$
and $\omega=\delta_{\mathcal{M}^{(0)}}$, then by \eqref{e_J_delta},
\begin{equation}
\mathfrak{J}^{(p)}(\omega)=\sum_{\mathcal{M}\in\mathscr{V}^{(p)}\setminus\{\mathcal{M}^{(0)}\}}r^{(p)}(\mathcal{M}^{(0)},\,\mathcal{M})\,.\label{e_512}
\end{equation}
Define
\[
\bm{g}(\mathcal{M})=\begin{cases}
\sqrt{\frac{\nu_{\star}}{\nu(\mathcal{M}^{(0)})}} & \mathcal{M}=\mathcal{M}^{(0)}\,,\\
0 & \mathcal{M}\in\mathscr{V}^{(p)}\setminus\{\mathcal{M}^{(0)}\}\,,
\end{cases}
\]
and define functions $G_{\epsilon}$, $F_{\epsilon}$, and the sequence
of measures $(\mu_{\epsilon})_{\epsilon}$ as above.
Then, $\mu_{\epsilon}\to\pi_{\mathcal{M}^{(0)}}$ and
Lemma \ref{l_limsup-0} gives
\[
\begin{aligned}\lim_{\epsilon\to0}\theta_{\epsilon}^{(p)}\epsilon\int_{\mathbb{R}^{d}}\left|\nabla F_{\epsilon}\right|^{2}d\pi_{\epsilon} & =\nu_{\star}^{-1}\nu(\mathcal{M}^{(0)}){\bf g}(\mathcal{M}^{(0)})^{2}\sum_{\mathcal{M}\in\mathscr{V}^{(p)}\setminus\{\mathcal{M}^{(0)}\}}r^{(p)}(\mathcal{M}^{(0)},\,\mathcal{M})\\
 & =\sum_{\mathcal{M}\in\mathscr{V}^{(p)}\setminus\{\mathcal{M}^{(0)}\}}r^{(p)}(\mathcal{M}^{(0)},\,\mathcal{M})\,,
\end{aligned}
\]
which, together with \eqref{e_func_max} and \eqref{e_512}, completes the proof.
\end{proof}

\subsection{Proof of Proposition \ref{p_Gamma}-(4)}
\begin{proof}[Proof of Proposition \ref{p_Gamma}-(4)]

$ $

\smallskip
\noindent $\Gamma-\liminf$.
\smallskip

We prove by induction on $p$.

\smallskip
\noindent\textbf{Step 1.} $p=1$
\smallskip

Let $\mu\in\mathcal{P}(\mathbb{R}^{d})$
be not a convex combination of $\delta_{\bm{m}}$,
$\bm{m}\in\mathcal{M}_{0}$.
For any sequence $(\mu_{\epsilon})_{\epsilon>0}$
in $\mathcal{P}(\mathbb{R}^{d})$ such that $\mu_{\epsilon}\to\mu$
as $\epsilon\to0$, Proposition \ref{p_Gamma}-(3) yields
\[
\liminf_{\epsilon\to0}\,\mathcal{I}_{\epsilon}(\mu_{\epsilon})=\mathcal{J}^{(0)}(\mu)>0\,,
\]
hence
\begin{equation}
\liminf_{\epsilon\to0}\,\theta_{\epsilon}^{(1)}\,\mathcal{I}_{\epsilon}(\pi_{\epsilon})\,=\,\infty=\,\mathcal{J}^{(1)}(\mu)\,.\label{e_liminf-1}
\end{equation}
If instead $\mu=\sum_{\bm{m}\in\mathcal{M}_{0}}\omega(\bm{m})\delta_{\bm{m}}$
for some $\omega\in\mathcal{P}(\mathcal{M}_{0})$, Lemma \ref{l_liminf} gives
\begin{equation}
\liminf_{\epsilon\to0}\,\theta_{\epsilon}^{(1)}\,\mathcal{I}_{\epsilon}(\mu_{\epsilon})\,\ge\,\mathfrak{J}^{(1)}(\omega)\,=\,\mathcal{J}^{(1)}(\mu)\,.\label{e_liminf-2}
\end{equation}
By the definition \eqref{e_J^p} of $\mathcal{J}^{(1)}$, together with
\eqref{e_liminf-1} and \eqref{e_liminf-2}, the sequence $(\theta_{\epsilon}^{(1)}\mathcal{I}_{\epsilon})_{\epsilon>0}$
satisfies Definition \ref{def_Gamma}-(1) with the limit $\mathcal{J}^{(1)}$.

\smallskip
\noindent\textbf{Step 2.} $p\in\llbracket2,\,\mathfrak{q}\rrbracket$
\smallskip

Fix $p\in\llbracket2,\,\mathfrak{q}\rrbracket$ and assume that for every $n<p$,
the sequence $(\theta_{\epsilon}^{(n)}\mathcal{I}_{\epsilon})_{\epsilon>0}$ satisfies Definition \ref{def_Gamma}-(1) with the limit $\mathcal{J}^{(n)}$.

Let $\mu\in\mathcal{P}(\mathbb{R}^{d})$ be not of the form
\begin{equation}
\mu\,=\,\sum_{\mathcal{M}\in\mathscr{V}^{(p)}}\omega(\mathcal{M})\,\pi_{\mathcal{M}}\label{e_liminf-3}
\end{equation}
for any $\omega\in\mathcal{P}(\mathscr{V}^{(p)})$. By Lemma \ref{l42}
and the induction hypothesis, for any sequence $(\mu_{\epsilon})_{\epsilon>0}$
in $\mathcal{P}(\mathbb{R}^{d})$ such that $\mu_{\epsilon}\to\mu$,
\[
\liminf_{\epsilon\to0}\theta_{\epsilon}^{(p-1)}\mathcal{I}_{\epsilon}(\mu_{\epsilon})\ge\mathcal{J}^{(p-1)}(\mu)>0\,,
\]
hence
\begin{equation}
\liminf_{\epsilon\to0}\theta_{\epsilon}^{(p)}\mathcal{I}_{\epsilon}(\mu_{\epsilon})=\infty\,.\label{e_liminf-4}
\end{equation}
If $\mu\in\mathcal{P}(\mathbb{R}^{d})$ is of the form \eqref{e_liminf-3},
Lemma \ref{l_liminf} gives
\begin{equation}
\liminf_{\epsilon\to0}\theta_{\epsilon}^{(p)}\mathcal{I}_{\epsilon}(\mu_{\epsilon})\ge\mathfrak{J}^{(p)}(\omega)=\mathcal{J}^{(p)}(\mu)\,.\label{e_liminf-5}
\end{equation}
Combining \eqref{e_J^p},
\eqref{e_liminf-4}, and \eqref{e_liminf-5} shows that $(\theta_{\epsilon}^{(p)}\mathcal{I}_{\epsilon})_{\epsilon>0}$
satisfies Definition \ref{def_Gamma}-(1) with the limit $\mathcal{J}^{(p)}$.

\smallskip
\noindent $\Gamma-\limsup$.
\smallskip

Fix $p\in\llbracket1,\,\mathfrak{q}\rrbracket$. If $\mu\in\mathcal{P}(\mathbb{R}^{d})$
is not of the form \eqref{e_liminf-3}
for any $\omega\in\mathcal{P}(\mathscr{V}^{(p)})$, then $\mathcal{J}^{(p)}(\mu)=\infty$ by the definition
\eqref{e_J^p} of $\mathcal{J}^{(p)}$,
so there is nothing to prove. Suppose that $\mathcal{J}^{(p)}(\mu)<\infty$
so that $\mu\in\mathcal{P}(\mathbb{R}^{d})$ is of the form \eqref{e_liminf-3}.
Decompose $\mathscr{V}^{(p)}$ as
\[
\mathscr{V}^{(p)}=\bigcup_{i=1}^{\mathfrak{l}}\mathfrak{D}_{i}
\]
where $\mathfrak{D}_{1},\,\dots,\,\mathfrak{D}_{\mathfrak{l}}$ are
the equivalence classes of $\{{\bf y}^{(p)}(t)\}_{t\ge0}$, and write
\[
\omega=\sum_{i=1}^{\mathfrak{l}}\omega(\mathfrak{D}_{i})\omega_{\mathfrak{D}_{i}}\,,
\]
where $\omega_{\mathfrak{D}_{i}}$ is the measure $\omega$ conditioned on $\mathfrak{D}_{i}$.

For each $i$, let $\mu_{\epsilon}^{i}\in\mathcal{P}(\mathbb{R}^{d})$
be the sequence provided by Lemma \ref{l_limsup} applied to $\omega_{\mathfrak{D}_i}$.
Define
\[
\mu_{\epsilon}=\sum_{i=1}^{\mathfrak{l}}\omega(\mathfrak{D}_{i})\mu_{\epsilon}^{i}\,.
\]
By convexity of $\mathcal{I}_{\epsilon}$ and  Lemma \ref{l_limsup},
\[
\limsup_{\epsilon\to0}\theta_{\epsilon}^{(p)}\mathcal{I}_{\epsilon}(\mu_{\epsilon})\le\sum_{i=1}^{\mathfrak{l}}\omega(\mathfrak{D}_{i})\limsup_{\epsilon\to0}\theta_{\epsilon}^{(p)}\mathcal{I}(\mu_{\epsilon}^{i})\le\sum_{i=1}^{\mathfrak{l}}\omega(\mathfrak{D}_{i})\mathfrak{J}^{(p)}(\omega_{\mathfrak{D}_{i}})\,.
\]
Finally, by Lemma \ref{l_DV_equiv-2}, 
\[
\sum_{i=1}^{\mathfrak{l}}\omega(\mathfrak{D}_{i})\mathfrak{J}^{(p)}(\omega_{\mathfrak{D}_{i}})=\mathfrak{J}^{(p)}(\omega)\,,
\]
which completes the proof.
\end{proof}

\section{\label{sec_tree_rig}Tree structure}

In this section, we present the rigorous definition of the tree
structure informally introduced in Section \ref{subsec_tree}.

\subsection{\label{subsec: MC1}The first layer}

We first recall several notions related to the energy landscape
induced by $U$ introduced in \cite[Section 4.1]{LLS-2nd}. Note that we
consider here the reversible case in which the drift $\bm{b}$ is equal
to $-\nabla U$.

\begin{itemize}
\item For each pair $\boldsymbol{m}'\neq\boldsymbol{m}''\in\mathcal{M}_{0}$,
denote by $\Theta(\boldsymbol{m}',\,\boldsymbol{m}'')$ the \emph{communication
height} between $\boldsymbol{m}'$ and $\boldsymbol{m}''$: 
\[
{\color{blue}\Theta(\boldsymbol{m}',\,\boldsymbol{m}''}\mathclose{\color{blue})}:=\inf_{\substack{\boldsymbol{z}:[0\,1]\rightarrow\mathbb{R}^{d}}
}\max_{t\in[0,\,1]}U(\boldsymbol{z}(t))\,,
\]
where the infimum is carried over all continuous paths $\boldsymbol{z}(\cdot)$
such that $\boldsymbol{z}(0)=\boldsymbol{m}'$ and $\boldsymbol{z}(1)=\boldsymbol{m}''$.
Clearly, $\Theta(\boldsymbol{m}',\,\boldsymbol{m}'')=\Theta(\boldsymbol{m}'',\,\boldsymbol{m}')$.
\item For $\bm{c}_{1},\,\bm{c}_{2}\in\mathcal{C}_{0}$, we write \textcolor{blue}{$\bm{c}_{1}\curvearrowright\bm{c}_{2}$}
if there exists a heteroclinic orbit connecting $\bm{c}_{1}$ to $\bm{c}_{2}$.
\item For each saddle point $\boldsymbol{\sigma}\in\mathcal{S}_{0}$,
the matrix $(\nabla^{2}U)(\boldsymbol{\sigma})$ has one negative
eigenvalue, represented by
${\color{blue} -\lambda_{1}^{\bm{\sigma}}}<0$. For
$\boldsymbol{\sigma}\in\mathcal{S}_{0}$, let the weight
$\omega(\boldsymbol{\sigma})$, the so-called \textit{Eyring--Kramers
constant}, be defined by
\[
{\color{blue}\omega(\boldsymbol{\sigma}}\mathclose{\color{blue})}:=\frac{\lambda_{1}^{\bm{\sigma}}}{2\pi\sqrt{-\,\det(\nabla^{2}U)(\boldsymbol{\sigma})}}\,.
\]
\end{itemize}

Let
${\color{blue} \mathscr{V}^{(1)}}:=\left\{
\{\bm{m}\}:\bm{m}\in\mathcal{M}_{0}\right\} $.  For
$\bm{m}\in\mathscr{V}^{(1)}$, denote by $\Xi(\boldsymbol{m})$ the
difference between the
height which separates $\bm{m}$ from lower local minima and the height
of $\bm{m}$:
\begin{equation}
{\color{blue}\Xi(\boldsymbol{m}}\mathclose{\color{blue})}:=\inf\left\{ \Theta(\boldsymbol{m},\,\boldsymbol{m}'):\boldsymbol{m}'\in\mathcal{M}_{0}\setminus\{\boldsymbol{m}\}\text{ such that }U(\boldsymbol{m}')\le U(\boldsymbol{m})\right\} -U(\bm{m})\label{e_def_Xi}\,.
\end{equation}
Let $d^{(1)}$ be the smallest height difference: 
\[
{\color{blue} d^{(1)}}
:=\min_{\bm{m}\in\mathscr{V}^{(1)}}\Xi(\bm{m})\,.
\]
Since, by assumption, $|\mathscr{V}^{(1)}|\ge2$, there exists
$\bm{m}\in\mathscr{V}^{(1)}$ such that $\Xi(\bm{m})<\infty$, so that
$d^{(1)}<\infty$.

For $\bm{m}\in\mathscr{V}^{(1)}$, let
$\mathcal{S}^{(1)}(\boldsymbol{m})$ be the set of saddle points
connected to the local minimum $\boldsymbol{m}$:
\begin{gather*}
{\color{blue}\mathcal{S}^{(1)}(\boldsymbol{m}}\mathclose{\color{blue})}:=\big\{\,\bm{\sigma}\in\mathcal{S}_{0}:\boldsymbol{\sigma}\curvearrowright\boldsymbol{m}\ ,\ \ U(\bm{\sigma})=U(\bm{m})+\Xi(\bm{m})\,\big\}\,.
\end{gather*}
Denote by $\mathcal{S}(\boldsymbol{m},\boldsymbol{m}')$, $\boldsymbol{m}'\neq\boldsymbol{m}$,
the set of saddle points which separate $\boldsymbol{m}$ from $\boldsymbol{m}'$:
\[
{\color{blue}\mathcal{S}(\boldsymbol{m},\,\boldsymbol{m}'}\mathclose{\color{blue})}:=\big\{\,\boldsymbol{\sigma}\in\mathcal{S}^{(1)}(\boldsymbol{m}):\boldsymbol{\sigma}\curvearrowright\boldsymbol{m}\;,\;\;\boldsymbol{\sigma}\curvearrowright\boldsymbol{m}'\,\big\}\,.
\]
Note that we may have
$\mathcal{S}(\bm{m},\,\bm{m}')\ne\mathcal{S}(\bm{m}',\,\bm{m})$ or
$\mathcal{S}(\boldsymbol{m},\,\boldsymbol{m}')=\varnothing$ for some
$\bm{m},\,\bm{m}'\in\mathscr{V}^{(1)}$.  Mind that if
$\Xi(\bm m) = d^{(1)}$, and
$\mathcal{S}  (\boldsymbol{m},\,\boldsymbol{m}') \neq \varnothing$ for some
$\bm m'\in \mathcal{ M}_0$, then $U(\bm m) \ge U(\bm m')$.

Denote by $\omega(\boldsymbol{m},\boldsymbol{m}')$ the sum of the
Eyring--Kramers constants of the saddle points in
$\mathcal{S}(\boldsymbol{m},\boldsymbol{m}')$:
\[
{\color{blue}\omega(\boldsymbol{m},\,\boldsymbol{m}'}\mathclose{\color{blue})}:=\sum_{\boldsymbol{\sigma}\in\mathcal{S}(\boldsymbol{m},\boldsymbol{m}')}\,\omega(\boldsymbol{\sigma})\;,\ \ {\color{blue}\omega_{1}(\bm{m},\,\bm{m}'}\mathclose{\color{blue})}:=\omega(\bm{m},\,\bm{m}')\,\boldsymbol{1}\left\{ \,\Xi(\boldsymbol{m})=d^{(1)}\,\right\} \,.
\]
Recall the definition of the weight $\nu(\bm m)$,
$\bm m\in \mathcal{ M}_0$, given in \eqref{e_def_nu}. For
$\bm{m},\,\bm{m}'\in\mathscr{V}^{(1)}$, define
\[
{\color{blue}r^{(1)}(\{\boldsymbol{m} \},\, \{\boldsymbol{m}'\}}
\mathclose{\color{blue})}:=\begin{cases}
\frac{1}{\nu(\boldsymbol{m})}\,\omega_{1}(\boldsymbol{m},\boldsymbol{m}') & \bm{m}\ne\bm{m}'\,,\\
0 & \bm{m}=\bm{m}'\,,
\end{cases}
\]
and let \textcolor{blue}{$\{{\bf y}^{(1)}(t)\}_{t\ge0}$} be the
$\mathscr{V}^{(1)}$-valued Markov chain with jump rates
$r^{(1)}:\mathscr{V}^{(1)}\times\mathscr{V}^{(1)}\to[0,\,\infty)$.  If
$\{{\bf y}^{(1)}(t)\}_{t\ge0}$ has only one irreducible class the
construction is over.

\subsection{\label{subsec: MC2}The upper levels}

First, we recall several notions introduced in \cite[Section
4.2]{LLS-2nd}.
\begin{itemize}
\item For two disjoint non-empty subsets $\mathcal{M}$ and $\mathcal{M}'$
of $\mathcal{M}_{0}$, let $\Theta(\mathcal{M},\,\mathcal{M}')$ be
the \textit{communication height} between the two sets: 
\[
{\color{blue}\Theta(\mathcal{M},\,\mathcal{M}'}\mathclose{\color{blue})}:=\min_{\boldsymbol{m}\in\mathcal{M},\,\boldsymbol{m}'\in\mathcal{M}'}\Theta(\boldsymbol{m},\,\boldsymbol{m}')\,,
\]
with the convention that $\Theta(\mathcal{\mathcal{M}},\,\varnothing)=+\infty$.
\item Recall the definition of simple sets introduced at the beginning of
Section \ref{subsec521}. For a simple set $\mathcal{\mathcal{M}}\subset\mathcal{M}_{0}$,
denote by $\widetilde{\mathcal{M}}$ the set of local minima of $U$
which do not belong to $\mathcal{M}$ and which have lower or equal
energy than $\mathcal{M}$: 
\[
{\color{blue}\widetilde{\mathcal{M}}}:=\big\{\,\boldsymbol{m}\in\mathcal{M}_{0}\setminus\mathcal{M}:U(\boldsymbol{m})\le U(\mathcal{M})\,\big\}\,.
\]
Note that $\widetilde{\mathcal{\mathcal{M}}}=\varnothing$ if and
only if $\mathcal{M}$ contains all the global minima of $U$.\smallskip{}
\item For a saddle point $\boldsymbol{\sigma}\in\mathcal{S}_{0}$ and local
minimum $\boldsymbol{m}\in\mathcal{M}_{0}$, we write\textit{\textcolor{blue}{{}
$\boldsymbol{\sigma}\rightsquigarrow\boldsymbol{m}$}} if $\boldsymbol{\sigma}\curvearrowright\boldsymbol{m}$
or if there exist $n\ge1$, $\boldsymbol{\sigma}_{1},\,\dots,\,\boldsymbol{\sigma}_{n}\in\mathcal{S}_{0}$
and $\boldsymbol{m}_{1}\,\,\dots,\,\boldsymbol{m}_{n}\in\mathcal{M}_{0}$
such that 
\[
\max\{U(\boldsymbol{\sigma}_{1}),\,\dots,\,U(\boldsymbol{\sigma}_{n})\,\}<U(\boldsymbol{\sigma})\;\;\;\text{and\;\;\;}\boldsymbol{\sigma}\curvearrowright\boldsymbol{m}_{1}\curvearrowleft\boldsymbol{\sigma}_{1}\curvearrowright\cdots\curvearrowright\boldsymbol{m}_{n}\curvearrowleft\boldsymbol{\sigma}_{n}\curvearrowright\boldsymbol{m}\,.
\]
For $\mathcal{\mathcal{M}}\subset\mathcal{M}_{0}$, write ${\color{blue}\boldsymbol{\sigma}\rightsquigarrow\mathcal{\mathcal{M}}}$
and \textcolor{blue}{$\bm{\sigma}\curvearrowright\mathcal{M}$} if
for some $\boldsymbol{m}\in\mathcal{\mathcal{M}}$, $\boldsymbol{\sigma}\rightsquigarrow\boldsymbol{m}$
and $\bm{\sigma}\curvearrowright\bm{m}$ , respectively. \smallskip{}
\item Fix a non-empty simple set $\mathcal{\mathcal{M}}\subset\mathcal{M}_{0}$
such that $\widetilde{\mathcal{M}}\neq\varnothing$. For a set $\mathcal{\mathcal{M}}'\subset\mathcal{M}_{0}$
such that $\mathcal{M}'\cap\mathcal{M}=\varnothing$, we write\textit{\textcolor{blue}{{}
$\mathcal{M}\rightarrow\mathcal{M}'$}} if there exists $\boldsymbol{\sigma}\in\mathcal{S}_{0}$
such that
\begin{equation}
U(\boldsymbol{\sigma})=\Theta(\mathcal{M},\,\widetilde{\mathcal{M}})=\Theta(\mathcal{M},\,\mathcal{M}')\;\;\text{and}\;\;\mathcal{M}'\,\curvearrowleft\,\boldsymbol{\sigma}\,\rightsquigarrow\,\mathcal{M}\,.\label{eq:con_gate}
\end{equation}
To emphasize the saddle point $\boldsymbol{\sigma}$ between $\mathcal{M}$
and $\mathcal{M}'$ we sometimes write ${\color{blue}\mathcal{M}\rightarrow_{\boldsymbol{\sigma}}\mathcal{M}'}$.
\smallskip{}
\item Denote by $\mathcal{S}(\mathcal{M},\,\mathcal{M}')$ the set of saddle
points $\boldsymbol{\sigma}\in\mathcal{S}_{0}$ satisfying \eqref{eq:con_gate},
\begin{equation}
{\color{blue}\mathcal{S}(\mathcal{M},\,\mathcal{M}'}\mathclose{\color{blue})}:=\{\,\boldsymbol{\sigma}\in\mathcal{S}_{0}:\mathcal{M}\to_{\boldsymbol{\sigma}}\mathcal{M}'\,\}\,.\label{e_SMM}
\end{equation}
The set $\mathcal{S}(\mathcal{M},\,\mathcal{M}')$ represents the
collection of lowest connection points which separate $\mathcal{M}$
from $\mathcal{M}'$. Note that we may have $\mathcal{S}(\mathcal{M},\,\mathcal{M}')\ne\mathcal{S}(\mathcal{M}',\,\mathcal{M})$
or $\mathcal{S}(\mathcal{M},\,\mathcal{M}')=\varnothing$ for some
$\mathcal{M},\,\mathcal{M}'\subset\mathcal{M}_0$.
\end{itemize}

Recall the definition of $\Lambda^{(n)}$, $n\ge1$, introduced at the
beginning of Section \ref{subsec_tree}. Fix $k\ge1$ and suppose that
the quintuples $\Lambda^{(n)}$, $n\in\llbracket1,\,k\rrbracket$, have
been defined. Denote by \textcolor{blue}{$\mathfrak{n}_{k}$} the
number of $\{{\bf y}^{(k)}(t)\}_{t\ge0}$-irreducible classes. If
$\mathfrak{n}_{k}=1$, the construction is over.  Otherwise, denote by
\textcolor{blue}{$\mathscr{R}_{1}^{(k)},\,\dots,\,\mathscr{R}_{\mathfrak{n}_{k}}^{(k)}$}
the ${\bf y}^{(k)}$-irreducible classes and by
\textcolor{blue}{$\mathscr{T}^{(k)}$} the collection of
${\bf y}^{(k)}$-transient states, respectively.

Recall from \eqref{e_M_p+1} and \eqref{e_V_p+1} the definitions of
$\mathcal{M}_{i}^{(k+1)}$, $1\le i\le \mf n_k$, $\mathscr{V}^{(k+1)}$,
$\mathscr{N}^{(k+1)}$, and $\mathscr{S}^{(k+1)}$. By Proposition
\ref{p: tree}-(2) below, all $\mathcal{M}\in\mathscr{S}^{(k+1)}$ are
simple. For $\mathcal{M}\in\mathscr{V}^{(k+1)}$, define
\begin{equation}
{\color{blue}\Xi(\mathcal{M}}\mathclose{\color{blue})}:=\Theta(\mathcal{M},\,\widetilde{\mathcal{M}})-U(\mathcal{M})\ \ \text{and}\ \ {\color{blue}d^{(k+1)}}:=\min_{\mathcal{M}\in\mathscr{V}^{(k+1)}}\Xi(\mathcal{M})\,.\label{e_Xi}
\end{equation}
Since $\mathfrak{n}_{k}\ge2$, there exists
$\mathcal{M}\in\mathscr{V}^{(k+1)}$ such that
$\Xi(\mathcal{M})<\infty$ so that $d^{(k+1)}<\infty$.

Denote by $\widehat{r}^{(k)}:\mathscr{S}^{(k)}\times\mathscr{S}^{(k)}\to[0,\,\infty)$
the jump rates of the $\mathscr{S}^{(k)}$-valued Markov chain $\{\widehat{\mathbf{y}}^{(k)}(t)\}_{t\ge0}$.
Since $\mathscr{S}^{(k+1)}=\mathscr{V}^{(k+1)}\cup\mathscr{N}^{(k+1)}$,
we can divide the definition of the jump rate\textcolor{blue}{\emph{
}}${\color{blue}\widehat{r}^{(k+1)}}:\mathscr{S}^{(k+1)}\times\mathscr{S}^{(k+1)}\to[0,\,\infty)$
of $\{\widehat{\mathbf{y}}^{(k+1)}(t)\}_{t\ge0}$ into four cases: 
\begin{itemize}
\item {[}\textbf{Case }1: $\mathcal{M}=\mathcal{M}'\in\mathscr{S}^{(k+1)}${]}
We set $\widehat{r}^{(k+1)}(\mathcal{M},\,\mathcal{M}')=0$.
\item {[}\textbf{Case }2: $\mathcal{M}\in\mathscr{N}^{(k+1)}$ and $\mathcal{M}'\in\mathscr{N}^{(k+1)}${]}
Since $\mathcal{M},\,\mathcal{M}'\in\mathscr{S}^{(k)}$ , we set 
\begin{equation}
\widehat{r}^{(k+1)}(\mathcal{M},\,\mathcal{M}'):=\widehat{r}^{(k)}(\mathcal{M},\,\mathcal{M}')\,.\label{e_rate-1}
\end{equation}
\item {[}\textbf{Case }3: $\mathcal{M}\in\mathscr{N}^{(k+1)}$ and $\mathcal{M}'\in\mathscr{V}^{(k+1)}${]}
Since $\mathcal{M}\in\mathscr{S}^{(k)}$ and since $\mathcal{M}'$
is the union of elements (may be just one) in $\mathscr{V}^{(k)}$,
we set 
\begin{equation}
\widehat{r}^{(k+1)}(\mathcal{M},\,\mathcal{M}'):=\sum_{\mathcal{M}''\in\mathscr{R}^{(k)}(\mathcal{M}')}\widehat{r}^{(k)}(\mathcal{M},\,\mathcal{M}'')\,,\label{e_rate-2}
\end{equation}
where $\mathscr{R}^{(k)}(\mathcal{M}')$, $\mathcal{M}'\in\mathscr{V}^{(k+1)}$,
is the irreducible class of $\{{\bf y}^{(k)}(t)\}_{t\ge0}$ such that
$\mathcal{M}'=\bigcup_{\mathcal{M}''\in\mathscr{R}^{(k)}(\mathcal{M})}\mathcal{M}''\,.$

\item {[}\textbf{Case }4: $\mathcal{M}\in\mathscr{V}^{(k+1)}$ and $\mathcal{M}'\in\mathscr{S}^{(k+1)}${]}
Let 
\[
{\color{blue} \boldsymbol{\omega}(\mathcal{M},\,\mathcal{M}') }
:=\sum_{\boldsymbol{\sigma}\in\mathcal{S}(\mathcal{M},\,\mathcal{M}')}\omega(\boldsymbol{\sigma})\;,\ \ {\color{blue}\omega_{k+1}(\mathcal{M},\,\mathcal{M}'}\mathclose{\color{blue})}:=\omega(\mathcal{M},\,\mathcal{M}')\,\boldsymbol{1}\{\,\Xi(\mathcal{M})=d^{(k+1)}\,\}\,.
\]
It is understood here that $\omega(\mathcal{M},\,\mathcal{M}')=0$
if the set $\mathcal{S}(\mathcal{M},\,\mathcal{M}')$ is empty. 
Set
\begin{equation}
\widehat{r}^{(k+1)}(\mathcal{M},\,\mathcal{M}'):=\frac{1}{\nu(\mathcal{M})}\,\omega_{k+1}(\mathcal{M},\,\mathcal{M}')\,,
\label{e_rate-3}
\end{equation}
\end{itemize}
where $\nu(\mathcal{M}) $ has been introduced in \eqref{e_def_nu}.

Define \textcolor{blue}{$\{\widehat{\mathbf{y}}^{(k+1)}(t)\}_{t\ge0}$}
as the $\mathscr{S}^{(k+1)}$-valued, continuous-time Markov chain with
jump rates $\widehat{r}^{(k+1)}:\mathscr{S}^{(k+1)}\times\mathscr{S}^{(k+1)}\to[0,\,\infty)$. By \cite[Lemma
5.8]{LLS-2nd}, all recurrent classes of
$\{\widehat{\mathbf{y}}^{(k+1)}(t)\}_{t\ge0}$ contain an element of
$\mathscr{V}^{(k+1)}$.  Therefore, by \cite[Lemma B.1]{LLS-2nd} and
\cite[display (B.1)]{LLS-2nd}, the trace process of
$\{\widehat{\mathbf{y}}^{(k+1)}(t)\}_{t\ge0}$ on $\mathscr{V}^{(k+1)}$
is well defined (cf. \cite[Appendix B]{LLS-2nd}).  Denote by
\textcolor{blue}{$\{{\bf y}^{(k+1)}(t)\}_{t\ge0}$} the trace
process. This completes the construction of the quintuples
$\Lambda^{(1)},\,\dots,\,\Lambda^{(k+1)}$.

If $\mathfrak{n}_{k+1}$, the number of irreducible classes of
$\{{\bf y}^{(k+1)}(t)\}_{t\ge0}$, is $1$, the construction is over,
and $\mathfrak{q}=k+1$. If $\mathfrak{n}_{k+1}>1$, we add a new layer
as in this subsection.

We conclude this section with important properties on the tree
structure derived in \cite{LLS-2nd}. 

\begin{prop}
\label{p: tree}
We have the following.
\begin{enumerate}
\item If $\mathfrak{n}_{n}>1$, $\mathfrak{n}_{n}>\mathfrak{n}_{n+1}$.
In particular, there exists $\mathfrak{q}\in\mathbb{N}$ such that
$\mathfrak{n}_{1}>\cdots>\mathfrak{n}_{\mathfrak{q}}=1$.
\item For all $n\in\llbracket1,\,\mathfrak{q}\rrbracket$ and $\mathcal{M}\in\mathscr{S}^{(n)}$,
$\mathcal{M}$ is simple.
\item $0<d^{(1)}<\cdots<d^{(\mathfrak{q})}<\infty$.
\item For all $n\in\llbracket1,\,\mathfrak{q}\rrbracket$ and $\mathcal{M},\,\mathcal{M}'\in\mathscr{S}^{(n)}$,
$\widehat{r}^{(n)}(\mathcal{M},\,\mathcal{M}')>0$ if and
only if $\Xi(\mathcal{M})\le d^{(n)}$ and
$\mathcal{M}\to\mathcal{M}'$.

\item Denote by
\textcolor{blue}{$\widehat{\mathcal{Q}}_{\mathcal{M}}^{(p)}$},
$1\le p\le \mf q$, $\mathcal{M}\in\mathscr{S}^{(p)}$, the law of the
Markov chain $\{\widehat{{\bf y}}^{(p)}(t)\}_{t\ge0}$ starting from
$\mathcal{M}$. For all $n\in\llbracket1,\,\mathfrak{q}\rrbracket$,
$\mathcal{M}\in\mathscr{N}^{(n)}$, and
$\mathcal{M}'\in\mathscr{V}^{(n)}$,
\[
\lim_{\epsilon\to0}\sup_{\bm{x}\in\mathcal{E}(\mathcal{M})}\bigg|\mathbb{P}_{\bm{x}}^{\epsilon}\Big[H_{\mathcal{E}^{(n)}}=H_{\mathcal{E}(\mathcal{M}')}\Big]-\widehat{\mathcal{Q}}_{\mathcal{M}}^{(n)}\Big[H_{\mathscr{V}^{(n)}}=H_{\mathcal{M}'}\Big]\bigg|=0\,,
\]
where $\mathcal{E}^{(n)}$, $\mathcal{E}(\mathcal{M})$,
$\mathcal{E}(\mathcal{M}')$ are the metastable sets defined in
\eqref{e_E_M}.
\end{enumerate}
\end{prop}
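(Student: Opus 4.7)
The plan is to derive all five assertions by matching the recursive construction of Section~\ref{sec_tree_rig} with the tree built in \cite{LLS-2nd} and then invoking the corresponding statements from that reference. Our tree is the reversible, gradient specialization of the more general construction of \cite{LLS-2nd} (with drift $\bm{b}=-\nabla U$), so every definition coincides verbatim and the bulk of the proof reduces to bookkeeping; the dynamical content needed for (5) is already available there as well.

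For (1), the new chain $\{\mathbf{y}^{(n+1)}(t)\}_{t\ge 0}$ is defined on $\mathscr{V}^{(n+1)}$, a set of cardinality $\mathfrak{n}_n$, so $\mathfrak{n}_{n+1}\le\mathfrak{n}_n$ trivially. Strict inequality is forced by the definition of $d^{(n+1)}$: as soon as some $\mathcal{M}\in\mathscr{V}^{(n+1)}$ is not contained in the set of global minima one has $d^{(n+1)}<\infty$, and there exists at least one ordered pair $\mathcal{M}\to_{\bm{\sigma}}\mathcal{M}'$ realizing $\Xi(\mathcal{M})=d^{(n+1)}$. By Case~4 of \eqref{e_rate-3} this yields $\widehat{r}^{(n+1)}(\mathcal{M},\mathcal{M}')>0$, so two distinct elements of $\mathscr{V}^{(n+1)}$ necessarily merge into a common communicating class. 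Finiteness of $\mathfrak{q}$ then follows from strict monotonicity of the positive integer sequence $(\mathfrak{n}_n)$.

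Assertions (2) and (3) will be proved jointly by induction on $n$, mirroring the $\mathfrak{P}_1$--$\mathfrak{P}_3$ scheme of \cite[Section~4.2]{LLS-2nd}. The base case $n=1$ is immediate from $\mathscr{V}^{(1)}=\{\{\bm{m}\}:\bm{m}\in\mathcal{M}_0\}$. For the inductive step of (2), Lemma~\ref{l_equi_height} guarantees that every irreducible class of $\{\mathbf{y}^{(n)}(t)\}_{t\ge 0}$ consists of valleys at a common energy height, so each union \eqref{e_M_p+1} defining $\mathscr{V}^{(n+1)}$ is itself simple; states already lying in $\mathscr{N}^{(n+1)}$ were simple by induction. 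For (3), once an irreducible class of valleys communicating through saddles at height $U(\mathcal{M})+d^{(n)}$ is collapsed, any escape from the merged valley to a lower-energy minimum must go through strictly higher saddles, yielding $d^{(n+1)}>d^{(n)}$; finiteness $d^{(\mathfrak{q})}<\infty$ persists at every level until the chain acquires a unique irreducible class.

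Assertion (4) is a direct unpacking of \eqref{e_rate-1}--\eqref{e_rate-3}: in Case~4 one has $\widehat{r}^{(n)}(\mathcal{M},\mathcal{M}')>0$ if and only if $\Xi(\mathcal{M})=d^{(n)}$ and $\mathcal{S}(\mathcal{M},\mathcal{M}')\neq\varnothing$, i.e.\ $\mathcal{M}\to\mathcal{M}'$; Cases~2 and~3 propagate the same characterization from earlier levels through the induction on $n$, since every positive transition out of a state in $\mathscr{N}^{(n+1)}$ traces back through \eqref{e_rate-1}--\eqref{e_rate-2} to a positive transition in the previous layer. The real obstacle is (5), which is not a combinatorial property of the tree but the full metastability theorem of \cite{LLS-2nd}: uniformly over starting points in the transient valley $\mathcal{E}(\mathcal{M})$, the law of the first recurrent valley hit by $\bm{x}_\epsilon(\cdot)$ among $\mathcal{E}^{(n)}$ must converge to the hitting distribution of the auxiliary chain $\{\widehat{\mathbf{y}}^{(n)}(t)\}_{t\ge 0}$ on $\mathscr{V}^{(n)}$. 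This rests on the sharp capacity estimates, Dirichlet-form comparisons, and the resolvent approach developed in \cite{LLS-1st,LLS-2nd}, which form the dynamical backbone driving the metastability analysis used elsewhere in the present paper; accordingly, we shall borrow the statement as it stands rather than reproduce its proof.
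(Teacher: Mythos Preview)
The paper's own proof is purely a list of citations: item (1) is \cite[Theorem~4.7-(3)]{LLS-2nd}, items (2)--(4) are the postulates $\mathfrak{P}_1,\mathfrak{P}_2,\mathfrak{P}_3$ of \cite[Definition~4.4]{LLS-2nd} established in \cite[Corollary~4.8]{LLS-2nd}, and item (5) is condition $\mathfrak{H}^{(n)}$ from \cite[Definition~3.10]{LLS-2nd} verified in \cite[Section~3]{LLS-2nd}. Your plan is ultimately the same---borrow everything from \cite{LLS-2nd}---but you add explanatory sketches for (1)--(4). Those sketches are in the right spirit, but they have gaps.

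The most concrete problem is your inductive step for (2): you invoke Lemma~\ref{l_equi_height} to conclude that all states in an irreducible class of $\{\mathbf{y}^{(n)}(t)\}$ share a common height. That lemma appears \emph{after} Proposition~\ref{p: tree} in the paper, and even to state it one needs $U(\mathcal{M})$ to be well-defined for $\mathcal{M}\in\mathscr{V}^{(p)}$, i.e.\ one already needs (2). Within this paper's logical order this is circular; what you really want is to cite \cite[Lemma~5.2-(2)]{LLS-2nd} directly as part of the induction in \cite{LLS-2nd} that establishes $\mathfrak{P}_1$. Similarly, your argument for (1) asserts that Case~4 yields $\widehat{r}^{(n+1)}(\mathcal{M},\mathcal{M}')>0$ without checking that the target $\mathcal{M}'$ actually lies in $\mathscr{S}^{(n+1)}$ (the relation $\mathcal{M}\to_{\bm{\sigma}}\mathcal{M}'$ in \eqref{eq:con_gate} only requires $\mathcal{M}'\subset\mathcal{M}_0$), and the phrase ``necessarily merge into a common communicating class'' is inaccurate: the correct conclusion is that $\mathcal{M}$ is not absorbing, so it is either transient or shares its class with another state---either way $\mathfrak{n}_{n+1}<\mathfrak{n}_n$. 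Your sketch for (3) is a plausible heuristic but not a proof; showing $d^{(n+1)}>d^{(n)}$ requires the level-set analysis carried out in \cite{LLS-2nd}.

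Since your proposal already concedes that the substantive content resides in \cite{LLS-2nd}, the cleanest fix is to do what the paper does: drop the sketches and give precise citations for each item.
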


\begin{proof}
The first property is \cite[Theorem 4.7-(3)]{LLS-2nd}. The next three
properties are postulates $\mathfrak{P}_{1}$, $\mathfrak{P}_{2}$, and
$\mathfrak{P}_{3}$ defined in \cite[Definition 4.4]{LLS-2nd}. It is
proved in \cite[Corollary 4.8]{LLS-2nd} that conditions
$\mathfrak{P}_{1}$, $\mathfrak{P}_{2}$, and $\mathfrak{P}_{3}$
hold. The last property is Condition $\mathfrak{H}^{(n)}$ introduced
in \cite[Definition 3.10]{LLS-2nd} which was proven to be true in
\cite[Section 3]{LLS-2nd} (cf. \cite[Figure 3.1]{LLS-2nd}).
\end{proof}
The following result is \cite[Proposition 4.9]{LLS-2nd}.
\begin{prop}
\label{p_char}Let $n\in\llbracket1,\,\mathfrak{q}\rrbracket$ and
$\mathcal{M}\in\mathscr{S}^{(n)}$.
\[
\begin{cases}
\Xi(\mathcal{M})<d^{(n)} & \text{iff}\ \mathcal{M}\in\mathscr{N}^{(n)}\,,\\
\Xi(\mathcal{M})=d^{(n)} & \text{iff}\ \mathcal{M}\in\mathscr{V}^{(n)}\ \text{and}\ \mathcal{M}\ \text{is not an absorbing state of}\ {\bf y}^{(n)}\,,\\
\Xi(\mathcal{M})>d^{(n)} & \text{iff}\ \mathcal{M}\in\mathscr{V}^{(n)}\ \text{and}\ \mathcal{M}\ \text{is an absorbing state of}\ {\bf y}^{(n)}\,.
\end{cases}
\]
\end{prop}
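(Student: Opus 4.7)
The plan is to first identify $\mathscr{N}^{(n)}$ with the level set $\{\Xi<d^{(n)}\}$, and then to distinguish absorbing from non-absorbing states inside $\mathscr{V}^{(n)}$ using Proposition \ref{p: tree}-(4), which already characterizes when the jump rates $\widehat{r}^{(n)}(\mathcal{M},\cdot)$ are strictly positive. Since the three cases partition $\mathscr{S}^{(n)}$ and the trichotomy for $\Xi(\mathcal{M})$ is exhaustive, it suffices to prove the $(\Leftarrow)$ direction in each line, or equivalently one implication in each of two equivalences.

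For the first identification, recall from \eqref{e_V_p+1} that $\mathscr{N}^{(n)}=\bigsqcup_{k=1}^{n-1}\mathscr{T}^{(k)}$. If $\mathcal{M}\in\mathscr{T}^{(k)}$ for some $k<n$, then $\mathcal{M}\in\mathscr{V}^{(k)}$ is transient for $\{{\bf y}^{(k)}(t)\}_{t\ge0}$; since ${\bf y}^{(k)}$ is the trace of $\widehat{{\bf y}}^{(k)}$ on $\mathscr{V}^{(k)}$, transience forces $\widehat{r}^{(k)}(\mathcal{M},\mathcal{M}')>0$ for some $\mathcal{M}'$. Applying Proposition \ref{p: tree}-(4) at level $k$ gives $\Xi(\mathcal{M})\le d^{(k)}$, so the definition of $d^{(k)}$ together with $\mathcal{M}\in\mathscr{V}^{(k)}$ forces $\Xi(\mathcal{M})=d^{(k)}$; the strict monotonicity $d^{(k)}<d^{(n)}$ from Proposition \ref{p: tree}-(3) then yields $\Xi(\mathcal{M})<d^{(n)}$. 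Conversely, $\Xi(\mathcal{M})<d^{(n)}$ excludes $\mathcal{M}\in\mathscr{V}^{(n)}$ by minimality, so $\mathcal{M}\in\mathscr{N}^{(n)}$.

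For the remaining two cases we may assume $\mathcal{M}\in\mathscr{V}^{(n)}$, so $\Xi(\mathcal{M})\ge d^{(n)}$. If $\Xi(\mathcal{M})>d^{(n)}$, Proposition \ref{p: tree}-(4) forces $\widehat{r}^{(n)}(\mathcal{M},\cdot)\equiv0$, and $\mathcal{M}$ is absorbing both for $\widehat{{\bf y}}^{(n)}$ and its trace ${\bf y}^{(n)}$. If $\Xi(\mathcal{M})=d^{(n)}$, Proposition \ref{p: tree}-(4) produces some $\mathcal{M}'\in\mathscr{S}^{(n)}$ with $\widehat{r}^{(n)}(\mathcal{M},\mathcal{M}')>0$ and $\mathcal{M}\to\mathcal{M}'$. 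What remains, and constitutes the main difficulty, is to verify that starting at $\mathcal{M}$ the trace ${\bf y}^{(n)}$ reaches $\mathscr{V}^{(n)}\setminus\{\mathcal{M}\}$ with positive probability. My plan here is to combine \cite[Lemma 5.8]{LLS-2nd}, which asserts that every recurrent class of $\widehat{{\bf y}}^{(n)}$ meets $\mathscr{V}^{(n)}$, with the recursive structure of Section \ref{subsec: MC2}: an excursion of $\widehat{{\bf y}}^{(n)}$ out of $\mathcal{M}$ into $\mathscr{N}^{(n)}$ is a concatenation of lower-level transitions inherited via \eqref{e_rate-1}--\eqref{e_rate-2}, and the target relation $\mathcal{M}\to\mathcal{M}'$ pushes the chain towards $\widetilde{\mathcal{M}}$, i.e.\ towards states of weakly lower potential. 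Hence the only way the trace ${\bf y}^{(n)}$ could return to $\mathcal{M}$ without passing through another $\mathscr{V}^{(n)}$-state would be for $\mathcal{M}$ and $\mathcal{M}'$ to lie in the same recurrent class of $\widehat{{\bf y}}^{(n)}$ whose unique $\mathscr{V}^{(n)}$-element is $\mathcal{M}$; carefully tracking how the equivalence classes of ${\bf y}^{(n-1)}$ get collapsed into single elements of $\mathscr{V}^{(n)}$ via \eqref{e_M_p+1} shows that this scenario is incompatible with $\mathcal{M}\neq\mathcal{M}'$ in $\mathscr{S}^{(n)}$. This merging argument, proceeding by induction on $n$ and threading carefully through the definitions of Section \ref{sec_tree_rig}, is the delicate combinatorial step that I expect to be the genuine obstacle.
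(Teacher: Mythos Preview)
The paper does not give its own proof of this proposition: it is simply quoted as \cite[Proposition 4.9]{LLS-2nd}. So there is no in-paper argument to compare against; any proof you write will be an addition rather than a reconstruction.

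Your reduction via the trichotomy is sound, and the first equivalence together with the implication $\Xi(\mathcal{M})>d^{(n)}\Rightarrow\mathcal{M}$ absorbing are correctly derived from Proposition~\ref{p: tree}-(3),(4) and the definition $d^{(n)}=\min_{\mathcal{M}'\in\mathscr{V}^{(n)}}\Xi(\mathcal{M}')$. Two issues remain.

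First, a minor one: when $\Xi(\mathcal{M})=d^{(n)}$ you assert that Proposition~\ref{p: tree}-(4) \emph{produces} some $\mathcal{M}'$ with $\widehat{r}^{(n)}(\mathcal{M},\mathcal{M}')>0$. That proposition is a criterion for a fixed pair; to use it you must first exhibit some $\mathcal{M}'\in\mathscr{S}^{(n)}$ with $\mathcal{M}\to\mathcal{M}'$. This follows from the landscape lemmas (a saddle at height $\Theta(\mathcal{M},\widetilde{\mathcal{M}})$ connects $\mathcal{M}$ to some $\mathcal{M}'$), but it is a separate step you should make explicit.

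Second, and more seriously, the step you yourself flag as the ``genuine obstacle'' is not proved. Knowing $\widehat{r}^{(n)}(\mathcal{M},\mathcal{M}')>0$ only says $\widehat{{\bf y}}^{(n)}$ leaves $\mathcal{M}$; for the \emph{trace} ${\bf y}^{(n)}$ not to be absorbing at $\mathcal{M}$ you must rule out the scenario in which $\mathcal{M}$ is $\widehat{{\bf y}}^{(n)}$-recurrent and its recurrent class meets $\mathscr{V}^{(n)}$ only in $\mathcal{M}$. Invoking \cite[Lemma 5.8]{LLS-2nd} handles the transient case cleanly, but does not by itself exclude this recurrent scenario. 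Your closing paragraph gestures at an inductive merging argument through \eqref{e_M_p+1}--\eqref{e_rate-2} but does not carry it out; as written it is a plan, not a proof. Since the present paper defers the whole statement to \cite{LLS-2nd}, if you want a self-contained argument you will need to actually execute this step, for instance by showing directly (via the landscape, as in \cite[Section 5]{LLS-2nd}) that from $\mathcal{M}$ the chain $\widehat{{\bf y}}^{(n)}$ reaches $\widetilde{\mathcal{M}}$, and that $\widetilde{\mathcal{M}}$ meets some $\mathcal{M}''\in\mathscr{V}^{(n)}\setminus\{\mathcal{M}\}$.
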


\section{\label{sec_pf_equi_pot}Proof of Proposition \ref{p_test}}

In this section, we prove Proposition \ref{p_test}.  For each
$p\in\llbracket1,\,\mathfrak{q}\rrbracket$, and equivalence class
$\mathfrak{D}\subset\mathscr{V}^{(p)}$ of the Markov chain
$\{{\bf y}^{(p)}(t)\}_{t\ge0}$, we construct the sequences
$(h_{\mathcal{M}}^{\epsilon})_{\epsilon>0}$,
$\mathcal{M}\in\mathfrak{D}$, of functions
$h_{\mathcal{M}}^{\epsilon}:\mathbb{R}^{d}\to[0,\,1]$ satisfying the
conditions of Proposition \ref{p_test}.

Fix $p\in\llbracket1,\,\mathfrak{q}\rrbracket$ and an equivalence
class $\mathfrak{D}\subset\mathscr{V}^{(p)}$ of the limiting chain
$\{{\bf y}^{(p)}(t)\}_{t\ge0}$. Let
${\color{blue}\widehat{\mathfrak{D}}}\subset\mathscr{S}^{(p)}$ be the
equivalence class of $\{\widehat{{\bf y}}^{(p)}(t)\}_{t\ge0}$
containing $\mathfrak{D}$, so that
$\mathfrak{D}=\widehat{\mathfrak{D}}\cap\mathscr{V}^{(p)}$.  We divide
the proof into two cases, depending on whether $\mathfrak{D}$ contains
an absorbing state or not.

\subsection{Equivalence classes formed by an absorbing state}

In this subsection, suppose that $\mathcal{M}_{1}\in\mathfrak{D}$ for
some absorbing state $\mathcal{M}_{1}\in\mathscr{V}^{(p)}$ of
$\{{\bf y}^{(p)}(t)\}_{t\ge0}$, i.e.,
$\mathfrak{D}=\{\mathcal{M}_{1}\}$.  We start recalling several
notions introduced in \cite{LLS-2nd}.

\begin{itemize}
\item For $\mathcal{A}\subset\mathbb{R}^{d}$, define
\[
{\color{blue}\mathcal{M}^{*}(\mathcal{A}}\mathclose{\color{blue})}:=\{\bm{m}\in\mathcal{M}_{0}\cap\mathcal{A}:U(\bm{m})=\min_{\bm{x}\in\mathcal{A}}U(\bm{x})\}\,.
\]
\item For $p\in\llbracket1,\,\mathfrak{q}\rrbracket$ and $\mathcal{A}\subset\mathbb{R}^{d}$,
define
\[
\begin{aligned}
{\color{blue}\mathscr{V}^{(p)}(\mathcal{A}}\mathclose{\color{blue})} & :=\{\mathcal{M}\in\mathscr{V}^{(p)}:\mathcal{M}\subset\mathcal{A}\}\,,\\
{\color{blue}\mathscr{N}^{(p)}(\mathcal{A}}\mathclose{\color{blue})} & :=\{\mathcal{M}\in\mathscr{N}^{(p)}:\mathcal{M}\subset\mathcal{A}\}\,,\\
{\color{blue}\mathscr{S}^{(p)}(\mathcal{A}}\mathclose{\color{blue})} & :=\{\mathcal{M}\in\mathscr{S}^{(p)}:\mathcal{M}\subset\mathcal{A}\}\,.
\end{aligned}
\]
\end{itemize}
The next lemma shows the existence of the test function satisfying the
conditions in Proposition \ref{p_test} when $\mathfrak{D}$ contains
(and therefore consists of) an absorbing state.

\begin{lem}
\label{l_test_absobing}Suppose that $\mathcal{M}_{1}\in\mathscr{V}^{(p)}$
is an absorbing state of $\{{\bf y}^{(p)}(t)\}_{t\ge0}$. Then, there
exists a smooth function $h_{\mathcal{M}_{1}}:\mathbb{R}^{d}\to\mathbb{R}$
satisfying the following conditions.
\begin{enumerate}
\item $0\le h_{\mathcal{M}_{1}}\le1$ and $h_{\mathcal{M}_{1}}(\bm{x})=1$
for $\bm{x}\in\mathcal{E}(\mathcal{M}_{1})$.
\item $\lim_{\epsilon\to0}e^{U(\mathcal{M}_{1})/\epsilon}\int_{\mathbb{R}^{d}\setminus\mathcal{E}(\mathcal{M}_{1})}(h_{\mathcal{M}_{1}})^{2}\,d\pi_{\epsilon}=0$.
\item $\lim_{\epsilon\to0}\,e^{U(\mathcal{M}_{1})/\epsilon}\theta_{\epsilon}^{(p)}\epsilon\int_{\mathbb{R}^{d}}\left|\nabla h_{\mathcal{M}_{1}}\right|^{2}d\pi_{\epsilon}=0$.
\end{enumerate}
\end{lem}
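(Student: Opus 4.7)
\medskip

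\noindent\textbf{Proof plan.} The starting point is Proposition \ref{p_char}: since $\mathcal{M}_1$ is an absorbing state of $\{{\bf y}^{(p)}(t)\}_{t\ge 0}$, we have $\Xi(\mathcal{M}_1) > d^{(p)}$. This gap between the barrier $\Xi(\mathcal{M}_1)$ and the time-scale depth $d^{(p)}$ is exactly the slack needed to build a cutoff function. Fix once and for all two values
\[
d^{(p)} \;<\; a \;<\; b \;<\; \Xi(\mathcal{M}_1),
\]
and let $V$ denote the connected component of $\{U < U(\mathcal{M}_1) + b\}$ containing $\mathcal{M}_1$. By the growth condition \eqref{e: growth}, $V$ is bounded; moreover, by the very definition of $\Xi(\mathcal{M}_1)$ and the choice $b < \Xi(\mathcal{M}_1)$, $V$ contains no element of $\widetilde{\mathcal{M}_1}$, so every local minimum of $U$ lying in $V\setminus\mathcal{M}_1$ has energy strictly larger than $U(\mathcal{M}_1)$.

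Pick $\phi\in C^\infty(\mathbb{R};[0,1])$ with $\phi\equiv 1$ on $(-\infty,a]$ and $\phi\equiv 0$ on $[b,\infty)$, and define
\[
h_{\mathcal{M}_1}(\bm{x})\;:=\;\phi\bigl(U(\bm{x})-U(\mathcal{M}_1)\bigr)\,\mathbf{1}_V(\bm{x}).
\]
Because $\phi$ already vanishes on a neighborhood of $[b,\infty)$ and $\partial V\subset\{U=U(\mathcal{M}_1)+b\}$, the function $h_{\mathcal{M}_1}$ is smooth on $\mathbb{R}^d$. Condition (1) is immediate from the construction: provided $r_0<a$ (which we may assume after possibly shrinking $r_0$), we have $U-U(\mathcal{M}_1)\le r_0<a$ on $\mathcal{E}(\mathcal{M}_1)$ and $\mathcal{E}(\mathcal{M}_1)\subset V$, so $h_{\mathcal{M}_1}\equiv 1$ there.

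The remaining conditions are Laplace asymptotics. For condition (2), $h_{\mathcal{M}_1}^{2}$ is supported in $V$ and bounded by $1$, so
\[
e^{U(\mathcal{M}_1)/\epsilon}\!\int_{\mathbb{R}^d\setminus\mathcal{E}(\mathcal{M}_1)}(h_{\mathcal{M}_1})^2\,d\pi_\epsilon
\;\le\;\frac{e^{U(\mathcal{M}_1)/\epsilon}}{Z_\epsilon}\int_{V\setminus\mathcal{E}(\mathcal{M}_1)}e^{-U/\epsilon}d\bm{x}.
\]
On $V\setminus\mathcal{E}(\mathcal{M}_1)$, the infimum of $U$ equals $U(\mathcal{M}_1)+r_0$, attained on $\partial\mathcal{E}(\mathcal{M}_1)$; any residual minima in $V$ contribute only higher-energy terms. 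Standard Laplace/co-area bounds together with the known asymptotic $Z_\epsilon\sim(2\pi\epsilon)^{d/2}\nu_\star e^{-U(\mathcal{M}_\star)/\epsilon}$ produce an estimate of order $\epsilon^{1-d/2}e^{-(r_0 + U(\mathcal{M}_1) - U(\mathcal{M}_\star) - U(\mathcal{M}_1))/\epsilon}$; since $r_0 + U(\mathcal{M}_1) - U(\mathcal{M}_\star) > U(\mathcal{M}_1)$ the relevant exponent is strictly negative, and the expression vanishes as $\epsilon\to 0$.

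For condition (3), compute $\nabla h_{\mathcal{M}_1} = \phi'(U - U(\mathcal{M}_1))\,\nabla U$, which is supported on $V\cap\{a\le U-U(\mathcal{M}_1)\le b\}$. Since $|\nabla U|$ is bounded on this compact set, applying Laplace's method (whose minimum value is $U(\mathcal{M}_1)+a$) gives
\[
\int_{\mathbb{R}^d}|\nabla h_{\mathcal{M}_1}|^2\,d\pi_\epsilon\;\le\;C\,\epsilon^{1-d/2}\,e^{-(U(\mathcal{M}_1)+a-U(\mathcal{M}_\star))/\epsilon}.
\]
Multiplying by $e^{U(\mathcal{M}_1)/\epsilon}\theta_\epsilon^{(p)}\epsilon=\epsilon\, e^{(U(\mathcal{M}_1)+d^{(p)})/\epsilon}$ produces a factor $e^{(d^{(p)}-a)/\epsilon}$ which, combined with the polynomial prefactor, tends to $0$ because $a>d^{(p)}$.

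The main technical point is the Laplace asymptotics near $\partial\mathcal{E}(\mathcal{M}_1)$, where one uses the regularity of the valleys (conditions (a)--(e) of Appendix \ref{subsec_valley}) together with the co-area formula to handle the fact that the infimum of $U$ on $V\setminus\mathcal{E}(\mathcal{M}_1)$ is attained along a $(d-1)$-dimensional manifold rather than at a point. Once these asymptotics are in place, both (2) and (3) reduce to purely exponential inequalities built from the strict inequalities $r_0>0$ and $a>d^{(p)}$, which are guaranteed by our choice of $a$.
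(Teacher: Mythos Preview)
Your approach is essentially the same as the paper's: both exploit Proposition~\ref{p_char} to get the gap $\Xi(\mathcal{M}_1)>d^{(p)}$, pick an intermediate level, and build a smooth cutoff that equals $1$ on a large sublevel set and vanishes outside a slightly larger one. The paper uses a general smooth function equal to $1$ on $\mathcal{A}_{2a}$ and $0$ outside $\mathcal{A}_{4a}$; your $\phi(U-U(\mathcal{M}_1))\mathbf{1}_V$ is one explicit realization of the same idea.

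Two imprecisions are worth fixing. First, your exponent bookkeeping in (2) is garbled: the inequality you invoke, ``$r_0+U(\mathcal{M}_1)-U(\mathcal{M}_\star)>U(\mathcal{M}_1)$'', simplifies to $r_0>U(\mathcal{M}_\star)$, which is not an assumption of the paper. The clean fix is to normalize $\min U=0$ at the outset (the paper does this implicitly, e.g.\ in the proof of Lemma~\ref{l_limsup-0}); then the exponent you need is just $-c_0/\epsilon$ with $c_0>0$. Relatedly, the infimum of $U$ on $V\setminus\mathcal{E}(\mathcal{M}_1)$ need not equal $U(\mathcal{M}_1)+r_0$: if $V$ contains another local minimum $\bm{m}'$ with $U(\mathcal{M}_1)<U(\bm{m}')<U(\mathcal{M}_1)+r_0$, the infimum is $U(\bm{m}')$. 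What you actually need (and what holds, since $V$ contains no point of $\widetilde{\mathcal{M}_1}$) is only that this infimum strictly exceeds $U(\mathcal{M}_1)$.

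Second, for smoothness of $h_{\mathcal{M}_1}$ you should be explicit that the cutoff level $b$ avoids critical values of $U$ (or, equivalently, that $\phi$ is chosen to vanish on $[b',\infty)$ for some $b'<b$), so that $\phi(U-U(\mathcal{M}_1))$ already vanishes in a full neighborhood of $\partial V$; otherwise the indicator $\mathbf{1}_V$ could introduce a discontinuity at saddles on $\partial V$. The paper handles this by choosing $a$ so that there is no critical value in the interval $(U(\mathcal{M}_1)+d^{(p)},\,U(\mathcal{M}_1)+d^{(p)}+4a)$.
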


\begin{proof}
For $b\ge0$, denote by $\mathcal{A}_{b}$ the connected component
of $\{U<U(\mathcal{M}_{1})+d^{(p)}+b\}$ containing $\mathcal{M}_{1}$.
By the proof of \cite[Lemma 10.2]{LLS-2nd}, there exists $a>0$ such
that $4a<\Xi(\mathcal{M}_{1})-d^{(p)}$, $\mathcal{A}_{b}$ is well
defined for $b\in[0,\,4a]$, and $\mathcal{M}_{1}=\mathcal{M}^{*}(\mathcal{A}_{4a})$.
Take $a>0$ small enough so that there is no critical point $\bm{c}\in\mathcal{C}_{0}$
such that $U(\bm{c})\in(U(\mathcal{M}_{1})+d^{(p)},U(\mathcal{M}_{1})+d^{(p)}+4a)$.
By \cite[Lemma A.14]{LLS-2nd}, 
\begin{equation}
\mathcal{M}_{0}\cap\mathcal{A}_{4a}=\mathcal{M}_{0}
\cap\mathcal{A}_{a}\,.\label{e_l71-1}
\end{equation}

We first claim that 
\begin{equation}
U(\bm{x})\ge U(\mathcal{M}_{1})+d^{(p)}+a\ \ \text{for all}\ \bm{x}\in\mathcal{A}_{4a}\setminus\mathcal{A}_{a}\,.\label{e_l71-2}
\end{equation}
Suppose that there exists
$\bm{x}_{0}\in\mathcal{A}_{4a}\setminus\mathcal{A}_{a}$ satisfying
$U(\bm{x}_{0})<U(\mathcal{M}_{1})+d^{(p)}+a$. Let $\mathcal{H}$ be the
connected component of $\{U<U(\mathcal{M}_{1})+d^{(p)}+a\}$ containing
$\bm{x}_{0}$. Since $\bm{x}_{0}\in\mathcal{A}_{4a}$,
$\mathcal{H}\subset\mathcal{A}_{4a}$, and since
$\bm{x}_{0}\not\in\mathcal{A}_{a}$,
$\mathcal{H}\cap\mathcal{A}_{a}=\varnothing$.  As $\mathcal{H}$ is
a level set, there exists a local minimum
$\bm{m}_{0}\in\mathcal{H}\cap\mathcal{M}_{0}$ so that
$(\mathcal{A}_{4a}\setminus\mathcal{A}_{a})\cap\mathcal{M}_{0}\ne\varnothing$,
which contradicts \eqref{e_l71-1}. Therefore, \eqref{e_l71-2}
holds.

Since $\mathcal{M}_{1}=\mathcal{M}^{*}(\mathcal{A}_{4a})$, there
exists $c_{0}>0$ such that
\begin{equation}
U(\bm{x})\ge U(\mathcal{M}_{1})+c_{0}\ \ \text{for all}\ \bm{x}\in\mathcal{A}_{4a}\setminus\mathcal{E}(\mathcal{M}_{1})\,.\label{e_l71-3}
\end{equation}
 Moreover, there exists a smooth function $h_{\mathcal{M}_{1}}:\mathbb{R}^{d}\to\mathbb{R}$
independent of $\epsilon>0$ such that
\begin{itemize}
\item $0\le h_{\mathcal{M}_{1}}(\bm{x})\le1$ for $\bm{x}\in\mathbb{R}^{d}$,
\item $h_{\mathcal{M}_{1}}(\bm{x})=1$ for $\bm{x}\in\mathcal{A}_{2a}$,
and
\item $h_{\mathcal{M}_{1}}(\bm{x})=0$ for $\bm{x}\in(\mathcal{A}_{4a})^{c}$.
\end{itemize}

We claim that the function
$h_{\mathcal{M}_{1}}:\mathbb{R}^{d}\to\mathbb{R}$ satisfies the
conditions of the lemma. The first condition is obvious from the
construction. By \eqref{e: growth}, $\mathcal{A}_{4a}$ is
bounded. Therefore, by \eqref{e_l71-3},
$\pi_{\epsilon}\left(\mathcal{A}_{4a}\setminus\mathcal{E}(\mathcal{M}_{1})\right)\le
C_{1}e^{-[U(\mathcal{M}_{1})+c_{0}]/\epsilon}$ for some finite constant
$C_{1}>0$. Since
\[
e^{U(\mathcal{M}_{1})/\epsilon}\int_{\mathbb{R}^{d}\setminus\mathcal{E}(\mathcal{M}_{1})}(h_{\mathcal{M}_{1}})^{2}\,d\pi_{\epsilon}\le e^{U(\mathcal{M}_{1})/\epsilon}\pi_{\epsilon}\left(\mathcal{A}_{4a}\setminus\mathcal{E}(\mathcal{M}_{1})\right)\,,
\]
$h_{\mathcal{M}_{1}}$ satisfies the second condition. As $\mathcal{A}_{4a}$
is bounded, by \eqref{e_l71-2}, $\pi_{\epsilon}(\mathcal{A}_{4a}\setminus\mathcal{A}_{a})\le C_{2}e^{-[U(\mathcal{M}_{1})+d^{(p)}+a]/\epsilon}$
for some finite constant $C_{2}>0$. Since $\nabla h_{\mathcal{M}_{1}}$ is uniformly
bounded, $\nabla h_{\mathcal{M}_{1}}(\bm{x})=0$ for $\bm{x}\in(\mathcal{A}_{4a}\setminus\mathcal{A}_{a})^{c}$,
and $\mathcal{A}_{4a}$ is bounded,
\[
\begin{aligned}e^{U(\mathcal{M}_{1})/\epsilon}\theta_{\epsilon}^{(p)}\epsilon\int_{\mathbb{R}^{d}}\left|\nabla h_{\mathcal{M}_{1}}\right|^{2}d\pi_{\epsilon} & \le\left(\|\nabla h_{\mathcal{M}_{1}}\|_{L^{\infty}(\mathbb{R}^{d})}\right)^{2}e^{U(\mathcal{M}_{1})/\epsilon}\theta_{\epsilon}^{(p)}\epsilon\,\pi_{\epsilon}(\mathcal{A}_{4a}\setminus\mathcal{A}_{a})\\
 & \le C_{2}\left(\|\nabla
 h_{\mathcal{M}_{1}}\|_{L^{\infty}(\mathbb{R}^{d})}\right)^{2}
 \epsilon \, e^{-a/\epsilon}\,.
\end{aligned}
\]
This shows  that $h_{\mathcal{M}_{1}}$ satisfies the last condition,
completing the proof of the lemma.
\end{proof}

\subsection{Equivalence classes without absorbing states}
\label{72}

Throughout this subsection, without recalling it at each statement, we
suppose that $\mathfrak{D}$ does not contain ${\bf y}^{(p)}$-absorbing
states. Thus, either $|\mathfrak{D}|\ge2$ or
$\mathfrak{D}=\{\mathcal{M}\}$ for some transient state
$\mathcal{M}\in\mathscr{V}^{(p)}$ of the chain
$\{{\bf y}^{(p)}(t)\}_{t\ge0}$. Recall from the beginning of this
section the definition of the set $\widehat{\mathfrak{D}}$. Keep in
mind that $\widehat{\mathfrak{D}}$ is the family of sets in
$\mathscr{S}^{(p)}$, and that
$\mathfrak{D}=\widehat{\mathfrak{D}}\cap\mathscr{V}^{(p)}$.

We claim that
\begin{equation}
\label{01}
\text{ $\widehat{\mathfrak{D}}$ does not contain
$\widehat{{\bf y}}^{(p)}$-absorbing states.}
\end{equation}
Indeed, by Proposition \ref{p: tree}-(4), if
$\mathcal{M}\in\mathscr{S}^{(p)}$ is an absorbing state of the chain
$\{\widehat{{\bf y}}^{(p)}(t)\}_{t\ge0}$,
$\Xi(\mathcal{M})>d^{(p)}$. Hence, by Proposition \ref{p_char},
$\mathcal{M}$ is an absorbing state of the chain
$\{{\bf y}^{(p)}(t)\}_{t\ge0}$, in contradiction with the hypothesis
of this subsection that $\mathfrak{D}$ does not contain
${\bf y}^{(p)}$-absorbing states.  This proves \eqref{01}.

\subsubsection{\label{sec721}Level sets containing equivalence
classes}

In this subsection, we construct level sets containing equivalence
classes. Fix a ${\bf y}^{(p)}$-equivalent class $\mathfrak{D}$
satisfying the assumption of Section \ref{72}. The next lemma shows
the existence of a level set containing the equivalence class
$\widehat{\mathfrak{D}}$.

\begin{lem}
\label{l_equi_height}
We have that
\begin{enumerate}
\item $U(\mathcal{M})=U(\mathcal{M}')$ for all
$\mathcal{M},\,\mathcal{M}'\in\mathfrak{D}$.

\item $\widehat{\mathfrak{D}}$ is contained in a connected component of
$\{U\le H+d^{(p)}\}$, where $H:=U(\mathcal{M})$ for $\mathcal{M}\in\mathfrak{D}$.
In particular, this component contains $\mathfrak{D}$.
\end{enumerate}
\end{lem}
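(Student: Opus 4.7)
The plan is to deduce both assertions from Proposition~\ref{p: tree}-(4) and Proposition~\ref{p_char}, which translate the combinatorial data of the $\widehat{\mathbf{y}}^{(p)}$-equivalence into statements about saddles, heteroclinic orbits and level sets of $U$. The starting observation is that by \eqref{01} and Proposition~\ref{p_char} every $\mathcal{M}\in\widehat{\mathfrak{D}}$ satisfies $\Xi(\mathcal{M})\le d^{(p)}$, with equality precisely on $\mathfrak{D}$; whenever $\widehat{r}^{(p)}(\mathcal{M},\mathcal{M}')>0$ inside $\widehat{\mathfrak{D}}$, Proposition~\ref{p: tree}-(4) produces a saddle $\sigma$ with $U(\sigma)=U(\mathcal{M})+\Xi(\mathcal{M})\le U(\mathcal{M})+d^{(p)}$ and a descending heteroclinic $\mathcal{M}'\curvearrowleft\sigma$. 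The central intermediate claim I would establish is a height-control statement: any positive-rate path $\mathcal{M}_0\to\mathcal{M}_1\to\cdots\to\mathcal{M}_k$ in $\widehat{\mathbf{y}}^{(p)}$ with $\mathcal{M}_0\in\mathfrak{D}$ is realized by a continuous curve in $\mathbb{R}^d$ lying in $\{U\le U(\mathcal{M}_0)+d^{(p)}\}$; equivalently, every intermediate minimum satisfies $U(\mathcal{M}_i)\le U(\mathcal{M}_0)$. I would prove this by induction on the levels of the tree, exploiting that every $\mathcal{M}_i\in\widehat{\mathfrak{D}}\setminus\mathfrak{D}=\mathscr{N}^{(p)}\cap\widehat{\mathfrak{D}}$ was, at some earlier stage $n<p$, a transient vertex absorbed into an irreducible class whose subsequent merger into $\mathfrak{D}$ transfers the height constraint from level $n$ to level $p$.

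For assertion (1), let $\mathcal{M},\mathcal{M}'\in\mathfrak{D}$. The height-control claim applied to a path from $\mathcal{M}$ to $\mathcal{M}'$ yields $\Theta(\mathcal{M},\mathcal{M}')\le U(\mathcal{M})+d^{(p)}$ and, by symmetry of $\Theta$ together with the analogous path from $\mathcal{M}'$ to $\mathcal{M}$, also $\Theta(\mathcal{M},\mathcal{M}')\le U(\mathcal{M}')+d^{(p)}$. Suppose $U(\mathcal{M})<U(\mathcal{M}')$; then $\mathcal{M}\in\widetilde{\mathcal{M}'}$ and, using $\Xi(\mathcal{M}')=d^{(p)}$,
\begin{equation*}
U(\mathcal{M}')+d^{(p)}\,=\,\Theta(\mathcal{M}',\widetilde{\mathcal{M}'})\,\le\,\Theta(\mathcal{M}',\mathcal{M})\,=\,\Theta(\mathcal{M},\mathcal{M}')\,\le\,U(\mathcal{M})+d^{(p)},
\end{equation*}
contradicting $U(\mathcal{M})<U(\mathcal{M}')$. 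Hence $U(\mathcal{M})=U(\mathcal{M}')$. For (2), write $H:=U(\mathcal{M})$ for any $\mathcal{M}\in\mathfrak{D}$, which is well defined by (1); applying the same height-control claim to a $\widehat{\mathbf{y}}^{(p)}$-path from a fixed $\mathcal{M}_0\in\mathfrak{D}$ to any $\mathcal{M}^*\in\widehat{\mathfrak{D}}$ produces a continuous curve in $\{U\le H+d^{(p)}\}$ joining them. Thus $\widehat{\mathfrak{D}}$ lies in a single connected component of $\{U\le H+d^{(p)}\}$ which, since $\mathfrak{D}\subset\widehat{\mathfrak{D}}$, also contains $\mathfrak{D}$.

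The main technical obstacle is the height-control claim itself. The naive inequality $U(\mathcal{M}_{i+1})<U(\mathcal{M}_i)+\Xi(\mathcal{M}_i)$ coming from the preliminary observation only constrains the height jump at one step by $d^{(p)}$, so it could in principle let the heights grow by $d^{(p)}$ at each passage through an $\mathscr{N}^{(p)}$-state and does not propagate. Establishing the tighter invariant $U(\mathcal{M}_i)\le U(\mathcal{M}_0)$ requires an induction on the levels $1,\dots,p$ of the tree built in Section~\ref{sec_tree_rig}: the heights of transient states $\mathscr{T}^{(n)}$ at each earlier stage must be controlled by the heights of the irreducible classes absorbing them, and that control then has to be transferred to the $\widehat{\mathbf{y}}^{(p)}$-transitions between $\mathscr{N}^{(p)}$-states at the present level.
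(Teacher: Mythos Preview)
Your central height-control claim is misstated, and the misstated version is actually false. You write that the curve lying in $\{U\le U(\mathcal{M}_0)+d^{(p)}\}$ is \emph{equivalent} to ``every intermediate minimum satisfies $U(\mathcal{M}_i)\le U(\mathcal{M}_0)$''. These are not equivalent, and the second formulation fails: the paper shows in \eqref{02} that every $\mathcal{M}\in\widehat{\mathfrak{D}}\setminus\mathfrak{D}$ satisfies $U(\mathcal{M})>H$. So a $\widehat{\mathbf{y}}^{(p)}$-path from $\mathcal{M}_0\in\mathfrak{D}$ that passes through $\mathscr{N}^{(p)}$-states will visit minima strictly \emph{above} $U(\mathcal{M}_0)$. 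Your proposed inductive scheme is aimed at the invariant $U(\mathcal{M}_i)\le U(\mathcal{M}_0)$, and since that invariant is false, the induction cannot close.

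What survives is the weaker (correct) statement that the connecting curve stays in $\{U\le H+d^{(p)}\}$, i.e.\ that $U(\mathcal{M}_i)+\Xi(\mathcal{M}_i)\le H+d^{(p)}$ along the path. But this is essentially assertion~(2) itself, so you have not reduced the problem; you have restated it. Proving this bound requires controlling the \emph{sum} $U(\mathcal{M}_i)+\Xi(\mathcal{M}_i)$ for negligible states, and nothing in Propositions~\ref{p: tree}-(4) or~\ref{p_char} alone gives that: they only give $\Xi(\mathcal{M}_i)<d^{(p)}$ and (a posteriori) $U(\mathcal{M}_i)>H$ separately. The paper does not attempt a self-contained argument here; it invokes \cite[Lemma~5.2-(2)]{LLS-2nd} for (1) and \cite[Lemma~13.2-(3)]{LLS-2nd} for (2), noting that those proofs, written for recurrent classes, carry over verbatim to equivalence classes. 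If you want a direct argument, the right invariant to propagate is the saddle height $\Theta(\mathcal{M}_i,\widetilde{\mathcal{M}_i})\le H+d^{(p)}$, not the minimum height, and establishing that does require unwinding the recursive structure of the $\mathscr{N}^{(p)}$-states as done in \cite{LLS-2nd}.
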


\begin{proof}
Consider the first assertion. If $|\mathfrak{D}|=1$, there is nothing
to prove. If $|\mathfrak{D}|\ge2$, the assertion is \cite[Lemma
5.2-(2)]{LLS-2nd}.  Mind that \cite[Lemma 5.2-(2)]{LLS-2nd} is derived
for the recurrent classes $\mathscr{R}$ of the Markov chain
$\{{\bf y}^{(p)}(t)\}_{t\ge0}$ such that $|\mathscr{R}|\ge2$. But the
proof is the same for equivalence classes $\mathfrak{D}$ of
$\{{\bf y}^{(p)}(t)\}_{t\ge0}$ such that $|\mathfrak{D}|\ge2$.

We turn to the second assertion. Suppose that $\widehat{\mathfrak{D}}=\{\mathcal{M}\}$
for some $\mathcal{M}\in\mathscr{V}^{(p)}$. If $\mathcal{M}$ is
a singleton, there is nothing to prove. If $\mathcal{M}$ is not a
singleton, then $p\ge2$. Since $p\ge2$ and $d^{(p)}>d^{(p-1)}$,
by \cite[Lemmas 5.3 and A.9]{LLS-2nd}, there exists a connected component
of $\{U<H+d^{(p)}\}$ containing $\mathcal{M}$ so that the second
assertion holds.

Suppose that $|\widehat{\mathfrak{D}}|\ge2$. Then,
the second assertion is \cite[Lemma 13.2-(3)]{LLS-2nd}. Note that
\cite[Lemma 13.2-(3)]{LLS-2nd} is derived for the recurrent classes
$\mathscr{R}$ of the Markov chain $\{{\bf y}^{(p)}(t)\}_{t\ge0}$
such that $|\mathscr{R}|\ge2$. But the proof is the same for equivalence
classes $\widehat{\mathfrak{D}}$ of $\{\widehat{{\bf y}}^{(p)}(t)\}_{t\ge0}$
such that $|\widehat{\mathfrak{D}}|\ge2$.
\end{proof}

By Lemma \ref{l_equi_height}, there exists
${\color{blue}H=H_{\mathfrak{D}}}\in\mathbb{R}$ such that
$H=U(\mathcal{M})$ for $\mathcal{M}\in\mathfrak{D}$, and a connected
component\textcolor{blue}{{} $\mathcal{K}=\mathcal{K}_{\mathfrak{D}}$
}of $\{U\le H+d^{(p)}\}$ containing $\widehat{\mathfrak{D}}$. Since
$\mathcal{K}$ contains $\mathfrak{D}$ and $U(\mathcal{M})=H$ for
$\mathcal{M}\in\mathfrak{D}$, $\mathcal{K}$ is not a singleton.  Then,
by \cite[Lemma A.11]{LLS-2nd},
\begin{equation}
\mathcal{K}=\bigcup_{i=1}^{\ell}\overline{\mathcal{W}_{i}}\ ,\ \ \mathcal{M}_{0}\cap\mathcal{K}=\mathcal{M}_{0}\cap\bigcup_{i=1}^{\ell}\mathcal{W}_{i}\,,\label{e_lv_decomp}
\end{equation}
where \textcolor{blue}{$\mathcal{W}_{1},\,\dots,\,\mathcal{W}_{\ell}$}
denote all connected components of $\{U<H+d^{(p)}\}$ intersecting
with $\mathcal{K}$.

\begin{itemize}
\item For $p\in\llbracket1,\,\mathfrak{q}\rrbracket$ and $\mathcal{A}\subset\mathbb{R}^{d}$,
we say that $\mathcal{A}$ \emph{does not separate $(p)$-states}
if for all $\mathcal{M}\in\mathscr{S}^{(p)}$, $\mathcal{M}\subset\mathcal{A}$
or $\mathcal{M}\subset\mathcal{A}^{c}$.
\end{itemize}

The following is the main property of level set $\mathcal{K}$ containing
the equivalence class $\mathfrak{D}$. Since the proof is technical,
it is postponed to Section \ref{subsec_lv_equiv}.

\begin{lem}
\label{l_equi_lv} The integer $\ell\in\mathbb{N}$ and the sets $\mathcal{W}_{i},\,\dots,\,\mathcal{W}_{\ell}$
introduced in \eqref{e_lv_decomp} are such that
\begin{enumerate}
\item $\ell\ge2$.
\item For each $i\in\llbracket1,\,\ell\rrbracket$, $\mathcal{W}_{i}$ does
not separate $(p)$-states. In particular, for all $\mathcal{M}\in\widehat{\mathfrak{D}}$,
there exists $a\in\llbracket1,\,\ell\rrbracket$ such that
$\mathcal{M}\in\mathscr{S}^{(p)}(\mathcal{W}_{a})$.

\item For each $i\in\llbracket1,\,\ell\rrbracket$, if $\mathscr{S}^{(p)}(\mathcal{W}_{i})\cap\mathfrak{D}\ne\varnothing$,
then $\mathscr{S}^{(p)}(\mathcal{W}_{i})\cap\mathfrak{D}=\mathscr{V}^{(p)}(\mathcal{W}_{i})=\{\mathcal{M}^{*}(\mathcal{W}_{i})\}$
and $U\left(\mathcal{M}^{*}(\mathcal{W}_{i})\right)=H$.
\item For each $i\in\llbracket1,\,\ell\rrbracket$, if $\mathscr{S}^{(p)}(\mathcal{W}_{i})\cap\mathfrak{D}=\varnothing$
and $\mathscr{S}^{(p)}(\mathcal{W}_{i})\cap\widehat{\mathfrak{D}}\ne\varnothing$,
then $\mathscr{V}^{(p)}(\mathcal{W}_{i})=\varnothing$, $\mathcal{M}^{*}(\mathcal{W}_{i})\in\widehat{\mathfrak{D}}$,
and $U\left(\mathcal{M}^{*}(\mathcal{W}_{i})\right)>H$.
\end{enumerate}
\end{lem}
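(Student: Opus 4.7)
The plan is to establish the four items in the order (2), (1), (3), (4), relying throughout on combining the partition structure from Proposition \ref{p: tree} with the height characterization from Proposition \ref{p_char} and the decomposition \eqref{e_lv_decomp}.

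First, I will prove the structural fact underlying (2): every $\mathcal{M}\in\mathscr{S}^{(p)}$ has its minima lying in a single connected component of $\{U<U(\mathcal{M})+d^{(p)}\}$. This follows by induction on the scale $k\le p$ at which $\mathcal{M}$ is generated, since at that scale the minima of $\mathcal{M}$ form a ${\bf y}^{(k-1)}$-irreducible class and hence communicate through saddles of height at most $U(\mathcal{M})+d^{(k)}<U(\mathcal{M})+d^{(p)}$, by Proposition \ref{p: tree}-(3). Any $\mathcal{M}\in\mathscr{S}^{(p)}$ with $\mathcal{M}\subset\mathcal{K}$ then satisfies $U(\mathcal{M})\le H$, so all its minima sit in one $\mathcal{W}_i$, yielding (2). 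For (1), I use \eqref{01} and the non-emptiness of $\mathfrak{D}$: for $\mathcal{M}\in\mathfrak{D}$, Proposition \ref{p_char} gives $\Xi(\mathcal{M})=d^{(p)}$ and Proposition \ref{p: tree}-(4) produces a saddle $\boldsymbol{\sigma}\in\mathcal{K}$ of height $H+d^{(p)}$ realizing a transition $\mathcal{M}\to_{\boldsymbol{\sigma}}\mathcal{M}'$. Near $\boldsymbol{\sigma}$ the Morse structure separates $\{U<H+d^{(p)}\}$ into two local components, both meeting $\mathcal{K}$, which extend to distinct $\mathcal{W}_i$'s; thus $\ell\ge 2$.

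For (3), I fix $\mathcal{M}\in\mathscr{S}^{(p)}(\mathcal{W}_i)\cap\mathfrak{D}$, so that $\mathcal{M}\subset\mathcal{W}_i$, $U(\mathcal{M})=H$, and $\Xi(\mathcal{M})=d^{(p)}$. Any local minimum $\bm{m}\in\mathcal{W}_i\setminus\mathcal{M}$ with $U(\bm{m})\le H$ belongs to $\widetilde{\mathcal{M}}$; since $\mathcal{M},\,\bm{m}$ lie in the same connected component of $\{U<H+d^{(p)}\}$, we get $\Theta(\mathcal{M},\widetilde{\mathcal{M}})\le\Theta(\mathcal{M},\bm{m})<H+d^{(p)}$, contradicting $\Xi(\mathcal{M})=d^{(p)}$. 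This forces $\mathcal{M}^{*}(\mathcal{W}_i)=\mathcal{M}$. The same estimate rules out any other $\mathcal{M}'\in\mathscr{V}^{(p)}(\mathcal{W}_i)$: such $\mathcal{M}'$ must satisfy $U(\mathcal{M}')>H$, placing $\mathcal{M}\in\widetilde{\mathcal{M}'}$, so that $\Theta(\mathcal{M}',\mathcal{M})<H+d^{(p)}<U(\mathcal{M}')+d^{(p)}$ gives $\Xi(\mathcal{M}')<d^{(p)}$, incompatible with $\mathcal{M}'\in\mathscr{V}^{(p)}$.

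Finally, for (4) pick $\mathcal{M}\in\mathscr{S}^{(p)}(\mathcal{W}_i)\cap\widehat{\mathfrak{D}}$, necessarily $\mathcal{M}\in\mathscr{N}^{(p)}$ with $\Xi(\mathcal{M})<d^{(p)}$. Set $\mathcal{M}_\ast=\mathcal{M}^{*}(\mathcal{W}_i)$ and $h=U(\mathcal{M}_\ast)$. I first show $h>H$: otherwise the $\mathscr{S}^{(p)}$-state $\mathcal{N}$ containing any $\bm{m}\in\mathcal{M}_\ast$ lies in $\mathcal{W}_i$ by (2) with $U(\mathcal{N})=h\le H$, and a close variant of the argument for (3) inside $\mathcal{W}_i$ places $\mathcal{N}$ in $\mathscr{V}^{(p)}$ with $\Xi(\mathcal{N})=d^{(p)}$; tracking the saddle-descent from $\mathcal{M}$ down to $\mathcal{N}$ via heights below $H+d^{(p)}$ then embeds $\mathcal{N}$ into the ${\bf y}^{(p)}$-equivalence class $\mathfrak{D}$, contradicting $\mathscr{S}^{(p)}(\mathcal{W}_i)\cap\mathfrak{D}=\varnothing$. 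The same reasoning excludes any $\mathscr{V}^{(p)}$-state inside $\mathcal{W}_i$, yielding $\mathscr{V}^{(p)}(\mathcal{W}_i)=\varnothing$. For $\mathcal{M}_\ast\in\widehat{\mathfrak{D}}$, I will first identify $\mathcal{M}_\ast$ with a single $\mathscr{S}^{(p)}$-state, using that all minima of $\mathcal{M}_\ast$ communicate through saddles of height at most $H+d^{(p)}<h+d^{(p)}$, and then combine Proposition \ref{p: tree}-(4, 5) with the rate formulae \eqref{e_rate-1}--\eqref{e_rate-3} to establish two-way $\widehat{{\bf y}}^{(p)}$-accessibility between $\mathcal{M}$ and $\mathcal{M}_\ast$. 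This last step—confirming two-way reachability so as to place $\mathcal{M}_\ast$ inside $\widehat{\mathfrak{D}}$—is the main obstacle, and will rely on the descent lemmas from \cite[Sections 4--5 and 13]{LLS-2nd}.
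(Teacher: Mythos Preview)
Your argument for (2) contains a false step: you assert that any $\mathcal{M}\in\mathscr{S}^{(p)}$ with $\mathcal{M}\subset\mathcal{K}$ satisfies $U(\mathcal{M})\le H$. This is exactly what fails in the situation of part (4): there $\mathcal{M}^{*}(\mathcal{W}_{i})\in\mathscr{N}^{(p)}\subset\mathscr{S}^{(p)}$ lies in $\mathcal{W}_i\subset\mathcal{K}$ with $U(\mathcal{M}^{*}(\mathcal{W}_{i}))>H$. Your inductive claim that $\mathcal{M}$ is contained in a single component of $\{U<U(\mathcal{M})+d^{(p)}\}$ may be correct, but to conclude that $\mathcal{M}$ sits in a single $\mathcal{W}_i$ you need the internal communication height of $\mathcal{M}$ to be strictly below $H+d^{(p)}$, not $U(\mathcal{M})+d^{(p)}$; when $U(\mathcal{M})>H$ your implication breaks down. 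The paper obtains (1) and (2) by invoking structural results from \cite{LLS-2nd} (their Lemmata A.13 and 5.9), which work directly at the level $H+d^{(p)}$; a self-contained fix of your approach would require bounding the internal communication of $\mathcal{M}\in\mathscr{N}^{(p)}$ more carefully.

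For (4), you correctly identify the crux as showing $\mathcal{M}^{*}(\mathcal{W}_{i})\in\widehat{\mathfrak{D}}$, but the route via ``descent lemmas'' and two-way accessibility is vague and unnecessarily heavy. The paper's argument (its Lemma~\ref{l_77}) is much cleaner and uses no descent: since $\widehat{\mathfrak{D}}$ is an equivalence class containing some $\mathcal{M}\in\mathscr{S}^{(p)}(\mathcal{W}_i)$, either $\widehat{\mathfrak{D}}\subset\mathscr{S}^{(p)}(\mathcal{W}_i)$ (handled by the argument of (3)), or there is a $\widehat{{\bf y}}^{(p)}$-path of positive rates from $\mathcal{M}$ to some element of $\widehat{\mathfrak{D}}$ outside $\mathcal{W}_i$. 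By Proposition~\ref{p: tree}-(4) each step of this path is an arrow $\mathcal{N}\to\mathcal{N}'$; letting $\mathcal{N}_a$ be the last element of the path inside $\mathcal{W}_i$, the exit lemma (Lemma~\ref{l_exit}) forces $\mathcal{N}_a=\mathcal{M}^{*}(\mathcal{W}_i)$, which places $\mathcal{M}^{*}(\mathcal{W}_i)$ in $\widehat{\mathfrak{D}}$ in one stroke. The bound $U(\mathcal{M}^{*}(\mathcal{W}_i))\ge H$ then follows from $\Xi(\mathcal{M}^{*}(\mathcal{W}_i))\le d^{(p)}$ together with $\Theta(\mathcal{M}^{*}(\mathcal{W}_i),\mathcal{N}_{a+1})\ge H+d^{(p)}$ via Lemma~\ref{l_pot1}-(2). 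Your argument for (3) is essentially correct and close in spirit to the paper's.
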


Let ${\color{blue}\mathcal{M}_{i}}:=\mathcal{M}^{*}(\mathcal{W}_{i})$
for $i\in\llbracket1,\,\ell\rrbracket$. Without loss of generality,
assume that for some $1\le{\color{blue}n\le m}\le\ell$, 
\begin{itemize}
\item $\mathscr{S}^{(p)}(\mathcal{W}_{i})\cap\mathfrak{D}\ne\varnothing$
for $i\in\llbracket1,\,n\rrbracket$,
\item $\mathscr{S}^{(p)}(\mathcal{W}_{i})\cap\mathfrak{D}=\varnothing$
and $\mathscr{S}^{(p)}(\mathcal{W}_{i})\cap\widehat{\mathfrak{D}}\ne\varnothing$
for $i\in\llbracket n+1,\,m\rrbracket$,
\item and $\mathscr{S}^{(p)}\left(\mathcal{W}_{i}\right)\cap\widehat{\mathfrak{D}}=\varnothing$
for $i\in\llbracket m+1,\,\ell\rrbracket$.
\end{itemize}

By Lemma \ref{l_equi_lv},
$\mathfrak{D}={\color{blue}\{\mathcal{M}_{1},\,\dots,\,\mathcal{M}_{n}\}}$
and
${\color{blue}\mathcal{M}_{n+1},\,\dots,\,
\mathcal{M}_{m}}\in\widehat{\mathfrak{D}}\setminus\mathfrak{D}$. Note
that $\widehat{\mathfrak{D}} \setminus\mathfrak{D} $ may contain other
sets.

By definition of $H$,
$U(\mathcal{M}_{1})=\cdots=U(\mathcal{M}_{n})=H$. We claim that
\begin{equation}
\label{02}
U(\mathcal{M})>H\ \text{for all}\
\mathcal{M}\in\widehat{\mathfrak{D}}\setminus\mathfrak{D}\,. 
\end{equation}
In particular, $U(\mathcal{M}_{n+1}),\,\dots,\,U(\mathcal{M}_{m})>H$.
To prove \eqref{02}, fix
$\mathcal{M}\in\widehat{\mathfrak{D}}\setminus\mathfrak{D}$.  Then,
there exists $i\in\llbracket1,\,m\rrbracket$ such that
$\mathcal{M}\in\mathscr{S}^{(p)}(\mathcal{W}_{i})$.  If
$i\in\llbracket1,\,n\rrbracket$, since
$\mathcal{M}\ne\mathcal{M}^{*}(\mathcal{W}_{i})$,
$U(\mathcal{M})>H$. If $i\in\llbracket n+1,\,m\rrbracket$,
\begin{equation}
\label{03}
U(\mathcal{M})\ge U(\mathcal{M}_{i})>H
\end{equation}
where the last inequality comes from Lemma \ref{l_equi_lv}-(4).

\subsubsection{\label{sec62}Test functions}

In this subsection, we construct the test functions introduced in
Proposition \ref{p_test}. They are approximations of the equilibrium
potentials, as these functions satisfy the properties required in the
proposition.  This is explained in details below equation
\eqref{e_def_h}.  We follow \cite[Section 8]{LS-22a}, with some
modifications of the test functions on shallow wells.

Recall the definition of the level set $\mc K$ introduced in \eqref{e_lv_decomp}.
Let
\[
{\color{blue}\delta}=\delta(\epsilon):=\sqrt{\epsilon\log\frac{1}{\epsilon}}\,,
\]
and let $J>0$ be a large number satisfying $J^{2}>d+10$
(cf. \cite[Lemma 10.4]{LS-22a}).  Denote by
\textcolor{blue}{$\mathcal{K}_{\epsilon}$} the connected component of
$\{U<H+d^{(p)}+J^{2}\delta^{2}\}$ containing $\mathcal{K}$. For
$i,\,j\in\llbracket1,\,\ell\rrbracket$, define
${\color{blue}\Sigma_{i,\,j}=\Sigma_{j,\,i}}:=\overline{\mathcal{W}_{i}}\cap\overline{\mathcal{W}_{j}}$.
By \cite[Lemma A.1]{LLS-2nd},
$\Sigma_{i,\,j}=\partial\mathcal{W}_{i}\cap\partial\mathcal{W}_{j}$,
elements of $\Sigma_{i,\,j}$ are saddle points
($\Sigma_{i,\,j}\subset\mathcal{S}_0$), and
\begin{equation}
U(\bm{\sigma})=H+d^{(p)}\ \text{for all}\ \bm{\sigma}\in\Sigma_{i,\,j},\,i,\,j\in\llbracket1,\,\ell\rrbracket\,.\label{e_sigma-H+d}
\end{equation}
For $i<j\in\llbracket1,\,\ell\rrbracket$ and $\bm{\sigma}\in\Sigma_{i,\,j}$,
denote by \textcolor{blue}{$-\lambda_{1}^{\bm{\sigma}}<0<\lambda_{2}^{\bm{\sigma}}<\cdots<\lambda_{d}^{\bm{\sigma}}$}
the eigenvalues of $\nabla^{2}U(\bm{\sigma})$ and by \textcolor{blue}{$\bm{e}_{1}^{\bm{\sigma}}$},
\textcolor{blue}{$\bm{e}_{k}^{\bm{\sigma}}$}, $k\in\llbracket2,\,d\rrbracket$,
the eigenvectors of $\nabla^{2}U(\bm{\sigma})$ corresponding to $-\lambda_{1}^{\bm{\sigma}}$
and $\lambda_{k}^{\bm{\sigma}}$, respectively. Choose $\bm{e}_{1}^{\bm{\sigma}}$
pointing towards $\mathcal{W}_{i}$: for all sufficiently small $a>0$,
$\bm{\sigma}+a\bm{e}_{1}^{\bm{\sigma}}\in\mathcal{W}_{i}$.

\begin{figure}
\begin{tikzpicture}
	\path [fill=purple!30!white] (-3,2.4) to [out=0,in=170] (-2,2.3) to (-2,-2.3) to [out=190,in=0] (-3,-2.4) to (-3,2.4);
	\path [fill=purple!30!white] (2,2.3) to [out=10,in=180] (3,2.4) to (3,-2.4) to [out=180,in=-10] (2,-2.3) to (2,2.3);
	\fill[CornflowerBlue!30!white] (-2,-2.7) rectangle (2,2.7);
	\path [fill=orange!30!white] (-2,2.3) to [out=-10,in=120] (-0.1,0.8) to [out=-60,in=180] (0,0.7) to [out=0,in=240] (0.1,0.8) to [out=60,in=190] (2,2.3) to (2,-2.3) to [out=170,in=-60] (0.1,-0.8) to [out=120,in=0] (0,-0.7) to [out=180,in=60] (-0.1,-0.8) to [out=240,in=10] (-2,-2.3) to (-2,2.3);

	\draw[dotted] (-3,2) to [out=0,in=120] (0,0);
	\draw[dotted] (0,-0) to [out=-60,in=180] (3,-2);

	\draw[dotted] (-3,-2) to [out=0,in=240] (0,0);
	\draw[dotted] (0,0) to [out=60,in=180] (3,2);

	\draw[dotted] (-2,-2.7) to (-2,2.7);
	\draw[dotted] (2,-2.7) to (2,2.7);
	\draw[dotted] (-2,2.7) to (2,2.7);
	\draw[dotted] (-2,-2.7) to (2,-2.7);
	\draw[line width=1pt] (2,2.3) to (2,-2.3);
	\draw[line width=1pt] (-2,-2.3) to (-2,2.3);

	\draw[line width=0.5pt] (-3,2.4) to [out=0,in=170] (-2,2.3);
	\draw[line width=1pt] (-2,2.3) to [out=-10,in=120] (-0.1,0.8);
	\draw[line width=1pt] (-0.1,0.8) to [out=-60,in=180] (0,0.7);
	\draw[line width=1pt] (0,0.7) to [out=0,in=240] (0.1,0.8);
	\draw[line width=1pt] (0.1,0.8) to [out=60,in=190] (2,2.3);
	\draw[line width=0.5pt] (2,2.3) to [out=10,in=180] (3,2.4);

	\draw[line width=0.5pt] (3,-2.4) to [out=180,in=-10] (2,-2.3);
	\draw[line width=1pt] (2,-2.3) to [out=170,in=-60] (0.1,-0.8);
	\draw[line width=1pt] (0.1,-0.8) to [out=120,in=0] (0,-0.7);
	\draw[line width=1pt] (0,-0.7) to [out=180,in=60] (-0.1,-0.8);
	\draw[line width=1pt] (-0.1,-0.8) to [out=240,in=10] (-2,-2.3);
	\draw[line width=0.5pt] (-2,-2.3) to [out=190,in=0] (-3,-2.4);

	\filldraw (0,0) circle (2pt);
	\draw (-0.3,0) node{$\bm{\sigma}$};
	\draw (2.6,0) node{$\mathcal{W}_{i}^{\epsilon}$};
	\draw (-2.5,0) node{$\mathcal{W}_{j}^{\epsilon}$};

	\draw (-3.7,-1) node{$\mathcal{B}_{\epsilon}^{\bm{\sigma}}$};
	\draw [-latex, dash pattern=on 1.5pt off 1pt] (-3.3,-1) to (-1,-0.5);
	\draw (3.8,1.5) node{$\partial_{i}\mathcal{B}_{\epsilon}^{\bm{\sigma}}$};
	\draw [-latex, dash pattern=on 1.5pt off 1pt] (3.3,1.5) to (2,1);
	\draw (-3.7,1.5) node{$\partial_{j}\mathcal{B}_{\epsilon}^{\bm{\sigma}}$};
	\draw [-latex, dash pattern=on 1.5pt off 1pt] (-3.3,1.5) to (-2,1);

	\draw (0,2.1) node{$\mathcal{C}_{\epsilon}^{\bm{\sigma}}$};
	\draw (0,-2.1) node{$\mathcal{C}_{\epsilon}^{\bm{\sigma}}$};
	\draw [decorate,decoration={brace,raise=2pt,amplitude=5pt}] (-2,2.7) -- (2,2.7);
	\draw (0,3.2) node{$\partial_{0}\mathcal{C}_{\epsilon}^{\bm{\sigma}}$};
	\draw [decorate,decoration={brace,mirror,raise=2pt,amplitude=5pt}] (-2,-2.7) -- (2,-2.7);
	\draw (0,-3.2) node{$\partial_{0}\mathcal{C}_{\epsilon}^{\bm{\sigma}}$};

	\draw (-3,3.5) node{$U=H+d^{(p)}$};
	\draw [-latex, dash pattern=on 1.5pt off 1pt] (-3,3.25) to (-2.5,2);
	\draw (3,3.5) node{$U=H+d^{(p)}+J^{2}\delta^{2}$};
	\draw [-latex, dash pattern=on 1.5pt off 1pt] (3,3.2) to (2.5,2.4);
\end{tikzpicture}

\caption{The sets around a saddle point $\bm{\sigma}$}
\label{fig1}
\end{figure}
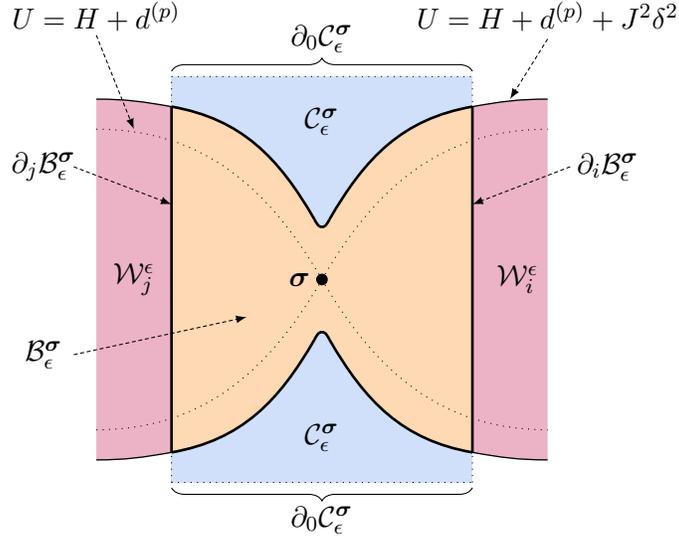

Define the box \textcolor{blue}{$\mathcal{C}_{\epsilon}^{\bm{\sigma}}$}
centered at $\bm{\sigma}$ by
\begin{align*}
\mathcal{C}_{\epsilon}^{\bm{\sigma}}:=\bigg\{\,\bm{\sigma}+\sum_{k=1}^{d}\alpha_{k}\bm{e}_{k}^{\boldsymbol{\sigma}}\in\mathbb{R}^{d}: & -\frac{J\delta}{\sqrt{\lambda_{1}^{\bm{\sigma}}}}\leq\alpha_{1}\leq\frac{J\delta}{\sqrt{\lambda_{1}^{\bm{\sigma}}}}\\
 & \;\text{and}\,-\frac{2J\delta}{\sqrt{\lambda_{k}^{\bm{\sigma}}}}\leq\alpha_{k}\leq\frac{2J\delta}{\sqrt{\lambda_{k}^{\bm{\sigma}}}}\,\text{ for }\,2\leq k\leq d\,\bigg\}\,,
\end{align*}
 and define
\[
{\color{blue}\partial_{0}\mathcal{C}_{\epsilon}^{\bm{\sigma}}}:=\left\{ \bm{\sigma}+\sum_{k=1}^{d}\alpha_{k}\bm{e}_{k}^{\boldsymbol{\sigma}}\in\mathcal{C}_{\epsilon}^{\bm{\sigma}}:\alpha_{k}=\pm\frac{J\delta}{\sqrt{\lambda_{k}^{\bm{\sigma}}}}\,\text{for some}\,2\le k\le d\right\} \,.
\]
By the proof of \cite[Lemma 8.3]{LS-22a},
\begin{equation}
U(\bm{x})\ge U(\bm{\sigma})+\frac{3}{2}J^{2}\delta^{2}[1+o_{\epsilon}(1)]\quad \text{for all}\ \bm{x}\in\partial_{0}\mathcal{C}_{\epsilon}^{\bm{\sigma}}\,,\label{e_67}
\end{equation}
so that $\partial_{0}\mathcal{C}_{\epsilon}^{\bm{\sigma}}\subset(\mathcal{K}_{\epsilon})^{c}$
for sufficiently small $\epsilon>0$. By \eqref{e_lv_decomp} and
\eqref{e_67}, for sufficiently small $\epsilon>0$, the set $\mathcal{K}_{\epsilon}\setminus(\bigcup_{1\le i<j\le\ell}\bigcup_{\bm{\sigma}\in\Sigma i,\,j}\mathcal{C}_{\epsilon}^{\bm{\sigma}})$
has $\ell$ connected components and each component intersects with
exactly one of $\mathcal{W}_{i}$, $i\in\llbracket1,\,\ell\rrbracket$.
Furthermore, each $\mathcal{W}_{i}$, $i\in\llbracket1,\,\ell\rrbracket$,
intersects with exactly one of such connected components. Denote by
\textcolor{blue}{$\mathcal{W}_{i}^{\epsilon}$}, $i\in\llbracket1,\,\ell\rrbracket$,
the connected component of $\mathcal{K}_{\epsilon}\setminus(\bigcup_{1\le i<j\le\ell}\bigcup_{\bm{\sigma}\in\Sigma i,\,j}\mathcal{C}_{\epsilon}^{\bm{\sigma}})$
intersecting with $\mathcal{W}_{i}$. Let ${\color{blue}\mathcal{B}_{\epsilon}^{\bm{\sigma}}}:=\mathcal{C}_{\epsilon}^{\bm{\sigma}}\cap\mathcal{K}_{\epsilon}$.
Since $\bm{e}_{1}^{\bm{\sigma}}$ points towards $\mathcal{W}_{i}$,
define for $\bm{\sigma}\in\Sigma_{i,\,j}$, $i<j\in\llbracket1,\,\ell\rrbracket$,
\[
\begin{aligned}{\color{blue}\partial_{i}\mathcal{B}_{\epsilon}^{\bm{\sigma}}} & :=\left\{ \bm{\sigma}+\sum_{k=1}^{d}\alpha_{k}\bm{e}_{k}^{\boldsymbol{\sigma}}\in\mathcal{B}_{\epsilon}^{\bm{\sigma}}:\alpha_{1}=\frac{J\delta}{\sqrt{\lambda_{1}^{\bm{\sigma}}}}\right\} \,,\\
{\color{blue}\partial_{j}\mathcal{B}_{\epsilon}^{\bm{\sigma}}} & :=\left\{ \bm{\sigma}+\sum_{k=1}^{d}\alpha_{k}\bm{e}_{k}^{\boldsymbol{\sigma}}\in\mathcal{B}_{\epsilon}^{\bm{\sigma}}:\alpha_{1}=-\frac{J\delta}{\sqrt{\lambda_{1}^{\bm{\sigma}}}}\right\} \,.
\end{aligned}
\]
Then, $\mathcal{K}_{\epsilon}$ can be decomposed as
\[
\mathcal{K}_{\epsilon}=\left(\bigcup_{1\le i<j\le\ell}\bigcup_{\bm{\sigma}\in\Sigma i,\,j}\mathcal{B}_{\epsilon}^{\bm{\sigma}}\right)\cup\left(\bigcup_{1\le i\le\ell}\mathcal{W}_{i}^{\epsilon}\right)\,.
\]
We refer to the Figure \ref{fig1} for a visualization of the sets
defined above.

For $\bm{\sigma}\in\Sigma_{i,\,j}$, $i<j\in\llbracket1,\,\ell\rrbracket$,
define $p_{\epsilon}^{\bm{\sigma}}:\mathcal{B}_{\epsilon}^{\bm{\sigma}}\to\mathbb{R}$
by
\[
p_{\epsilon}^{\bm{\sigma}}(x):=\frac{1}{c_{\epsilon}^{\bm{\sigma}}}\int_{-J\delta/\sqrt{\lambda_{1}^{\bm{\sigma}}}}^{(\bm{x}-\bm{\sigma})\cdot\bm{e}_{1}^{\bm{\sigma}}}e^{-\frac{\lambda_{1}^{\bm{\sigma}}}{2\epsilon}t^{2}}dt\,,
\]
where the normalizing constant is given by
\[
c_{\epsilon}^{\bm{\sigma}}:=\int_{-J\delta/\sqrt{\lambda_{1}^{\bm{\sigma}}}}^{J\delta/\sqrt{\lambda_{1}^{\bm{\sigma}}}}e^{-\frac{\lambda_{1}^{\bm{\sigma}}}{2\epsilon}t^{2}}dt=\sqrt{\frac{2\pi\epsilon}{\lambda_{1}^{\bm{\sigma}}}}\,[1+o_{\epsilon}(1)]\,.
\]
By definition,
\[
p_{\epsilon}^{\bm{\sigma}}(\bm{x})=\begin{cases}
0 & \bm{x}\in\partial_{j}\mathcal{B}_{\epsilon}^{\bm{\sigma}}\,,\\
1 & \bm{x}\in\partial_{i}\mathcal{B}_{\epsilon}^{\bm{\sigma}}\,,
\end{cases}
\]
and $0\le p_{\epsilon}^{\bm{\sigma}}(\bm{x})\le1$ for
$\bm{x}\in\mathcal{B}_{\epsilon}^{\bm{\sigma}}$.

Recall from Proposition \ref{p: tree} that
\textcolor{blue}{$\widehat{\mathcal{Q}}_{\mathcal{M}}^{(p)}$},
$\mathcal{M}\in\mathscr{S}^{(p)}$, represents the law of the Markov
chain $\{\widehat{{\bf y}}^{(p)}(t)\}_{t\ge0}$ starting from
$\mathcal{M}$. For $i\in\llbracket1,\,n\rrbracket$, denote by
${\color{blue}{\bf
h}_{i}^{(p)}}:\llbracket1,\,\ell\rrbracket\to[0,\,1]$ the
$\widehat{\mathbf{y}}^{(p)}$-equilibrium potential between $\mc M_i$
and $\ms V^{(p)}\setminus \{\mc M_i\}$, set to be $0$ on
$\llbracket1,\,m\rrbracket^{c}$:
\begin{equation}
{\color{blue}{\bf h}_{i}^{(p)}(k}\mathclose{\color{blue})}:=\begin{cases}
\widehat{\mathcal{Q}}_{\mathcal{M}_{k}}^{(p)}\left[H_{\mathscr{V}^{(p)}}=H_{\mathcal{M}_{i}}\right] & k\in\llbracket1,\,m\rrbracket\,,\\
0 & k\in\llbracket m+1,\,\ell\rrbracket\,.
\end{cases}\label{e_def_h}
\end{equation}
Mind that ${\bf h}_{i}^{(p)}(k)=\delta_i(k)$ for $k \in \llbracket 1, \, n \rrbracket$.

With the help of the equilibrium potentials ${\bf h}_{i}^{(p)}$, we
define a family of test functions
$(h_{\mathcal{M}}^{\epsilon})_{\epsilon>0}$,
$\mathcal{M}\in\mathfrak{D}$, which fulfill the requirements of
Proposition \ref{p_test}.

By the definition of the generator $\mathfrak{L}^{(p)}$, the Dirichlet
form of $\delta_{\mathcal{M}}:\mathscr{V}^{(p)}\to\mathbb{R}$ with
respect to the generator $\mathfrak{L}^{(p)}$ is given by
\[
-\sum_{\mathcal{M}'\in\mathscr{V}^{(p)}} \nu(\mathcal{M}')\delta_{\mathcal{M}}(\mathcal{M}')\left(\mathfrak{L}^{(p)}\delta_{\mathcal{M}}\right)(\mathcal{M}')=\nu(\mathcal{M})\sum_{\mathcal{M}''\in\mathscr{V}^{(p)}}r^{(p)}(\mathcal{M},\,\mathcal{M}'')\,.
\]
Thus, we need to find test functions $h_{\mathcal{M}}^{\epsilon}$
whose Dirichlet forms multiplied by
$\nu_\star e^{H/\epsilon}\theta_\epsilon^{(p)}$  converge to
the Dirichlet form of $\delta_{\mathcal{M}}$ with respect to
$\mathfrak{L}^{(p)}$.

For any set or element $A$, ${\color{blue} H_A}$ denotes the first
hitting time of $A$ for a given process. Since
$\delta_{\mathcal{M}}(\cdot)=\mathcal{Q}_{\,\cdot}^{(p)}\left[H_{\mathcal{M}}=H_{\mathscr{V}^{(p)}}\right]$
is the $\mathbf{y}^{(p)}$-equilibrium potential between $\mc M$ and
$\ms V^{(p)}\setminus \{\mc M\}$, and since $\mathbf{y}^{(p)}$
describes the reduced evolution of the diffusion process, potential
theory suggests that the searched test function $h_{\mathcal{M}}^{\epsilon}$
should approximate the equilibrium potential
\[
h_{\mathcal{E}(\mathcal{M}),\,\mathcal{E}^{(p)}\setminus\mathcal{E}(\mathcal{M})}(\bm{x}):=\mathbb{P}_{\bm{x}}^{\epsilon}\left[H_{\mathcal{E}(\mathcal{M})}=H_{\mathcal{E}^{(p)}}\right] \,, \quad \bm{x}\in\mathbb{R}^d\,.
\]

We now construct a test function on $\mathcal{K}_\epsilon$ close to $h_{\mathcal{E}(\mathcal{M}),\,\mathcal{E}^{(p)}\setminus\mathcal{E}(\mathcal{M})}$.
Fix $\mathcal{M}\in\mathfrak{D}$ and let $i\in\llbracket 1, \, n \rrbracket$ be such that $\mathcal{M}_{i}=\mathcal{M}$.

\smallskip
\noindent$\bullet$ Behavior inside wells $\mathcal{W}_k$, $k\in\llbracket 1, \, \ell \rrbracket$
\smallskip

If $k\in\llbracket1,\,n\rrbracket$, since $\mathcal{M}_k \in \mathscr{V}^{(p)}$, we expect that as $\epsilon\to0$, for $\bm{x}\in\mathcal{E}(\mathcal{M}_k)$,
\[
h_{\mathcal{E}(\mathcal{M}),\,\mathcal{E}^{(p)}\setminus\mathcal{E}(\mathcal{M})}(\bm{x}) \approx \delta_{\mathcal{M}_i}(\mathcal{M}_k)={\bf h}_{i}^{(p)}(k) \,.
\]
For $k \in \llbracket n+1, m \rrbracket$, since $\mathcal{M}_k\in\mathscr{N}^{(p)}$, Proposition \ref{p: tree}-(5) yields
\[
\lim_{\epsilon\to0}h_{\mathcal{E}(\mathcal{M}),\,\mathcal{E}^{(p)}\setminus\mathcal{E}(\mathcal{M})}(\bm{x})=\widehat{\mathcal{Q}}_{\mathcal{M}_k}^{(p)}\left[H_{\mathscr{V}^{(p)}}=H_{\mathcal{M}_i}\right]={\bf h}_{i}^{(p)}(k) \,, \quad \bm{x}\in\mathcal{E}(\mathcal{M}_k)  \,.
\]
If $k\in\llbracket m+1,\, \ell\rrbracket$, then $\mathcal{W}_k$ contains no element of $\widehat{\mathfrak{D}}$. For $\mathcal{M}'\in\mathscr{V}^{(p)}(\mathcal{W}_k)$, the Markov chain $\{{\bf y}^{(p)}\}_{t\ge0}$ starting from $\mathcal{M}'$ cannot reach $\mathfrak{D}$ in positive probability. Therefore, it is expected that as $\epsilon\to0$ for $\bm{x}\in\mathcal{E}(\mathcal{M}')$, $\mathcal{M}'\in\mathscr{V}^{(p)}(\mathcal{W}_k)$,
\[
h_{\mathcal{E}(\mathcal{M}),\,\mathcal{E}^{(p)}\setminus\mathcal{E}(\mathcal{M})}(\bm{x}) \approx 0={\bf h}_{i}^{(p)}(k) \,.
\]
In summary, the value of the testfunction inside each well
$\mathcal{W}_k$, $k\in\llbracket 1,\, \ell\rrbracket$, is given by
${\bf h}_{i}^{(p)}(k)$.

\smallskip
\noindent$\bullet$ Behavior near saddle points $\mathcal{B}_{\epsilon}^{\bm{\sigma}}$, $\bm{\sigma}\in\Sigma_{a,\,b}$, $a<b\in\llbracket1,\,\ell\rrbracket$
\smallskip

We next consider the neighborhoods of saddle points $\mathcal{B}_{\epsilon}^{\bm{\sigma}}$, $\bm{\sigma}\in\Sigma_{a,\,b}$, $a<b\in\llbracket1,\,\ell\rrbracket$.
The equilibrium potential $h_{\mathcal{E}(\mathcal{M}),\,\mathcal{E}^{(p)}\setminus\mathcal{E}(\mathcal{M})}$ satisfies
\[
\mathscr{L}_{\epsilon}h_{\mathcal{E}(\mathcal{M}),\,\mathcal{E}^{(p)}\setminus\mathcal{E}(\mathcal{M})}(\bm{x})=0 \,, \quad \bm{x}\in\mathbb{R}^{d}\setminus\mathcal{E}^{(p)}\,.
\]
Fix $\bm{\sigma}\in\Sigma_{a,\,b}$, $a<b\in\llbracket1,\,\ell\rrbracket$.
As proved in \cite[Proposition 8.5]{LS-22a}, $\mathscr{L}_\epsilon p_{\epsilon}^{\bm{\sigma}}(\bm{x})$ is negligible for $\bm{x}\in\mathcal{B}_{\epsilon}^{\bm{\sigma}}$.
Therefore, our test function $h_{\mathcal{M}}^{\epsilon}$ is approximated by the continuous function ${\color{blue}h_{i}^{\epsilon}}:\mathcal{K}_{\epsilon}\to\mathbb{R}$ defined by
\begin{equation}
h_{i}^{\epsilon}(\bm{x}):=\begin{cases}
{\bf h}_{i}^{(p)}(k) & \bm{x}\in\mathcal{W}_{k}^{\epsilon},\,k\in\llbracket1,\,\ell\rrbracket\,,\\{}
[{\bf h}_{i}^{(p)}(a)-{\bf h}_{i}^{(p)}(b)]\,p_{\epsilon}^{\bm{\sigma}}(\bm{x}) & \bm{x}\in\mathcal{B}_{\epsilon}^{\bm{\sigma}},\,\bm{\sigma}\in\Sigma_{a,\,b},\,a<b\in\llbracket1,\,\ell\rrbracket\,.
\end{cases}\label{e_def_h_i}
\end{equation}

Define the vector field
${\color{blue}\Phi_{i}^{\epsilon}}:\mathbb{R}^{d}\to\mathbb{R}^{d}$ as
\[
\Phi_{i}^{\epsilon}(\bm{x}):=\begin{cases}
\left[{\bf h}_{i}^{(p)}(a)-{\bf h}_{i}^{(p)}(b)\right]\nabla p_{\epsilon}^{\bm{\sigma}}(\bm{x}) & \bm{x}\in\mathcal{B}_{\epsilon}^{\bm{\sigma}},\,\bm{\sigma}\in\Sigma_{a,\,b},\,a<b\in\llbracket1,\,\ell\rrbracket\,,\\
0 & \text{otherwise}\,.
\end{cases}
\]

The following proposition is the main result of this section. The
proof is postponed to Section \ref{subsec_pf_p_cap}. Recall the
definition of the weights $\nu(\mathcal{M})$, $\mc M\subset \mc M_0$,
and $\nu_{\star}$, given in \eqref{e_def_nu}.

\begin{prop}
\label{p_cap}
Recall that we assumed that $\mf D$ has no absorbing states.
For all $i\in\llbracket1,\,n\rrbracket$,
\[
\lim_{\epsilon\to0}e^{H/\epsilon}\theta_{\epsilon}^{(p)}\epsilon\int_{\mathbb{R}^{d}}|\Phi_{i}^{\epsilon}|^{2}d\pi_{\epsilon}=\frac{\nu(\mathcal{M}_{i})}{\nu_{\star}}\sum_{\mathcal{M}'\in\mathscr{V}^{(p)}\setminus\{\mathcal{M}_{i}\}}r^{(p)}(\mathcal{M}_{i},\,\mathcal{M}')\,. 
\]
If $n\ge2$, for $i,\,j\in\llbracket1,\,n\rrbracket$,
\[
\lim_{\epsilon\to0}e^{H/\epsilon}\theta_{\epsilon}^{(p)}\epsilon\int_{\mathbb{R}^{d}}\Phi_{i}^{\epsilon}\cdot\Phi_{j}^{\epsilon}\,d\pi_{\epsilon}=-\frac{1}{2\nu_{\star}}\left(\nu(\mathcal{M}_{i})\,r^{(p)}(\mathcal{M}_{i},\,\mathcal{M}_{j})+\nu(\mathcal{M}_{j})\,r^{(p)}(\mathcal{M}_{j},\,\mathcal{M}_{i})\right)\,.
\]
\end{prop}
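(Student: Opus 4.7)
The plan is to combine a local Eyring--Kramers Laplace expansion at each saddle with a discrete Dirichlet-form identity for the reduced Markov chain. Since $\nabla p_\epsilon^{\bm{\sigma}}$ is supported in $\mathcal{B}_\epsilon^{\bm{\sigma}}$ and the boxes $\{\mathcal{B}_\epsilon^{\bm{\sigma}}:\bm{\sigma}\in\bigcup_{a<b}\Sigma_{a,b}\}$ are pairwise disjoint, both target integrals decompose into finite sums of saddle-by-saddle contributions. The computation is thereby reduced to the single-saddle asymptotic
\[
e^{H/\epsilon}\,\theta_\epsilon^{(p)}\,\epsilon \int_{\mathcal{B}_\epsilon^{\bm{\sigma}}} |\nabla p_\epsilon^{\bm{\sigma}}|^2 \, d\pi_\epsilon \;\longrightarrow\; \frac{\omega(\bm{\sigma})}{\nu_\star}\,,
\]
which I would obtain by standard Eyring--Kramers arguments in the spirit of \cite[Section 8]{LS-22a}: the Gaussian profile of $\nabla p_\epsilon^{\bm{\sigma}}$ along $\bm{e}_1^{\bm{\sigma}}$, the Morse expansion of $U$ at the saddle, the Laplace asymptotic $Z_\epsilon = (2\pi\epsilon)^{d/2}\,\nu_\star\,[1+o_\epsilon(1)]$, and crucially $U(\bm{\sigma}) = H + d^{(p)}$ from \eqref{e_sigma-H+d}, providing the exact cancellation between the factor $e^{-U(\bm{\sigma})/\epsilon}$ arising from $d\pi_\epsilon$ and the prefactor $e^{H/\epsilon}\theta_\epsilon^{(p)}$.

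After this step, the two claimed limits reduce to the algebraic identities
\[
\sum_{a<b}\sum_{\bm{\sigma}\in\Sigma_{a,b}} \bigl[{\bf h}_i^{(p)}(a)-{\bf h}_i^{(p)}(b)\bigr] \bigl[{\bf h}_j^{(p)}(a)-{\bf h}_j^{(p)}(b)\bigr]\,\omega(\bm{\sigma})
\]
which should equal $\nu(\mathcal{M}_i)\sum_{\mathcal{M}'\neq \mathcal{M}_i}r^{(p)}(\mathcal{M}_i,\mathcal{M}')$ when $i=j$, and $-\tfrac{1}{2}\bigl[\nu(\mathcal{M}_i)r^{(p)}(\mathcal{M}_i,\mathcal{M}_j)+\nu(\mathcal{M}_j)r^{(p)}(\mathcal{M}_j,\mathcal{M}_i)\bigr]$ when $i\neq j$. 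To prove these I would use the tree-structure construction of the rates \eqref{e_rate-1}--\eqref{e_rate-3} together with Lemma \ref{l_equi_lv} to recognise that each saddle $\bm{\sigma}\in\Sigma_{a,b}$ contributes its weight $\omega(\bm{\sigma})$ to exactly one jump rate of $\widehat{{\bf y}}^{(p)}$ between elements of $\mathscr{S}^{(p)}(\mathcal{W}_a)$ and $\mathscr{S}^{(p)}(\mathcal{W}_b)$. Consequently the displayed left-hand side is twice the bilinear Dirichlet form $D_{\widehat{\mathfrak{D}}}({\bf h}_i^{(p)},{\bf h}_j^{(p)})$ of the equilibrium potentials. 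By the classical Dirichlet / capacity principle for the chain $\widehat{{\bf y}}^{(p)}$ on $\widehat{\mathfrak{D}}$, combined with the trace-process reduction to the chain ${\bf y}^{(p)}$ on $\mathscr{V}^{(p)}$, this Dirichlet form identifies with the capacity and hence with the Dirichlet form of $\delta_{\mathcal{M}_i}$ (or a polarised bilinear version for $i\neq j$) under the generator $\mathfrak{L}^{(p)}$, giving the stated values. Both identities depend on the reversibility of $\widehat{{\bf y}}^{(p)}$ restricted to $\widehat{\mathfrak{D}}$, an extension of Proposition \ref{p_rev}.

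The main obstacle is the discrete Dirichlet-form identification in the second step. One must carefully account for the contribution of the ``non-minimum'' elements $\mathcal{M}_{n+1},\dots,\mathcal{M}_m\in\widehat{\mathfrak{D}}\setminus\mathfrak{D}$, which carry nontrivial equilibrium-potential values by \eqref{e_def_h} and which appear in the sum but do not belong to $\mathscr{V}^{(p)}$. Here the geometric structure provided by Lemmas \ref{l_equi_height} and \ref{l_equi_lv} becomes essential: they guarantee that the summation over saddles respects the partition of $\mathcal{K}$ into the wells $\mathcal{W}_a$ and enable the clean transfer of Eyring--Kramers constants into the discrete jump rates of the reduced chain.
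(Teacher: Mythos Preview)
Your proposal is correct and follows essentially the same route as the paper: the saddle-by-saddle decomposition and single-saddle Eyring--Kramers asymptotic (the paper invokes \cite[Lemma~3.5]{RS} for the limit $e^{H/\epsilon}\theta_\epsilon^{(p)}\epsilon\int_{\mathcal{B}_\epsilon^{\bm{\sigma}}}|\nabla p_\epsilon^{\bm{\sigma}}|^2\,d\pi_\epsilon\to\omega(\bm{\sigma})/\nu_\star$) reduce everything to the discrete identity, which the paper packages as Lemma~\ref{l_85} and proves via harmonic extensions together with the abstract Dirichlet-form formula Lemma~\ref{l_A3} (using Lemma~\ref{l_83} for the link $\omega_{a,b}=\nu(\mathcal{M}_a)\sum_{\mathcal{M}\in\mathscr{S}^{(p)}(\mathcal{W}_b)}\widehat{r}^{(p)}(\mathcal{M}_a,\mathcal{M})$ and polarisation for $i\neq j$), exactly as you outline. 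One refinement worth making explicit: the discrete sum is not a pure Dirichlet form on $\widehat{\mathfrak{D}}$, since the wells $\mathcal{W}_{m+1},\dots,\mathcal{W}_\ell$ with $\mathscr{S}^{(p)}(\mathcal{W}_b)\cap\widehat{\mathfrak{D}}=\varnothing$ contribute ``escape'' terms encoding jumps of $\mathcal{M}_i$ out of $\widehat{\mathfrak{D}}$, and these are essential to recover the full sum over $\mathcal{M}'\in\mathscr{V}^{(p)}\setminus\{\mathcal{M}_i\}$ rather than only the rates within $\mathfrak{D}$; the paper handles them via Lemma~\ref{l_82}-(3) and the second summand in Lemma~\ref{l_A3}-(2).
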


Let $\color{blue} \xi:\mathbb{R}^{d}\to\mathbb{R}$ be a smooth,
positive, rotationally invariant function supported on the unit ball
$B_{1}$. For $\eta>0$ , write
\[
{\color{blue} \xi_{\eta}(\bm{x}) } :=\eta^{-d}\xi(\eta^{-1}\bm{x})\,.
\]
The following result is \cite[Proposition 10.2]{LS-22a}.
\begin{lem}
\label{l_75} For all $i\in\llbracket1,\,n\rrbracket$,
\[
\lim_{\epsilon\to0} e^{H/\epsilon}\theta_{\epsilon}^{(p)}\epsilon \int_{\mathbb{R}^{d}}\left|\nabla\left(h_{i}^{\epsilon}*\xi_{\epsilon^{2}}\right)-\Phi_{i}^{\epsilon}\right|^{2}d\pi_{\epsilon}=0\,,
\]
where $*$ represents the usual convolution.
\end{lem}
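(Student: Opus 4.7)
The plan is to follow closely the argument of \cite[Proposition 10.2]{LS-22a}, which was devised precisely for this situation. Set $\eta := \epsilon^{2}$ and decompose $\mathbb{R}^{d}$ according to the local behaviour of $h_{i}^{\epsilon}$ relative to the mollification scale. In the deep interior of each well $\mathcal{W}_{k}^{\epsilon}$—i.e. at distance $>\eta$ from $\partial\mathcal{W}_{k}^{\epsilon}$—the function $h_{i}^{\epsilon}$ is locally equal to the constant ${\bf h}_{i}^{(p)}(k)$ on the entire support of $\xi_{\eta}(\cdot-\bm{x})$, so $\nabla(h_{i}^{\epsilon}*\xi_{\eta})(\bm{x})=0=\Phi_{i}^{\epsilon}(\bm{x})$ pointwise, and this region contributes nothing. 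In the interior of each saddle box $\mathcal{B}_{\epsilon}^{\bm{\sigma}}$ (at distance $>\eta$ from $\partial\mathcal{B}_{\epsilon}^{\bm{\sigma}}$), $h_{i}^{\epsilon}$ is smooth with $\nabla h_{i}^{\epsilon}=\Phi_{i}^{\epsilon}$; hence $\nabla(h_{i}^{\epsilon}*\xi_{\eta})=\Phi_{i}^{\epsilon}*\xi_{\eta}$, and a first-order Taylor expansion yields
\[
|\Phi_{i}^{\epsilon}*\xi_{\eta}-\Phi_{i}^{\epsilon}|(\bm{x})\ \le\ \eta\,\sup_{\bm{y}\in B_{\eta}(\bm{x})}|\nabla\Phi_{i}^{\epsilon}(\bm{y})|\,.
\]
The explicit formula for $p_{\epsilon}^{\bm{\sigma}}$ gives $\sup_{\mathcal{B}_{\epsilon}^{\bm{\sigma}}}|\nabla\Phi_{i}^{\epsilon}|=O(\delta\epsilon^{-3/2})$, and combined with the Laplace estimate $\pi_{\epsilon}(\mathcal{B}_{\epsilon}^{\bm{\sigma}})=O(\delta^{d}\epsilon^{-d/2}e^{-(H+d^{(p)})/\epsilon})$ this contributes $O(\epsilon^{2}\log(1/\epsilon))$ to the rescaled integral $e^{H/\epsilon}\theta_{\epsilon}^{(p)}\epsilon\int|\cdot|^{2}d\pi_{\epsilon}$, which tends to zero.

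The principal difficulty is controlling the contributions from the thin boundary strips of width $\eta$ around $\partial\mathcal{B}_{\epsilon}^{\bm{\sigma}}$, $\partial_{0}\mathcal{C}_{\epsilon}^{\bm{\sigma}}$, and $\partial\mathcal{K}_{\epsilon}$. Across $\partial_{a}\mathcal{B}_{\epsilon}^{\bm{\sigma}}$ the function $h_{i}^{\epsilon}$ matches continuously with the adjacent constant value, and the residual discrepancy of its one-sided normal derivatives is bounded by $|{\bf h}_{i}^{(p)}(a)-{\bf h}_{i}^{(p)}(b)|\,|\nabla p_{\epsilon}^{\bm{\sigma}}|$ evaluated on $\partial_{a}\mathcal{B}_{\epsilon}^{\bm{\sigma}}$. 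The Gaussian factor $e^{-J^{2}\log(1/\epsilon)/2}$ appearing in $\nabla p_{\epsilon}^{\bm{\sigma}}$ at $\alpha_{1}=\pm J\delta/\sqrt{\lambda_{1}^{\bm{\sigma}}}$ makes this discrepancy of order $\epsilon^{J^{2}/2-1/2}$, so that $|\nabla(h_{i}^{\epsilon}*\xi_{\eta})-\Phi_{i}^{\epsilon}|$ remains uniformly small on the strip, whose $d$-volume is $O(\eta\,\delta^{d-1})$. Around $\partial_{0}\mathcal{C}_{\epsilon}^{\bm{\sigma}}$ and $\partial\mathcal{K}_{\epsilon}$, the bound \eqref{e_67} gives $U\ge H+d^{(p)}+\tfrac{3}{2}J^{2}\delta^{2}$, yielding an additional factor $\epsilon^{3J^{2}/2}$ in the Gibbs weight that absorbs any polynomial growth of $|\nabla(h_{i}^{\epsilon}*\xi_{\eta})|^{2}$ produced by the smoothing of a bounded jump; the condition $J^{2}>d+10$, inherited from the construction of the boxes, ensures this factor dominates.

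After extending $h_{i}^{\epsilon}$ continuously to all of $\mathbb{R}^{d}$ (so that the convolution is well defined globally), combining the four estimates yields the claim. The main obstacle is the careful matching of the Gaussian decay of $\nabla p_{\epsilon}^{\bm{\sigma}}$ at $\partial_{a}\mathcal{B}_{\epsilon}^{\bm{\sigma}}$ with the Laplace asymptotics of $\pi_{\epsilon}$ near the saddle points, which is carried out in detail in \cite[Section 10]{LS-22a}; the only way the present setting differs from that reference is in the choice of the scalar constants ${\bf h}_{i}^{(p)}(k)$ defining $h_{i}^{\epsilon}$, which reflects the equivalence-class structure of $\mathfrak{D}$, and this has no bearing on the analytic estimates near the saddles.
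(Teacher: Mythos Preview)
Your proposal is correct and matches the paper's approach: the paper does not give an independent proof but simply records that this is \cite[Proposition 10.2]{LS-22a}, and your sketch accurately outlines the region-by-region analysis carried out there, correctly noting that the only change from that reference is the choice of the scalar values ${\bf h}_{i}^{(p)}(k)$, which does not affect the analytic estimates.
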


Fix $\eta>0$ small enough so that that there is no critical point
$\bm{c}\in\mathcal{C}_{0}$ such that
$U(\bm{c})\in(H+d^{(p)},\,H+d^{(p)}+\eta)$. Let $\color{blue} \Omega$
be the connected component of $\{U<H+d^{(p)}+\eta\}$ containing
$\mathcal{K}$.  For $\mathcal{A},\,\mathcal{B}\subset\mathbb{R}^{d}$,
define
${\color{blue}d(\mathcal{A},\,\mathcal{B}}\mathclose{\color{blue})}:=\inf\{|\bm{x}-\bm{y}|:\bm{x}\in\mathcal{A},\,\bm{y}\in\mathcal{B}\}$.
If $\mathcal{A}=\{\bm{x}\}$ for some $\bm{x}\in\mathbb{R}^{d}$, let us
write
${\color{blue}d(\bm{x},\,\mathcal{B}}\mathclose{\color{blue})}:=d(\{\bm{x}\},\,\mathcal{B})$.
Since $h_{i}^{\epsilon}(\bm{x})=0$,
$i\in\llbracket1,\,\ell\rrbracket$, for
$\bm{x}\notin\mathcal{K}_{\epsilon}$,
$\bigcap_{\epsilon>0}\mathcal{K}_{\epsilon}=\mathcal{K}$, and
$d(\mathcal{K},\,\Omega^{c})>0$, there exists $\epsilon_{1}>0$ such
that for $\epsilon\in(0,\,\epsilon_{1})$,
\begin{equation}
(h_{i}^{\epsilon}*\xi_{\epsilon^{2}})
(\bm{x})=0\ \text{for}\ \bm{x}\in\Omega^{c}\,.\label{e_eps1}
\end{equation}

For $i\in\llbracket1,\,\ell\rrbracket$, define
\[
{\color{blue}\mathcal{V}_{i}^{\epsilon}}:=\{\bm{x}\in\mathcal{W}_{i}^{\epsilon}:d(\bm{x},\,\partial\mathcal{W}_{i}^{\epsilon})>\epsilon^{2}\}\,.
\]
Decompose $\Omega$ as
\[
\Omega=\mathcal{A}_{\epsilon}\cup\left(\bigcup_{i=1}^{\ell}\mathcal{V}_{i}^{\epsilon}\right)\,,
\]
where ${\color{blue}\mathcal{A}_{\epsilon}}:=\Omega\setminus\left(\bigcup_{i=1}^{\ell}\mathcal{V}_{i}^{\epsilon}\right)$.
Mind that $\mathcal{A}_{\epsilon}\subset\left(\Omega\setminus\mathcal{K}_{\epsilon}\right)\cup\left(\bigcup_{1\le j\le k\le\ell}\bigcup_{\bm{\sigma}\in\Sigma_{j,\,k}}\mathcal{B}_{\epsilon}^{\bm{\sigma}}\right)\cup\left(\bigcup_{1\le i\le\ell}\mathcal{W}_{i}^{\epsilon}\setminus\mathcal{V}_{i}^{\epsilon}\right)$.
We claim that there exists $\epsilon_{2}>0$ such that for
$\epsilon\in(0,\,\epsilon_{2})$, 
\begin{equation}
U(\bm{x})>H+d^{(p)}/2\quad \text{for}\ \bm{x}\in\mathcal{A}_{\epsilon}\,.\label{e_eps2}
\end{equation}
By the definition of $\mathcal{K}_{\epsilon}$, $U(\bm{x})>H+d^{(p)}$
for $\bm{x}\in\Omega\setminus\mathcal{K}_{\epsilon}$. Since $\bigcap_{\epsilon>0}\mathcal{B}_{\epsilon}^{\bm{\sigma}}=\{\bm{\sigma}\}$
and $U(\bm{\sigma})=H+d^{(p)}$, $\bm{\sigma}\in\bigcup_{1\le i\le j\le\ell}\Sigma_{i,\,j}$,
there exists $\epsilon_{2}^{(1)}>0$ such that $U(\bm{x})>H+d^{(p)}/2$
for $\bm{x}\in\bigcup_{1\le i\le j\le\ell}\bigcup_{\bm{\sigma}\in\Sigma_{i,\,j}}\mathcal{B}_{\epsilon}^{\bm{\sigma}}$
and $\epsilon\in(0,\,\epsilon_{2}^{(1)})$. Since $U(\bm{x})=H+d^{(p)}$
for $\bm{x}\in\partial\mathcal{W}_{i}$, $i\in\llbracket1,\,\ell\rrbracket$,
$\lim_{\epsilon\to0}d(\partial\mathcal{W}_{i},\,\partial\mathcal{W}_{i}^{\epsilon})=0$,
and $d(\mathcal{W}_{i}^{\epsilon}\setminus\mathcal{V}_{i}^{\epsilon},\partial\mathcal{W}_{i}^{\epsilon})\le\epsilon^{2}$,
there exists $\epsilon_{2}^{(2)}>0$ such that $U(\bm{x})>H+d^{(p)}/2$
for $\bm{x}\in\bigcup_{1\le i\le\ell}\mathcal{W}_{i}^{\epsilon}\setminus\mathcal{V}_{i}^{\epsilon}$
and $\epsilon\in(0,\,\epsilon_{2}^{(2)})$. Then, $\epsilon_{2}:=\min\{\epsilon_{2}^{(1)},\,\epsilon_{2}^{(2)}\}$
satisfies \eqref{e_eps2}.

We are in a position to  prove Proposition \ref{p_test}.

\begin{proof}[Proof of Proposition \ref{p_test}]
\begin{flushleft}
 Suppose that $\mathfrak{D}$ contains an absorbing state $\mathcal{M}_{1}\in\mathscr{V}^{(p)}$
of $\{{\bf y}^{(p)}(t)\}_{t\ge0}$. Then, $\mathfrak{D}=\{\mathcal{M}_{1}\}$
and $r^{(p)}(\mathcal{M}_{1},\,\mathcal{M}')=0$ for all $\mathcal{M}'\in\mathscr{V}^{(p)}$
so that the proof is a direct consequence of Lemma \ref{l_test_absobing}.
\par\end{flushleft}

Suppose that $\mathfrak{D}$ does not contain absorbing states. Fix
$\mathcal{M}\in\mathfrak{D}$. Then, there exists $i\in\llbracket1,\,n\rrbracket$
such that $\mathcal{M}=\mathcal{M}_{i}$. Let $h_{\mathcal{M}}^{\epsilon}:=h_{i}^{\epsilon}*\xi_{\epsilon^{2}}$.
By \eqref{e_eps1},
\begin{equation}
e^{H_{\mathfrak{D}}/\epsilon}\int_{\mathbb{R}^{d}\setminus\Omega}(h_{\mathcal{M}}^{\epsilon})^{2}\,d\pi_{\epsilon}=0\,.\label{e_pf_p_test-1}
\end{equation}
By \eqref{e_eps2}, since $h_{\mathcal{M}}^{\epsilon}$ is uniformly
bounded and $\mc A_\epsilon$ is a bounded set, there exists $C_{1}>0$ such that
\begin{equation}
\lim_{\epsilon\to0}e^{H_{\mathfrak{D}}/\epsilon}\int_{\mathcal{A}_{\epsilon}}(h_{\mathcal{M}}^{\epsilon})^{2}\,d\pi_{\epsilon}\le C_{1}\lim_{\epsilon\to0}e^{-d^{(p)}/(2\epsilon)}=0\,.\label{e_pf_p_test-2}
\end{equation}
Since $U(\bm{x})\ge H_{\mathfrak{D}}+r_{0}$ for $\bm{x}\in\mathcal{V}_{i}^{\epsilon}\setminus\mathcal{E}(\mathcal{M})$
and $h_{\mathcal{M}}^{\epsilon}$ is uniformly bounded, there exists
$C_{2}>0$ such that
\begin{equation}
\lim_{\epsilon\to0}e^{H_{\mathfrak{D}}/\epsilon}\int_{\mathcal{V}_{i}^{\epsilon}\setminus\mathcal{E}(\mathcal{M})}(h_{\mathcal{M}}^{\epsilon})^{2}\,d\pi_{\epsilon}\le C_{2}\lim_{\epsilon\to0}e^{-r_{0}/\epsilon}=0\,.\label{e_pf_p_test-3}
\end{equation}
Fix $k\in\llbracket1,\,\ell\rrbracket\setminus\{i\}$. If $k\in\llbracket1,\,n\rrbracket\cup\llbracket m+1,\,\ell\rrbracket$,
since $h_{i}^{\epsilon}(\bm{x})=0$ for $\bm{x}\in\mathcal{W}_{k}^{\epsilon}$,
$h_{\mathcal{M}}^{\epsilon}(\bm{x})=0$ for $\bm{x}\in\mathcal{V}_{k}^{\epsilon}$
so that
\begin{equation}
e^{H_{\mathfrak{D}}/\epsilon}\int_{\mathcal{V}_{k}^{\epsilon}}(h_{\mathcal{M}}^{\epsilon})^{2}\,d\pi_{\epsilon}=0\,.\label{e_pf_p_test-4}
\end{equation}
If $k\in\llbracket n+1,\,m\rrbracket$, by \eqref{03},
$U(\mc M_k)> H_{\mf D}$. Hence, there exists $c>0$ such that
$U(\bm{x})>H_{\mathfrak{D}}+c$ for
$\bm x \in \mathcal{V}_{k}^{\epsilon}$. As
$h_{\mathcal{M}}^{\epsilon}$ is uniformly bounded, there exists
$C_{3}>0$ such that
\begin{equation}
\lim_{\epsilon\to0}e^{H_{\mathfrak{D}}/\epsilon}\int_{\mathcal{V}_{k}^{\epsilon}}(h_{\mathcal{M}}^{\epsilon})^{2}\,d\pi_{\epsilon}\le C_{3}\lim_{\epsilon\to0}e^{-c/\epsilon}=0\,.\label{e_pf_p_test-5}
\end{equation}
Hence, the first assertion follows from \eqref{e_pf_p_test-1}-\eqref{e_pf_p_test-5}.

For the second assertion, by Proposition \ref{p_cap} and Lemma \ref{l_75},
\[
\lim_{\epsilon\to0}e^{H/\epsilon}\theta_{\epsilon}^{(p)}\epsilon\int_{\mathbb{R}^{d}}\left|\nabla h_{\mathcal{M}}^{\epsilon}\right|^{2}d\pi_{\epsilon}=\frac{\nu(\mathcal{M}_{i})}{\nu_{\star}}\sum_{\mathcal{M}''\in\mathscr{V}^{(p)}}r^{(p)}(\mathcal{M}_{i},\,\mathcal{M}'')\,.
\]

We turn to the last assertion. Suppose that $|\mathfrak{D}|\ge2$. For
$\mathcal{M}'\in\mathfrak{D}\setminus\{\mathcal{M}\}$, let
$j\in\llbracket1,\,n\rrbracket\setminus\{i\}$ be such that
$\mathcal{M}'=\mathcal{M}_{j}$.  Then, by Proposition \ref{p_cap} and
Lemma \ref{l_75},
\[
\lim_{\epsilon\to0}e^{H/\epsilon}\theta_{\epsilon}^{(p)}\epsilon\int_{\mathbb{R}^{d}}\nabla h_{\mathcal{M}}^{\epsilon}\cdot\nabla h_{\mathcal{M}'}^{\epsilon}\,d\pi_{\epsilon}=-\frac{1}{2\nu_{\star}}\left(\nu(\mathcal{M}_{i})r^{(p)}(\mathcal{M}_{i},\,\mathcal{M}_{j})+\nu(\mathcal{M}_{j})r^{(p)}(\mathcal{M}_{j},\,\mathcal{M}_{i})\right)\,.
\]
This completes the proof of Proposition \ref{p_test}.
\end{proof}

\subsubsection{\label{subsec_lv_equiv}Proof of Lemma \ref{l_equi_lv}}

In this part, we prove Lemma \ref{l_equi_lv}. Recall that for $\mathcal{A}\subset\mathbb{R}^{d}$,
${\color{blue}\mathcal{M}^{*}(\mathcal{A}}\mathclose{\color{blue})}:=\{\bm{m}\in\mathcal{M}_{0}\cap\mathcal{A}:U(\bm{m})=\min_{\bm{x}\in\mathcal{A}}U(\bm{x})\}$.

\begin{lem}
\label{l_76}
The integer $\ell\in\mathbb{N}$ and the
sets $\mathcal{W}_{i},\,\dots,\,\mathcal{W}_{\ell}$ introduced in
\eqref{e_lv_decomp} satisfy the following.

\begin{enumerate}
\item $\ell\ge2$ and there exists a saddle point $\bm{\sigma}\in\mathcal{S}_{0}\cap\mathcal{K}$
such that $U(\bm{\sigma})=H+d^{(p)}$. In particular, $\mathcal{K}$
is the connected component of $\{U\le U(\bm{\sigma})\}$ containing
$\bm{\sigma}$.

\item For each $i\in\llbracket1,\,\ell\rrbracket$, $\mathcal{W}_{i}$ does
not separate $(p)$-states.
\item For each $i\in\llbracket1,\,\ell\rrbracket$, if $\min_{\bm{x}\in\mathcal{W}_{i}}U(\bm{x})=H$,
then
$\mathscr{V}^{(p)}(\mathcal{W}_{i})=\{\mathcal{M}^{*}(\mathcal{W}_{i})\}$.

\item For each $i\in\llbracket1,\,\ell\rrbracket$, if $\min_{\bm{x}\in\mathcal{W}_{i}}U(\bm{x})>H$,
then $\mathscr{V}^{(p)}(\mathcal{W}_{i})=\varnothing$ and $\mathcal{M}^{*}(\mathcal{W}_{i})\in\mathscr{N}^{(p)}$.
\end{enumerate}
\end{lem}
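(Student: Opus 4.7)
The plan is to prove the four assertions of Lemma~\ref{l_76} in order, exploiting the recursive construction of the tree, the strict monotonicity of the depths (Proposition~\ref{p: tree}-(3)), and the hypothesis that $\mathfrak{D}$ contains no $\mathbf{y}^{(p)}$-absorbing state.

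For (1), I will start from any $\mathcal{M}\in\mathfrak{D}$. Since $\mathcal{M}$ is not absorbing, Proposition~\ref{p_char} gives $\Xi(\mathcal{M})=d^{(p)}$, so by the definition of $r^{(p)}$ in \eqref{e_rate-3} together with \eqref{e_SMM} there exist $\mathcal{M}'\in\mathscr{S}^{(p)}\setminus\{\mathcal{M}\}$ and a saddle $\bm{\sigma}\in\mathcal{S}(\mathcal{M},\mathcal{M}')$ with $U(\bm{\sigma})=\Theta(\mathcal{M},\widetilde{\mathcal{M}})=H+d^{(p)}$ and $\mathcal{M}\to_{\bm{\sigma}}\mathcal{M}'$. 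Since $U$ is Morse, the standard local picture at a saddle yields two disjoint connected neighbourhoods of the unstable manifold of $\bm{\sigma}$ in $\{U<H+d^{(p)}\}$, attracted respectively to $\mathcal{M}$ and to $\mathcal{M}'$; these lie in distinct components $\mathcal{W}_a,\mathcal{W}_b$ of $\{U<H+d^{(p)}\}$, forcing $\ell\ge 2$. As $\bm{\sigma}\in\overline{\mathcal{W}_a}\cap\overline{\mathcal{W}_b}\subset\mathcal{K}$ and $U(\bm{\sigma})=H+d^{(p)}$, the connected component of $\{U\le U(\bm{\sigma})\}$ through $\bm{\sigma}$ coincides with $\mathcal{K}$.

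For (2), fix $\mathcal{M}\in\mathscr{S}^{(p)}$ with $\mathcal{M}\cap\mathcal{W}_i\ne\varnothing$ and I aim to show $\mathcal{M}\subset\mathcal{W}_i$. By Proposition~\ref{p: tree}-(2), $\mathcal{M}$ is simple, so all its local minima share the height $U(\mathcal{M})<H+d^{(p)}$. Let $q+1\le p$ be the first stage at which $\mathcal{M}$ appears. If $q+1=1$, $\mathcal{M}$ is a singleton and the claim is trivial. Otherwise, $\mathcal{M}$ is the union of an irreducible class $\mathscr{R}^{(q)}(\mathcal{M})$ of $\{\mathbf{y}^{(q)}(t)\}_{t\ge 0}$; applying Lemma~\ref{l_equi_height}-(2) at level $q+1$, this class sits inside a single connected component of $\{U\le U(\mathcal{M})+d^{(q)}\}$. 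The crux is the sharp inequality $U(\mathcal{M})+d^{(q)}\le H+d^{(p)}$: when this holds, that component embeds into a connected component of $\{U<H+d^{(p)}\}$ (using strict inequality on saddles and simplicity), which by assumption is $\mathcal{W}_i$. The inequality is immediate when $U(\mathcal{M})\le H$ thanks to $d^{(q)}<d^{(p)}$. When $U(\mathcal{M})>H$, I will argue by contradiction: if $U(\mathcal{M})+d^{(q)}>H+d^{(p)}$, the saddles realising the merge of $\mathscr{R}^{(q)}(\mathcal{M})$ at stage $q+1$ would sit above $H+d^{(p)}$, which together with $\mathcal{M}\cap\mathcal{K}\ne\varnothing$ forces $\mathcal{M}$ to have been identified with a strictly larger cluster at some intermediate stage, contradicting $\mathcal{M}\in\mathscr{S}^{(p)}$. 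This contradiction step is the main technical obstacle and will rely on the strict monotonicity of $d^{(\cdot)}$ and a careful bookkeeping of the stages of the tree.

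For (3) and (4), I use that each $\mathcal{W}_i$ is an open connected component of $\{U<H+d^{(p)}\}$ and, by (2), packages integer many $(p)$-states. If $\min_{\mathcal{W}_i}U=H$, then $\mathcal{M}^*(\mathcal{W}_i)\subset\mathcal{W}_i$ is a simple subset of $\mathcal{M}_0$ at height $H$ with $\Xi(\mathcal{M}^*(\mathcal{W}_i))\ge d^{(p)}$ (any escape path must cross $\partial\mathcal{W}_i$), so by Proposition~\ref{p_char}, $\mathcal{M}^*(\mathcal{W}_i)\in\mathscr{V}^{(p)}$; moreover any other $\mathcal{M}\in\mathscr{V}^{(p)}$ inside $\mathcal{W}_i$ would have $U(\mathcal{M})>H$ and be connected to $\mathcal{M}^*(\mathcal{W}_i)$ by a path of height $<H+d^{(p)}$, yielding $\Xi(\mathcal{M})<d^{(p)}$, contradicting $\mathcal{M}\in\mathscr{V}^{(p)}$. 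If instead $\min_{\mathcal{W}_i}U>H$, then the target $\mathcal{M}_{i_0}\in\mathfrak{D}$ from another $\mathcal{W}_{i_0}$ satisfies $U(\mathcal{M}_{i_0})=H<\min_{\mathcal{W}_i}U$, so $\widetilde{\mathcal{M}^*(\mathcal{W}_i)}$ contains $\mathcal{M}_{i_0}$ and $\Theta(\mathcal{M}^*(\mathcal{W}_i),\widetilde{\mathcal{M}^*(\mathcal{W}_i)})\le H+d^{(p)}$, yielding $\Xi(\mathcal{M}^*(\mathcal{W}_i))<d^{(p)}$; by Proposition~\ref{p_char}, $\mathcal{M}^*(\mathcal{W}_i)\in\mathscr{N}^{(p)}$ and no $\mathscr{V}^{(p)}$-state fits inside $\mathcal{W}_i$ by the same argument applied to any potential $(p)$-state at height $\ge\min_{\mathcal{W}_i}U$. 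The hard part throughout is Part~(2): the control of the internal communication height of a $(p)$-state \emph{above} $H$ is delicate and demands the careful inductive use of the tree depths.
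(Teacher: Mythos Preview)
Your approach is genuinely different from the paper's: the paper's proof is two sentences long, reducing everything to \cite[Lemma~A.13]{LLS-2nd} for assertion~(1) and \cite[Lemma~5.9]{LLS-2nd} for assertions~(2)--(4), once it observes (as you do) that $\Xi(\mathcal{M})=d^{(p)}$ so that $\mathcal{K}$ is the component of $\{U\le\Theta(\mathcal{M},\widetilde{\mathcal{M}})\}$ containing $\mathcal{M}$. You instead attempt a self-contained argument through the tree structure.

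Your sketch for~(1) is fine and matches the paper's setup. The real problems are in~(2)--(4). In~(2), the case $U(\mathcal{M})>H$ is not proved: you say that if $U(\mathcal{M})+d^{(q)}>H+d^{(p)}$ then ``$\mathcal{M}$ would have been identified with a strictly larger cluster at some intermediate stage'', but you give no mechanism for this. The tree only merges states belonging to a common \emph{recurrent} class of $\mathbf{y}^{(r)}$ at some level $r$; nothing in your argument forces such a merge just because certain saddles sit above $H+d^{(p)}$. You also need a strict inequality $U(\mathcal{M})+d^{(q)}<H+d^{(p)}$ (not $\le$) to pass from a component of $\{U\le\cdot\}$ to one of $\{U<H+d^{(p)}\}$, and your appeal to ``strict inequality on saddles and simplicity'' does not supply it. This is precisely the content packaged in \cite[Lemma~5.9]{LLS-2nd}, and reproducing it requires more than the bookkeeping you outline.

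In~(3) and~(4) there is a circularity: you invoke Proposition~\ref{p_char} to conclude $\mathcal{M}^*(\mathcal{W}_i)\in\mathscr{V}^{(p)}$ (resp.\ $\mathscr{N}^{(p)}$), but that proposition presupposes $\mathcal{M}^*(\mathcal{W}_i)\in\mathscr{S}^{(p)}$, which you have not shown. Part~(2) only says that each $(p)$-state meeting $\mathcal{W}_i$ lies entirely inside it; it does not say that the set of lowest minima of $\mathcal{W}_i$ constitutes a \emph{single} $(p)$-state. Moreover, $\Xi(\cdot)$ in the paper is defined only on elements of $\bigcup_k\mathscr{V}^{(k)}$ (see \eqref{e_def_Xi}, \eqref{e_Xi}), so writing $\Xi(\mathcal{M}^*(\mathcal{W}_i))$ before establishing membership in $\mathscr{S}^{(p)}$ is not well-posed. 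To make your route work you would first need an independent argument that $\mathcal{M}^*(\mathcal{W}_i)\in\mathscr{S}^{(p)}$; this is again part of what \cite[Lemma~5.9]{LLS-2nd} delivers.
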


\begin{proof}
Recall that we assumed at the beginning of this section that $\mf D$
has no absorbing states.  Let $\mathcal{M}\in\mathfrak{D}$. Since
$\mathcal{M}$ is not an absorbing state, by Proposition \ref{p_char},
$\Xi(\mathcal{M})=d^{(p)}$.  Then,
$\Theta(\mathcal{M},\,\widetilde{\mathcal{M}})=U(\mathcal{M})+\Xi(\mathcal{M})=H+d^{(p)}<\infty$
so that $\widetilde{\mathcal{M}}\ne\varnothing$. Therefore, since
$\mathcal{K}$ is the connected component of
$\{U\le\Theta(\mathcal{M},\,\widetilde{\mathcal{M}})\}$ containing
$\mathcal{M}$, the first assertion is proven by \cite[Lemma
A.13]{LLS-2nd}.  Since $\mathcal{K}$ is a connected component of
$\{U\le U(\bm{\sigma})\}$ and $\ell\ge2$, the other assertions follow
from \cite[Lemma 5.9]{LLS-2nd}.
\end{proof}
Recall that
${\color{blue}\mathcal{M}_{i}}:=\mathcal{M}^{*}(\mathcal{W}_{i})$ for
$i\in\llbracket1,\,\ell\rrbracket$. By Lemma \ref{l_equi_height}-(2),
$\widehat{\mathfrak{D}}$ is contained in $\mathcal{K}$. Hence, by
\eqref{e_lv_decomp}, any element $\mathcal{M}$ in
$\widehat{\mathfrak{D}}$ is such that
$\mathcal{M}\subset\bigcup_{i\in\llbracket1,\,\ell\rrbracket}\mathcal{W}_{i}$.
By Lemma \ref{l_76}-(2), the sets $\mathcal{W}_{i}$,
$i\in\llbracket1,\,\ell\rrbracket$, do not separate
$(p)$-states. Thus, for $\mathcal{M}\in\widehat{\mathfrak{D}}$, there
exists $j\in\llbracket1,\,\ell\rrbracket$ such that
$\mathcal{M}\subset\mathcal{W}_{j}$.

\begin{lem}
\label{l_77}
For all $i\in\llbracket1,\,\ell\rrbracket$ such that
$\widehat {\mathfrak{D} }\cap \mathscr{S}^{(p)}(\mathcal{W}_{i} ) \neq
\varnothing$, $\mathcal{M}_{i}\in\widehat{\mathfrak{D}}$,
$U(\mathcal{M}_{i})\ge H$, and $\Xi(\mathcal{M}_{i})\le d^{(p)}$.
\end{lem}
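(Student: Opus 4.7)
The plan is to fix any $\mathcal{M}^{*}\in\widehat{\mathfrak{D}}\cap\mathscr{S}^{(p)}(\mathcal{W}_{i})$ and split into two cases, depending on whether $\mathcal{M}^{*}\in\mathfrak{D}$ or $\mathcal{M}^{*}\in\widehat{\mathfrak{D}}\setminus\mathfrak{D}\subset\mathscr{N}^{(p)}$. Three preliminary facts will be used throughout: (a) by \eqref{01} and Proposition \ref{p_char}, every $\mathcal{M}\in\widehat{\mathfrak{D}}$ satisfies $\Xi(\mathcal{M})\le d^{(p)}$; (b) by Lemma \ref{l_equi_height}-(1) together with \eqref{02}, every $\mathcal{M}\in\widehat{\mathfrak{D}}$ satisfies $U(\mathcal{M})\ge H$, with equality iff $\mathcal{M}\in\mathfrak{D}$; (c) by Lemma \ref{l_76}-(2) and the fact that $\mathscr{S}^{(p)}$ partitions $\mathcal{M}_{0}$, the set $\mathcal{M}_{i}=\mathcal{M}^{*}(\mathcal{W}_{i})$ is an element of $\mathscr{S}^{(p)}$ contained in $\mathcal{W}_{i}$.

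In the case $\mathcal{M}^{*}\in\mathfrak{D}$ we have $U(\mathcal{M}^{*})=H$ and $\Xi(\mathcal{M}^{*})=d^{(p)}$, so $\Theta(\mathcal{M}^{*},\widetilde{\mathcal{M}^{*}})=H+d^{(p)}$. I claim $\mathcal{M}_{i}=\mathcal{M}^{*}$: otherwise there is a minimum $\boldsymbol{m}\in\mathcal{M}_{i}\setminus\mathcal{M}^{*}$ with $U(\boldsymbol{m})=U(\mathcal{M}_{i})\le U(\mathcal{M}^{*})=H$, hence $\boldsymbol{m}\in\widetilde{\mathcal{M}^{*}}$; since $\mathcal{W}_{i}$ is an open connected component of $\{U<H+d^{(p)}\}$ containing both $\mathcal{M}^{*}$ and $\boldsymbol{m}$, a path inside $\mathcal{W}_{i}$ yields $\Theta(\mathcal{M}^{*},\boldsymbol{m})<H+d^{(p)}$, contradicting $\Theta(\mathcal{M}^{*},\widetilde{\mathcal{M}^{*}})=H+d^{(p)}$. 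Thus $\mathcal{M}_{i}=\mathcal{M}^{*}\in\mathfrak{D}\subset\widehat{\mathfrak{D}}$, and the three assertions follow with $U(\mathcal{M}_{i})=H$ and $\Xi(\mathcal{M}_{i})=d^{(p)}$.

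In the case $\mathcal{M}^{*}\in\widehat{\mathfrak{D}}\setminus\mathfrak{D}$, the contrapositive of the previous paragraph forces $\mathfrak{D}\cap\mathscr{S}^{(p)}(\mathcal{W}_{i})=\varnothing$. I next rule out the alternative of Lemma \ref{l_76}-(3) for this $\mathcal{W}_{i}$: its unique $\mathscr{V}^{(p)}$-element would be $\mathcal{M}_{i}\notin\widehat{\mathfrak{D}}$ (since $\widehat{\mathfrak{D}}\cap\mathscr{V}^{(p)}=\mathfrak{D}$); such an $\mathcal{M}_{i}$ would belong to a different equivalence class of $\widehat{\mathbf{y}}^{(p)}$ and would absorb the trajectory of $\mathcal{M}^{*}$ inside $\mathcal{W}_{i}$, preventing $\mathcal{M}^{*}$ from ever reaching $\mathfrak{D}$, in contradiction with $\mathcal{M}^{*}\in\widehat{\mathfrak{D}}$. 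Hence Lemma \ref{l_76}-(4) applies, yielding $\mathcal{M}_{i}\in\mathscr{N}^{(p)}$, $U(\mathcal{M}_{i})>H$, and (by Proposition \ref{p_char}) $\Xi(\mathcal{M}_{i})<d^{(p)}$. The remaining assertion $\mathcal{M}_{i}\in\widehat{\mathfrak{D}}$ is the principal obstacle and requires a careful trajectory analysis of $\widehat{\mathbf{y}}^{(p)}$ based on the rate formulas \eqref{e_rate-1}--\eqref{e_rate-3}: one shows that the chain restricted to $\mathscr{S}^{(p)}(\mathcal{W}_{i})$ has $\mathcal{M}_{i}$ as its effective lowest state, so that every trajectory from $\mathcal{M}^{*}$ visits $\mathcal{M}_{i}$ before exiting $\mathcal{W}_{i}$ toward $\mathfrak{D}$, which places $\mathcal{M}_{i}$ in the same equivalence class $\widehat{\mathfrak{D}}$. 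The execution of this last step is expected to proceed either by induction on $p$ or by reduction to the equivalence-class structure of $\widehat{\mathbf{y}}^{(p-1)}$ restricted to $\mathcal{W}_{i}$.
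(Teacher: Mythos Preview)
Your Case 1 is correct and coincides with the paper's first case (where $\widehat{\mathfrak{D}}\subset\mathcal{W}_i$, equivalently $\mathfrak{D}\cap\mathscr{S}^{(p)}(\mathcal{W}_i)\neq\varnothing$).

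There are, however, two genuine problems.

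\textbf{Circularity.} Your preliminary fact (b) invokes \eqref{02}, but \eqref{02} is deduced in the paper from Lemma~\ref{l_equi_lv}-(4), whose proof in turn relies on the present Lemma~\ref{l_77}. You may not quote \eqref{02} here. (Fortunately, (b) is not essential to the argument once Case~2 is handled correctly.)

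\textbf{The gap in Case 2.} You explicitly leave the assertion $\mathcal{M}_i\in\widehat{\mathfrak{D}}$ as a sketch, proposing either an induction on $p$ or an analysis of $\widehat{\mathbf{y}}^{(p-1)}$. Neither is needed, and your ``absorption'' argument to rule out the alternative of Lemma~\ref{l_76}-(3) is not justified: you give no reason why a hypothetical $\mathcal{M}_i\in\mathscr{V}^{(p)}\setminus\widehat{\mathfrak{D}}$ would trap the $\widehat{\mathbf{y}}^{(p)}$-trajectory of $\mathcal{M}^*$ inside $\mathcal{W}_i$.

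The paper's route is direct and uses only Lemma~\ref{l_exit}. In your Case~2 one has $\mathfrak{D}\cap\mathscr{S}^{(p)}(\mathcal{W}_i)=\varnothing$, so $\widehat{\mathfrak{D}}\not\subset\mathcal{W}_i$. Pick any $\mathcal{M}^{(1)}\in\widehat{\mathfrak{D}}\cap\mathscr{S}^{(p)}(\mathcal{W}_j)$ with $j\neq i$ and follow a $\widehat{\mathbf{y}}^{(p)}$-path $\mathcal{M}^*=\mathcal{N}_0,\mathcal{N}_1,\dots,\mathcal{N}_{k+1}=\mathcal{M}^{(1)}$ with all $\widehat{r}^{(p)}(\mathcal{N}_l,\mathcal{N}_{l+1})>0$; every $\mathcal{N}_l$ lies in $\widehat{\mathfrak{D}}$. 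By Proposition~\ref{p: tree}-(4) each step satisfies $\mathcal{N}_l\to\mathcal{N}_{l+1}$. Letting $a$ be the last index with $\mathcal{N}_a\in\mathscr{S}^{(p)}(\mathcal{W}_i)$, Lemma~\ref{l_exit} forces $\mathcal{N}_a=\mathcal{M}^*(\mathcal{W}_i)=\mathcal{M}_i$, hence $\mathcal{M}_i\in\widehat{\mathfrak{D}}$. From the same step $\mathcal{M}_i\to\mathcal{N}_{a+1}$ one reads off $\Xi(\mathcal{M}_i)\le d^{(p)}$ (Proposition~\ref{p: tree}-(4)) and, since $\mathcal{N}_{a+1}\subset(\mathcal{W}_i)^c$, Lemma~\ref{l_pot1}-(2) gives $\Theta(\mathcal{M}_i,\widetilde{\mathcal{M}_i})=\Theta(\mathcal{M}_i,\mathcal{N}_{a+1})\ge H+d^{(p)}$, whence $U(\mathcal{M}_i)\ge H$. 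No appeal to \eqref{02}, to Lemma~\ref{l_76}-(3,4), or to any induction is required.
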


\begin{proof}
Recall that we assumed at the beginning of this section that $\mf D$
has no absorbing states.  Fix
$\mathcal{M}\in\widehat{\mathfrak{D}} \cap
\mathscr{S}^{(p)}(\mathcal{W}_{i})$ for some
$i\in\llbracket1,\,\ell\rrbracket$. First, suppose that
$\widehat{\mathfrak{D}}$ is contained in $\mathcal{W}_{i}$, i.e.,
$\widehat{\mathfrak{D}}\cap\mathscr{S}^{(p)}(\mathcal{W}_{k})=\varnothing$
for all $k\in\llbracket1,\,\ell\rrbracket\setminus\{i\}$.

Let
$\mathcal{M}'\in\mathfrak{D}\cap\mathscr{V}^{(p)}(\mathcal{W}_{i})$.
Mind that $\mc M'$ may be equal to $\mc M$.
By Lemma
\ref{l_equi_height}, $U(\mathcal{M}')=H$.  By \eqref{e_lv_decomp} and
Lemma \ref{l_76}-(2), $\mathcal{W}_{i}$ is the connected component of
$\{U<H+d^{(p)}\}$ containing $\mathcal{M}'$. Since $\mathcal{M}'$ is
not an absorbing state, by Proposition \ref{p_char},
$d^{(p)}=\Xi(\mathcal{M}')$ so that
\[
H+d^{(p)}=U(\mathcal{M}')+\Xi(\mathcal{M}')=\Theta(\mathcal{M}',\,\widetilde{\mathcal{M}'})\,.
\]
Therefore, by Lemma \ref{l_pot1}-(1),
$\widetilde{\mathcal{M}'}\subset(\mathcal{W}_{i})^{c}$.  Hence,
$U(\bm{m}) > U(\mathcal{M}')$ for all
$\bm{m}\in (\mathcal{M}_{0} \setminus \mathcal{M}')
\cap\mathcal{W}_{i}$. Thus
$\mathcal{M}'=\mathcal{M}^{*}(\mathcal{W}_{i})=\mathcal{M}_{i}$. It
follows from the estimates obtained for $\mc M'$ that
$\mathcal{M}_{i}\in\widehat{\mathfrak{D}}$, $U(\mathcal{M}_{i})= H$,
and $\Xi(\mathcal{M}_{i}) =  d^{(p)}$.

Suppose that there exist $j\in\llbracket1,\,\ell\rrbracket\setminus\{i\}$
and $\mathcal{M}^{(1)}\in\widehat{\mathfrak{D}}\cap\mathscr{S}^{(p)}(\mathcal{W}_{j})$.
Since the Markov chain $\{\widehat{{\bf y}}^{(p)}(t)\}_{t\ge0}$ can
reach $\mathcal{M}^{(1)}$ starting from $\mathcal{M}$, there exist
$k_{1}\in\mathbb{N}$ and $\mathcal{N}_{1}^{(1)},\,\dots,\,\mathcal{N}_{k_{1}}^{(1)}\in\widehat{\mathfrak{D}}$
such that 
\[
\widehat{r}^{(p)}(\mathcal{M},\,\mathcal{N}_{1}^{(1)}),\,\widehat{r}^{(p)}(\mathcal{N}_{1}^{(1)},\,\mathcal{N}_{2}^{(1)}),\,\dots,\,\widehat{r}^{(p)}(\mathcal{N}_{k_{1}-1}^{(1)},\,\mathcal{N}_{k_{1}}^{(1)}),\,\widehat{r}^{(p)}(\mathcal{N}_{k_{1}}^{(1)},\,\mathcal{M}^{(1)})>0\,.
\]
Let $\mathcal{N}_{0}^{(1)}=\mathcal{M}$ and $\mathcal{N}_{k_{1}+1}^{(1)}=\mathcal{M}^{(1)}$.
By Proposition \ref{p: tree}-(4),
\[
\mathcal{N}_{0}^{(1)}\to\cdots\to\mathcal{N}_{k_{1}+1}^{(1)}\,.
\]
Let $\mathcal{N}_{a_{1}}^{(1)}$ be the last element in $\mathscr{S}^{(p)}(\mathcal{W}_{i})$.
Since $\mathcal{M}^{(1)}\in\mathscr{S}^{(p)}(\mathcal{W}_{j})$, $a_{1}\le k_{1}$
and $\mathcal{N}_{a_{1}+1}^{(1)}\notin\mathscr{S}^{(p)}(\mathcal{W}_{i})$
so that by Lemma \ref{l_exit}, $\mathcal{M}_{i}=\mathcal{N}_{a_{1}}^{(1)}\in\widehat{\mathfrak{D}}$.
Since $\widehat{r}^{(p)}(\mathcal{M}_{i},\,\mathcal{N}_{a_{1}+1}^{(1)})>0$,
by Proposition \ref{p: tree}-(4) and \eqref{eq:con_gate}, $d^{(p)}\ge\Xi(\mathcal{M}_{i})$,
$\mathcal{M}_{i}\to\mathcal{N}_{a_{1}+1}^{(1)}$, and $\Theta(\mathcal{M}_{i},\,\widetilde{\mathcal{M}_{i}})=\Theta(\mathcal{M}_{i},\,\mathcal{N}_{a_{1}+1}^{(1)})$.
Since $\mathcal{N}_{a_{1}+1}^{(1)}\subset\mathcal{K}\setminus\mathcal{W}_{i}$,
by Lemma \ref{l_pot1}-(2), $\Theta(\mathcal{M}_{i},\,\mathcal{N}_{a_{1}+1}^{(1)})\ge H+d^{(p)}$.
Hence,
\[
d^{(p)}\ge\Xi(\mathcal{M}_{i})=\Theta(\mathcal{M}_{i},\,\widetilde{\mathcal{M}_{i}})-U(\mathcal{M}_{i})=\Theta(\mathcal{M}_{i},\,\mathcal{N}_{a_{1}+1}^{(1)})-U(\mathcal{M}_{i})\ge H+d^{(p)}-U(\mathcal{M}_{i})\,,
\]
which implies that $U(\mathcal{M}_{i})\ge H$. This completes the
proof.
\end{proof}

We are now in a position to prove Lemma \ref{l_equi_lv}.

\begin{proof}[Proof of Lemma \ref{l_equi_lv}]
The first two assertions have been proved in Lemma \ref{l_76}.

We turn to the third assertion.  Fix
$i\in\llbracket1,\,\ell\rrbracket$. Suppose that
$\mathscr{S}^{(p)}(\mathcal{W}_{i})\cap\mathfrak{D}\ne\varnothing$.
Fix
$\mathcal{M}'\in\mathscr{S}^{(p)}(\mathcal{W}_{i})\cap\mathfrak{D}$. By
Lemma \ref{l_equi_height}, $U(\mathcal{M}')=H$. By Lemma \ref{l_77},
$\mathcal{M}_{i}=\mathcal{M}^{*}(\mathcal{W}_{i})\in\widehat{\mathfrak{D}}$
and $U(\mathcal{M}_{i})\ge H$. Hence,
$U(\mathcal{M}')=H\le U(\mathcal{M}_{i})\le
U(\mathcal{M}')$, so that  $U(\mathcal{M}_{i})=H$ and
$\mathcal{M}'\subset\mathcal{M}_{i}$.  As
$\mathcal{M}_{i}\in\widehat{\mathfrak{D}}$,
$\mathcal{M}'=\mathcal{M}_{i}$.  Since $U(\mathcal{M}_i)=H$, by Lemma \ref{l_76}-(3),
$\mathscr{V}^{(p)}(\mathcal{W}_{i})=\{\mathcal{M}_i\}$. Therefore, since $\mathcal{M}_i$ is the unique element of $\mathscr{V}^{(p)}(\mathcal{W}_{i})$ and $\mathcal{M}_i =\mathcal{M}' \in \mathfrak{D}$,
$\mathscr{S}^{(p)}(\mathcal{W}_{i})\cap\mathfrak{D}=\{\mathcal{M}_{i}\}$.

It remains to consider the fourth assertion. Fix
$i\in\llbracket1,\,\ell\rrbracket$.  Suppose that
$\mathscr{S}^{(p)}(\mathcal{W}_{i})\cap\mathfrak{D}=\varnothing$ and
$\mathscr{S}^{(p)}(\mathcal{W}_{i})\cap\widehat{\mathfrak{D}}\ne\varnothing$.
By Lemma \ref{l_77}, $\mathcal{M}_{i}\in\widehat{\mathfrak{D}}$ and
$U(\mathcal{M}_{i})\ge H$. If $U(\mathcal{M}_{i})=H$, by Lemma
\ref{l_76}-(3), $\mathcal{M}_{i}\in\mathscr{V}^{(p)}$ so that
$\mathcal{M}_{i}\in\mathscr{V}^{(p)}\cap\widehat{\mathfrak{D}}=\mathfrak{D}$,
which is a contradiction. Therefore, $U(\mathcal{M}_i)>H$ and by Lemma \ref{l_76}-(4), $\mathscr{V}^{(p)}(\mathcal{W}_i)=\varnothing$.
\end{proof}

\section{\label{sec_pf_last}Proof of Propositions \ref{p_rev} and \ref{p_cap}}

In this section, we prove Propositions \ref{p_rev} and \ref{p_cap}.
It follows from the hypotheses of these results that $\mathfrak{D}$
has no ${\bf y}^{(p)}$-absorbing states. This condition is thus adopted throughout this
section without further comment.

Recall that for $\mathcal{A}\subset\mathbb{R}^{d}$,
${\color{blue}\mathcal{M}^{*}(\mathcal{A}}\mathclose{\color{blue})}:=\{\bm{m}\in\mathcal{M}_{0}\cap\mathcal{A}:U(\bm{m})=\min_{\bm{x}\in\mathcal{A}}U(\bm{x})\}$,
and recall from Section \ref{sec721} that:
\begin{itemize}
\item There exists ${\color{blue}H=H_{\mathfrak{D}}}\in\mathbb{R}$ such
that $H=U(\mathcal{M})$ for $\mathcal{M}\in\mathfrak{D}$.
\item \textcolor{blue}{$\mathcal{K}=\mathcal{K}_{\mathfrak{D}}$ }is the
connected component of $\{U\le H+d^{(p)}\}$ containing $\widehat{\mathfrak{D}}$.
\item By \eqref{e_lv_decomp},
\begin{equation}
\mathcal{K}=\bigcup_{i=1}^{\ell}\overline{\mathcal{W}_{i}}\ ,\ \ \mathcal{M}_{0}\cap\mathcal{K}=\mathcal{M}_{0}\cap\bigcup_{i=1}^{\ell}\mathcal{W}_{i}\,,\label{e_lv_decomp-2}
\end{equation}
where \textcolor{blue}{$\mathcal{W}_{1},\,\dots,\,\mathcal{W}_{\ell}$}
denote the connected components of $\{U<H+d^{(p)}\}$ intersecting
with $\mathcal{K}$.
\item $\mathfrak{D}={\color{blue}\{\mathcal{M}_{1},\,\dots,\,\mathcal{M}_{n}\}}$,
${\color{blue}\mathcal{M}_{n+1},\,\dots,\,\mathcal{M}_{m}}\in\widehat{\mathfrak{D}}\setminus\mathfrak{D}$
for some $1\le{\color{blue}n\le m}\le\ell$, where ${\color{blue}\mathcal{M}_{i}}:=\mathcal{M}^{*}(\mathcal{W}_{i})$,
$i\in\llbracket1,\,\ell\rrbracket$, and $\mathscr{S}^{(p)}\left(\bigcup_{m+1\le i\le\ell}\mathcal{W}_{i}\right)\cap\widehat{\mathfrak{D}}=\varnothing$.
\item $U(\mathcal{M}_{1})=\cdots=U(\mathcal{M}_{n})=H$ and $U(\mathcal{M}_{n+1}),\,\dots,\,U(\mathcal{M}_{m})>H$.
\end{itemize}
Moreover, recall from \eqref{e_sigma-H+d} that:
\begin{itemize}
\item For $i,\,j\in\llbracket1,\,\ell\rrbracket$, ${\color{blue}\Sigma_{i,\,j}=\Sigma_{j,\,i}}:=\overline{\mathcal{W}_{i}}\cap\overline{\mathcal{W}_{j}}=\partial\mathcal{W}_{i}\cap\partial\mathcal{W}_{j}\subset\mathcal{S}_{0}$
and
\[
U(\bm{\sigma})=H+d^{(p)}\ \text{for all}\ \bm{\sigma}\in\Sigma_{i,\,j},\,i,\,j\in\llbracket1,\,\ell\rrbracket\,.
\]
\end{itemize}

Next result is a generalization of the Claim B stated in the proof of
\cite[Lemma 5.11]{LLS-2nd}.

\begin{lem}
\label{l_81}
Let $i\in\llbracket1,\,\ell\rrbracket$ be such that
$U(\mathcal{M}_{i})\ge H$.  For every
$\mathcal{M}\in\mathscr{S}^{(p)}(\mathcal{W}_{i})$, the Markov chain
$\{\widehat{{\bf y}}^{(p)}(t)\}_{t\ge0}$, starting from $\mathcal{M}$
reaches $\mathcal{M}_{i}$ with positive probability.
\end{lem}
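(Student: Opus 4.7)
The plan is to argue by contradiction: for $\mathcal{M}\in\mathscr{S}^{(p)}(\mathcal{W}_i)$, let $\mathcal{R}(\mathcal{M})\subset\mathscr{S}^{(p)}$ denote the set of states that the chain $\widehat{\mathbf{y}}^{(p)}$ visits with positive probability when started at $\mathcal{M}$, and suppose for contradiction that $\mathcal{M}_i\notin\mathcal{R}(\mathcal{M})$. I will show that $\mathcal{R}(\mathcal{M})$ is a finite set, closed under the transitions of $\widehat{\mathbf{y}}^{(p)}$, and entirely contained in $\mathscr{N}^{(p)}$; the chain restricted to $\mathcal{R}(\mathcal{M})$ then admits a recurrent class of $\widehat{\mathbf{y}}^{(p)}$ lying inside $\mathscr{N}^{(p)}$, contradicting \cite[Lemma 5.8]{LLS-2nd}, which asserts that every recurrent class of $\widehat{\mathbf{y}}^{(p)}$ meets $\mathscr{V}^{(p)}$.

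The first structural input is that every $\mathcal{M}\in\mathscr{S}^{(p)}(\mathcal{W}_i)\setminus\{\mathcal{M}_i\}$ lies in $\mathscr{N}^{(p)}$. Since $\mathscr{S}^{(p)}$ partitions $\mathcal{M}_0$ and $\mathcal{M}_i=\mathcal{M}^{*}(\mathcal{W}_i)$, such $\mathcal{M}$ satisfies $U(\mathcal{M})>U(\mathcal{M}_i)\ge H$. As $\mathcal{W}_i$ is a path-connected component of the open set $\{U<H+d^{(p)}\}$, the minima of $\mathcal{M}$ can be joined to those of $\mathcal{M}_i$ by a path lying entirely in $\mathcal{W}_i$, whence $\Theta(\mathcal{M},\mathcal{M}_i)<H+d^{(p)}$. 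Since $\mathcal{M}_i\subset\widetilde{\mathcal{M}}$,
\[
\Xi(\mathcal{M})=\Theta(\mathcal{M},\widetilde{\mathcal{M}})-U(\mathcal{M})\le\Theta(\mathcal{M},\mathcal{M}_i)-U(\mathcal{M})<H+d^{(p)}-U(\mathcal{M})<d^{(p)}\,,
\]
and Proposition \ref{p_char} yields $\mathcal{M}\in\mathscr{N}^{(p)}$.

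The second input is that every positive-rate jump out of such $\mathcal{M}$ stays within $\mathscr{S}^{(p)}(\mathcal{W}_i)$. By Proposition \ref{p: tree}-(4), $\widehat{r}^{(p)}(\mathcal{M},\mathcal{M}')>0$ forces $\mathcal{M}\to\mathcal{M}'$ via a saddle $\boldsymbol{\sigma}$ with $U(\boldsymbol{\sigma})=\Theta(\mathcal{M},\widetilde{\mathcal{M}})<H+d^{(p)}$. The $\rightsquigarrow$-chain connecting $\boldsymbol{\sigma}$ to $\mathcal{M}$ uses only saddles of strictly lower height, so all its saddles and intervening minima remain in $\{U<H+d^{(p)}\}$, and being continuously connected to $\mathcal{M}\subset\mathcal{W}_i$, they sit inside the connected component $\mathcal{W}_i$. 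The heteroclinic orbit realizing $\boldsymbol{\sigma}\curvearrowright\mathcal{M}'$ decreases in $U$ and so terminates at a minimum $\boldsymbol{m}'\in\mathcal{M}'\cap\mathcal{W}_i$; by Lemma \ref{l_equi_lv}-(2), $\mathcal{W}_i$ does not separate $(p)$-states, which upgrades this to $\mathcal{M}'\subset\mathcal{W}_i$.

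Combining the two inputs closes the argument: assuming $\mathcal{M}_i\notin\mathcal{R}(\mathcal{M})$, the set $\mathcal{R}(\mathcal{M})\subset\mathscr{S}^{(p)}(\mathcal{W}_i)\setminus\{\mathcal{M}_i\}\subset\mathscr{N}^{(p)}$ is closed under transitions of $\widehat{\mathbf{y}}^{(p)}$, and the finite-chain argument produces a recurrent class inside $\mathscr{N}^{(p)}$, contradicting \cite[Lemma 5.8]{LLS-2nd}. I expect the main obstacle to be the second step: carefully tracking the $\rightsquigarrow$-chain and the subsequent heteroclinic orbit to confirm that both the saddle $\boldsymbol{\sigma}$ and the target minimum $\boldsymbol{m}'$ lie in the connected component $\mathcal{W}_i$, since the definition of $\rightsquigarrow$ permits intermediate excursions through other minima and must be controlled using the strict height bound $U(\boldsymbol{\sigma})<H+d^{(p)}$.
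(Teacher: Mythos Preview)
Your proof is correct and uses essentially the same two ingredients as the paper: the fact that $\mathscr{S}^{(p)}(\mathcal{W}_i)\setminus\{\mathcal{M}_i\}\subset\mathscr{N}^{(p)}$ (the paper cites Lemma~\ref{l_76}-(3,4) for this, while you recover it via Proposition~\ref{p_char}), and \cite[Lemma~5.8]{LLS-2nd} ruling out recurrent classes inside $\mathscr{N}^{(p)}$. The only difference is organizational: the paper first invokes \cite[Lemma~5.8]{LLS-2nd} to reach some $\mathcal{M}'\in\mathscr{V}^{(p)}$ and then applies Lemma~\ref{l_exit} to the path to locate $\mathcal{M}_i$, whereas you run the argument by contradiction and prove the contrapositive of Lemma~\ref{l_exit} inline in your Step~2; you could shorten that step by citing Lemma~\ref{l_exit} directly.
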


\begin{proof}
Fix $i\in\llbracket1,\,\ell\rrbracket$ such that
$U(\mathcal{M}_{i})\ge H$, and
$\mathcal{M}\in\mathscr{S}^{(p)}(\mathcal{W}_{i})$.  If
$\mathcal{M}=\mathcal{M}_{i}$, the claim is trivial: Assume that
$\mathcal{M}\ne\mathcal{M}_{i}$. Since $U(\mathcal{M}_{i})\ge H$ and
$\mathcal{M}\ne\mathcal{M}_{i}$, by Lemma \ref{l_76}-(3, 4),
$\mathcal{M}\in\mathscr{N}^{(p)}(\mathcal{W}_{i})$. By \cite[Lemma
5.8]{LLS-2nd}, there is no $\widehat{{\bf y}}^{(p)}$-recurrent class
consisting only of elements of $\mathscr{N}^{(p)}$. Therefore,
starting from $\mathcal{M}$, the Markov chain
$\{\widehat{{\bf y}}^{(p)}(t)\}_{t\ge0}$ reaches some
$\mathcal{M}'\in\mathscr{V}^{(p)}$ with positive probability.

If $\mathcal{M}'\in\mathscr{V}^{(p)}(\mathcal{W}_{i})$, then by
Lemma\ref{l_76}-(3, 4), $U(\mathcal{M}_{i})=H$ and
$\mathcal{M}' = \mathcal{M}_{i}$ so that starting from $\mathcal{M}$,
the Markov chain $\{\widehat{{\bf y}}^{(p)}(t)\}_{t\ge0}$ reaches
$\mathcal{M}_{i}$ with positive probability.

Suppose instead that $\mathcal{M}'\in\mathscr{V}^{(p)}\left((\mathcal{W}_{i})^{c}\right)$.
Then, either $\widehat{r}^{(p)}(\mathcal{M},\,\mathcal{M}')>0$ or
there exist $a\ge1$ and $\mathcal{N}_{1},\,\dots,\,\mathcal{N}_{a}\in\mathscr{S}^{(p)}$
such that 
\begin{equation}
\label{e_NNaa}
\widehat{r}^{(p)}(\mathcal{M},\,\mathcal{N}_{1})>0,\;
\widehat{r}^{(p)}(\mathcal{N}_1,\,\mathcal{N}_{2})>0,\;
\dots,
\widehat{r}^{(p)}(\mathcal{N}_{a-1},\,\mathcal{N}_{a})>0,\;
\widehat{r}^{(p)}(\mathcal{N}_a,\,\mathcal{M}')>0\;.
\end{equation}

If $\widehat{r}^{(p)}(\mathcal{M},\,\mathcal{M}')>0$, Lemma
\ref{l_exit} yields $\mathcal{M}=\mathcal{M}_{i}$, contradicting the
initial assumption $\mathcal{M}\ne\mathcal{M}_{i}$. Therefore, there
exist $a\ge1$ and
$\mathcal{N}_{1},\,\dots,\,\mathcal{N}_{a}\in\mathscr{S}^{(p)}$
satisfying \eqref{e_NNaa}. Set $\mathcal{N}_{0}=\mathcal{M}$ and let
\[
b:=\max\left\{ j\in\llbracket0,\,a\rrbracket:\mathcal{N}_{j}\in\mathscr{S}^{(p)}(\mathcal{W}_{i})\right\} \,.
\]
By Lemma \ref{l_exit},
$\mathcal{N}_{b}=\mathcal{M}^{*}(\mathcal{W}_{i})=\mathcal{M}_{i}$.
Therefore, starting from $\mathcal{M}$, the Markov chain
$\{\widehat{{\bf y}}^{(p)}(t)\}_{t\ge0}$ reaches
$\mathcal{M}_{i}$ with positive probability.
\end{proof}

The following two auxiliary lemmas relate the jump rates of
$\{\widehat{{\bf y}}(t)\}_{t\ge0}$ to the geometry of the level set
$\mathcal{K}$.

\begin{lem}
\label{l_82}
Let $i\in\llbracket1,\,m\rrbracket$. Then:
\begin{enumerate}
\item $\Theta(\mathcal{M}_{i},\,\widetilde{\mathcal{M}_{i}})=H+d^{(p)}$.
\item There exist $j\in\llbracket1,\,\ell\rrbracket\setminus\{i\}$ and
$\mathcal{M}\in\mathscr{S}^{(p)}(\mathcal{W}_{j})$ such that $\Sigma_{i,\,j}\ne\varnothing$
and $\widehat{r}^{(p)}(\mathcal{M}_{i},\,\mathcal{M})>0$.

\item If $\widehat{r}^{(p)}(\mathcal{M}_{i},\,\mathcal{M})>0$ for some
$\mathcal{M}\in\mathscr{S}^{(p)}\setminus\widehat{\mathfrak{D}}$, then
there exists $j\in\llbracket m+1,\,\ell\rrbracket$ such
that $\Sigma_{i,\,j}\ne\varnothing$ and
$\mathcal{M}\in\mathscr{S}^{(p)}(\mathcal{W}_{j})$.
\end{enumerate}
\end{lem}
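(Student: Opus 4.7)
Plan. I will prove the three items in order. The common strategy is to translate $\widehat r^{(p)}(\mathcal{M}_i,\mathcal{M})>0$ into geometric information on the landscape via Proposition \ref{p: tree}-(4) and Proposition \ref{p_char}, and then exploit the decomposition of $\mathcal{K}$ into the wells $\mathcal{W}_1,\ldots,\mathcal{W}_\ell$ from Lemma \ref{l_equi_lv}.

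For (1), Lemma \ref{l_77} gives $U(\mathcal{M}_i) \ge H$ and $\Xi(\mathcal{M}_i)\le d^{(p)}$. When $i \in \llbracket 1,n\rrbracket$, $\mathcal{M}_i \in \mathfrak{D}\subset \mathscr{V}^{(p)}$ is non-absorbing in $\mathbf{y}^{(p)}$, so Proposition \ref{p_char} forces $\Xi(\mathcal{M}_i) = d^{(p)}$; since $U(\mathcal{M}_i) = H$, the identity follows. When $i \in \llbracket n+1,m\rrbracket$, I will argue both inequalities directly. Any element of $\mathcal{M}_1$ lies in $\widetilde{\mathcal{M}_i}$ (as $U(\mathcal{M}_1) = H < U(\mathcal{M}_i)$) and is joined to $\mathcal{M}_i$ by a path inside $\mathcal{K}$, yielding $\Theta(\mathcal{M}_i,\widetilde{\mathcal{M}_i}) \le H + d^{(p)}$. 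Conversely, any $\bm m \in \widetilde{\mathcal{M}_i}$ has $U(\bm m) \le U(\mathcal{M}_i) = U(\mathcal{M}^{*}(\mathcal{W}_i))$, so $\bm m \notin \mathcal{W}_i$; hence any connecting path must exit the connected component $\mathcal{W}_i$ of $\{U<H+d^{(p)}\}$, attaining height at least $H+d^{(p)}$.

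For (2), by \eqref{01} the state $\mathcal{M}_i \in \widehat{\mathfrak{D}}$ is not $\widehat{\mathbf{y}}^{(p)}$-absorbing, so some $\mathcal{M}$ satisfies $\widehat r^{(p)}(\mathcal{M}_i,\mathcal{M})>0$. Proposition \ref{p: tree}-(4) together with (1) produce a saddle $\bm\sigma \in \mathcal{S}(\mathcal{M}_i,\mathcal{M})$ at height $U(\bm\sigma) = H + d^{(p)}$ with $\mathcal{M} \curvearrowleft \bm\sigma \rightsquigarrow \mathcal{M}_i$. I will then analyse the two heteroclinic orbits emanating from $\bm\sigma$. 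The chain $\bm\sigma \rightsquigarrow \mathcal{M}_i$ opens with a direct orbit $\bm\sigma \curvearrowright \bm m'$ descending into some well; every subsequent saddle in the chain has $U$ strictly below $H+d^{(p)}$, so the remainder of the chain stays in the connected component of $\{U<H+d^{(p)}\}$ entered at $\bm m'$, which therefore equals $\mathcal{W}_i$. The other orbit $\bm\sigma \curvearrowright \bm m$ with $\bm m \in \mathcal{M}$ enters some well $\mathcal{W}_j$; if $j = i$ then $\mathcal{M} \subset \mathcal{W}_i$ and a path inside $\mathcal{W}_i$ would give $\Theta(\mathcal{M}_i,\mathcal{M}) < H+d^{(p)}$, contradicting $\Theta(\mathcal{M}_i,\mathcal{M}) = \Theta(\mathcal{M}_i,\widetilde{\mathcal{M}_i}) = H+d^{(p)}$. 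Hence $j \ne i$, $\bm\sigma \in \Sigma_{i,j}\ne\varnothing$, and the non-separation of $(p)$-states by $\mathcal{W}_j$ in Lemma \ref{l_equi_lv}-(2) promotes $\bm m \in \mathcal{M}\cap \mathcal{W}_j$ to $\mathcal{M} \in \mathscr{S}^{(p)}(\mathcal{W}_j)$.

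For (3), the argument of (2) already yields $j \in \llbracket 1,\ell\rrbracket \setminus \{i\}$ with $\mathcal{M}\in \mathscr{S}^{(p)}(\mathcal{W}_j)$ and $\Sigma_{i,j}\ne\varnothing$. To exclude $j\in \llbracket 1,m\rrbracket$, I argue by contradiction. For such $j$, $\mathcal{M}_j \in \widehat{\mathfrak{D}}$ and $U(\mathcal{M}_j)\ge H$; by Lemma \ref{l_equi_lv}-(3) and (4), either $\mathcal{M}=\mathcal{M}_j$ (so $\mathcal{M}\in\widehat{\mathfrak{D}}$) or $\mathcal{M}\in \mathscr{N}^{(p)}(\mathcal{W}_j)$, in which case Lemma \ref{l_81} provides a $\widehat{\mathbf{y}}^{(p)}$-path from $\mathcal{M}$ to $\mathcal{M}_j$ with positive probability. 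Combined with $\mathcal{M}_j \leftrightarrow \mathcal{M}_i$ inside $\widehat{\mathfrak{D}}$ and the direct link $\mathcal{M}_i \to \mathcal{M}$, this produces mutual $\widehat{\mathbf{y}}^{(p)}$-communication between $\mathcal{M}$ and $\mathcal{M}_i$, so $\mathcal{M}\in\widehat{\mathfrak{D}}$, contrary to the hypothesis. Hence $j\in \llbracket m+1,\ell\rrbracket$. The main obstacle in this plan is the geometric identification in (2) of the well attached to the orbit $\bm\sigma \curvearrowright \mathcal{M}$: since $\mathcal{S}(\mathcal{M}_i,\mathcal{M})$ is defined via the transitive relation $\rightsquigarrow$, one must carefully argue that the chain back to $\mathcal{M}_i$ descends into a single component $\mathcal{W}_i$, and then combine the height identity from (1) with the non-separation property to force $j\ne i$.
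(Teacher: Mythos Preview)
Your proposal is correct. Parts (1) and (3) proceed essentially as in the paper; in (3) the paper applies Lemma~\ref{l_81} directly to any $\mathcal{M}\in\mathscr{S}^{(p)}(\mathcal{W}_j)$ without your case split $\mathcal{M}=\mathcal{M}_j$ versus $\mathcal{M}\in\mathscr{N}^{(p)}$, but this is cosmetic.

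The genuine difference is in (2). The paper argues \emph{geometry first}: it invokes \cite[Lemma A.16]{LLS-1st} to find a neighbouring well $\mathcal{W}_j$ with $\Sigma_{i,j}\ne\varnothing$, picks $\bm\sigma\in\Sigma_{i,j}$, uses \cite[Lemma A.16-(1),(3)]{LLS-2nd} to obtain $\bm\sigma\rightsquigarrow\mathcal{M}_i$ and $\bm\sigma\curvearrowright\bm m'\in\mathcal{W}_j$, and then verifies $\mathcal{M}_i\to\mathcal{M}$ by checking \eqref{eq:con_gate}. Your route is \emph{dynamics first}: from non-absorbing (\eqref{01}) you get some $\mathcal{M}$ with $\widehat r^{(p)}(\mathcal{M}_i,\mathcal{M})>0$, extract $\bm\sigma\in\mathcal{S}(\mathcal{M}_i,\mathcal{M})$ via Proposition~\ref{p: tree}-(4), and then track both heteroclinic orbits from $\bm\sigma$ to locate $j$. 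Your argument that the $\rightsquigarrow$-chain back to $\mathcal{M}_i$ stays inside a single component of $\{U<H+d^{(p)}\}$ (hence $\mathcal{W}_i$) because all intermediate saddles lie strictly below $H+d^{(p)}$ is correct, and replaces the paper's appeal to \cite[Lemma A.17]{LLS-2nd} in (3). The gain of your approach is uniformity: the geometric localisation in (2) and (3) is the same analysis, so (3) becomes a specialisation of (2). The paper's approach avoids the orbit-tracking step in (2) at the cost of citing several external structural lemmas on level-set boundaries.
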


\begin{proof}
For the first assertion, fix $i\in\llbracket1,\,m\rrbracket$.  By the
remarks in \eqref{e_lv_decomp-2},
$\mathcal{M}_{i}\in\widehat{\mathfrak{D}}$.  By the assumption
formulated at the beginning of the section, $\mathcal{M}_i$ is not a ${\bf y}^{(p)}$-absorbing state.

If $i\in\llbracket1,\,n\rrbracket$, $\mathcal{M}_{i}\in\mathfrak{D}$
and $U(\mathcal{M}_{i})=H$. Since $\mathcal{M}_{i}$ is not a
${\bf y}^{(p)}$-absorbing state, Proposition \ref{p_char} yields that
$\Xi(\mathcal{M}_{i})=d^{(p)}$. Thus,
\[
\Theta(\mathcal{M}_{i},\,\widetilde{\mathcal{M}_{i}})=U(\mathcal{M}_{i})
+\Xi(\mathcal{M}_{i})=H+d^{(p)}\,.
\]

Let $i\in\llbracket n+1,\,m\rrbracket$. By \eqref{e_lv_decomp-2},
$\mathcal{M}_{i}\in\widehat{\mathfrak{D}}\setminus\mathfrak{D}$ and
$U(\mathcal{M}_{i})>H$.  Since $U(\bm{m})>U(\mathcal{M}_{i})$ for all
$\bm{m}\in(\mathcal{M}_0\setminus\mathcal{M}_i) \cap \mathcal{W}_i$,
$\widetilde{\mathcal{M}_{i}}\subset(\mathcal{W}_{i})^{c}$.  Hence, by
Lemma \ref{l_pot1}-(2),
\begin{equation}
\Theta(\mathcal{M}_{i},\,\widetilde{\mathcal{M}_{i}})\ge H+d^{(p)}\,.
\label{e_l82-1}
\end{equation}

Fix $\mathcal{M}'\in\mathfrak{D}$. As $U(\mathcal{M}')=H < U(\mc M_i)$,
\begin{equation}
\mathcal{M}'\subset(\mathcal{W}_{i})^{c} \quad \text{and} \quad
\Theta(\mathcal{M}_{i},\,\widetilde{\mathcal{M}_{i}})
\le\Theta(\mathcal{M}_{i},\,\mathcal{M}')\,.
\label{e_l82-2}
\end{equation}
As $\mc M'$ belongs to
$\mf D$, $\mathcal{M}'\in\mathscr{V}^{(p)}(\mathcal{W}_{k}) $ for some
$k\in\llbracket1,\,n\rrbracket\setminus\{i\}$. By \cite[Lemma A.12-(1)]{LLS-2nd},
\[
\Theta(\bm{m},\,\bm{m}')=H+d^{(p)}\ \text{for all}\ \bm{m}\in\mathcal{M}_{i}\subset\mathcal{W}_{i}\ \text{and}\ \bm{m}'\in\mathcal{M}'\subset\mathcal{W}_{k}\,.
\]
Thus, $\Theta(\mathcal{M}_{i},\,\mathcal{M}')=H+d^{(p)}$, which, together with \eqref{e_l82-1} and
\eqref{e_l82-2}, completes
the proof of the first assertion.

We turn to the second assertion. By \cite[Lemma A.16-(2)]{LLS-1st},
there exists $j\in\llbracket1,\,\ell\rrbracket\setminus\{i\}$ such
that $\Sigma_{i,\,j}\ne\varnothing$. Recall from beginning of the
section that $\Sigma_{i,\,j}=\partial\mathcal{W}_{i}\cap\partial\mathcal{W}_{j}\subset\mathcal{S}_{0}$.
Let $\bm{\sigma}\in\Sigma_{i,\,j}$. Since $\bm{\sigma}\in\partial\mathcal{W}_{i}\cap\mathcal{S}_{0}$,
by \cite[Lemma A.16-(3)]{LLS-2nd}, $\bm{\sigma}\rightsquigarrow\bm{m}$
for all $\bm{m}\in\mathcal{M}_{0}\cap\mathcal{W}_{i}$. Therefore, since $\mathcal{M}_{i}\subset\mathcal{M}_{0}\cap\mathcal{W}_{i}$, $\bm{\sigma}\rightsquigarrow\mathcal{M}_{i}$.
On the other hand, since $\bm{\sigma}\in\partial\mathcal{W}_{j}\cap\mathcal{S}_{0}$,
by \cite[Lemma A.16-(1)]{LLS-2nd}, there exists $\bm{m}'\in\mathcal{M}_{0}\cap\mathcal{W}_{j}$
such that $\bm{\sigma}\curvearrowright\bm{m}'$. 

Let $\mathcal{M}\in\mathscr{S}^{(p)}$ contain $\bm{m}'$ so that
$\sigma\curvearrowright\mathcal{M}$. By Lemma \ref{l_equi_lv}-(2),
$\mathcal{W}_{j}$ does not separate $(p)$-states. Since
$\bm{m}'\in\mathcal{W}_{j}$,
$\mathcal{M}\in\mathscr{S}^{(p)}(\mathcal{W}_{j})$. Recall from the
beginning of the proof that
$\mathcal{M}_{i}\in\widehat{\mathfrak{D}}$.  Since
$\widehat{\mathfrak{D}}$ does not contain ${\bf y}^{(p)}$-absorbing
states [because $\mathfrak{D}$ does not contain such states], by
Proposition \ref{p_char}, $\Xi(\mathcal{M}_{i})\le d^{(p)}$.  By
Proposition \ref{p: tree}-(4), it remains to prove that
$\mathcal{M}_{i}\to\mathcal{M}$, i.e., from \eqref{eq:con_gate}, that
\begin{equation}
U(\boldsymbol{\sigma})=\Theta(\mathcal{M}_{i},\,\widetilde{\mathcal{M}_{i}})
=\Theta(\mathcal{M}_{i},\,\mathcal{M})
\;\;\text{and}\;\;\mathcal{M}\,\curvearrowleft\,\boldsymbol{\sigma}\,\rightsquigarrow\,\mathcal{M}_{i}\,.\label{e_l82-4}
\end{equation}
The second property holds by definition of $\bm{\sigma}$ and
$\mathcal{M}$. Since
$\mathcal{M}\in\mathscr{S}^{(p)}(\mathcal{W}_{j})$,
$\mathcal{M}\subset(\mathcal{W}_{i})^{c}$. Hence, by Lemma
\ref{l_pot1}-(1),
$\Theta(\mathcal{M}_{i},\,\mathcal{M})\ge H+d^{(p)}$. On the other
hand, by \cite[Lemma A.6-(3)]{LLS-2nd} and \eqref{e_sigma-H+d},
$\Theta(\mathcal{M}_{i},\,\mathcal{M})\le U(\bm{\sigma})=H+d^{(p)}$.
This, together with the first assertion of the lemma, proves
\eqref{e_l82-4}.

We turn to the third assertion. Let
$\mathcal{M}\in\mathscr{S}^{(p)}\setminus\widehat{\mathfrak{D}}$ be
such that $\widehat{r}^{(p)}(\mathcal{M}_{i},\,\mathcal{M})>0$.  By
\eqref{eq:con_gate} and Proposition \ref{p: tree}-(4), there exists
$\bm{\sigma}\in\mathcal{S}_{0}$ such that
$\mathcal{M}_{i}\to_{\bm{\sigma}}\mathcal{M}$, i.e.,
\[
U(\boldsymbol{\sigma})=\Theta(\mathcal{M}_{i},\,\widetilde{\mathcal{M}_{i}})=\Theta(\mathcal{M}_{i},\,\mathcal{M})\;\;\text{and}\;\;\mathcal{M}\,\curvearrowleft\,\boldsymbol{\sigma}\,\rightsquigarrow\,\mathcal{M}_{i}\,.
\]
Pick $\bm{m}_{1}\in\mathcal{M}_{i}$ and $\bm{m}_{2}\in\mathcal{M}$
satisfying
$\Theta(\mathcal{M}_{i},\,\mathcal{M})=\Theta(\bm{m}_{1},\,\bm{m}_{2})$.
By the first assertion of the lemma,
$\Theta(\bm{m}_{1},\,\bm{m}_{2})=\Theta(\mathcal{M}_{i},\,\mathcal{M})=\Theta(\mathcal{M}_{i},\,\widetilde{\mathcal{M}_{i}})=H+d^{(p)}$.
Since $\mathcal{K}$ is the connected component of
$\{U\le H+d^{(p)}\}=\{U\le\Theta(\bm{m}_{1},\,\bm{m}_{2})\}$
containing $\bm{m}_{1}$, by \cite[Lemma A.5]{LLS-2nd}
$\bm{m}_{2}\in\mathcal{K}$ as well. By \eqref{e_lv_decomp-2}, there
exists $j\in\llbracket1,\,\ell\rrbracket$ such that
$\bm{m}_{2}\in\mathcal{W}_{j}$.  Hence, by Lemma \ref{l_equi_lv}-(2),
$\mathcal{M}\in\mathscr{S}^{(p)}(\mathcal{W}_{j})$.  Moreover, since
$\Theta(\mathcal{M}_{i},\,\mathcal{M})=H+d^{(p)}$, by Lemma
\ref{l_pot1}-(1)
$\mathcal{M}\notin\mathscr{S}^{(p)}(\mathcal{W}_{i})$, and hence
$j\ne i$.

We now claim that $\Sigma_{i,\,j}\ne\varnothing$.  Since
$\mathcal{M}\,\curvearrowleft\,\boldsymbol{\sigma}\,
\rightsquigarrow\,\mathcal{M}_{i}$, there exist
$\bm{m}_{3}\in\mathcal{M}$ and $\bm{m}_{4}\in\mathcal{M}_{i}$ such
that
$\bm{m}_{3}\curvearrowleft\boldsymbol{\sigma}\rightsquigarrow\bm{m}_{4}$.
Since $\mathcal{W}_{i}$ and $\mathcal{W}_{j}$ are the connected
components of $\{U<U(\bm{\sigma})\}=\{U<H+d^{(p)}\}$ containing
$\bm{m}_{3}$ and $\bm{m}_{4}$, respectively, and
$\bm{m}_{3}\curvearrowleft\boldsymbol{\sigma}\rightsquigarrow\bm{m}_{4}$,
\cite[Lemma A.17]{LLS-2nd} implies
$\bm{\sigma}\in\partial\mathcal{W}_{i}\cap\partial\mathcal{W}_{j}$ so
that
$\Sigma_{i,\,j}=\partial\mathcal{W}_{i}\cap\partial\mathcal{W}_{j}\ne\varnothing$.

It remains to prove that $j\in\llbracket m+1,\,\ell\rrbracket$.
Suppose by contradiction that $j\in\llbracket1,\,m\rrbracket$. Recall
from the  beginning of the section that
$\mathcal{M}_{j}\in\widehat{\mathfrak{D}}$.  By Lemma \ref{l_81},
since $\mathcal{M}\in\mathscr{V}^{(p)}(\mathcal{W}_{j})$, starting
from $\mathcal{M}$, the Markov chain
$\{\widehat{{\bf y}}^{(p)}(t)\}_{t\ge0}$ reaches $\mathcal{M}_{j}$
with positive probability. Since
$\mathcal{M}_{i},\,\mathcal{M}_{j}\in\widehat{\mathfrak{D}}$, it
follows from $\widehat{r}^{(p)}(\mathcal{M}_{i},\,\mathcal{M})>0$ that
$\mathcal{M}\in\widehat{\mathfrak{D}}$ as well, which is a
contradiction. Hence $j\in\llbracket m+1,\,\ell\rrbracket$, proving
the third assertion.
\end{proof}

Let
\[
{\color{blue}\omega_{i,\,j}}:=\sum_{\bm{\sigma}\in\Sigma_{i,\,j}}\omega(\bm{\sigma})\,.
\]
Note that $\omega_{i,\,j}=0$ if $\Sigma_{i,\,j}=\varnothing$. The
next result corresponds to \cite[display (13.6)]{LLS-2nd}.
\begin{lem}
\label{l_83}Fix $i\in\llbracket1,\,m\rrbracket$. Then, for every
$j\in\llbracket1,\,\ell\rrbracket\setminus\{i\}$,
\[
\sum_{\mathcal{M}\in\mathscr{S}^{(p)}(\mathcal{W}_{j})}\widehat{r}^{(p)}(\mathcal{M}_{i},\,\mathcal{M})=\frac{\omega_{i,\,j}}{\nu(\mathcal{M}_{i})}\,.
\]
\end{lem}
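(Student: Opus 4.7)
The plan is to split the proof into two cases depending on whether $\mathcal{M}_i$ belongs to $\mathscr{V}^{(p)}$ (i.e.\ $i\in\llbracket 1,n\rrbracket$) or to $\mathscr{N}^{(p)}$ (i.e.\ $i\in\llbracket n+1,m\rrbracket$), since the rate $\widehat{r}^{(p)}(\mathcal{M}_i,\cdot)$ is defined by different formulas in these two regimes. In both cases $\mathcal{M}_i\in\widehat{\mathfrak{D}}$ is non-absorbing, hence $\Xi(\mathcal{M}_i)\le d^{(p)}$ by Proposition \ref{p_char}.

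In the first case ($\mathcal{M}_i\in\mathscr{V}^{(p)}$), the formula \eqref{e_rate-3} together with the identity $\Xi(\mathcal{M}_i)=d^{(p)}$, which follows from Lemma \ref{l_82}-(1), reduce the claim to the set identity
\[
\bigsqcup_{\mathcal{M}\in\mathscr{S}^{(p)}(\mathcal{W}_j)}\mathcal{S}(\mathcal{M}_i,\mathcal{M})\,=\,\Sigma_{i,j}\,.
\]
The inclusion $\Sigma_{i,j}\subset\bigsqcup_{\mathcal{M}}\mathcal{S}(\mathcal{M}_i,\mathcal{M})$ will be established essentially as in the proof of Lemma \ref{l_82}-(2): for $\bm{\sigma}\in\Sigma_{i,j}$, Assumption \ref{assu: hetero} and the Hartman--Grobman theorem ensure that the two heteroclinic orbits of $\bm{\sigma}$ terminate at local minima in $\mathcal{W}_i$ and $\mathcal{W}_j$; by \cite[Lemma A.16-(3)]{LLS-2nd}, $\bm{\sigma}\rightsquigarrow\mathcal{M}_i$; by Lemma \ref{l_equi_lv}-(2), the endpoint in $\mathcal{W}_j$ belongs to a unique $\mathcal{M}\in\mathscr{S}^{(p)}(\mathcal{W}_j)$; and Lemma \ref{l_82}-(1), Lemma \ref{l_pot1}-(1), and standard path-concatenation arguments give $\Theta(\mathcal{M}_i,\mathcal{M})=U(\bm{\sigma})=\Theta(\mathcal{M}_i,\widetilde{\mathcal{M}_i})$, hence $\bm{\sigma}\in\mathcal{S}(\mathcal{M}_i,\mathcal{M})$. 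The reverse inclusion will follow by reproducing the argument of Lemma \ref{l_82}-(3), and disjointness holds because each saddle $\bm{\sigma}\in\Sigma_{i,j}$ has a unique heteroclinic orbit entering $\mathcal{W}_j$.

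In the second case ($\mathcal{M}_i\in\mathscr{N}^{(p)}$, which forces $p\ge 2$), the recursive Cases 2 and 3 of Section \ref{subsec: MC2} will give
\[
\sum_{\mathcal{M}\in\mathscr{S}^{(p)}(\mathcal{W}_j)}\widehat{r}^{(p)}(\mathcal{M}_i,\mathcal{M})\,=\,\sum_{\mathcal{M}'\in\mathscr{S}^{(p-1)}(\mathcal{W}_j)}\widehat{r}^{(p-1)}(\mathcal{M}_i,\mathcal{M}')\,,
\]
provided one can verify the partition
\[
\mathscr{S}^{(p-1)}(\mathcal{W}_j)\,=\,\mathscr{N}^{(p)}(\mathcal{W}_j)\;\sqcup\;\bigsqcup_{\mathcal{M}\in\mathscr{V}^{(p)}(\mathcal{W}_j)}\mathscr{R}^{(p-1)}(\mathcal{M})\,.
\]
Iterating this reduction down to the smallest $q\le p$ with $\mathcal{M}_i\in\mathscr{V}^{(q)}$ will bring the problem back to the first case at level $q$.

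The main obstacle is the analysis of this second case. On one hand, the partition above requires that every irreducible class $\mathscr{R}^{(p-1)}(\mathcal{M})$ with $\mathcal{M}\subset\mathcal{W}_j$ be entirely contained in $\mathcal{W}_j$, a fact to be deduced from $d^{(p-1)}<d^{(p)}$ (Proposition \ref{p: tree}-(3)) together with Lemma \ref{l_equi_height}. On the other, one must track how the saddle boundary $\Sigma_{i,j}$ corresponds through the coarser-to-finer layers of the tree so that the saddle sum obtained at level $q$ still matches $\omega_{i,j}$, essentially reproducing the argument of \cite[Section 13]{LLS-2nd}.
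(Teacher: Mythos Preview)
The paper does not actually prove this lemma; it simply states that ``the next result corresponds to \cite[display (13.6)]{LLS-2nd}'' and moves on. So there is no in-paper proof to compare against.

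Your reconstruction is essentially what a direct proof would look like, and the two-case split according to whether $\mathcal{M}_i\in\mathscr{V}^{(p)}$ or $\mathcal{M}_i\in\mathscr{N}^{(p)}$ is the right organizing principle. Case~1 is correct as written: the set identity $\bigsqcup_{\mathcal{M}\in\mathscr{S}^{(p)}(\mathcal{W}_j)}\mathcal{S}(\mathcal{M}_i,\mathcal{M})=\Sigma_{i,j}$ follows exactly from the ingredients you list, and both inclusions are minor variations of the arguments already spelled out in Lemma~\ref{l_82}. For Case~2, the recursive identity and the partition of $\mathscr{S}^{(p-1)}(\mathcal{W}_j)$ are correct; the key observation making the induction close is that $\mathcal{W}_j$ does not separate $(p{-}1)$-states either (since every $\mathcal{M}'\in\mathscr{S}^{(p-1)}$ is contained in some $\mathcal{M}\in\mathscr{S}^{(p)}$), so the sum descends cleanly. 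The point you flag as the ``main obstacle'' --- that the saddle heights must still match $\Sigma_{i,j}$ at the base level $q$ --- is resolved by Lemma~\ref{l_82}-(1): $\Theta(\mathcal{M}_i,\widetilde{\mathcal{M}_i})=H+d^{(p)}$ holds for \emph{all} $i\in\llbracket1,m\rrbracket$, so $U(\mathcal{M}_i)+\Xi(\mathcal{M}_i)=U(\mathcal{M}_i)+d^{(q)}=H+d^{(p)}$, and the relevant saddle sets at level $q$ sit at the same height as $\Sigma_{i,j}$. With that, the Case~1 argument applies verbatim at level $q$ and the recursion terminates.
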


\subsection{\label{subsec_rev}Local reversibility}

Decompose 
\[
\widehat{\mathfrak{D}}=\bigcup_{i\in\llbracket1,\,m\rrbracket}\Big(\widehat{\mathfrak{D}}\cap\mathscr{S}^{(p)}(\mathcal{W}_{i})\Big)\,.
\]
Recall from the beginning of the section that $\mathcal{M}_{i}\in\widehat{\mathfrak{D}}\cap\mathscr{S}^{(p)}(\mathcal{W}_{i})$
for each $i\in\llbracket1,\,m\rrbracket$, that $\mathfrak{D}=\{\mathcal{M}_{1},\,\dots,\,\mathcal{M}_{n}\}$,
and that $\{\mathcal{M}_{n+1},\,\dots,\,\mathcal{M}_{m}\}\subset\widehat{\mathfrak{D}}\setminus\mathfrak{D}$.
The next lemma shows that this decomposition satisfies the assumptions
of Lemma \ref{l_A3}.
\begin{lem}
\label{l_rev}Fix $i\in\llbracket1,\,m\rrbracket$. 
\begin{enumerate}
\item For every $\mathcal{M}\in\widehat{\mathfrak{D}}\cap\mathscr{S}^{(p)}(\mathcal{W}_{i})$,
\[
\begin{cases}
\sum_{\mathcal{M}'\in\mathscr{S}^{(p)}\setminus\mathscr{S}^{(p)}(\mathcal{W}_{i})}\widehat{r}^{(p)}(\mathcal{M},\,\mathcal{M}')=0 & \text{if}\ \mathcal{M}\ne\mathcal{M}_{i}\,,\\
\sum_{\mathcal{M}'\in\mathscr{S}^{(p)}\setminus\mathscr{S}^{(p)}(\mathcal{W}_{i})}\widehat{r}^{(p)}(\mathcal{M},\,\mathcal{M}')>0 & \text{if}\ \mathcal{M}=\mathcal{M}_{i}\,.
\end{cases}
\]
\item Suppose $m\ge2$. Then, for every
$j\in\llbracket1,\,m\rrbracket\setminus\{i\}$,
\[
\nu(\mathcal{M}_{i})\sum_{\mathcal{M}\in\mathscr{S}^{(p)}(\mathcal{W}_{j})}\widehat{r}^{(p)}(\mathcal{M}_{i},\,\mathcal{M})=\nu(\mathcal{M}_{j})\sum_{\mathcal{M}\in\mathscr{S}^{(p)}(\mathcal{W}_{i})}\widehat{r}^{(p)}(\mathcal{M}_{j},\,\mathcal{M})=\omega_{i,\,j}\,.
\]
\end{enumerate}
\end{lem}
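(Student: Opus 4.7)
The plan is that the lemma reduces entirely to three results already available in the excerpt: Lemma \ref{l_exit} (exit from a well forces passage through its deepest minimum), Lemma \ref{l_82} (existence of an outgoing transition from $\mathcal{M}_i$ and control on where such transitions land), and Lemma \ref{l_83} (the explicit formula for the total outgoing mass towards a neighbouring well). No new geometric input beyond these is needed.

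For assertion (1), I would first treat the case $\mathcal{M}\in\widehat{\mathfrak{D}}\cap\mathscr{S}^{(p)}(\mathcal{W}_{i})$ with $\mathcal{M}\ne\mathcal{M}_{i}$. For any $\mathcal{M}'\in\mathscr{S}^{(p)}\setminus\mathscr{S}^{(p)}(\mathcal{W}_{i})$ such that $\widehat{r}^{(p)}(\mathcal{M},\,\mathcal{M}')>0$, Lemma \ref{l_exit} forces $\mathcal{M}=\mathcal{M}^{*}(\mathcal{W}_{i})=\mathcal{M}_{i}$, contradicting our hypothesis; hence each summand vanishes and the whole sum is $0$. For the case $\mathcal{M}=\mathcal{M}_{i}$, Lemma \ref{l_82}-(2) directly yields $j\in\llbracket1,\,\ell\rrbracket\setminus\{i\}$ and $\mathcal{M}'\in\mathscr{S}^{(p)}(\mathcal{W}_{j})$ with $\widehat{r}^{(p)}(\mathcal{M}_{i},\,\mathcal{M}')>0$. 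Since $\mathscr{S}^{(p)}(\mathcal{W}_{j})\subset\mathscr{S}^{(p)}\setminus\mathscr{S}^{(p)}(\mathcal{W}_{i})$, the sum in question is strictly positive.

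For assertion (2), both equalities are immediate consequences of Lemma \ref{l_83}. Indeed, applying that lemma to the pair $(i,\,j)$ gives
\[
\nu(\mathcal{M}_{i})\sum_{\mathcal{M}\in\mathscr{S}^{(p)}(\mathcal{W}_{j})}\widehat{r}^{(p)}(\mathcal{M}_{i},\,\mathcal{M})=\omega_{i,\,j}\,,
\]
and applying it to the pair $(j,\,i)$ gives
\[
\nu(\mathcal{M}_{j})\sum_{\mathcal{M}\in\mathscr{S}^{(p)}(\mathcal{W}_{i})}\widehat{r}^{(p)}(\mathcal{M}_{j},\,\mathcal{M})=\omega_{j,\,i}\,.
\]
The desired identity then follows from the evident symmetry $\Sigma_{i,\,j}=\Sigma_{j,\,i}$, which yields $\omega_{i,\,j}=\omega_{j,\,i}$. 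Note that for Lemma \ref{l_83} to apply to $j$ one needs $j\in\llbracket1,\,m\rrbracket$, which is precisely the restriction imposed in the statement.

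There is no real obstacle here: every nontrivial geometric fact has already been established in Section \ref{sec_pf_last}. The only thing to verify carefully is that the assumption of the section ($\mathfrak{D}$ contains no ${\bf y}^{(p)}$-absorbing state) is in force, so that $\mathcal{M}_{i}\in\widehat{\mathfrak{D}}$ satisfies $\Xi(\mathcal{M}_{i})\le d^{(p)}$ and Lemma \ref{l_82}-(2) is applicable. Once this is noted, the whole argument is essentially a bookkeeping step combining the previous lemmas.
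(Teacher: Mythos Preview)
Your proof is correct and follows essentially the same approach as the paper's own proof. The only minor imprecision is that, to invoke Lemma \ref{l_exit} in the case $\mathcal{M}\ne\mathcal{M}_{i}$, you need $\mathcal{M}\to\mathcal{M}'$ rather than merely $\widehat{r}^{(p)}(\mathcal{M},\mathcal{M}')>0$; the paper bridges this via Proposition \ref{p: tree}-(4), which you should cite explicitly.
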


\begin{proof}
We prove the first assertion. Let
$\mathcal{M}\in\widehat{\mathfrak{D}}\cap\mathscr{S}^{(p)}(\mathcal{W}_{i})$
and
$\mathcal{M}'\in\mathscr{S}^{(p)}\setminus\mathscr{S}^{(p)}(\mathcal{W}_{i})$
be such that $\widehat{r}^{(p)}(\mathcal{M},\,\mathcal{M}')>0$. By
Proposition \ref{p: tree}-(4), $\mathcal{M}\to\mathcal{M}'$. Thus, by
Lemma \ref{l_exit} $\mathcal{M}=\mathcal{M}_{i}$. Hence,
\begin{equation}
\label{e_l_rev-1}
\sum_{\mathcal{M}'\in\widehat{\mathfrak{D}}\setminus\mathscr{S}^{(p)}(\mathcal{W}_{i})}\widehat{r}^{(p)}(\mathcal{M},\,\mathcal{M}')=0\quad\text{if}\quad\mathcal{M}\ne\mathcal{M}_{i}\,.
\end{equation}

On the other hand, by Lemma \ref{l_82}-(2), there exist
$k\in\llbracket1,\,\ell\rrbracket\setminus\{i\}$ and
$\mathcal{M}'\in\widehat{\mathfrak{D}}\cap\mathscr{S}^{(p)}(\mathcal{W}_{k})$
such that $\Sigma_{i,\,k}\ne\varnothing$ and
$\widehat{r}^{(p)}(\mathcal{M}_{i},\,\mathcal{M}')>0$, which, together
with \eqref{e_l_rev-1}, proves the first assertion.

We turn to the second assertion. Suppose $m\ge2$.
By Lemma \ref{l_83}, for $j\in\llbracket1,\,m\rrbracket\setminus\{i\}$,
\[
\nu(\mathcal{M}_{i})\sum_{\mathcal{M}\in\mathscr{S}^{(p)}(\mathcal{W}_{j})}\widehat{r}^{(p)}(\mathcal{M}_{i},\,\mathcal{M})=\omega_{i,\,j}=\nu(\mathcal{M}_{j})\sum_{\mathcal{M}\in\mathscr{S}^{(p)}(\mathcal{W}_{i})}\widehat{r}^{(p)}(\mathcal{M}_{j},\,\mathcal{M})\,,
\]
and this completes the proof.
\end{proof}
Now, we are ready to prove Proposition \ref{p_rev}.
\begin{proof}[Proof of Proposition \ref{p_rev}]
Since $|\mathfrak{D}|\ge2$, Lemma \ref{l_equi_lv}-(3) yields $m\ge n\ge2$.
By Lemma \ref{l_rev}, the equivalence class $\mathfrak{D}$ of the
Markov chain $\{{\bf y}^{(p)}(t)\}_{t\ge0}$ satisfies the assumptions
of Lemma \ref{l_A3}. Hence, by part (1) of that lemma, the reflected
chain $\{{\bf y}_{\mathfrak{D}}^{(p)}(t)\}_{t\ge0}$ is reversible
with respect to the restriction of the measure $\nu$ to $\mathfrak{D}$.
\end{proof}

\subsection{\label{subsec_pf_p_cap}Proof of Proposition \ref{p_cap}}

The next lemma relates the functions
$\{{\bf h}_{i}^{(p)}\}_{i\in\llbracket1,\,n\rrbracket}$, defined in
\eqref{e_def_h}, to the limiting Markov chain
$\{{\bf y}^{(p)}(t)\}_{t\ge0}$.
\begin{lem}
\label{l_85}For $i\in\llbracket1,\,n\rrbracket$,
\[
\sum_{1\le a<b\le\ell}\left|{\bf h}_{i}^{(p)}(a)-{\bf h}_{i}^{(p)}(b)\right|^{2}\omega_{a,\,b}=\nu(\mathcal{M}_{i})\sum_{\mathcal{M}\in\mathscr{V}^{(p)}\setminus\{\mathcal{M}_{i}\}}r^{(p)}(\mathcal{M}_{i},\,\mathcal{M})\,.
\]
If $n\ge2$, then for $i\ne j\in\llbracket1,\,n\rrbracket$,
\[
\begin{aligned} & \sum_{1\le a<b\le\ell}\left[{\bf h}_{i}^{(p)}(a)-{\bf h}_{i}^{(p)}(b)\right]\left[{\bf h}_{j}^{(p)}(a)-{\bf h}_{j}^{(p)}(b)\right]\omega_{a,\,b}\\
 & =-\frac{1}{2}\left(\nu(\mathcal{M}_{i})\,r^{(p)}(\mathcal{M}_{i},\,\mathcal{M}_{j})+\nu(\mathcal{M}_{j})\,r^{(p)}(\mathcal{M}_{j},\,\mathcal{M}_{i})\right)\,.
\end{aligned}
\]
\end{lem}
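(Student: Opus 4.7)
The plan is to identify both sides of each identity with bilinear Dirichlet forms: the LHS with a Dirichlet form on the chain $\widehat{\mathbf{y}}^{(p)}$ evaluated at equilibrium potentials, and the RHS with the corresponding trace-chain Dirichlet form on $\mathbf{y}^{(p)}$. The bridge between them is the standard trace-rate formula combined with the local reversibility obtained in Proposition \ref{p_rev}.

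First, I extend each $\mathbf{h}_k^{(p)}$ to the $\widehat{\mathbf{y}}^{(p)}$-equilibrium potential $\mathbf{H}_k:\mathscr{S}^{(p)}\to[0,1]$ defined by $\mathbf{H}_k(\mathcal{M}):=\widehat{\mathcal{Q}}_{\mathcal{M}}^{(p)}[H_{\mathscr{V}^{(p)}}=H_{\mathcal{M}_k}]$. Then $\mathbf{H}_k$ is $\widehat{\mathfrak{L}}^{(p)}$-harmonic on $\mathscr{N}^{(p)}$ and satisfies $\mathbf{H}_k(\mathcal{M}_a)=\mathbf{h}_k^{(p)}(a)$ for every $a$. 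By Lemma \ref{l_rev}-(1), the chain $\widehat{\mathbf{y}}^{(p)}$ started at any state in $\widehat{\mathfrak{D}}\cap\mathscr{S}^{(p)}(\mathcal{W}_a)\setminus\{\mathcal{M}_a\}$ cannot exit $\mathscr{S}^{(p)}(\mathcal{W}_a)$; combined with Lemma \ref{l_81} (which, by finiteness of the state space, upgrades positive-probability reachability of $\mathcal{M}_a$ to almost-sure reachability), the strong Markov property gives $\mathbf{H}_k\equiv\mathbf{h}_k^{(p)}(a)$ on each cluster $\widehat{\mathfrak{D}}\cap\mathscr{S}^{(p)}(\mathcal{W}_a)$ for $a\in\llbracket 1,m\rrbracket$. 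Moreover, by Lemma \ref{l_82}-(3), transitions from $\widehat{\mathfrak{D}}$ that leave the equivalence class land in $\mathscr{S}^{(p)}(\mathcal{W}_b)$ with $b\in\llbracket m+1,\ell\rrbracket$, on which $\mathbf{h}_k^{(p)}$ vanishes.

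Second, using Lemma \ref{l_83} to write $\omega_{a,b}=\nu(\mathcal{M}_a)\sum_{\mathcal{M}\in\mathscr{S}^{(p)}(\mathcal{W}_b)}\widehat{r}^{(p)}(\mathcal{M}_a,\mathcal{M})$ for $a\in\llbracket 1,m\rrbracket$, $b\ne a$, together with the detailed-balance symmetry from Lemma \ref{l_rev}-(2) valid for $a,b\in\llbracket 1,m\rrbracket$, and the cluster-constancy of $\mathbf{H}_k$ from Step 1 (which forces internal transitions to contribute zero), the LHS of the first identity reorganizes as the $\widehat{\mathbf{y}}^{(p)}$-Dirichlet form
\[
\tfrac{1}{2}\sum_{\mathcal{M},\,\mathcal{M}'\in\mathscr{S}^{(p)}}\nu(\mathcal{M})\,\widehat{r}^{(p)}(\mathcal{M},\mathcal{M}')\,\bigl(\mathbf{H}_i(\mathcal{M})-\mathbf{H}_i(\mathcal{M}')\bigr)^2.
\]
By the local reversibility of $\widehat{\mathbf{y}}^{(p)}$ on $\widehat{\mathfrak{D}}$ (the analog of Proposition \ref{p_rev}, via Lemma \ref{l_A3} applied to the lifted chain) and the harmonicity of $\mathbf{H}_i$ on $\mathscr{N}^{(p)}$, this equals $-\nu(\mathcal{M}_i)\widehat{\mathfrak{L}}^{(p)}\mathbf{H}_i(\mathcal{M}_i)=\nu(\mathcal{M}_i)\sum_{\mathcal{M}\ne\mathcal{M}_i}\widehat{r}^{(p)}(\mathcal{M}_i,\mathcal{M})\bigl(1-\mathbf{H}_i(\mathcal{M})\bigr)$. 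The trace-rate formula
\[
r^{(p)}(\mathcal{M}_i,\mathcal{M}')=\widehat{r}^{(p)}(\mathcal{M}_i,\mathcal{M}')+\sum_{\mathcal{N}\in\mathscr{N}^{(p)}}\widehat{r}^{(p)}(\mathcal{M}_i,\mathcal{N})\,\widehat{\mathcal{Q}}_\mathcal{N}^{(p)}[H_{\mathscr{V}^{(p)}}=H_{\mathcal{M}'}],
\]
summed over $\mathcal{M}'\ne\mathcal{M}_i$, then gives $\nu(\mathcal{M}_i)\sum_{\mathcal{M}'\ne\mathcal{M}_i}r^{(p)}(\mathcal{M}_i,\mathcal{M}')$, completing the first identity.

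The second identity follows by applying the identical reorganization to the bilinear form
\[
\tfrac{1}{2}\sum_{\mathcal{M},\,\mathcal{M}'\in\mathscr{S}^{(p)}}\nu(\mathcal{M})\widehat{r}^{(p)}(\mathcal{M},\mathcal{M}')\bigl(\mathbf{H}_i(\mathcal{M})-\mathbf{H}_i(\mathcal{M}')\bigr)\bigl(\mathbf{H}_j(\mathcal{M})-\mathbf{H}_j(\mathcal{M}')\bigr),
\]
and using harmonicity of both $\mathbf{H}_i$ and $\mathbf{H}_j$ to reduce it to $-\nu(\mathcal{M}_i)\widehat{\mathfrak{L}}^{(p)}\mathbf{H}_j(\mathcal{M}_i)=-\nu(\mathcal{M}_i)r^{(p)}(\mathcal{M}_i,\mathcal{M}_j)$, whose symmetrization under $i\leftrightarrow j$ yields the asserted expression. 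The main technical obstacle will be the cluster-constancy of $\mathbf{H}_i$ in Step 1, which requires carefully combining Lemma \ref{l_rev}-(1) with Lemma \ref{l_81} and the finite-state reachability argument, together with the verification that the well-level sum on the LHS matches the state-level Dirichlet form — the latter relying crucially on the fact, ensured by Lemma \ref{l_rev}-(1), that only transitions originating from the minimum $\mathcal{M}_a$ of a well can leave $\mathscr{S}^{(p)}(\mathcal{W}_a)$, and on Lemma \ref{l_82}-(3) to control transitions into wells indexed by $\llbracket m+1,\ell\rrbracket$.
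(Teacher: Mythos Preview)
Your approach uses the same key objects and lemmas as the paper --- the harmonic extension $\mathbf{H}_i=\widehat{\mathbf g}_i$ of $\delta_{\mathcal M_i}$, the structural Lemmas \ref{l_83}, \ref{l_82}-(3), \ref{l_rev}, and the abstract Lemma \ref{l_A3} --- but there is a genuine gap in the intermediate step.

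The identification of the LHS with the \emph{global} Dirichlet form
\[
\tfrac12\sum_{\mathcal M,\mathcal M'\in\mathscr S^{(p)}}\nu(\mathcal M)\,\widehat r^{(p)}(\mathcal M,\mathcal M')\,(\mathbf H_i(\mathcal M)-\mathbf H_i(\mathcal M'))^2
\]
fails in general, for two linked reasons. First, $\widehat{\mathbf y}^{(p)}$ is \emph{not} reversible with respect to $\nu$ on all of $\mathscr S^{(p)}$; the reversibility you have (via Lemma \ref{l_A3}) is only local to $\widehat{\mathfrak D}$, so the passage from the quadratic form to $-\sum_{\mathcal M}\nu(\mathcal M)\mathbf H_i(\mathcal M)\widehat{\mathfrak L}^{(p)}\mathbf H_i(\mathcal M)$ is not justified globally. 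Second, $\mathbf H_i$ can be strictly positive on states \emph{outside} $\widehat{\mathfrak D}$ --- and even outside $\mathcal K$ --- namely any $\mathcal N\in\mathscr S^{(p)}$ from which $\widehat{\mathbf y}^{(p)}$ can reach $\mathcal M_i$ but which $\mathcal M_i$ cannot reach. Transitions out of such states contribute to the global form but have no counterpart in the LHS, which only involves $\omega_{a,b}$ for wells $a,b\in\llbracket1,\ell\rrbracket$ inside $\mathcal K$. Lemma \ref{l_A2} only guarantees $\mathbf H_i=0$ on the \emph{immediate} outgoing neighbours of $\widehat{\mathfrak D}$, not on all of its complement.

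The paper avoids this by never writing a global Dirichlet form. After reaching the expression you would also get (display \eqref{e_l85-2}), it applies Lemma \ref{l_A3}-(2) (or Lemma \ref{l_A4} when $m=1$) directly: that lemma works entirely within $\widehat{\mathfrak D}$ and its boundary and outputs $-\sum_{k\in\llbracket1,n\rrbracket}\nu(\mathcal M_k)\mathbf g_i(\mathcal M_k)\mathfrak L^{(p)}\mathbf g_i(\mathcal M_k)$ on the \emph{trace} chain, from which the result follows since $\mathbf g_i=\delta_{\mathcal M_i}$. For the second identity, your bilinear reduction inherits the same global-reversibility issue; the paper instead uses polarization --- apply the first assertion to $\mathbf h_i^{(p)}+\mathbf h_j^{(p)}$ and expand --- which only needs the already-established quadratic case.
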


\begin{proof}
Fix $i\in\llbracket1,\,n\rrbracket$ and set ${\bf g}_{i}:=\delta_{\mathcal{M}_{i}}:\mathscr{V}^{(p)}\to\mathbb{R}$.
Let $\widehat{{\bf g}_{i}}:\mathscr{S}^{(p)}\to\mathbb{R}$ be the
harmonic extension of ${\bf g}_{i}$. By \eqref{e_harmonic}, for
each $k\in\llbracket1,\,m\rrbracket$,
\begin{equation}
\widehat{{\bf g}_{i}}(\mathcal{M}_{k})=\widehat{\mathcal{Q}}_{\mathcal{M}_{k}}\left[H_{\mathscr{V}^{(p)}}=H_{\mathcal{M}_{i}}\right]={\bf h}_{i}^{(p)}(k)\,.\label{e_l85-1}
\end{equation}
 Since ${\bf h}_{i}^{(p)}(a)=0$ for $a\in\llbracket m+1,\,\ell\rrbracket$,
\eqref{e_l85-1} yields
\[
\begin{aligned} & \sum_{1\le a<b\le\ell}\left|{\bf h}_{i}^{(p)}(a)-{\bf h}_{i}^{(p)}(b)\right|^{2}\omega_{a,\,b}\\
 & =\sum_{a\in\llbracket1,\,m\rrbracket}\sum_{b\in\llbracket a+1,\,m\rrbracket}\left|{\bf h}_{i}^{(p)}(a)-{\bf h}_{i}^{(p)}(b)\right|^{2}\omega_{a,\,b}+\sum_{a\in\llbracket1,\,m\rrbracket}\sum_{b\in\llbracket m+1,\,\ell\rrbracket}{\bf h}_{i}^{(p)}(a)^{2}\omega_{a,\,b}\\
 & =\frac{1}{2}\sum_{a,\,b\in\llbracket1,\,m\rrbracket}\left|{\bf h}_{i}^{(p)}(a)-{\bf h}_{i}^{(p)}(b)\right|^{2}\omega_{a,\,b}+\sum_{a\in\llbracket1,\,m\rrbracket}\sum_{b\in\llbracket m+1,\,\ell\rrbracket}{\bf h}_{i}^{(p)}(a)^{2}\omega_{a,\,b}\\
 & =\frac{1}{2}\sum_{a,\,b\in\llbracket1,\,m\rrbracket}\left|\widehat{{\bf g}_{i}}(\mathcal{M}_{a})-\widehat{{\bf g}_{i}}(\mathcal{M}_{b})\right|^{2}\omega_{a,\,b}+\sum_{a\in\llbracket1,\,m\rrbracket}\widehat{{\bf g}_{i}}(\mathcal{M}_{a})^{2}\sum_{b\in\llbracket m+1,\,\ell\rrbracket}\omega_{a,\,b}\,.
\end{aligned}
\]
By Lemma \ref{l_83}, the last term can be written as
\[
\sum_{a\in\llbracket1,\,m\rrbracket}\widehat{{\bf g}_{i}}(\mathcal{M}_{a})^{2}\nu(\mathcal{M}_{a})\sum_{b\in\llbracket m+1,\,\ell\rrbracket}\sum_{\mathcal{M}\in\mathscr{S}^{(p)}(\mathcal{W}_{b})}\widehat{r}^{(p)}(\mathcal{M}_{a},\,\mathcal{M})\,.
\]

Fix $a\in\llbracket1,\,m\rrbracket$. Let
$\mathcal{M}\in\mathscr{S}^{(p)}\setminus\widehat{\mathfrak{D}}$ be
such that $\widehat{r}^{(p)}(\mathcal{M}_{a},\,\mathcal{M})>0$.  By
Lemma \ref{l_82}-(3), there exists
$b\in\llbracket m+1,\,\ell\rrbracket$ such that
$\mathcal{M}\in\mathscr{S}^{(p)}(\mathcal{W}_{b})$. On the other hand,
let $b\in\llbracket m+1,\,\ell\rrbracket$ and
$\mathcal{M}\in\mathscr{S}^{(p)}(\mathcal{W}_{b})$ be such that
$\widehat{r}^{(p)}(\mathcal{M}_{a},\,\mathcal{M})>0$.  Then, since
$\widehat{\mathfrak{D}}\cap\mathscr{S}^{(p)}(\mathcal{W}_{b})=0$,
$\mathcal{M}\in\mathscr{S}^{(p)}\setminus\widehat{\mathfrak{D}}$.
Therefore, for $a\in\llbracket1,\,m\rrbracket$,
\[
\sum_{b\in\llbracket m+1,\,\ell\rrbracket}\sum_{\mathcal{M}\in\mathscr{S}^{(p)}(\mathcal{W}_{b})}\widehat{r}^{(p)}(\mathcal{M}_{a},\,\mathcal{M})=\sum_{\mathcal{M}\in\mathscr{S}^{(p)}\setminus\widehat{\mathfrak{D}}}\widehat{r}^{(p)}(\mathcal{M}_{a},\,\mathcal{M})\,.
\]
Hence, by the above equalities, 
\begin{equation}
\begin{aligned} & \sum_{1\le a<b\le\ell}\left|{\bf h}_{i}^{(p)}(a)-{\bf h}_{i}^{(p)}(b)\right|^{2}\omega_{a,\,b}\\
 & =\frac{1}{2}\sum_{a,\,b\in\llbracket1,\,m\rrbracket}\left|\widehat{{\bf g}_{i}}(\mathcal{M}_{a})-\widehat{{\bf g}_{i}}(\mathcal{M}_{b})\right|^{2}\omega_{a,\,b}+\sum_{a\in\llbracket1,\,m\rrbracket}\widehat{{\bf g}_{i}}(\mathcal{M}_{a})^{2}\nu(\mathcal{M}_{a})\sum_{\mathcal{M}\in\mathscr{S}^{(p)}\setminus\widehat{\mathfrak{D}}}\widehat{r}^{(p)}(\mathcal{M}_{a},\,\mathcal{M})\,.
\end{aligned}
\label{e_l85-2}
\end{equation}

If $m\ge2$, then by Lemma \ref{l_rev}, the Markov
chain $\{\widehat{{\bf y}}(t)\}_{t\ge0}$ and the equivalence class
$\widehat{\mathfrak{D}}$ satisfy the assumptions of Lemma \ref{l_A3}
under the decomposition $\widehat{\mathfrak{D}}=\bigcup_{i\in\llbracket1,\,m\rrbracket}\Big(\widehat{\mathfrak{D}}\cap\mathscr{S}^{(p)}(\mathcal{W}_{i})\Big)$,
$\{\mathcal{M}_{1},\,\dots,\,\mathcal{M}_{m}\}\subset\widehat{\mathfrak{D}}$,
with measure $\nu$ conditioned on $\{\mathcal{M}_{1},\,\dots,\,\mathcal{M}_{m}\}$.
Then, \eqref{e_l85-2} and Lemma \ref{l_A3}-(2) imply
\[
\begin{aligned}\sum_{1\le a<b\le\ell}\left|{\bf h}_{i}^{(p)}(a)-{\bf h}_{i}^{(p)}(b)\right|^{2}\omega_{a,\,b} & =-\sum_{k\in\llbracket1,\,n\rrbracket}\nu(\mathcal{M}_{k}){\bf g}_{i}(\mathcal{M}_{k})\mathfrak{L}^{(p)}{\bf g}_{i}(\mathcal{M}_{k})\\
 & =\nu(\mathcal{M}_{i})\sum_{\mathcal{M}'\in\mathscr{V}^{(p)}\setminus\{\mathcal{M}_{i}\}}r^{(p)}(\mathcal{M}_{i},\,\mathcal{M}')\,,
\end{aligned}
\]
where the last equality follows from the definition of ${\bf g}_{i}$.

Next, suppose $m=1$, and hence $i=1$. By Lemma \ref{l_rev}-(1),
the Markov chain $\{\widehat{{\bf y}}(t)\}_{t\ge0}$ and the equivalence
class $\widehat{\mathfrak{D}}$ satisfy the assumptions of Lemma \ref{l_A4}
with $\mathfrak{D}=\{\mathcal{M}_{1}\}$. Then, \eqref{e_l85-2} and
Lemma \ref{l_A4} yield
\[
\begin{aligned}\sum_{1\le a<b\le\ell}\left|{\bf h}_{1}^{(p)}(a)-{\bf h}_{1}^{(p)}(b)\right|^{2}\omega_{a,\,b} & =\widehat{{\bf g}_{1}}(\mathcal{M}_{1})^{2}\nu(\mathcal{M}_{1})\sum_{\mathcal{M}\in\mathscr{S}^{(p)}\setminus\widehat{\mathfrak{D}}}\widehat{r}^{(p)}(\mathcal{M}_{1},\,\mathcal{M})\\
 & =-\nu(\mathcal{M}_{1}){\bf g}_{1}(\mathcal{M}_{1})\mathfrak{L}^{(p)}{\bf g}_{1}(\mathcal{M}_{1})\\
 & =\nu(\mathcal{M}_{1})\sum_{\mathcal{M}'\in\mathscr{V}^{(p)}\setminus\{\mathcal{M}_{1}\}}r^{(p)}(\mathcal{M}_{1},\,\mathcal{M}')\,,
\end{aligned}
\]
where the last equality follows from the definition of ${\bf g}_{1}$.
This completes the proof of the first assertion.

We turn to the second assertion. Assume $n\ge2$, fix $j\in\llbracket1,\,n\rrbracket\setminus\{i\}$,
and define ${\bf h}_{i,\,j}^{(p)}:={\bf h}_{i}^{(p)}+{\bf h}_{j}^{(p)}$
and ${\bf g}_{i,\,j}:=\delta_{\mathcal{M}_{i}}+\delta_{\mathcal{M}_{j}}$.
By the same argument as above,
\[
\begin{aligned} & \sum_{1\le a<b\le\ell}\left|{\bf h}_{i,\,j}^{(p)}(a)-{\bf h}_{i,\,j}^{(p)}(b)\right|^{2}\omega_{a,\,b}\\
 & =-\sum_{k\in\llbracket1,\,n\rrbracket}\nu(\mathcal{M}_{k}){\bf g}_{i,\,j}(\mathcal{M}_{k})\mathfrak{L}{\bf g}_{i,\,j}(\mathcal{M}_{k})\\
 & =\nu(\mathcal{M}_{i})\sum_{\mathcal{M}'\in\mathscr{V}^{(p)}\setminus\{\mathcal{M}_{i},\,\mathcal{M}_{j}\}}r^{(p)}(\mathcal{M}_{i},\,\mathcal{M}')+\nu(\mathcal{M}_{j})\sum_{\mathcal{M}'\in\mathscr{V}^{(p)}\setminus\{\mathcal{M}_{i},\,\mathcal{M}_{j}\}}r^{(p)}(\mathcal{M}_{j},\,\mathcal{M}')\,.
\end{aligned}
\]
Therefore, since
\[
\begin{aligned}
& 2\left[{\bf h}_{i}^{(p)}(a)-{\bf h}_{i}^{(p)}(b)\right]\left[{\bf h}_{j}^{(p)}(a)-{\bf h}_{j}^{(p)}(b)\right]\\
& =\left[{\bf h}_{i,\,j}^{(p)}(a)-{\bf h}_{i,\,j}^{(p)}(b)\right]^{2}-\left[{\bf h}_{i}^{(p)}(a)-{\bf h}_{i}^{(p)}(b)\right]^{2}-\left[{\bf h}_{j}^{(p)}(a)-{\bf h}_{j}^{(p)}(b)\right]^{2}\,,
\end{aligned}
\]
we conclude that
\[
\begin{aligned} & \sum_{1\le a<b\le\ell}\left[{\bf h}_{i}^{(p)}(a)-{\bf h}_{i}^{(p)}(b)\right]\left[{\bf h}_{j}^{(p)}(a)-{\bf h}_{j}^{(p)}(b)\right]\omega_{a,\,b}\\
 & =-\frac{1}{2}\left(\nu(\mathcal{M}_{i})\,r^{(p)}(\mathcal{M}_{i},\,\mathcal{M}_{j})+\nu(\mathcal{M}_{j})\,r^{(p)}(\mathcal{M}_{j},\,\mathcal{M}_{i})\right)\,.
\end{aligned}
\]
\end{proof}
We now prove Proposition \ref{p_cap}.
\begin{proof}[Proof of Proposition \ref{p_cap}]
 Recall that $\nu_{\star}$ is defined in \eqref{e_def_nu}. By \eqref{e_sigma-H+d}
and \cite[Lemma 3.5]{RS},
\[
\lim_{\epsilon\to0}e^{H/\epsilon}\theta_{\epsilon}^{(p)}\epsilon\int_{\mathcal{B}_{\epsilon}^{\bm{\sigma}}}|\nabla p_{\epsilon}^{\bm{\sigma}}|^{2}d\pi_{\epsilon}=\frac{\omega(\bm{\sigma})}{\nu_{\star}}\,.
\]
Hence, for $i,\,j\in\llbracket1,\,n\rrbracket$,
\[
\begin{aligned} & \lim_{\epsilon\to0}e^{H/\epsilon}\theta_{\epsilon}^{(p)}\epsilon\int_{\mathbb{R}^{d}}\Phi_{i}^{\epsilon}\cdot\Phi_{j}^{\epsilon}d\pi_{\epsilon}\\
 & =\sum_{a<b\in\llbracket1,\,\ell\rrbracket}\left[{\bf h}_{i}^{(p)}(a)-{\bf h}_{i}^{(p)}(b)\right]\left[{\bf h}_{j}^{(p)}(a)-{\bf h}_{j}^{(p)}(b)\right]\sum_{\bm{\sigma}\in\Sigma_{a,\,b}}\lim_{\epsilon\to0}e^{H/\epsilon}\theta_{\epsilon}^{(p)}\epsilon\int_{\mathcal{B}_{\epsilon}^{\bm{\sigma}}}|\nabla p_{\epsilon}^{\bm{\sigma}}|^{2}d\pi_{\epsilon}\\
 & =\frac{1}{\nu_{\star}}\sum_{a<b\in\llbracket1,\,\ell\rrbracket}\left[{\bf h}_{i}^{(p)}(a)-{\bf h}_{i}^{(p)}(b)\right]\left[{\bf h}_{j}^{(p)}(a)-{\bf h}_{j}^{(p)}(b)\right]\omega_{a,\,b}\,.
\end{aligned}
\]
Lemma \ref{l_85} then completes the proof.
\end{proof}

\appendix

\section{Markov chains}

In this appendix, we present general results on Markov chains on
finite state spaces. Let
\textcolor{blue}{$\mathscr{V}\subset\mathscr{S}$} be nonempty finite
sets, and \textcolor{blue}{$\{\widehat{{\bf y}}(t)\}_{t\ge0}$} denote
a continuous-time Markov chain on $\mathscr{S}$ with jump rates
${\color{blue}\widehat{r}}:\mathscr{S}\times\mathscr{S}\to[0,\,\infty)$.
Mind that we do not assume $\widehat{{\bf y}} (\cdot)$ to be irreducible.

Assume that $\mathscr{V}$ contains at least one state from each
irreducible class of $\{\widehat{{\bf y}}(t)\}_{t\ge0}$.  Under this
assumption, \cite[display (B.1)]{LLS-2nd} holds by \cite[Lemma
B.1]{LLS-2nd}, and hence the trace process of
$\{\widehat{{\bf y}}(t)\}_{t\ge0}$ on $\mathscr{V}$ is well defined.

Denote by \textcolor{blue}{$\{{\bf y}(t)\}_{t\ge0}$}
this trace process, and by ${\color{blue}r}:\mathscr{V}\times\mathscr{V}\to[0,\,\infty)$
its jump rates.
Let \textcolor{blue}{$\widehat{\mathfrak{L}}$}
and \textcolor{blue}{$\mathfrak{L}$} be the infinitesimal generators
of $\{\widehat{{\bf y}}(t)\}_{t\ge0}$ and $\{{\bf y}(t)\}_{t\ge0}$,
respectively. Finally, denote by \textcolor{blue}{$\widehat{\mathcal{Q}}_{x}$}
the law of $\{\widehat{{\bf y}}(t)\}_{t\ge0}$
starting at $x\in\mathscr{S}$.

\subsection{\label{sec_harmonic} Harmonic extension}

For any function ${\bf f}:\mathscr{V}\to\mathbb{R}$, denote by ${\color{blue}\widehat{{\bf f}}}:\mathscr{S}\to\mathbb{R}$
its harmonic extension, defined by
\[
\begin{cases}
\widehat{{\bf f}}(x)={\bf f}(x) & x\in\mathscr{V}\,,\\
\widehat{\mathfrak{L}}\widehat{{\bf f}}(x)=0 & x\in\mathscr{S}\setminus\mathscr{V}\,.
\end{cases}
\]
It is well known that the harmonic extension admits the stochastic representation
\begin{equation}
\widehat{{\bf f}}(x)=\sum_{y\in\mathscr{V}}\widehat{\mathcal{Q}}_{x}\left[H_{\mathscr{V}}=H_{y}\right]{\bf f}(y)\quad \text{for}\ x\in\mathscr{S}\,.\label{e_harmonic}
\end{equation}

The following result is \cite[Lemma A.1]{BGL-22AAP}.

\begin{lem}
\label{l_A1}For all ${\bf f}:\mathscr{V}\to\mathbb{R}$ and
$x\in\mathscr{V}$, $\mathfrak{L}{\bf f}(x)=\widehat{\mathfrak{L}}\widehat{{\bf f}}(x)$.
\end{lem}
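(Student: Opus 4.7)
The plan is to establish the identity by expanding both sides using explicit formulas for the trace-process jump rates and for the harmonic extension, and then performing an algebraic reduction. The key input is the probabilistic representation \eqref{e_harmonic} of $\widehat{\mathbf{f}}$, together with the well-known characterization of the trace jump rates:
\begin{equation*}
r(x,\,y) \;=\; \sum_{z\in\mathscr{S}\setminus\{x\}} \widehat{r}(x,\,z)\,\widehat{\mathcal{Q}}_{z}\!\left[H_{\mathscr{V}}=H_{y}\right]\,,\qquad y\in\mathscr{V}\setminus\{x\}\,,
\end{equation*}
which follows from the strong Markov property applied at the first jump of $\{\widehat{{\bf y}}(t)\}_{t\ge 0}$ out of $x$: after such a jump the chain lands in some $z\ne x$, and the first visit to $\mathscr{V}$ from $z$ occurs at $y$ with probability $\widehat{\mathcal{Q}}_{z}[H_{\mathscr{V}}=H_{y}]$ (which equals $\delta_{z,y}$ when $z\in\mathscr{V}$). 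I would recall/state this formula at the beginning and treat it as the starting point.

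Fix $x\in\mathscr{V}$. First I would write
\begin{equation*}
\mathfrak{L}{\bf f}(x) \,=\, \sum_{y\in\mathscr{V}\setminus\{x\}} r(x,\,y)\,[{\bf f}(y)-{\bf f}(x)]\,,
\end{equation*}
substitute the formula above for $r(x,y)$, and interchange the order of summation to obtain
\begin{equation*}
\mathfrak{L}{\bf f}(x) \,=\, \sum_{z\in\mathscr{S}\setminus\{x\}} \widehat{r}(x,\,z)\,\Bigl[\sum_{y\in\mathscr{V}\setminus\{x\}}\widehat{\mathcal{Q}}_{z}[H_{\mathscr{V}}=H_{y}]\,{\bf f}(y)\;-\;{\bf f}(x)\sum_{y\in\mathscr{V}\setminus\{x\}}\widehat{\mathcal{Q}}_{z}[H_{\mathscr{V}}=H_{y}]\Bigr]\,.
\end{equation*}
The hypothesis that $\mathscr{V}$ intersects every irreducible class of $\{\widehat{{\bf y}}(t)\}_{t\ge0}$ ensures that $H_{\mathscr{V}}<\infty$ a.s.\ under $\widehat{\mathcal{Q}}_{z}$, so the probabilities $\widehat{\mathcal{Q}}_{z}[H_{\mathscr{V}}=H_{y}]$, $y\in\mathscr{V}$, sum to $1$.

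Using this, the inner bracket simplifies. By \eqref{e_harmonic},
\begin{equation*}
\sum_{y\in\mathscr{V}\setminus\{x\}}\widehat{\mathcal{Q}}_{z}[H_{\mathscr{V}}=H_{y}]\,{\bf f}(y) \,=\, \widehat{{\bf f}}(z)-\widehat{\mathcal{Q}}_{z}[H_{\mathscr{V}}=H_{x}]\,{\bf f}(x)\,,
\end{equation*}
and
\begin{equation*}
\sum_{y\in\mathscr{V}\setminus\{x\}}\widehat{\mathcal{Q}}_{z}[H_{\mathscr{V}}=H_{y}] \,=\, 1-\widehat{\mathcal{Q}}_{z}[H_{\mathscr{V}}=H_{x}]\,.
\end{equation*}
Plugging these into the bracket, the two $\widehat{\mathcal{Q}}_{z}[H_{\mathscr{V}}=H_{x}]\,{\bf f}(x)$ contributions cancel, leaving $\widehat{{\bf f}}(z)-{\bf f}(x)=\widehat{{\bf f}}(z)-\widehat{{\bf f}}(x)$, since $\widehat{{\bf f}}(x)={\bf f}(x)$ for $x\in\mathscr{V}$. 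Hence
\begin{equation*}
\mathfrak{L}{\bf f}(x) \,=\, \sum_{z\in\mathscr{S}\setminus\{x\}}\widehat{r}(x,\,z)\,[\widehat{{\bf f}}(z)-\widehat{{\bf f}}(x)] \,=\, \widehat{\mathfrak{L}}\widehat{{\bf f}}(x)\,,
\end{equation*}
which is the claim.

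I do not expect any real obstacle: the argument is essentially bookkeeping, with the only subtlety being the need to add and subtract the $y=x$ term to relate the restricted sum to the full harmonic representation \eqref{e_harmonic}. The assumption that every irreducible class of $\{\widehat{{\bf y}}(t)\}_{t\ge0}$ meets $\mathscr{V}$ is used exactly to guarantee that $\widehat{\mathcal{Q}}_{z}[H_{\mathscr{V}}<\infty]=1$, which is what allows the clean normalization $\sum_{y\in\mathscr{V}}\widehat{\mathcal{Q}}_{z}[H_{\mathscr{V}}=H_{y}]=1$ that drives the cancellation.
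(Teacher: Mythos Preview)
Your argument is correct. The paper does not actually prove this lemma; it simply cites \cite[Lemma A.1]{BGL-22AAP}, so there is no in-paper proof to compare against. Your direct computation via the trace-rate formula and the harmonic representation \eqref{e_harmonic} is exactly the standard route and goes through cleanly; the one point worth making explicit is that the trace-rate identity $r(x,y)=\sum_{z\ne x}\widehat r(x,z)\,\widehat{\mathcal Q}_z[H_{\mathscr V}=H_y]$ holds because the local time at $x$ for the trace process coincides with that of the original chain, so the holding rate at $x$ is unchanged and only the exit distribution is collapsed onto $\mathscr V$.
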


\subsection{Equivalence classes}

For an equivalence class $\mathfrak{D}\subset\mathscr{V}$ of $\{{\bf y}(t)\}_{t\ge0}$,
denote by ${\color{blue}\widehat{\mathfrak{D}}}\subset\mathscr{S}$
the equivalence class of $\{\widehat{{\bf y}}(t)\}_{t\ge0}$ containing
$\mathfrak{D}$.

\begin{lem}
\label{l_A2}Fix an equivalence class $\mathfrak{D}$ of $\{{\bf y}(t)\}_{t\ge0}$.
Let ${\bf f}:\mathscr{V}\to\mathbb{R}$ be such that ${\bf f}(x)=0$ for
all $x\notin\mathfrak{D}$. Then, $\widehat{{\bf f}}(x)=0$ for every
$x\notin\widehat{\mathfrak{D}}$ such that $\widehat{r}(y,\,x)>0$
for some $y\in\widehat{\mathfrak{D}}$.
\end{lem}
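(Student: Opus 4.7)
The proof will be a short argument using the harmonic extension formula \eqref{e_harmonic} together with the defining property of the equivalence class $\widehat{\mathfrak{D}}$ as a communicating class of $\{\widehat{{\bf y}}(t)\}_{t\ge0}$. My plan is to show that any state $x\notin\widehat{\mathfrak{D}}$ which $\widehat{\mathfrak{D}}$ can exit into directly cannot itself reach $\widehat{\mathfrak{D}}$, so that $\widehat{{\bf f}}(x)$ reduces to a sum over hitting probabilities that all vanish.

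Fix $x\notin\widehat{\mathfrak{D}}$ and $y\in\widehat{\mathfrak{D}}$ with $\widehat{r}(y,x)>0$. Since ${\bf f}$ is supported on $\mathfrak{D}$, the representation \eqref{e_harmonic} gives
\[
\widehat{{\bf f}}(x)\,=\,\sum_{z\in\mathfrak{D}}\widehat{\mathcal{Q}}_{x}\left[H_{\mathscr{V}}=H_{z}\right]{\bf f}(z)\,,
\]
so it suffices to prove that $\widehat{\mathcal{Q}}_{x}[H_{\mathscr{V}}=H_{z}]=0$ for every $z\in\mathfrak{D}$. I will argue this by contradiction: assume there exists $z\in\mathfrak{D}$ such that this probability is positive. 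This means the chain $\{\widehat{{\bf y}}(t)\}_{t\ge0}$ starting from $x$ reaches $z$ with positive probability, hence in particular $x\rightsquigarrow z$ in $\{\widehat{{\bf y}}\}$.

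Now combine three connections: $y\to x$ (directly, from $\widehat{r}(y,x)>0$), $x\rightsquigarrow z$ (from the assumption just made), and $z\rightsquigarrow y$ (because $z\in\mathfrak{D}\subset\widehat{\mathfrak{D}}$ and $y\in\widehat{\mathfrak{D}}$ lie in the same $\widehat{{\bf y}}$-equivalence class). Concatenating, $y\rightsquigarrow x\rightsquigarrow z\rightsquigarrow y$, so $x$ and $y$ communicate under $\{\widehat{{\bf y}}\}$, forcing $x\in\widehat{\mathfrak{D}}$, which contradicts the hypothesis $x\notin\widehat{\mathfrak{D}}$. Hence $\widehat{\mathcal{Q}}_{x}[H_{\mathscr{V}}=H_{z}]=0$ for every $z\in\mathfrak{D}$, and $\widehat{{\bf f}}(x)=0$.

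There is no real obstacle here; the only conceptual point to check is that an equivalence class of $\{\widehat{{\bf y}}\}$ is closed under two-way communication, which is the standard definition used throughout the paper. The trace construction plays no role in the argument beyond guaranteeing that $\mathfrak{D}\subset\mathscr{V}$ sits inside a single $\widehat{{\bf y}}$-equivalence class $\widehat{\mathfrak{D}}$, so that the sum in \eqref{e_harmonic} can be restricted to $\mathfrak{D}$ and then eliminated entirely by the communicating-class argument.
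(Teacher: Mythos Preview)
Your proof is correct and follows essentially the same approach as the paper: both use the harmonic representation \eqref{e_harmonic}, then argue by contradiction that if $x$ could reach $\mathfrak{D}\subset\widehat{\mathfrak{D}}$ with positive probability, the existence of the direct jump $y\to x$ would force $x\in\widehat{\mathfrak{D}}$. The paper phrases the key step as $\widehat{\mathcal{Q}}_{x}[H_{\widehat{\mathfrak{D}}}<\infty]=0$ and additionally notes $\widehat{\mathcal{Q}}_{x}[H_{\mathscr{V}}=\infty]=0$, but this extra observation is not needed for the conclusion and your version is otherwise the same.
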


\begin{proof}
Fix $x\notin\widehat{\mathfrak{D}}$ such that $\widehat{r}(y,\,x)>0$
for some $y\in\widehat{\mathfrak{D}}$. We claim that the Markov chain
$\{\widehat{{\bf y}}(t)\}_{t\ge0}$ cannot reach
$\widehat{\mathfrak{D}}$ starting from $x$:
\begin{equation}
\widehat{\mathcal{Q}}_{x}\left[H_{\widehat{\mathfrak{D}}}
<\infty\right]=0\,.
\label{e_lA2-1}
\end{equation} 
Indeed, if the Markov chain $\{\widehat{{\bf y}}(t)\}_{t\ge0}$ could
reach $\widehat{\mathfrak{D}}$ starting from $x$, since
$\widehat{r}(y,\,x)>0$, $x$ would belong to $\widehat{\mathfrak{D}}$,
which is a contradiction.

On the other hand, since $\mathscr{V}$ contains at least one element
of each irreducible classes of $\{\widehat{{\bf y}}(t)\}_{t\ge0}$,
\begin{equation}
\widehat{\mathcal{Q}}_{x}\left[H_{\mathscr{V}}
=\infty\right]=0\,.\label{e_lA2-2}
\end{equation}

By \eqref{e_lA2-1} and \eqref{e_lA2-2},
$\widehat{\mathcal{Q}}_{x}\left[H_{\mathscr{V}}=H_{z}\right]=0$ for
all $z\in\mathfrak{D}$. Since ${\bf f}(z)=0$ for all
$z\notin\mathfrak{D}$, the harmonic representation \eqref{e_harmonic}
yields
\[
\widehat{{\bf f}}(x)=\sum_{z\in\mathscr{V}}\widehat{\mathcal{Q}}_{x}\left[H_{\mathscr{V}}=H_{z}\right]{\bf f}(z)=0\,.
\]
\end{proof}

For any equivalence class $\mathfrak{D}$ of $\{{\bf y}(t)\}_{t\ge0}$,
let \textcolor{blue}{$\{{\bf y}_{\mathfrak{D}}(t)\}_{t\ge0}$}
denote the Markov chain $\{{\bf y}(t)\}_{t\ge0}$ relfected at $\mathfrak{D}$. That is,
$\{{\bf y}_{\mathfrak{D}}(t)\}_{t\ge0}$ is the $\mathfrak{D}$-valued Markov chain
with jump rates
\[
r_{\mathfrak{D}}(\mathcal{M},\,\mathcal{M}')=r(\mathcal{M},\,\mathcal{M}')\,,\quad \mathcal{M},\,\mathcal{M}'\in\mathfrak{D}\,.
\]

\begin{lem}
\label{l_A3}
Fix an equivalence class $\mathfrak{D}\subset\mathscr{S}$
of $\{{\bf y}(t)\}_{t\ge0}$. Suppose that there exist $n,\,m\in\mathbb{N}$
such that $m\ge2$, $1\le n\le m$, and $\widehat{\mathfrak{D}}$ admits
a decomposition
\begin{equation}
\widehat{\mathfrak{D}}=\bigcup_{i\in\llbracket1,\,m\rrbracket}\widehat{\mathfrak{D}}_{i}\,,\label{e_lA3-0}
\end{equation}
satisfying the following.
\begin{itemize}
\item[(a)] For each $i\in\llbracket1,\,m\rrbracket$,
there exists $x_{i}\in\widehat{\mathfrak{D}}_{i}$
such that
\begin{equation}
\begin{cases}
\sum_{y\in\mathscr{S}\setminus\widehat{\mathfrak{D}}_{i}}\widehat{r}(x,\,y)=0\
\text{for all}\ x\in\widehat{\mathfrak{D}}_{i}\setminus\{x_{i}\}\,,\\
\sum_{y\in\mathscr{S}\setminus\widehat{\mathfrak{D}}_{i}}\widehat{r}(x_{i},\,y)>0\,.
\end{cases}\label{e_lA3-1}
\end{equation}
\item[(b)]  $\mathfrak{D}=\{x_{1},\,\dots,\,x_{n}\}$. In particular, $\widehat{\mathfrak{D}}_{i}\cap\mathscr{V}=\varnothing$
for $i\in\llbracket n+1,\,m\rrbracket$.
\item[(c)]  There exists a measure $\rho$ on $\{x_{1},\,\dots,\,x_{m}\}$ such
that for all $i\ne j\in\llbracket1,\,m\rrbracket$,
\begin{equation}
\rho(x_{i})\sum_{y\in\widehat{\mathfrak{D}}_{j}}\widehat{r}(x_{i},\,y)=\rho(x_{j})\sum_{y\in\widehat{\mathfrak{D}}_{i}}\widehat{r}(x_{j},\,y)
\,.
\label{e_lA3-2}
\end{equation}
Denote these sums by $\color{blue} \omega_{i,\,j}$ (which is symmetric
in its arguments).
\end{itemize}
Then,
\begin{enumerate}
\item The Markov chain $\{{\bf y}_{\mathfrak{D}}(t)\}_{t\ge0}$ is reversible
with respect to the measure $\rho$.
\item For any ${\bf g}:\mathscr{V}\to\mathbb{R}$ such that ${\bf g}(x)=0$
for all $x\notin\mathfrak{D}$, 
\[
-\sum_{x\in\mathfrak{D}}\rho(x){\bf g}(x)\mathfrak{L}{\bf g}(x)
=\frac{1}{2}\sum_{ i,\,j\in\llbracket1,\, m \rrbracket}
\omega_{i,\,j}\left[\widehat{{\bf g}}(x_{j})-\widehat{{\bf g}}(x_{i})\right]^{2}+\sum_{i\in\llbracket1,\,m\rrbracket}\rho(x_{i})\widehat{{\bf g}}(x_{i})^{2}\sum_{y\in\mathscr{S}\setminus\widehat{\mathfrak{D}}}\widehat{r}(x_{i},\,y)\,,
\]
where $\widehat{{\bf g}}:\mathscr{S}^{(p)}\to\mathbb{R}$ is the harmonic
extension of ${\bf g}$ defined in\eqref{e_harmonic}.
\end{enumerate}
\end{lem}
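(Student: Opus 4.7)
The approach begins with a key observation: for any $\mathbf{g}\colon\mathscr{V}\to\mathbb{R}$ vanishing outside $\mathfrak{D}$, the harmonic extension $\widehat{\mathbf{g}}$ is constant on each block $\widehat{\mathfrak{D}}_j$ with common value $\widehat{\mathbf{g}}(x_j)$. To establish this, fix $y\in\widehat{\mathfrak{D}}_j$. By condition~(a), $x_j$ is the unique state in $\widehat{\mathfrak{D}}_j$ from which $\widehat{\mathbf{y}}(\cdot)$ can exit the block, so under $\widehat{\mathcal{Q}}_y$ the chain cannot leave $\widehat{\mathfrak{D}}_j$ without first visiting $x_j$. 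Condition~(b) then forces $\widehat{\mathfrak{D}}_j\cap\mathscr{V}\subset\{x_j\}$, from which $H_{\mathscr{V}}\ge H_{x_j}$ almost surely. The strong Markov property at $H_{x_j}$ applied to \eqref{e_harmonic} yields $\widehat{\mathbf{g}}(y)=\widehat{\mathbf{g}}(x_j)$.

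For part~(2), I would start from
\[
-\sum_{x\in\mathfrak{D}}\rho(x)\mathbf{g}(x)\mathfrak{L}\mathbf{g}(x)
\;=\;-\sum_{i=1}^{n}\rho(x_i)\widehat{\mathbf{g}}(x_i)\widehat{\mathfrak{L}}\widehat{\mathbf{g}}(x_i),
\]
where the identity follows from Lemma~\ref{l_A1}. Since $x_{n+1},\dots,x_m\notin\mathscr{V}$ and $\widehat{\mathfrak{L}}\widehat{\mathbf{g}}$ vanishes on $\mathscr{S}\setminus\mathscr{V}$, the outer sum extends to $i\in\llbracket1,m\rrbracket$ without changing its value. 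Expanding $\widehat{\mathfrak{L}}\widehat{\mathbf{g}}(x_i)=\sum_{y\in\mathscr{S}}\widehat{r}(x_i,y)[\widehat{\mathbf{g}}(y)-\widehat{\mathbf{g}}(x_i)]$ and splitting the inner sum into $y\in\widehat{\mathfrak{D}}_j$ for $j\in\llbracket1,m\rrbracket$ and $y\in\mathscr{S}\setminus\widehat{\mathfrak{D}}$, the key observation replaces $\widehat{\mathbf{g}}(y)$ by $\widehat{\mathbf{g}}(x_j)$ in the first case, while Lemma~\ref{l_A2} replaces $\widehat{\mathbf{g}}(y)$ by $0$ in the second. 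Condition~(c) identifies $\rho(x_i)\sum_{y\in\widehat{\mathfrak{D}}_j}\widehat{r}(x_i,y)$ with $\omega_{i,j}$, the diagonal terms $j=i$ vanish, and a standard symmetrization in $(i,j)$ using $\omega_{i,j}=\omega_{j,i}$ converts the off-diagonal piece to $\tfrac{1}{2}\sum_{i,j}\omega_{i,j}[\widehat{\mathbf{g}}(x_j)-\widehat{\mathbf{g}}(x_i)]^{2}$; the remaining boundary contribution produces the second sum in the statement.

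Part~(1) follows from the same computation at the level of jump rates. For $i\neq j\in\llbracket1,n\rrbracket$, the definition of the trace generator gives $r_{\mathfrak{D}}(x_i,x_j)=r(x_i,x_j)=\mathfrak{L}\delta_{x_j}(x_i)$, and Lemma~\ref{l_A1} identifies this with $\widehat{\mathfrak{L}}\widehat{\delta_{x_j}}(x_i)$. The key observation shows $\widehat{\delta_{x_j}}\equiv 1$ on $\widehat{\mathfrak{D}}_j$ and $\widehat{\delta_{x_j}}\equiv 0$ on $\widehat{\mathfrak{D}}_k$ for $k\neq j$, while Lemma~\ref{l_A2} gives $\widehat{\delta_{x_j}}(y)=0$ for the relevant $y\in\mathscr{S}\setminus\widehat{\mathfrak{D}}$. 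Hence $r_{\mathfrak{D}}(x_i,x_j)=\sum_{y\in\widehat{\mathfrak{D}}_j}\widehat{r}(x_i,y)=\omega_{i,j}/\rho(x_i)$, and the symmetry $\omega_{i,j}=\omega_{j,i}$ from~(c) yields $\rho(x_i)r_{\mathfrak{D}}(x_i,x_j)=\omega_{i,j}=\rho(x_j)r_{\mathfrak{D}}(x_j,x_i)$, which is precisely the reversibility of $\{\mathbf{y}_{\mathfrak{D}}(t)\}_{t\ge 0}$ with respect to the restriction of $\rho$ to $\mathfrak{D}$.

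The main obstacle is the key observation of the first paragraph: identifying $\widehat{\mathbf{g}}$ with its boundary value $\widehat{\mathbf{g}}(x_j)$ throughout $\widehat{\mathfrak{D}}_j$. Once this is in hand, both parts reduce to routine Dirichlet-form bookkeeping. A minor technical point is to confirm that every invocation of Lemma~\ref{l_A2} is legitimate, but this is automatic: the only $y\in\mathscr{S}\setminus\widehat{\mathfrak{D}}$ contributing to our sums come weighted by a factor $\widehat{r}(x_i,y)$, so effectively $\widehat{r}(z,y)>0$ for some $z\in\widehat{\mathfrak{D}}$, which is exactly the hypothesis of that lemma.
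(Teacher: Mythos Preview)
Your key observation and your argument for part~(2) are essentially identical to the paper's proof: the paper also establishes that $\widehat{\mathbf g}$ is constant on each $\widehat{\mathfrak D}_i$ with value $\widehat{\mathbf g}(x_i)$, extends the outer sum from $\mathfrak D$ to all of $\widehat{\mathfrak D}$ (equivalently to $\{x_1,\dots,x_m\}$) using harmonicity, splits the inner sum, and symmetrizes via~(c).

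There is, however, a genuine gap in your argument for part~(1). You claim that $\widehat{\delta_{x_j}}\equiv 0$ on $\widehat{\mathfrak D}_k$ for every $k\neq j$, but the key observation only yields constancy; the constant value is $\widehat{\delta_{x_j}}(x_k)$. For $k\in\llbracket 1,n\rrbracket$ this equals $\delta_{x_j}(x_k)=0$ because $x_k\in\mathscr V$. For $k\in\llbracket n+1,m\rrbracket$, however, $x_k\notin\mathscr V$ and
\[
\widehat{\delta_{x_j}}(x_k)\;=\;\widehat{\mathcal Q}_{x_k}\bigl[H_{\mathscr V}=H_{x_j}\bigr]
\]
can be strictly positive (the chain started at $x_k$ may well reach $x_j$ before any other point of $\mathscr V$). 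Consequently your formula $r_{\mathfrak D}(x_i,x_j)=\omega_{i,j}/\rho(x_i)$ is wrong when $m>n$: the correct expression picks up extra contributions $\sum_{k>n}\widehat{\delta_{x_j}}(x_k)\,\omega_{i,k}/\rho(x_i)$, and it is not clear from your computation that these extra terms preserve the detailed balance.

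The paper avoids this difficulty by working in two stages: it first reflects $\widehat{\mathbf y}$ at $\widehat{\mathfrak D}$ and traces on the full set $\{x_1,\dots,x_m\}$, where condition~(c) together with a cited result (\cite[Proposition~B.2]{LLS-2nd}) gives reversibility with respect to $\rho$; it then traces this reversible chain further down to $\mathfrak D=\{x_1,\dots,x_n\}$, invoking the standard fact (\cite[Proposition~6.3]{BL}) that the trace of a reversible chain is reversible with respect to the restricted measure. Your direct approach does work in the special case $m=n$, and could be salvaged in general by first carrying out your computation at the intermediate level $\{x_1,\dots,x_m\}$ and only then passing to the trace on $\mathfrak D$.
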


\begin{proof}
Consider the first assertion. Let $\rho$ be the measure introduced in
(c).  Let
\textcolor{blue}{$\{\widehat{{\bf
y}}_{\widehat{\mathfrak{D}}}(t)\}_{t\ge0}$} denote the process
obtained by reflecting $\{\widehat{{\bf y}}(t)\}_{t\ge0}$ at
$\widehat{\mathfrak{D}}$, i.e., jumps from $\widehat{\mathfrak{D}}$ to
$\mathscr{S}\setminus\widehat{\mathfrak{D}}$ are forbidden.

Since $\widehat{\mathfrak{D}}$ is an equivalence class, the chain
$\{\widehat{{\bf y}}_{\widehat{\mathfrak{D}}}(t)\}_{t\ge0}$ is
irreducible. Fix $i\in\llbracket1,\,m\rrbracket$. As
$\widehat{\mathfrak{D}}\setminus\widehat{\mathfrak{D}}_{i}\ne\varnothing$,
choose
$y\in\widehat{\mathfrak{D}}\setminus\widehat{\mathfrak{D}}_{i}$.
Since $\widehat{\mathfrak{D}}$ is an equivalence class, the chain
$\{\widehat{{\bf y}}_{\widehat{\mathfrak{D}}}(t)\}_{t\ge0}$ can reach
$y$ starting from $x_{i}$. Moreover, since $\widehat{r}(x,\,z)=0$ for
all $x\in\widehat{\mathfrak{D}}_{i}\setminus\{x_{i}\}$ and
$z\in\widehat{\mathfrak{D}}\setminus\widehat{\mathfrak{D}}_{i}$, there
exists
$y_{0}\in\widehat{\mathfrak{D}}\setminus\widehat{\mathfrak{D}}_{i}$
such that $\widehat{r}(x_{i},\,y_{0})>0$.  Thus,
\[
\sum_{y\in\widehat{\mathfrak{D}}\setminus\widehat{\mathfrak{D}}_{i}}\widehat{r}(x_{i},\,y)>0 \,.
\]
It then follows from \cite[Proposition B.2]{LLS-2nd} that the trace
process of $\{\widehat{{\bf y}}_{\widehat{\mathfrak{D}}}(t)\}_{t\ge0}$
on $\{x_{1},\,\dots,\,x_{m}\}$ is reversible with respect to the
measure $\rho$. Since $\{{\bf y}_{\mathfrak{D}}(t)\}_{t\ge0}$ is
the trace process of this process on $\mathfrak{D}$, \cite[Proposition 6.3]{BL}
implies that $\{{\bf y}_{\mathfrak{D}}(t)\}_{t\ge0}$ is reversible
with respect to the restriction of the measure $\rho$ to
$\mathfrak{D}$.

For the second assertion, we first establish the claim that for any
${\bf f}:\mathscr{V}\to\mathbb{R}$ and $i\in\llbracket1,\,m\rrbracket$,
\begin{equation}
\widehat{{\bf f}}(x)=\widehat{{\bf f}}(x_{i})\quad \text{for all}\ x\in\widehat{\mathfrak{D}}_{i}\,.\label{e_lA3-3}
\end{equation}

Fix a function ${\bf f}:\mathscr{V}\to\mathbb{R}$, $i\in\llbracket1,\,m\rrbracket$ and
$x\in\widehat{\mathfrak{D}}_{i}$.  It suffices to prove the claim for
$x\ne x_{i}$. By (a) and (b),
$\widehat{\mathcal{Q}}_{x}\left[H_{x_{i}}\le H_{\mathscr{V}}\right]=1$.
Therefore, by \eqref{e_harmonic} and the strong Markov property,
\[
\widehat{{\bf f}}(x)=\sum_{z\in\mathscr{V}}\widehat{\mathcal{Q}}_{x}\left[H_{\mathscr{V}}=H_{z}\right]{\bf f}(z)=\sum_{z\in\mathscr{V}}\widehat{\mathcal{Q}}_{x_{i}}\left[H_{\mathscr{V}}=H_{z}\right]{\bf f}(z)=\widehat{{\bf f}}(x_{i})\,,
\]
which proves \eqref{e_lA3-3}.

We turn to the second assertion. Let
${\bf g}:\mathscr{V}\to\mathbb{R}$ be such that ${\bf g}(x)=0$ for
$x\notin\mathfrak{D}$. For convenience, extend $\rho$ by setting
$\rho(x)=0$ for
$x\in\widehat{\mathfrak{D}}\setminus\{x_{1},\,\dots,\, x_m\}$.  By Lemma \ref{l_A1},
and since $\widehat{{\bf g}}$ is harmonic on
$\widehat{\mathfrak{D}} \setminus \mf D \subset \ms S \setminus \ms
V$,
\begin{align*}
-\sum_{x\in\mathfrak{D}}\rho(x){\bf g}(x)\mathfrak{L}{\bf g}(x)
=-\sum_{x\in\mathfrak{D}}\rho(x)\widehat{{\bf g}}(x)
\widehat{\mathfrak{L}}\widehat{{\bf g}}(x)
=-\sum_{x\in\widehat{\mathfrak{D}}}\rho(x)\widehat{{\bf
g}}(x)\widehat{\mathfrak{L}}\widehat{{\bf g}}(x)\,.
\end{align*}
By the decomposition of $\widehat{\mf D}$, this sum is equal to
\begin{align*}
-\sum_{i\in\llbracket1,\,m\rrbracket}
\sum_{x\in\widehat{\mathfrak{D}}_{i}}\rho(x)\widehat{{\bf
g}}(x)\widehat{\mathfrak{L}}\widehat{{\bf g}}(x)  
=\sum_{i\in\llbracket1,\,m\rrbracket}\sum_{x\in\widehat{\mathfrak{D}}_{i}}
\rho(x)\widehat{{\bf g}}(x)\sum_{y\in\mathscr{S}}
\widehat{r}(x,\,y)\left[\widehat{{\bf g}}(x)-\widehat{{\bf g}}(y)\right]\,.
\end{align*}

Fox fixed $i\in\llbracket1,\,m\rrbracket$,
by the decomposition \eqref{e_lA3-0}, the sum over $y$ decomposes as
\[
\sum_{y\in\widehat{\mathfrak{D}}_{i}}+\sum_{j\in\llbracket1,\,m\rrbracket\setminus\{i\}}\sum_{y\in\widehat{\mathfrak{D}}_{j}}+\sum_{y\in\mathscr{S}\setminus\widehat{\mathfrak{D}}}\,.
\]
By \eqref{e_lA3-3}, the first part is equal to
\begin{equation}
\sum_{i\in\llbracket1,\,m\rrbracket}\sum_{x\in\widehat{\mathfrak{D}}_{i}}\rho(x)\widehat{{\bf g}}(x)\sum_{y\in\widehat{\mathfrak{D}}_{i}}\widehat{r}(x,\,y)\left[\widehat{{\bf g}}(x)-\widehat{{\bf g}}(y)\right]=0\,.\label{e_lA3-4}
\end{equation}
By \eqref{e_lA3-1} and Lemma \ref{l_A2}, the third part equals
\begin{equation}
\begin{aligned} & \sum_{i\in\llbracket1,\,m\rrbracket}\sum_{x\in\widehat{\mathfrak{D}}_{i}}\rho(x)\widehat{{\bf g}}(x)\sum_{y\in\mathscr{S}\setminus\widehat{\mathfrak{D}}}\widehat{r}(x,\,y)\left[\widehat{{\bf g}}(x)-\widehat{{\bf g}}(y)\right]\\
 & =\sum_{i\in\llbracket1,\,m\rrbracket}\rho(x_{i})\widehat{{\bf g}}(x_{i})\sum_{y\in\mathscr{S}\setminus\widehat{\mathfrak{D}}}\widehat{r}(x_{i},\,y)\left[\widehat{{\bf g}}(x_{i})-\widehat{{\bf g}}(y)\right]\\
 & =\sum_{i\in\llbracket1,\,m\rrbracket}\rho(x_{i})\widehat{{\bf g}}(x_{i})^{2}\sum_{y\in\mathscr{S}\setminus\widehat{\mathfrak{D}}}\widehat{r}(x_{i},\,y)\,.
 \end{aligned}
\label{e_lA3-5}
\end{equation}

It remains to consider the second part.  For a fixed
$i\in\llbracket1,\,m\rrbracket$, by \eqref{e_lA3-1} and
\eqref{e_lA3-3},
\[
\begin{aligned}
& \sum_{x\in\widehat{\mathfrak{D}}_{i}}\rho(x)\widehat{{\bf
g}}(x)\sum_{j\in\llbracket1,\,m\rrbracket\setminus\{i\}}\sum_{y\in\widehat{\mathfrak{D}}_{j}}\widehat{r}(x,\,y)\left[\widehat{{\bf
g}}(x)-\widehat{{\bf g}}(y)\right]\\ 
& =\rho(x_{i})\widehat{{\bf g}}(x_{i})\sum_{j\in\llbracket1,\,m\rrbracket\setminus\{i\}}\sum_{y\in\widehat{\mathfrak{D}}_{j}}\widehat{r}(x_{i},\,y)\left[\widehat{{\bf g}}(x_{i})-\widehat{{\bf g}}(y)\right]\\
 & =\widehat{{\bf g}}(x_{i})\sum_{j\in\llbracket1,\,m\rrbracket\setminus\{i\}}\left[\widehat{{\bf g}}(x_{i})-\widehat{{\bf g}}(x_{j})\right]\rho(x_{i})\sum_{y\in\widehat{\mathfrak{D}}_{j}}\widehat{r}(x_{i},\,y)\,.
\end{aligned}
\]
By \eqref{e_lA3-2}, this is equal to
\[
\widehat{{\bf g}}(x_{i})\sum_{j\in\llbracket1,\,m\rrbracket\setminus\{i\}}\omega_{i,\,j}\left[\widehat{{\bf g}}(x_{i})-\widehat{{\bf g}}(x_{j})\right]\,.
\]
Summing over $i$ gives
\[
\begin{aligned} & \sum_{i\in\llbracket1,\,m\rrbracket}\sum_{x\in\widehat{\mathfrak{D}}_{i}}\rho(x)\widehat{{\bf g}}(x)\sum_{j\in\llbracket1,\,m\rrbracket\setminus\{i\}}\sum_{y\in\widehat{\mathfrak{D}}_{j}}\widehat{r}(x,\,y)\left[\widehat{{\bf g}}(x)-\widehat{{\bf g}}(y)\right]\\
 & =\sum_{i\in\llbracket1,\,m\rrbracket}\widehat{{\bf g}}(x_{i})\sum_{j\in\llbracket1,\,m\rrbracket\setminus\{i\}}\omega_{i,\,j}\left[\widehat{{\bf g}}(x_{i})-\widehat{{\bf g}}(x_{j})\right]\\
 & =\frac{1}{2}\sum_{ i ,\,j\in\llbracket1,\, m \rrbracket}
 \omega_{i,\,j}\left[\widehat{{\bf g}}(x_{j})-\widehat{{\bf g}}(x_{i})\right]^{2}\,.
\end{aligned}
\]
Combining this with \eqref{e_lA3-4} and \eqref{e_lA3-5} yields the desired identity, which completes the proof.
\end{proof}

The next lemma provides the analogue of Lemma \ref{l_A3} in the case $m=1$.

\begin{lem}
\label{l_A4}Fix an equivalence class $\mathfrak{D}\subset\mathscr{S}$
of $\{{\bf y}(t)\}_{t\ge0}$. Suppose that in the
decomposition \eqref{e_lA3-0}, $m=1$, and
\[
\begin{cases}
\sum_{y\in\mathscr{S}\setminus\widehat{\mathfrak{D}}}\widehat{r}(x,\,y)=0\ \text{for}\ x\in\widehat{\mathfrak{D}}\setminus\{x_{1}\}\,,\\
\sum_{y\in\mathscr{S}\setminus\widehat{\mathfrak{D}}}\widehat{r}(x_{1},\,y)>0\,.
\end{cases}
\]
Then, for any ${\bf g}:\mathscr{V}\to\mathbb{R}$ such that ${\bf g}(x)=0$
for all $x\notin\mathfrak{D}$, 
\[
{\bf g}(x_{1})\mathfrak{L}{\bf g}(x_{1})=
- \widehat{{\bf g}}(x_{1})^{2}\sum_{y\in\mathscr{S}\setminus\widehat{\mathfrak{D}}}\widehat{r}(x_{1},\,y)\,.
\]
\end{lem}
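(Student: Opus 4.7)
The plan is to mimic the argument of Lemma \ref{l_A3} in the degenerate case $m=1$, where $\mathfrak{D}=\{x_{1}\}$ and the entire decomposition collapses to $\widehat{\mathfrak{D}}=\widehat{\mathfrak{D}}_{1}$, so the inter-class term vanishes. Concretely, I would first invoke Lemma \ref{l_A1} to replace $\mathfrak{L}{\bf g}(x_{1})$ with $\widehat{\mathfrak{L}}\widehat{{\bf g}}(x_{1})$, and then expand the latter by the definition of the generator as
\[
\widehat{\mathfrak{L}}\widehat{{\bf g}}(x_{1})=\sum_{y\in\mathscr{S}}\widehat{r}(x_{1},\,y)\bigl[\widehat{{\bf g}}(y)-\widehat{{\bf g}}(x_{1})\bigr]\,.
\]
I would then split this sum along $\widehat{\mathfrak{D}}$ and $\mathscr{S}\setminus\widehat{\mathfrak{D}}$ and evaluate each piece.

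For the sum over $y\in\widehat{\mathfrak{D}}$, I would establish the analogue of \eqref{e_lA3-3} in this setting, namely that $\widehat{{\bf g}}(y)=\widehat{{\bf g}}(x_{1})$ for every $y\in\widehat{\mathfrak{D}}$. The argument is exactly that of \eqref{e_lA3-3}: the hypothesis $\sum_{z\in\mathscr{S}\setminus\widehat{\mathfrak{D}}}\widehat{r}(x,\,z)=0$ for $x\in\widehat{\mathfrak{D}}\setminus\{x_{1}\}$ forces the chain $\{\widehat{{\bf y}}(t)\}_{t\ge0}$ started at $y\in\widehat{\mathfrak{D}}\setminus\{x_{1}\}$ to reach $\mathscr{V}$ only through $x_{1}$, so $\widehat{\mathcal{Q}}_{y}[H_{\mathscr{V}}=H_{x_{1}}]=1$ and the stochastic representation \eqref{e_harmonic} gives $\widehat{{\bf g}}(y)=\widehat{{\bf g}}(x_{1})$. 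Consequently the sum over $y\in\widehat{\mathfrak{D}}$ vanishes identically.

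For the sum over $y\in\mathscr{S}\setminus\widehat{\mathfrak{D}}$, I would apply Lemma \ref{l_A2}: since ${\bf g}$ vanishes outside $\mathfrak{D}=\{x_{1}\}$ and $\widehat{r}(x_{1},\,y)>0$ implies that $y$ is a state outside $\widehat{\mathfrak{D}}$ accessible from $\widehat{\mathfrak{D}}$ in one step, the hypotheses of Lemma \ref{l_A2} apply and yield $\widehat{{\bf g}}(y)=0$ for every such $y$. Therefore
\[
\sum_{y\in\mathscr{S}\setminus\widehat{\mathfrak{D}}}\widehat{r}(x_{1},\,y)\bigl[\widehat{{\bf g}}(y)-\widehat{{\bf g}}(x_{1})\bigr]\,=\,-\,\widehat{{\bf g}}(x_{1})\sum_{y\in\mathscr{S}\setminus\widehat{\mathfrak{D}}}\widehat{r}(x_{1},\,y)\,.
\]
Combining the two contributions and multiplying by $\widehat{{\bf g}}(x_{1})={\bf g}(x_{1})$ yields the stated identity. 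There is no real obstacle here: the proof is purely bookkeeping, and the only slightly delicate point is verifying that the hypothesis of Lemma \ref{l_A2}, namely that $y\in\mathscr{S}\setminus\widehat{\mathfrak{D}}$ with $\widehat{r}(x_{1},\,y)>0$ receives a well-defined, vanishing harmonic extension of ${\bf g}$, is indeed satisfied by every term contributing to the sum.
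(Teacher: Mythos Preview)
Your proposal is correct and follows essentially the same route as the paper's own proof: both use Lemma~\ref{l_A1} to pass to $\widehat{\mathfrak{L}}\widehat{{\bf g}}(x_1)$, establish constancy of $\widehat{{\bf g}}$ on $\widehat{\mathfrak{D}}$ via the hitting-time argument behind \eqref{e_lA3-3}, and invoke Lemma~\ref{l_A2} to kill $\widehat{{\bf g}}(y)$ for the reachable $y\in\mathscr{S}\setminus\widehat{\mathfrak{D}}$. The only cosmetic difference is the order of presentation; your careful remark that Lemma~\ref{l_A2} applies precisely to the $y$ contributing to the sum is exactly the point the paper leaves implicit.
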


\begin{proof}
Let ${\bf g}:\mathscr{V}\to\mathbb{R}$.  Using the same argument as in
the proof of \eqref{e_lA3-3}, one can show that
\begin{equation}
\widehat{{\bf g}}(x)=\widehat{{\bf g}}(x_{1})\ \text{for all}\ x\in\widehat{\mathfrak{D}}\,.\label{e_lA4-2}
\end{equation}
By Lemma
\ref{l_A1},
\[
\begin{aligned}{\bf g}(x_{1})\mathfrak{L}{\bf g}(x_{1}) & =\widehat{{\bf g}}(x_{1})\widehat{\mathfrak{L}}\widehat{{\bf g}}(x_{1})\\
& =\widehat{{\bf g}}(x_{1})\sum_{y\in\mathscr{S}}
\widehat{r}(x_{1},\,y)\left[\widehat{{\bf g}}(y)-\widehat{{\bf g}}(x_1)\right]\,.
\end{aligned}
\]
By \eqref{e_lA4-2}, the terms with $y\in\widehat{\mathfrak{D}}$
vanish, so the sum reduces to 
\[
\sum_{y\in\mathscr{S}\setminus\widehat{\mathfrak{D}}}
\widehat{r}(x_{1},\,y)\left[\widehat{{\bf g}}(y)-\widehat{{\bf g}}(x_1)\right]\,.
\]
Assume now that that ${\bf g}(x)=0$ for all $x\notin\mathfrak{D}$.  By
Lemma \ref{l_A2}, $\widehat{{\bf g}}(y)=0$ for
$y\in\mathscr{S}\setminus\widehat{\mathfrak{D}}$, so this equals
\[
-\, \widehat{{\bf g}}(x_{1})
\sum_{y\in\mathscr{S}\setminus\widehat{\mathfrak{D}}}\widehat{r}(x_{1},\,y)\,,
\]
which completes the proof.
\end{proof}

\subsection{\label{app_DV}Donsker--Varadhan functionals of Markov chains}

In this subsection, we recall some general results on Donsker--Varadhan
large deviation rate functionals for Markov chains. Let ${\color{blue}\mathfrak{J}}:\mathcal{P}(\mathscr{V})\to[0,\,\infty]$ denote
the large deviation rate functional  associated with the chain $\{{\bf y}(t)\}_{t\ge0}$,
defined by
\begin{equation}
\mathfrak{J}(\omega):=\sup_{{\bf u}>0}\,\sum_{x\in\mathscr{V}}-\frac{\mathfrak{L}{\bf u}(x)}{{\bf u}(x)}\omega(x)\,,\label{e_def_J}
\end{equation}
where the supremum is carried over all functions ${\bf u}:\mathscr{V}\to(0,\,\infty)$.

We first evaluate this functional on Dirac measures. For $x_{0}\in\mathscr{V}$,
\begin{equation}
\begin{aligned}\mathfrak{J}(\delta_{x_{0}}) & =\sup_{{\bf u}>0}-\sum_{x\in\mathscr{V}}\frac{\mathfrak{L}{\bf u}(x)}{{\bf u}(x)}\delta_{x_{0}}(x)\\
 & =\sup_{{\bf u}>0}-\sum_{y\in\mathscr{V}\setminus\{x_{0}\}}\frac{r(x_{0},\,y)}{{\bf u}(x_{0})}({\bf u}(y)-{\bf u}(x_{0}))\\
 & =\sum_{y\in\mathscr{V}\setminus\{x_{a}\}}r(x_{0},\,y)-\inf_{{\bf u}>0}\sum_{y\in\mathscr{V}\setminus\{x_{0}\}}\frac{r(x_{0},\,y)}{{\bf u}(x_{0})}{\bf u}(y)\\
 & =\sum_{y\in\mathscr{V}\setminus\{x_{0}\}}r(x_{0},\,y)\,.
\end{aligned}
\label{e_J_delta}
\end{equation}
In the last step, the infimum is attained by taking ${\bf u}(y)=0$
for $y\ne x_{0}$. In particular, $\mathfrak{J}(\delta_{x_{0}})<\infty$.

The previous computation extends to general probability measures $\omega\in \mathcal{P}(\mathscr{V})$.

\begin{lem}
\label{l_MC_DV_finite}For any $\omega\in\mathcal{P}(\mathscr{V})$,
$\mathfrak{J}(\omega)<\infty$.
\end{lem}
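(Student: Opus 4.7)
The plan is to exploit convexity of $\mathfrak{J}$ together with the explicit finiteness of $\mathfrak{J}$ on Dirac measures established in \eqref{e_J_delta}. For each fixed positive function ${\bf u}:\mathscr{V}\to(0,\infty)$, the map $\omega\mapsto \sum_{x\in\mathscr{V}} -\frac{\mathfrak{L}{\bf u}(x)}{{\bf u}(x)}\,\omega(x)$ is linear in $\omega$. Hence $\mathfrak{J}$, being the pointwise supremum of such linear functionals, is a convex functional on $\mathcal{P}(\mathscr{V})$.

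Given any $\omega\in \mathcal{P}(\mathscr{V})$, write it as the convex combination $\omega=\sum_{x\in\mathscr{V}}\omega(x)\,\delta_{x}$ of Dirac measures, which is possible because $\mathscr{V}$ is finite. By convexity of $\mathfrak{J}$ and the computation \eqref{e_J_delta},
\[
\mathfrak{J}(\omega)\,\le\,\sum_{x\in\mathscr{V}}\omega(x)\,\mathfrak{J}(\delta_{x})\,=\,\sum_{x\in\mathscr{V}}\omega(x)\sum_{y\in\mathscr{V}\setminus\{x\}}r(x,\,y)\,\le\,\max_{x\in\mathscr{V}}\sum_{y\in\mathscr{V}\setminus\{x\}}r(x,\,y)\,<\,\infty\,,
\]
where the last inequality uses that $\mathscr{V}$ is finite and the jump rates $r(x,y)$ take finitely many values. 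This yields the desired bound.

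There is no real obstacle here: the whole argument rests on the two standard observations that $\mathfrak{J}$ is a supremum of affine maps (so convex) and that the state space is finite (so the Dirac masses are extreme points over which one may bound convex functions). The only mild point is to recall why $\mathfrak{J}(\delta_{x_0})<\infty$, but that has already been carried out in \eqref{e_J_delta} by choosing the test function ${\bf u}(y)=\varepsilon$ for $y\ne x_0$ and ${\bf u}(x_0)=1$, and letting $\varepsilon\to 0^+$, yielding the explicit formula $\mathfrak{J}(\delta_{x_0})=\sum_{y\neq x_0}r(x_0,y)$.
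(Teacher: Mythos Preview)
Your proof is correct and follows essentially the same approach as the paper: both use convexity of $\mathfrak{J}$ together with the finiteness of $\mathfrak{J}(\delta_x)$ from \eqref{e_J_delta} to bound $\mathfrak{J}(\omega)\le\sum_{x}\omega(x)\mathfrak{J}(\delta_x)<\infty$. You simply spell out the convexity argument and the final bound in slightly more detail.
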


\begin{proof}
By the convexity of $\mathfrak{J}$ and \eqref{e_J_delta}, for any
$\omega\in\mathcal{P}(\mathscr{V})$, 
\[
\mathfrak{J}(\omega)\le\sum_{x\in\mathscr{V}}\omega(x)\mathfrak{J}(\delta_{x})<\infty\,.
\]
\end{proof}

Recall that for any equivalence class $\mathfrak{D}$ of $\{{\bf y}(t)\}_{t\ge0}$,
$\{{\bf y}_{\mathfrak{D}}(t)\}_{t\ge0}$ is
the Markov chain $\{{\bf y}(t)\}_{t\ge0}$ reflected at $\mathfrak{D}$.
The reflected chain $\{{\bf y}_{\mathfrak{D}}(t)\}_{t\ge0}$
is irreducible, and hence has a unique stationary distribution, denoted by
\textcolor{blue}{$\nu_{\mathfrak{D}}$}. Let
$\mathfrak{n}$ be the number of irreducible classes of the original
chain $\{{\bf y}(t)\}_{t\ge0}$, and denote them by \textcolor{blue}{$\mathscr{R}_{1},\,\dots,\,\mathscr{R}_{\mathfrak{n}}$}.

Since every stationary distribution of $\{{\bf y}(t)\}_{t\ge0}$
is a convex combination of $\nu_{\mathscr{R}_{a}}$, $a=1,\dots,\mathfrak{n}$, the following characterization holds, as stated in \cite[Lemma A.8]{Landim-Gamma}.
\begin{lem}[{\cite[Lemma A.8]{Landim-Gamma}}]
\label{l_MC_DV_irred}Let $\omega\in\mathcal{P}(\mathscr{V})$. Then,
$\mathfrak{J}(\omega)=0$ if and only if
\[
\omega=\sum_{a=1}^{\mathfrak{n}}\alpha(a)\nu_{\mathscr{R}_{a}}\,,
\]
for some $\alpha\in\mathcal{P}(\llbracket1,\,n\rrbracket)$.
\end{lem}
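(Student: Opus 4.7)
The plan is to reduce the lemma to proving the intermediate equivalence that $\mathfrak{J}(\omega) = 0$ if and only if $\omega$ is a stationary distribution of $\{{\bf y}(t)\}_{t\ge0}$. The desired decomposition $\omega = \sum_{a=1}^{\mathfrak{n}} \alpha(a)\,\nu_{\mathscr{R}_a}$ then follows from the classical fact that on a finite state space every stationary distribution assigns zero mass to the transient states (since $\omega(x) = \sum_y \omega(y)\,\mathbb{P}_y[{\bf y}(t) = x] \to 0$ as $t \to \infty$ when $x$ is transient), and that the restriction of $\omega$ to each irreducible class $\mathscr{R}_a$, once normalized, coincides with the unique stationary distribution $\nu_{\mathscr{R}_a}$ of the chain restricted to $\mathscr{R}_a$. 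Setting $\alpha(a) := \omega(\mathscr{R}_a)$ will then give the expression, with $\alpha \in \mathcal{P}(\llbracket 1, \mathfrak{n}\rrbracket)$ since $\sum_a \omega(\mathscr{R}_a) = 1$.

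For the ``if'' direction of the intermediate claim, I would fix a stationary $\omega$ and a positive ${\bf u} : \mathscr{V} \to (0,\infty)$, and rewrite
\[
-\sum_{x \in \mathscr{V}} \omega(x)\,\frac{\mathfrak{L}{\bf u}(x)}{{\bf u}(x)} \;=\; \sum_{\substack{x,\,y \in \mathscr{V}\\ x \ne y}} \omega(x)\,r(x,y)\,\Bigl(1 - \frac{{\bf u}(y)}{{\bf u}(x)}\Bigr)\,.
\]
Using the elementary inequality $1 - t \le -\log t$, valid for all $t > 0$, this is bounded above by $-\sum_{x \ne y} \omega(x)\,r(x,y)\,(\log {\bf u}(y) - \log {\bf u}(x)) = -\sum_{x} \omega(x)\,\mathfrak{L}(\log {\bf u})(x)$, which vanishes because $\omega$ is stationary. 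Taking the supremum over ${\bf u} > 0$ yields $\mathfrak{J}(\omega) \le 0$, and since ${\bf u}\equiv 1$ attains the value $0$ in \eqref{e_def_J}, we conclude $\mathfrak{J}(\omega) = 0$.

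For the ``only if'' direction, I would use perturbative test functions ${\bf u}_\epsilon := 1 + \epsilon \psi$ for an arbitrary $\psi : \mathscr{V} \to \mathbb{R}$ and $\epsilon > 0$ small enough that ${\bf u}_\epsilon > 0$ on $\mathscr{V}$ (possible by finiteness of $\mathscr{V}$). Substituting into the variational formula and using the hypothesis $\mathfrak{J}(\omega) = 0$ gives
\[
0 \;\ge\; -\sum_{x\in\mathscr{V}} \omega(x)\,\frac{\mathfrak{L}{\bf u}_\epsilon(x)}{{\bf u}_\epsilon(x)} \;=\; -\epsilon \sum_{x\in\mathscr{V}} \omega(x)\,\frac{\mathfrak{L}\psi(x)}{1 + \epsilon\,\psi(x)}\,.
\]
Dividing by $-\epsilon$ and letting $\epsilon \to 0^+$ yields $\sum_{x} \omega(x)\,\mathfrak{L}\psi(x) \le 0$; applying the same reasoning to $-\psi$ produces the reverse inequality, so $\sum_x \omega(x)\,\mathfrak{L}\psi(x) = 0$ for every $\psi$, which is exactly the stationarity of $\omega$.

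Since the result is quoted from \cite[Lemma A.8]{Landim-Gamma}, there is no substantive obstacle in the argument. The two points worth modest care are the use of the elementary log inequality in the ``if'' direction (which is sharp precisely at constant ${\bf u}$, reflecting the fact that stationary measures are exactly the zeros of $\mathfrak{J}$), and the classification step in the first paragraph, where the absence of irreducibility of $\{{\bf y}(t)\}_{t\ge 0}$ must be handled by summing over all irreducible classes rather than invoking a unique invariant measure.
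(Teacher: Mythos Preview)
The paper does not supply its own proof of this lemma: it is quoted verbatim as \cite[Lemma A.8]{Landim-Gamma} and left unproved in the text. So there is no in-paper argument to compare against.

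Your argument is correct and is the standard one. One cosmetic slip: after the display in the ``only if'' direction, dividing $0\ge -\epsilon\sum_x\omega(x)\,\mathfrak{L}\psi(x)/(1+\epsilon\psi(x))$ by the negative number $-\epsilon$ reverses the inequality, so the limit gives $\sum_x\omega(x)\,\mathfrak{L}\psi(x)\ge 0$, not $\le 0$. This is harmless since you immediately apply the same bound to $-\psi$ and conclude equality. Everything else---the log-inequality step for the ``if'' direction, the perturbative test functions for the converse, and the reduction of ``stationary'' to ``convex combination of the $\nu_{\mathscr{R}_a}$'' via vanishing on transient states---is sound.
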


For an equivalence class $\mathfrak{D}$, denote by \textcolor{blue}{$\mathfrak{J}_{\mathfrak{D}}$}
the Donsker--Varadhan large deviation rate functional of the reflected
chain $\{{\bf y}_{\mathfrak{D}}(t)\}_{t\ge0}$. If $\mathfrak{D}=\{x_{0}\}$
for some $x_{0}\in\mathscr{V}$, then $\mathcal{P}(\mathfrak{D})=\{\delta_{x_{0}}\}$,
and we set $\mathfrak{J}_{\mathfrak{D}}(\delta_{x_{0}})=0$. Furthermore,
for $\omega\in\mathcal{P}(\mathscr{V})$ and $A\subset\mathscr{V}$,
let \textcolor{blue}{$\omega_{A}$} be the conditioned measure of
$\omega$ on $A$.

The following decomposition formula is a special case of \cite[Lemma A.7]{Landim-Gamma}, with $\omega$ supported on an equivalence class $\mathfrak{D}$.
\begin{lem}[{\cite[Lemma A.7]{Landim-Gamma}}]
\label{l_DV_equiv-1}Fix an equivalence class $\mathfrak{D}$. Then,
for all $\omega\in\mathcal{P}(\mathscr{V})$ supported on $\mathfrak{D}$,
\[
\mathfrak{J}(\omega)=\mathfrak{J}_{\mathfrak{D}}(\omega_{\mathfrak{D}})+\sum_{x\in\mathfrak{D}}\omega(x)\sum_{y\notin\mathfrak{D}}r(x,\,y)\,.
\]
\end{lem}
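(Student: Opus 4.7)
The plan is to exploit the variational representation \eqref{e_def_J} of $\mathfrak{J}$ together with the freedom to prescribe the test function off $\mathfrak{D}$. First I would record a trivial reduction: since $\omega$ is a probability measure supported on $\mathfrak{D}$, the conditioned measure $\omega_{\mathfrak{D}}$ coincides with $\omega$ regarded as a probability measure on $\mathfrak{D}$, so the sum over $x \in \mathscr{V}$ in \eqref{e_def_J} collapses to a sum over $x \in \mathfrak{D}$.

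Next, for any test function ${\bf u}: \mathscr{V} \to (0,\infty)$ and any $x \in \mathfrak{D}$, I would split the generator into internal and external jumps,
\[
\mathfrak{L}{\bf u}(x) \;=\; \mathfrak{L}_{\mathfrak{D}}{\bf u}(x) \;+\; \sum_{y \notin \mathfrak{D}} r(x,y)\,[{\bf u}(y) - {\bf u}(x)]\,,
\]
divide by ${\bf u}(x)$, weight by $-\omega(x)$, and sum over $x \in \mathfrak{D}$. This yields the algebraic identity
\[
\sum_{x \in \mathfrak{D}} -\frac{\mathfrak{L}{\bf u}(x)}{{\bf u}(x)}\,\omega(x) \;=\; \Phi({\bf u}|_{\mathfrak{D}}) \;+\; E \;-\; R({\bf u})\,,
\]
where $\Phi({\bf v}) := -\sum_{x \in \mathfrak{D}} \omega(x)\,\mathfrak{L}_{\mathfrak{D}}{\bf v}(x)/{\bf v}(x)$ is the functional whose supremum over positive ${\bf v}$ on $\mathfrak{D}$ equals $\mathfrak{J}_{\mathfrak{D}}(\omega_{\mathfrak{D}})$, the quantity $E := \sum_{x \in \mathfrak{D}} \omega(x) \sum_{y \notin \mathfrak{D}} r(x,y)$ is exactly the exit-rate constant that should appear in the answer, and $R({\bf u}) := \sum_{x \in \mathfrak{D}} \sum_{y \notin \mathfrak{D}} \omega(x) r(x,y) {\bf u}(y)/{\bf u}(x) \ge 0$ is a residual term that must be suppressed.

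From this decomposition both inequalities are essentially free. The upper bound $\mathfrak{J}(\omega) \le \mathfrak{J}_{\mathfrak{D}}(\omega_{\mathfrak{D}}) + E$ is immediate, since $R({\bf u}) \ge 0$ and the first two pieces depend only on ${\bf u}|_{\mathfrak{D}}$. For the matching lower bound, given any positive ${\bf v}$ on $\mathfrak{D}$ I would introduce the extension ${\bf u}_{\varepsilon}$ equal to ${\bf v}$ on $\mathfrak{D}$ and equal to $\varepsilon$ on $\mathscr{V} \setminus \mathfrak{D}$; then ${\bf u}_{\varepsilon} > 0$, the value $\Phi({\bf u}_{\varepsilon}|_{\mathfrak{D}}) = \Phi({\bf v})$ does not depend on $\varepsilon$, and $R({\bf u}_{\varepsilon}) \to 0$ as $\varepsilon \to 0^+$. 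Passing to the limit and then taking the supremum over ${\bf v} > 0$ recovers $\mathfrak{J}(\omega) \ge \mathfrak{J}_{\mathfrak{D}}(\omega_{\mathfrak{D}}) + E$.

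The calculation is essentially mechanical and I do not expect any genuine obstacle; the only minor point to watch is that admissible test functions must remain strictly positive, which the one-parameter perturbation ${\bf u}_{\varepsilon}$ handles cleanly.
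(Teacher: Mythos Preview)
Your argument is correct. The paper does not actually supply a proof of this lemma; it quotes the result directly from \cite[Lemma~A.7]{Landim-Gamma} as a special case (measures supported on a single equivalence class). Your self-contained derivation via the splitting $\mathfrak{L}{\bf u}(x)=\mathfrak{L}_{\mathfrak{D}}{\bf u}(x)+\sum_{y\notin\mathfrak{D}}r(x,y)[{\bf u}(y)-{\bf u}(x)]$, followed by the two-sided bound using $R({\bf u})\ge 0$ and the perturbation ${\bf u}_\varepsilon$, is exactly the standard route and matches what one finds in the cited reference.
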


Let $\mathfrak{l}\in\mathbb{N}$ denote the number of equivalence
classes of the chain $\{{\bf y}(t)\}_{t\ge0}$, and denote them by
$\mathfrak{D}_{1},\,\dots,\,\mathfrak{D}_{\mathfrak{l}}$.  Recall that
$\mathfrak{n}$ denotes the number of the irreducible
classes so that $\mathfrak{n}\le\mathfrak{l}$. Reorder the equivalence classes so that
$|\mathfrak{D}_{a}|\ge2$ for $1\le a\le\mathfrak{m}$ and
$|\mathfrak{D}_{a}|=1$ for $\mathfrak{m}+1\le a\le\mathfrak{l}$.
Some of the equivalence classes with one element may be absorbing
states, the others equivalence classes with one transient state.

\begin{lem}
\label{l_DV_equiv-2} For any $\omega\in\mathcal{P}(\mathscr{V})$,
\[
\mathfrak{J}(\omega)=\sum_{a\in{\omega_+}}\omega(\mathfrak{D}_{a})\,\mathfrak{J}(\omega_{\mathfrak{D}_{a}})\,,
\]
where $\omega_+ := \{k\in\llbracket1,\,\mathfrak{l}\rrbracket: \omega(\mathfrak{D}_k)>0\}$.
\end{lem}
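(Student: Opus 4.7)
The plan is to prove the identity by sandwiching $\mathfrak{J}(\omega)$ between two matching bounds. Write $\omega=\sum_{a\in\omega_+}\omega(\mathfrak{D}_a)\,\omega_{\mathfrak{D}_a}$. The functional $\mathfrak{J}$ is the supremum of linear functionals of $\omega$, hence convex, so immediately
\[
\mathfrak{J}(\omega)\ \le\ \sum_{a\in\omega_+}\omega(\mathfrak{D}_a)\,\mathfrak{J}(\omega_{\mathfrak{D}_a})\,.
\]
The work is entirely in the opposite inequality, and the main obstacle is to manufacture, out of a separate choice of positive test function on each equivalence class, a single global test function in \eqref{e_def_J} that decouples the classes in the limit.

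To do this, first I would fix a topological ordering $\mathfrak{D}_{(1)},\dots,\mathfrak{D}_{(\mathfrak{l})}$ of the equivalence classes, meaning that whenever $r(x,y)>0$ with $x\in\mathfrak{D}_{(i)}$, $y\in\mathfrak{D}_{(j)}$ and $i\ne j$, necessarily $j>i$. Such an ordering exists since the quotient of $\mathscr V$ by the equivalence relation induced by $\{\mathbf y(t)\}_{t\ge0}$ is a directed acyclic graph. For each $i\in\omega_+$ pick an arbitrary positive $\mathbf{u}_{(i)}\colon\mathfrak{D}_{(i)}\to(0,\infty)$; for $i\notin\omega_+$ set $\mathbf{u}_{(i)}\equiv 1$. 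For a large parameter $M>1$, define the global test function
\[
\mathbf{u}_M(x)\ :=\ M^{-i}\,\mathbf{u}_{(i)}(x)\,,\qquad x\in\mathfrak{D}_{(i)}\,.
\]

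The key calculation is the asymptotic evaluation of $-\mathfrak{L}\mathbf{u}_M(x)/\mathbf{u}_M(x)$ for $x\in\mathfrak{D}_{(i)}$. By the topological ordering, any $y\ne x$ with $r(x,y)>0$ lies either in $\mathfrak{D}_{(i)}$ or in some $\mathfrak{D}_{(j)}$ with $j>i$. In the first case the contribution is the intrinsic reflected-chain contribution $r(x,y)(\mathbf u_{(i)}(y)-\mathbf u_{(i)}(x))$ multiplied by the common factor $M^{-i}$. In the second case, since $M^{-j}\mathbf{u}_{(j)}(y)=o(M^{-i})$, the term collapses to $-r(x,y)\,M^{-i}\mathbf u_{(i)}(x)+o(M^{-i})$. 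Dividing by $\mathbf u_M(x)=M^{-i}\mathbf u_{(i)}(x)$ and letting $M\to\infty$ yields
\[
\lim_{M\to\infty}\left[-\frac{\mathfrak{L}\mathbf{u}_M(x)}{\mathbf{u}_M(x)}\right]\ =\ -\frac{\mathfrak{L}_{\mathfrak{D}_{(i)}}\mathbf{u}_{(i)}(x)}{\mathbf{u}_{(i)}(x)}\ +\ \sum_{y\notin\mathfrak{D}_{(i)}}r(x,y)\,,
\]
where $\mathfrak{L}_{\mathfrak{D}_{(i)}}$ is the generator of the chain reflected at $\mathfrak{D}_{(i)}$.

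Integrating this against $\omega$ and using $\mathfrak{J}(\omega)\ge\sum_x\omega(x)(-\mathfrak{L}\mathbf{u}_M(x)/\mathbf{u}_M(x))$ for every $M$, only indices $i\in\omega_+$ survive in the first term on the right-hand side, and these terms decouple: taking the supremum over $\mathbf{u}_{(i)}$ separately for each $i$ (they are independent variables) produces $\mathfrak{J}_{\mathfrak{D}_{(i)}}(\omega_{\mathfrak{D}_{(i)}})$. This yields
\[
\mathfrak{J}(\omega)\ \ge\ \sum_{a\in\omega_+}\omega(\mathfrak{D}_a)\Bigl[\mathfrak{J}_{\mathfrak{D}_a}(\omega_{\mathfrak{D}_a})\ +\ \sum_{x\in\mathfrak{D}_a}\omega_{\mathfrak{D}_a}(x)\sum_{y\notin\mathfrak{D}_a}r(x,y)\Bigr]\,.
\]
Applying Lemma \ref{l_DV_equiv-1} to each $\omega_{\mathfrak{D}_a}$ (which is supported on $\mathfrak{D}_a$), the bracket is exactly $\mathfrak{J}(\omega_{\mathfrak{D}_a})$, matching the upper bound and concluding the proof. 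The delicate step is the asymptotic decoupling in the display above; the scaling $M^{-i}$ is chosen precisely so that forward cross-class jumps cost a full unit of rate in the limit while backward cross-class jumps are absent by construction of the topological ordering.
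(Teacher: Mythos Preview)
Your proof is correct and takes a genuinely different route from the paper. The paper's proof simply invokes an external decomposition formula (display~(A.14) and Lemma~A.7 of \cite{Landim-Gamma}), which already expresses $\mathfrak{J}(\omega)$ as the sum over all equivalence classes of the reflected-chain functionals $\mathfrak{J}_{\mathfrak{D}_a}(\omega_{\mathfrak{D}_a})$ plus the escape-rate terms $\sum_{x\in\mathfrak{D}_a}\omega(x)\sum_{y\notin\mathfrak{D}_a}r(x,y)$; it then just matches these pieces with $\mathfrak{J}(\omega_{\mathfrak{D}_a})$ via \eqref{e_J_delta} and Lemma~\ref{l_DV_equiv-1}. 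You instead give a self-contained argument for the hard direction: the topological ordering of equivalence classes together with the scaled test function $\mathbf{u}_M(x)=M^{-i}\mathbf{u}_{(i)}(x)$ is precisely what forces the cross-class terms to collapse to pure escape rates in the limit $M\to\infty$, thereby decoupling the variational problem across classes. In effect, your argument is a direct elementary proof of the cited external formula. The paper's approach is shorter only because the work is outsourced; yours is more transparent and requires nothing beyond Lemma~\ref{l_DV_equiv-1} and convexity.
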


\begin{proof}
By  by display (A.14) and Lemma A.7 in \cite{Landim-Gamma},
\[
\mathfrak{J}(\omega)=\sum_{a=1}^{\mathfrak{m}}\omega(\mathfrak{D}_{a})\mathfrak{J}_{\mathfrak{D}_{a}}(\omega_{\mathfrak{D}_{a}})+\sum_{a=1}^{\mathfrak{m}}\sum_{x\in\mathfrak{D}_{a}}\omega(x)\sum_{y\notin\mathfrak{D}_{a}}r(x,\,y)+\sum_{a=\mathfrak{m}+1}^{\mathfrak{l}}\omega(x_a)\sum_{y\in \mathscr{V}\setminus\{x_{a}\}}r(x,\,y)\,.
\]

For $a\in\llbracket\mathfrak{m}+1,\,\mathfrak{l}\rrbracket$, let $\mathfrak{D}_{a}=\{x_{a}\}$ and
suppose that $\omega(x_{a})>0$. Then, by \eqref{e_J_delta},
\begin{equation}
\omega(\mathfrak{D}_{a})\mathfrak{J}(\omega_{\mathfrak{D}_{a}})=\omega(x_{a})\mathfrak{J}(\delta_{x_{a}})=\omega(x_{a})\sum_{y\in\mathscr{V}\setminus\{x_{a}\}}r(x_{a},\,y)\,.
\label{e_l_A8}
\end{equation}

For $a\in\llbracket1,\,\mathfrak{m}\rrbracket$ such that $\omega(\mathfrak{D}_{a})>0$, by Lemma \ref{l_DV_equiv-1},
\[
\omega(\mathfrak{D}_{a})\mathfrak{J}(\omega_{\mathfrak{D}_{a}})=\omega(\mathfrak{D}_{a})\mathfrak{J}_{\mathfrak{D}_{a}}(\omega_{\mathfrak{D}_{a}})+\sum_{x\in\mathfrak{D}_{a}}\omega(x)\sum_{y\notin\mathfrak{D}_{a}}r(x,\,y)\,,
\]
which, together with \eqref{e_l_A8}, yields the desired decomposition.
\end{proof}

Finally, the following formula is due to Donsker--Varadhan \cite[Theorem 5]{DV}.
\begin{lem}[{\cite[Theorem 5]{DV}}]
\label{l_DV_equiv-3}Let $\mathfrak{D}\subset\mathscr{V}$ be an
equivalence class such that $|\mathfrak{D}|\ge2$. Suppose that the
reflected chain $\{{\bf y}_{\mathfrak{D}}(t)\}_{t\ge0}$ is reversible
with respect to $\nu_{\mathfrak{D}}$. Then, for any $\omega\in\mathcal{P}(\mathfrak{D})$,
\[
\mathfrak{J}_{\mathfrak{D}}(\omega)=-\sum_{x\in\mathfrak{D}}\nu_{\mathfrak{D}}(x){\bf f}(x)\mathfrak{L}_{\mathfrak{D}}{\bf f}(x)\,,
\]
where
\[
{\bf f}(x):=\sqrt{\frac{\omega(x)}{\nu_{\mathfrak{D}}(x)}}\,.
\]
\end{lem}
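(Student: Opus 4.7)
The plan is to reduce the variational formula for $\mathfrak{J}_\mathfrak{D}(\omega)$ to a Dirichlet-form inequality using reversibility, and then close the two inequalities by an elementary pointwise bound. Throughout write $c(x,y):=\nu_\mathfrak{D}(x)r_\mathfrak{D}(x,y)$, which is symmetric by reversibility, so that for any $\bm g\colon\mathfrak{D}\to\bb R$,
\[
-\sum_{x\in\mathfrak{D}}\nu_\mathfrak{D}(x)\bm g(x)\mathfrak{L}_\mathfrak{D}\bm g(x)\;=\;\tfrac12\sum_{x,y\in\mathfrak{D}}c(x,y)\bigl(\bm g(y)-\bm g(x)\bigr)^{2}\,.
\]
This is the canonical formula I will use on the right-hand side of the claimed identity.

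For the upper bound $\mathfrak{J}_\mathfrak{D}(\omega)\ge-\sum_x\nu_\mathfrak{D}(x)\bm f(x)\mathfrak{L}_\mathfrak{D}\bm f(x)$, I would insert the test function $\bm u_\delta:=\bm f+\delta$, $\delta>0$, in the variational definition \eqref{e_def_J}. Since $\mathfrak{L}_\mathfrak{D}\bm u_\delta=\mathfrak{L}_\mathfrak{D}\bm f$ and $\omega(x)=\nu_\mathfrak{D}(x)\bm f(x)^{2}$, each summand equals $-\nu_\mathfrak{D}(x)\bm f(x)^{2}(\bm f(x)+\delta)^{-1}\mathfrak{L}_\mathfrak{D}\bm f(x)$; on the support of $\omega$ we have $\bm f(x)>0$, and on its complement the summand vanishes. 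Letting $\delta\downarrow 0$ yields exactly $-\sum_x\nu_\mathfrak{D}(x)\bm f(x)\mathfrak{L}_\mathfrak{D}\bm f(x)$, which is therefore a lower bound for the supremum.

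For the reverse inequality, fix an arbitrary positive $\bm u\colon\mathfrak{D}\to(0,\infty)$ and rewrite, using $\omega(x)=\nu_\mathfrak{D}(x)\bm f(x)^{2}$ and the symmetry of $c(\cdot,\cdot)$,
\[
-\sum_{x\in\mathfrak{D}}\frac{\mathfrak{L}_\mathfrak{D}\bm u(x)}{\bm u(x)}\omega(x)\;=\;\tfrac12\sum_{x,y\in\mathfrak{D}}c(x,y)\bigl(\bm u(x)-\bm u(y)\bigr)\!\left[\frac{\bm f(x)^{2}}{\bm u(x)}-\frac{\bm f(y)^{2}}{\bm u(y)}\right].
\]
The key pointwise inequality is then
\[
(a-b)\!\left(\frac{p^{2}}{a}-\frac{q^{2}}{b}\right)\;\le\;(p-q)^{2}\qquad(a,b>0,\;p,q\ge0),
\]
which, after expansion, reduces to the AM--GM bound $\tfrac{a}{b}q^{2}+\tfrac{b}{a}p^{2}\ge2pq$. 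Applying this edge-by-edge with $a=\bm u(x),b=\bm u(y),p=\bm f(x),q=\bm f(y)$ and summing against the nonnegative weights $c(x,y)$ gives
\[
-\sum_{x\in\mathfrak{D}}\frac{\mathfrak{L}_\mathfrak{D}\bm u(x)}{\bm u(x)}\omega(x)\;\le\;\tfrac12\sum_{x,y\in\mathfrak{D}}c(x,y)\bigl(\bm f(x)-\bm f(y)\bigr)^{2}\;=\;-\sum_{x\in\mathfrak{D}}\nu_\mathfrak{D}(x)\bm f(x)\mathfrak{L}_\mathfrak{D}\bm f(x)\,.
\]
Taking the supremum over $\bm u>0$ completes the proof.

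The only nontrivial step is the lower bound, but it is a one-line symmetrization plus AM--GM; the mild obstacle is that $\bm f$ itself is not an admissible test function when $\omega$ is not fully supported, which is why the $\bm f+\delta$ regularization is needed in the first half. The irreducibility of $\{\bm y_\mathfrak{D}(t)\}_{t\ge 0}$ on $\mathfrak{D}$ (which holds since $\mathfrak{D}$ is an equivalence class with $|\mathfrak{D}|\ge 2$) guarantees the uniqueness of $\nu_\mathfrak{D}$ and hence the unambiguous definition of $\bm f$.
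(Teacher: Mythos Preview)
Your proof is correct. The paper does not supply its own proof of this lemma: it is stated as a direct citation of \cite[Theorem~5]{DV} and left without argument. You have therefore provided a self-contained proof where the paper defers to the literature.

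Your argument is the standard one for the reversible case: the lower bound comes from plugging the (regularized) optimizer $\bm u_\delta=\bm f+\delta$ into the variational formula, and the upper bound from symmetrizing the sum via the conductances $c(x,y)=\nu_\mathfrak{D}(x)r_\mathfrak{D}(x,y)$ and applying the elementary inequality $(a-b)(p^{2}/a-q^{2}/b)\le(p-q)^{2}$ edge-by-edge. The regularization step is exactly what is needed to handle $\omega$ not fully supported, and your verification of the pointwise inequality via AM--GM is clean. There is nothing to correct.
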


\section{\label{sec_dom}Domain of generators}

Recall that the operator $\mathscr{L}_{\epsilon}:D(\mathscr{L}_{\epsilon})\subset L^{2}(d\pi_{\epsilon})\to L^{2}(d\pi_{\epsilon})$,
defined as the extension of \eqref{e_generator}, is the infinitesimal
generator of the process $\{\bm{x}_{\epsilon}(t)\}_{t\ge0}$ governed
by the SDE \eqref{e: SDE}. Define
\[
{\color{blue}C^{2}(\mathscr{L}_{\epsilon}}\mathclose{\color{blue})}:=\{f\in C^{2}(\mathbb{R}^{d}):f,\,-\nabla U\cdot\nabla f+\epsilon\Delta f\in L^{2}(d\pi_{\epsilon})\}\,.
\]

\begin{prop}
\label{p_gen}The infinitesimal generator $\mathscr{L}_{\epsilon}:D(\mathscr{L}_{\epsilon})\subset L^{2}(d\pi_{\epsilon})\to L^{2}(d\pi_{\epsilon})$
satisfies the following.
\begin{enumerate}
\item For every $\lambda>0$ and $g\in L^{2}(d\pi_{\epsilon})$, there exists
a unique solution $f\in D(\mathscr{L}_{\epsilon})$ to the resolvent
equation
\[
(\lambda-\mathscr{L}_{\epsilon})f=g\,.
\]

\item $C^{2}(\mathscr{L}_{\epsilon})\subset D(\mathscr{L}_{\epsilon})$,
and for all $f\in C^{2}(\mathscr{L}_{\epsilon})$,
\[
\mathscr{L}_{\epsilon}f=-\nabla U\cdot\nabla f+\epsilon\Delta f \,.
\] 
\end{enumerate}
\end{prop}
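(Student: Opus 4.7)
The strategy is to identify $\mathscr{L}_{\epsilon}$ with the (negative of the) self-adjoint operator canonically associated with the Dirichlet form
\[
\mathcal{E}_{\epsilon}(f,g) \,:=\, \epsilon\int_{\mathbb{R}^{d}}\nabla f\cdot\nabla g\,d\pi_{\epsilon}\,,
\]
initially defined on $C_{c}^{\infty}(\mathbb{R}^{d})$. Since $\pi_{\epsilon}$ has density $e^{-U/\epsilon}/Z_{\epsilon}$, an integration by parts shows that for $f,\,g\in C_{c}^{\infty}(\mathbb{R}^{d})$,
\[
-\int \widetilde{\mathscr{L}}_{\epsilon}f \cdot g\,d\pi_{\epsilon} \;=\; \mathcal{E}_{\epsilon}(f,g)\,,
\]
so $-\widetilde{\mathscr{L}}_{\epsilon}$ is symmetric and nonnegative on $C_{c}^{\infty}$. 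The form $\mathcal{E}_{\epsilon}$ is closable in $L^{2}(d\pi_{\epsilon})$ and its closure is a regular Dirichlet form; $-\mathscr{L}_{\epsilon}$ is by construction the self-adjoint nonnegative operator associated with it (i.e., $D(\mathscr{L}_{\epsilon})=\{f\in D(\mathcal{E}_\epsilon):\mathcal{E}_\epsilon(f,\cdot)\text{ is }L^2\text{-continuous}\}$, with $\mathscr{L}_\epsilon f$ determined by $-\int \mathscr{L}_\epsilon f\cdot\phi\,d\pi_\epsilon=\mathcal{E}_\epsilon(f,\phi)$). Assertion (1) is then the standard Lax--Milgram / spectral-theorem fact: for $\lambda>0$, the symmetric continuous coercive form $\lambda(\cdot,\cdot)_{L^{2}(\pi_{\epsilon})}+\mathcal{E}_{\epsilon}(\cdot,\cdot)$ on $D(\mathcal{E}_{\epsilon})$ yields a unique solution $f\in D(\mathscr{L}_{\epsilon})$ to $(\lambda-\mathscr{L}_{\epsilon})f=g$.

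For assertion (2), fix $f\in C^{2}(\mathscr{L}_{\epsilon})$; by definition $f$ and $\widetilde{\mathscr{L}}_{\epsilon}f$ lie in $L^{2}(d\pi_{\epsilon})$. The first step is to show $\nabla f\in L^{2}(d\pi_{\epsilon})$, i.e., $f\in D(\mathcal{E}_{\epsilon})$. Pick smooth cutoffs $\psi_{n}\in C_{c}^{\infty}(\mathbb{R}^{d})$ with $\psi_{n}\equiv 1$ on $B_{n}$, $\operatorname{supp}\psi_{n}\subset B_{2n}$, $|\nabla\psi_{n}|\le C/n$. Since $f\psi_{n}^{2}\in C_{c}^{2}(\mathbb{R}^{d})$, an integration by parts gives
\[
-\int f\psi_{n}^{2}\widetilde{\mathscr{L}}_{\epsilon}f\,d\pi_{\epsilon}
\;=\; \epsilon\int\psi_{n}^{2}|\nabla f|^{2}d\pi_{\epsilon}
\;+\;2\epsilon\int f\psi_{n}\,\nabla f\cdot\nabla\psi_{n}\,d\pi_{\epsilon}\,.
\]
Apply $2|ab|\le \tfrac12 a^{2}+2b^{2}$ to the last term with $a=\psi_{n}|\nabla f|$, $b=|f||\nabla\psi_{n}|$, rearrange, and bound $\bigl|\int f\psi_{n}^{2}\widetilde{\mathscr{L}}_{\epsilon}f\,d\pi_{\epsilon}\bigr|\le \|f\|_{L^{2}(\pi_{\epsilon})}\|\widetilde{\mathscr{L}}_{\epsilon}f\|_{L^{2}(\pi_{\epsilon})}$ and $\int f^{2}|\nabla\psi_{n}|^{2}d\pi_{\epsilon}\le C^{2}n^{-2}\|f\|_{L^{2}(\pi_{\epsilon})}^{2}$. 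Taking $n\to\infty$ with Fatou's lemma gives $\nabla f\in L^{2}(d\pi_{\epsilon})$.

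Next, for any $\phi\in C_{c}^{\infty}(\mathbb{R}^{d})$, integration by parts (whose boundary terms vanish because $\phi$ has compact support) yields
\[
-\int \widetilde{\mathscr{L}}_{\epsilon}f\cdot\phi\,d\pi_{\epsilon}
\;=\;\mathcal{E}_{\epsilon}(f,\phi)\,.
\]
Using the same cutoff argument together with $\nabla f\in L^{2}(d\pi_{\epsilon})$ one checks that $f\psi_{n}\to f$ in the $\mathcal{E}_{\epsilon}$-graph norm, so $f\in D(\mathcal{E}_{\epsilon})$. Since $C_{c}^{\infty}(\mathbb{R}^{d})$ is dense in $D(\mathcal{E}_{\epsilon})$ (by construction of the Friedrichs extension), the identity extends to all $\phi\in D(\mathcal{E}_{\epsilon})$. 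This precisely says that $\mathcal{E}_{\epsilon}(f,\cdot)$ is represented by $-\widetilde{\mathscr{L}}_{\epsilon}f\in L^{2}(d\pi_{\epsilon})$, so $f\in D(\mathscr{L}_{\epsilon})$ and $\mathscr{L}_{\epsilon}f=\widetilde{\mathscr{L}}_{\epsilon}f=-\nabla U\cdot\nabla f+\epsilon\Delta f$.

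The main obstacle is the first step of (2), namely establishing $\nabla f\in L^{2}(d\pi_{\epsilon})$ without a priori integrability on $|\nabla U|\cdot f$: the key is that the cutoff identity above involves only the combination $\widetilde{\mathscr{L}}_{\epsilon}f$ (which is assumed $L^{2}$), never the individual terms $\nabla U\cdot\nabla f$ and $\epsilon\Delta f$ separately. All other steps are routine consequences of the self-adjoint/Dirichlet-form construction.
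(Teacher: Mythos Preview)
Your proposal is correct and takes a somewhat different route from the paper. For part (1) the paper simply invokes Hille--Yosida, while you use the Dirichlet-form/Lax--Milgram construction; these are equivalent here. For part (2) both arguments use smooth cutoffs, but the paper works directly with the generator: it writes $\xi_n f\in C_c^2\subset D(\mathscr{L}_\epsilon)$, asserts that $\xi_n f\to f$ and $\mathscr{L}_\epsilon(\xi_n f)\to\widetilde{\mathscr{L}}_\epsilon f$ in $L^2(d\pi_\epsilon)$, and then appeals to closedness of $\mathscr{L}_\epsilon$. Your approach instead first proves a Caccioppoli-type bound ($\nabla f\in L^2(d\pi_\epsilon)$) by testing against $f\psi_n^2$, and then uses the variational characterisation of $D(\mathscr{L}_\epsilon)$ through the form $\mathcal{E}_\epsilon$. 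The payoff of your route is rigor at the delicate point: expanding $\mathscr{L}_\epsilon(\xi_n f)=\xi_n\widetilde{\mathscr{L}}_\epsilon f+f\,\widetilde{\mathscr{L}}_\epsilon\xi_n+2\epsilon\nabla\xi_n\cdot\nabla f$, the paper's ``elementary calculus'' step needs the last two error terms to vanish in $L^2$, which in turn requires integrability of $|\nabla f|^2$ and of $|f|^2|\nabla U|^2$ over the annuli---neither of which is immediate from the hypotheses. Your argument never separates the drift and Laplacian pieces of $\widetilde{\mathscr{L}}_\epsilon f$ and supplies $\nabla f\in L^2$ up front, so it cleanly avoids this issue; the paper's version is shorter but leaves that verification implicit.
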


\begin{proof}
The first assertion is a direct consequence of the Hille--Yosida theorem.

We turn to the second assertion. Let $f\in C^{2}(\mathscr{L}_{\epsilon})$.
For $n\in\mathbb{N}$, let $(\xi_{n})_{n\ge1}$ be a sequence of smooth cutoff
functions such that
\[
\xi_{n}(x)=\begin{cases}
1 & |x|\le n\,,\\
0 & |x|\ge n+1\,,
\end{cases}
\]
and
\[
\sup_{n\in\mathbb{N}}\sup_{1\le j\le d}\left\Vert \frac{\partial\xi_{n}}{\partial x_{j}}\right\Vert _{L^{\infty}(\mathbb{R}^{d})}\,,\ \sup_{n\in\mathbb{N}}\sup_{1\le j,\,k\le d}\left\Vert \frac{\partial^{2}\xi_{n}}{\partial x_{j}\partial x_{k}}\right\Vert _{L^{\infty}(\mathbb{R}^{d})}<\infty\,.
\]
Then $\xi_{n}f\in C_{c}^{2}(\mathbb{R}^{d})\subset D(\mathscr{L}_{\epsilon})$
for all $n\in\mathbb{N}$. By elementary calculus, $\xi_{n}f\to f$
and $\mathscr{L}_{\epsilon}(\xi_{n}f)\to\epsilon\Delta f-\nabla U\cdot\nabla f$
in $L^{2}(d\pi_{\epsilon})$. Since $\mathscr{L}_{\epsilon}$ is closed
by the Hille--Yosida theorem, it follows that $f\in D(\mathscr{L}_{\epsilon})$
and $\mathscr{L}_{\epsilon}f=\epsilon\Delta f-\nabla U\cdot\nabla f$.
\end{proof}
For any matrix $\mathbb{M}$, define the matrix norm by
\[
{\color{blue}\|\mathbb{M}\|}:=\sup_{|\bm{y}|=1}|\mathbb{M}\bm{y}|\,.
\]

The following lemma shows that the assumption \eqref{e_L2} is not
restrictive. Note that the condition \eqref{e_Delta_nabla} appears
in \cite[Assumption 2]{LM}.

\begin{lem}
\label{l_assuL2}Suppose that $U$ satisfies \eqref{e_U_L1}. Assume
further that there exist $C>0$ and a compact set $\mathcal{K}\subset\mathbb{R}^{d}$
such that
\begin{equation}
\|\nabla^{2}U(\bm{x})\|\le C|\nabla U(\bm{x})|^{2}\ \ \text{for all}\ \bm{x}\notin\mathcal{K}\,.\label{e_Delta_nabla}
\end{equation}
Then there exists $\epsilon_{0}>0$ such that $\nabla U,\,\Delta U\in L^{2}(d\pi_{\epsilon})$
for $\epsilon\in(0,\,\epsilon_{0})$.
\end{lem}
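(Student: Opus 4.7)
The plan is to bootstrap from integration by parts on expanding balls, avoiding the technicalities of cutoff error terms. Fix $R>0$ large enough that $\mathcal{K}\subset B_R$ and $\nabla U(\bm{x})\cdot\bm{x}/|\bm{x}|>0$ for all $\bm{x}$ with $|\bm{x}|=R$; the latter holds eventually by the second assertion of the growth condition \eqref{e: growth}. Since $U\in C^3$, the divergence theorem applies classically on $B_R$ to the vector field $\nabla U\,e^{-U/\epsilon}$, yielding
\[
\int_{B_R}|\nabla U|^2\,e^{-U/\epsilon}\,d\bm{x}\,=\,\epsilon\int_{B_R}\Delta U\cdot e^{-U/\epsilon}\,d\bm{x}\,-\,\epsilon\int_{\partial B_R}e^{-U/\epsilon}\,\nabla U\cdot\bm{n}\,dS.
\]
For such $R$, the boundary integrand is nonnegative, so the last term is nonpositive; hence
\[
\int_{B_R}|\nabla U|^2\,d\pi_\epsilon\,\le\,\epsilon\int_{B_R}|\Delta U|\,d\pi_\epsilon.
\]

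Using $|\Delta U|\le d\,\|\nabla^2 U\|$ together with \eqref{e_Delta_nabla}, we have $|\Delta U|\le dC|\nabla U|^2$ outside $\mathcal{K}$. Splitting the right-hand side into the contributions on $\mathcal{K}$ and on $B_R\setminus\mathcal{K}$ gives
\[
\int_{B_R}|\nabla U|^2\,d\pi_\epsilon\,\le\,\epsilon\,\|\Delta U\|_{L^\infty(\mathcal{K})}\,+\,\epsilon\,dC\int_{B_R}|\nabla U|^2\,d\pi_\epsilon.
\]
For $\epsilon<1/(2dC)$, the last term is absorbed on the left, producing a bound uniform in $R$. Monotone convergence then gives $|\nabla U|^2\in L^1(d\pi_\epsilon)$, i.e., $\nabla U\in L^2(d\pi_\epsilon)$.

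To upgrade to $L^2$-estimates on $\Delta U$, the same divergence theorem applied to the vector field $|\nabla U|^2\,\nabla U\,e^{-U/\epsilon}$ leads, after discarding the nonnegative boundary contribution, to
\[
\int_{B_R}|\nabla U|^4\,d\pi_\epsilon\,\le\,\epsilon\int_{B_R}\bigl|\,2(\nabla^2 U\,\nabla U)\cdot\nabla U\,+\,|\nabla U|^2\,\Delta U\,\bigr|\,d\pi_\epsilon.
\]
By \eqref{e_Delta_nabla}, the integrand is bounded outside $\mathcal{K}$ by $(2+d)C|\nabla U|^4$. The same absorption trick (with $\epsilon$ chosen smaller if necessary) yields $|\nabla U|^4\in L^1(d\pi_\epsilon)$. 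Since $|\Delta U|^2\le d^2 C^2|\nabla U|^4$ outside $\mathcal{K}$, this immediately gives $\Delta U\in L^2(d\pi_\epsilon)$.

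The main obstacle is controlling the boundary term in the divergence theorem; this is precisely where the second assertion of the growth condition \eqref{e: growth} is needed, guaranteeing that $\nabla U\cdot\bm{n}\ge 0$ on $\partial B_R$ for every sufficiently large $R$, so that the boundary contribution carries the favourable sign and can simply be discarded.
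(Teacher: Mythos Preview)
Your proof is correct and takes a genuinely different route from the paper's. The paper argues via a gradient-flow Lyapunov computation: reversing the negative gradient flow from any point $\bm{x}$ with $U(\bm{x})\ge H$ down to the level set $\{U=H\}$, it shows that $t\mapsto|\nabla U(\psi(t))|^{2}e^{-U(\psi(t))/a}$ is monotone along the trajectory when $a<(2C)^{-1}$, yielding the \emph{pointwise} bound $|\nabla U(\bm{x})|^{2}e^{-U(\bm{x})/a}\le M_{H}$ outside $\{U<H\}$. Squaring and using \eqref{e_U_L1} then gives $|\nabla U|^{4}\in L^{1}(d\pi_{\epsilon})$ for small $\epsilon$.

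Your approach instead extracts the same integrability by an energy estimate: the divergence theorem on $B_{R}$ applied to $\nabla U\,e^{-U/\epsilon}$ and then $|\nabla U|^{2}\nabla U\,e^{-U/\epsilon}$, with the boundary term discarded thanks to the radial growth $\bm{x}\cdot\nabla U/|\bm{x}|\to\infty$ from \eqref{e: growth}, followed by absorption of the right-hand side for $\epsilon$ small. This is more elementary---no trajectory analysis, no need to know that high level sets are connected or that the gradient flow reaches them in finite time---and the uniform-in-$R$ bound plus monotone convergence closes the argument cleanly. What the paper's method buys in return is a pointwise decay estimate on $|\nabla U|^{2}e^{-U/a}$, which is stronger than mere integrability, though that extra information is not used elsewhere. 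Note also that your first integration-by-parts step (giving $\nabla U\in L^{2}$) is not strictly needed: the second step alone already delivers $|\nabla U|^{4}\in L^{1}$, which is what \eqref{e_L2} actually requires and what the paper proves.
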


\begin{proof}
By \eqref{e_Delta_nabla}, for $\bm{x}\notin\mathcal{K}$,
\[
|\Delta U(\bm{x})|=|{\rm Tr}\nabla^{2}U(\bm{x})|\le d\|\nabla^{2}U(\bm{x})\|\le dC|\nabla U(\bm{x})|^{2}.
\]
Therefore, it suffices to prove $|\nabla U|^{2}\in
L^{2}(d\pi_{\epsilon})$.

Fix $H>0$ large enough so that $\{U<H-1\}$ contains all critical
points of $U$ and $\mathcal{K}$, and $\{U<K\}$ is connected for all
$K\ge H-1$. Fix $\bm{x}\in\mathbb{R}^{d}$ such that $U(\bm{x})\ge
H$. Define the trajectory $\phi:[0,\,\infty)\to\mathbb{R}^{d}$ by
\[
\phi(0)=\bm{x}\,,\ \dot{\phi}(t)=-\nabla U(\phi(t))\,.
\]
Let
\[
T_{\bm{x}}:=\inf\{t>0:\phi(t)\in\{U\le H\}\}\,.
\]
By continuity, $U(\phi(T_{\bm{x}}))=H$.
Define the reversed path
\[
\psi(t)=\phi(T_{\bm{x}}-t)\ ;\ t\ge0\,,
\]
so that
\[
U(\psi(0))=H\,,\ \psi(T_{\bm{x}})=\bm{x}\,,\ \dot{\psi}(t)=\nabla U(\psi(t))\,.
\]
Differentiating yields
\[
\frac{d}{dt}(|\nabla U(\psi(t))|^{2}e^{-U(\psi(t))/a})=e^{-U(\psi(t))/a}\nabla U(\psi(t))^{\dagger}(2\nabla^{2}U(\psi(t))-\frac{1}{a}|\nabla U(\psi(t))|^{2}\mathbb{I}_{d})\nabla U(\psi(t))\,.
\]

Since $\psi(t)\ge H$ for all $t\ge0$, $\psi(t)\notin\mathcal{K}$.
If $a\in(0,\,(2C)^{-1})$, then by \eqref{e_Delta_nabla}, the matrix inside parentheses is negative definite, so the derivative above is strictly negative.
Thus, for $a\in(0,\,(2C)^{-1})$,
\begin{equation}
|\nabla U(\psi(0))|^{2}e^{-U(\psi(0))/a}\ge|\nabla U(\psi(T_{\bm{x}}))|^{2}e^{-U(\psi(T_{\bm{x}}))/a}=|\nabla U(\bm{x})|^{2}e^{-U(\bm{x})/a}\,.\label{e_l_L2}
\end{equation}

Define 
\[
M_{H}:=\sup_{\bm{x}\in\{U\le H\}}|\nabla U(\bm{x})|^{2}e^{-U(\bm{x})/a}\,.
\]
Then, for all $\bm{x}\notin\{U\le H\}$, the inequality \eqref{e_l_L2}
yields
\[
|\nabla U(\bm{x})|^{4}e^{-2U(\bm{x})/a}\le(M_{H})^{2} \,.
\]
Hence,
for $\epsilon\in(0,\,a/2)$ and $\bm{x}\notin\{U\le H\}$,
\[
\begin{aligned}
|\nabla U(\bm{x})|^{4}e^{-U(\bm{x})/\epsilon} &=|\nabla U(\bm{x})|^{4}e^{-2U(\bm{x})/a}e^{-(a-2\epsilon)U(\bm{x})/(a\epsilon)}\\
 & \le(M_{H})^{2}e^{-(a-2\epsilon)U(\bm{x})/(a\epsilon)}\,.
\end{aligned}
\]
By \eqref{e_U_L1}, the right-hand side is integrable. Therefore, $|\nabla U|^{2}\in L^{2}(d\pi_{\epsilon})$
for $\epsilon\in(0,\,a/2)$, completing the proof.
\end{proof}

\section{The energy landscape}

In this appendix, we recall several results on the energy landscape
from \cite{LLS-1st,LLS-2nd} which are used throughout the article.

\subsection{Landscape of potential $U$}

In this subsection, we summarize general properties on the landscape
of the potential $U$. The first result corresponds to \cite[Lemma A.4]{LLS-2nd}.
\begin{lem}
\label{l_pot1}Fix $H\in\mathbb{R}$. Let $\mathcal{V}\subset\mathbb{R}^{d}$
be a connected component of $\{U<H\}$. Let $\mathcal{M}\subset\mathcal{M}_{0}\cap\mathcal{V}$
and $\mathcal{M}'\subset\mathcal{M}_{0}\setminus\mathcal{M}$.
\begin{enumerate}
\item If $\mathcal{M}'\subset\mathcal{V}$, then $\Theta(\mathcal{M},\,\mathcal{M}')<H$.
Equivalently, if $\Theta(\mathcal{M},\,\bm{m})\ge H$ for all $\bm{m}\in\mathcal{M}'$,
then $\mathcal{M}'\subset\mathbb{R}^{d}\setminus\mathcal{V}$.
\item If $\mathcal{M}'\subset\mathbb{R}^{d}\setminus\mathcal{V}$, then
$\Theta(\mathcal{M},\,\mathcal{M}')\ge H$. Equivalently, if $\Theta(\mathcal{M},\,\bm{m})<H$
for all $\bm{m}\in\mathcal{M}'$, then $\mathcal{M}'\subset\mathcal{V}$.
\end{enumerate}
\end{lem}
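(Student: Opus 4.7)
The whole lemma is a topological statement about the communication height $\Theta$ and the connected component $\mathcal{V}$. The plan is to first record two soft topological facts about $\mathcal{V}$, and then use them, respectively, to build a good path for part (1) and to block any path for part (2).

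\emph{Preliminary facts.} Since $\{U<H\}$ is open in $\mathbb{R}^{d}$ and $\mathbb{R}^{d}$ is locally path-connected, the connected component $\mathcal{V}$ is itself open and path-connected. Moreover, a standard argument shows that $\partial\mathcal{V}\subset\partial\{U<H\}$: any $x\in\partial\mathcal{V}\cap\{U<H\}$ would have an open ball $B(x,r)\subset\{U<H\}$ intersecting $\mathcal{V}$, forcing $B(x,r)$ to lie in the same connected component $\mathcal{V}$, contradicting $x\notin\mathcal{V}$. Finally, continuity of $U$ gives $\partial\{U<H\}\subset\{U=H\}$, since any limit of points with $U<H$ that is also a limit of points with $U\ge H$ must satisfy $U=H$. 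Thus $U\equiv H$ on $\partial\mathcal{V}$.

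\emph{Proof of (1).} Fix $\boldsymbol{m}\in\mathcal{M}$ and $\boldsymbol{m}'\in\mathcal{M}'$; by the assumptions $\boldsymbol{m},\boldsymbol{m}'\in\mathcal{V}$. By path-connectedness of $\mathcal{V}$ there exists a continuous path $\boldsymbol{z}:[0,1]\to\mathcal{V}$ with $\boldsymbol{z}(0)=\boldsymbol{m}$ and $\boldsymbol{z}(1)=\boldsymbol{m}'$. The image $\boldsymbol{z}([0,1])$ is compact and contained in the open set $\{U<H\}$, so $\max_{t\in[0,1]}U(\boldsymbol{z}(t))<H$. This shows $\Theta(\boldsymbol{m},\boldsymbol{m}')<H$, hence $\Theta(\mathcal{M},\mathcal{M}')<H$ by minimizing over pairs. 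The equivalent formulation follows by applying the implication to singletons $\mathcal{M}'=\{\boldsymbol{m}\}$ and contraposing.

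\emph{Proof of (2).} Fix $\boldsymbol{m}\in\mathcal{M}\subset\mathcal{V}$ and $\boldsymbol{m}'\in\mathcal{M}'\subset\mathbb{R}^{d}\setminus\mathcal{V}$, and let $\boldsymbol{z}:[0,1]\to\mathbb{R}^{d}$ be any continuous path joining them. Define the first exit time
\[
t^{*}:=\inf\{t\in[0,1]:\boldsymbol{z}(t)\notin\mathcal{V}\}\,.
\]
Since $\boldsymbol{z}(0)\in\mathcal{V}$ (open) we have $t^{*}>0$, and since $\boldsymbol{z}(1)\notin\mathcal{V}$ we have $t^{*}\le 1$. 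Continuity places $\boldsymbol{z}(t^{*})\in\overline{\mathcal{V}}$, and the openness of $\mathcal{V}$ together with the definition of $t^{*}$ rules out $\boldsymbol{z}(t^{*})\in\mathcal{V}$. Hence $\boldsymbol{z}(t^{*})\in\partial\mathcal{V}$, and by the preliminary fact $U(\boldsymbol{z}(t^{*}))=H$. Consequently $\max_{t\in[0,1]}U(\boldsymbol{z}(t))\ge H$. Taking the infimum over all paths yields $\Theta(\boldsymbol{m},\boldsymbol{m}')\ge H$, and minimizing over $\boldsymbol{m}\in\mathcal{M}$, $\boldsymbol{m}'\in\mathcal{M}'$ gives $\Theta(\mathcal{M},\mathcal{M}')\ge H$. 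The equivalent formulation is again obtained by specializing to singletons and contraposing.

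\emph{Obstacles.} There is no real obstacle; the only point requiring care is the topology of connected components of open sets in $\mathbb{R}^{d}$ (openness, path-connectedness, and the inclusion $\partial\mathcal{V}\subset\{U=H\}$). Once these are in hand, the proof is essentially a one-line first-exit argument for (2) and a one-line compactness argument for (1).
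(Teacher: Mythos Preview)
Your proof is correct. The paper does not actually prove this lemma; it merely cites it as \cite[Lemma A.4]{LLS-2nd}, so your self-contained topological argument (path-connectedness of $\mathcal{V}$ for part (1), first-exit time hitting $\partial\mathcal{V}\subset\{U=H\}$ for part (2)) supplies exactly what the paper omits.
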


The next lemma corresponds to \cite[Lemma 5.6-(1)]{LLS-2nd}.
\begin{lem}
\label{l_exit}Fix $p\in\llbracket1,\,\mathfrak{q}\rrbracket$ and
$H\in\mathbb{R}$. Let $\mathcal{V}$ be a connected component of
$\{U<H\}$ which does not separate $(p)$-states, and let $\mathcal{M}\in\mathscr{S}^{(p)}(\mathcal{V})$.
If $\mathcal{M}\to\mathcal{M}'$ for some $\mathcal{M}'\in\mathscr{S}^{(p)}(\mathcal{V}^{c})$,
then $\mathcal{M}=\mathcal{M}^{*}(\mathcal{V})$.
\end{lem}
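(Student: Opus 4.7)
The plan is to exploit the characterization of $\mathcal{M}\to\mathcal{M}'$ given in \eqref{eq:con_gate} together with Lemma \ref{l_pot1} to localize $\widetilde{\mathcal{M}}$ outside the well $\mathcal{V}$, and then to promote the resulting inclusion $\mathcal{M}\subset\mathcal{M}^{*}(\mathcal{V})$ to an equality by invoking the non-separation hypothesis.

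First I would observe that, by the definition \eqref{eq:con_gate} of the relation $\mathcal{M}\to\mathcal{M}'$,
\[
\Theta(\mathcal{M},\,\widetilde{\mathcal{M}})\;=\;\Theta(\mathcal{M},\,\mathcal{M}')\,.
\]
Because $\mathcal{M}$ and $\mathcal{M}'$ are distinct elements of the partition $\mathscr{S}^{(p)}$, they are disjoint, and since $\mathcal{M}'\in\mathscr{S}^{(p)}(\mathcal{V}^{c})$ one has $\mathcal{M}'\subset\mathbb{R}^{d}\setminus\mathcal{V}$. Lemma \ref{l_pot1}-(2) then yields $\Theta(\mathcal{M},\,\mathcal{M}')\ge H$, so $\Theta(\mathcal{M},\,\widetilde{\mathcal{M}})\ge H$. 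Applying Lemma \ref{l_pot1}-(1) (in the equivalent form stated there) to each singleton $\{\bm{m}\}$ with $\bm{m}\in\widetilde{\mathcal{M}}$, I would conclude that $\widetilde{\mathcal{M}}\cap\mathcal{V}=\varnothing$. By the definition of $\widetilde{\mathcal{M}}$, this translates into
\[
U(\bm{m})\;>\;U(\mathcal{M})\quad\text{for every }\;\bm{m}\in(\mathcal{M}_{0}\cap\mathcal{V})\setminus\mathcal{M}\,.
\]
Since $\mathcal{M}$ is simple with common $U$-value $U(\mathcal{M})$ and $\mathcal{M}\subset\mathcal{V}$, this already gives $\mathcal{M}\subset\mathcal{M}^{*}(\mathcal{V})$.

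For the reverse inclusion, I would pick any $\bm{m}^{*}\in\mathcal{M}^{*}(\mathcal{V})$ and let $\mathcal{N}\in\mathscr{S}^{(p)}$ be the unique block of the partition containing $\bm{m}^{*}$. Since $\bm{m}^{*}\in\mathcal{V}$ and $\mathcal{V}$ does not separate $(p)$-states, $\mathcal{N}\subset\mathcal{V}$. If $\mathcal{N}\ne\mathcal{M}$, then $\mathcal{N}\cap\mathcal{M}=\varnothing$ and, because $\mathcal{N}$ is simple by Proposition \ref{p: tree}-(2) with common height $U(\bm{m}^{*})=U(\mathcal{M})$, one has $\mathcal{N}\subset\widetilde{\mathcal{M}}\cap\mathcal{V}$, contradicting the emptiness of this intersection established above. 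Hence $\mathcal{N}=\mathcal{M}$ and $\bm{m}^{*}\in\mathcal{M}$, giving $\mathcal{M}^{*}(\mathcal{V})\subset\mathcal{M}$. The only delicate point is that the reverse inclusion genuinely needs the non-separation hypothesis: without it, a deeper minimum sitting inside $\mathcal{V}$ could a priori belong to a $(p)$-state that straddles $\partial\mathcal{V}$, and one would only recover $\mathcal{M}\subset\mathcal{M}^{*}(\mathcal{V})$ rather than equality.
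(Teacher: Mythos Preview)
Your proof is correct. The paper itself does not prove this lemma; it simply records that the statement corresponds to \cite[Lemma 5.6-(1)]{LLS-2nd}, so there is no in-paper argument to compare against. Your approach via \eqref{eq:con_gate} and Lemma~\ref{l_pot1} is natural and self-contained.

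One minor comment: the detour through $\mathcal{N}$ and the invocation of the non-separation hypothesis in your reverse-inclusion step are unnecessary. Once you have established $\widetilde{\mathcal{M}}\cap\mathcal{V}=\varnothing$ and $\mathcal{M}\subset\mathcal{M}^{*}(\mathcal{V})$, take any $\bm{m}^{*}\in\mathcal{M}^{*}(\mathcal{V})$; then $U(\bm{m}^{*})=U(\mathcal{M})$, so if $\bm{m}^{*}\notin\mathcal{M}$ one has $\bm{m}^{*}\in\widetilde{\mathcal{M}}\cap\mathcal{V}$ directly, a contradiction. Thus $\mathcal{M}^{*}(\mathcal{V})\subset\mathcal{M}$ without ever using that $\mathcal{V}$ does not separate $(p)$-states. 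Your final paragraph suggesting that non-separation is ``genuinely needed'' for the reverse inclusion is therefore slightly misleading; the hypothesis is used in the statement only to make sense of $\mathcal{M}\in\mathscr{S}^{(p)}(\mathcal{V})$ and $\mathcal{M}'\in\mathscr{S}^{(p)}(\mathcal{V}^{c})$ (i.e., to guarantee full containment), not in the argument itself.
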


\subsection{\label{subsec_valley}Metastable valleys}

In this subsection, we define the modulus ${\color{blue}r_{0}}>0$
associated with the metastable valleys \eqref{e_Em}. Following \cite[condition (a)-(e) at the paragraph before (2.12)]{LLS-1st},
choose $r_{0}>0$ sufficiently small so that, for all $\bm{m}\in\mathcal{M}_{0}$,
the following hold.
\begin{itemize}
\item[(a)] $\overline{\mathcal{W}^{2r_{0}}(\bm{m})}\setminus\{\bm{m}\}$ does
not contain critical points of $U$.
\item[(b)]  For all $\bm{x}\in\mathcal{W}^{2r_{0}}(\bm{m})$ the diffusion process
$\bm{y}_{0}(t)$ starting from $\bm{x}$ converges to $\bm{m}$.
\item[(c)] $-\nabla U(\bm{x})\cdot\bm{n}(\bm{x})<0$ for all $\bm{x}\in\partial\mathcal{W}^{2r_{0}}(\bm{m})$,
where $\bm{n}(\cdot)$ is the unit exterior normal vector of the boundary
of $\mathcal{W}^{2r_{0}}(\bm{m})$.
\item[(d)] $\mathcal{W}^{3r_{0}}(\bm{m})\subset B_{r_{5}(\bm{m})}(\bm{m})$.
\item[(e)] $\mathcal{W}^{2r_{0}}(\bm{m})\subset\mathcal{D}_{r_{4}(\bm{m})}^{\bm{m}}$.
\end{itemize}

It remains to present the definitions of $r_{4}(\bm{m}),\,r_{5}(\bm{m})>0$,
which are given in \cite[Section 3]{LLS-1st} and \cite[Appendix B]{LLS-1st},
respectively. For $\bm{m}\in\mathcal{M}_{0}$, let ${\color{blue}\mathbb{H}^{\bm{m}}}:=\nabla^{2}U(\bm{m})$ denote the Hessian of $U$ at $\bm{m}$.
By the Taylor expansion,
\[
\nabla U(\bm{x})\cdot\mathbb{H}^{\bm{m}}(\bm{x}-\bm{m})=\left[\mathbb{H}^{\bm{m}}(\bm{x}-\bm{m})+O(|\bm{x}-\bm{m}|)^{2}\right]\cdot\mathbb{H}^{\bm{m}}(\bm{x}-\bm{m})=\left|\mathbb{H}^{\bm{m}}(\bm{x}-\bm{m})\right|^{2}+O(|\bm{x}-\bm{m}|^{3})\,,
\]
so that there exists ${\color{blue}r_{5}(\bm{m}}\mathclose{\color{blue})}>0$
such that
\[
\nabla U(\bm{x})\cdot\mathbb{H}^{\bm{m}}(\bm{x}-\bm{m})\ge\frac{1}{2}\left|\mathbb{H}^{\bm{m}}(\bm{x}-\bm{m})\right|^{2}\ \ \text{for all}\ \ \bm{x}\in B_{r_{5}(\bm{m})}(\bm{m})\,.
\]
For $\bm{x}\notin B_{r_{5}(\bm{m})}(\bm{m})$, define the projection
\[
\bm{r}^{\bm{m}}(\bm{x}):=\frac{r_{5}(\mb{m})}{|\bm{x}-\bm{m}|}(\bm{x}-\bm{m})+\bm{m}\in\partial B_{r_{5}(\bm{m})}(\bm{m})\,.
\]
Then, define a vector field $\bm{b}_{0}^{\bm{m}}:\mathbb{R}^{d}\to\mathbb{R}^{d}$
by
\[
\bm{b}_{0}^{\bm{m}}(\bm{x})=\begin{cases}
-\nabla U(\bm{x}) & \bm{x}\in B_{r_{5}(\bm{m})}(\bm{m})\,,\\
-\nabla U(\bm{r}^{\bm{m}}(\bm{x}))-\nabla^{2}U(\bm{r}^{\bm{m}}(\bm{x}))\,(\bm{x}-\bm{r}(\bm{x})) & \bm{x}\in(B_{r_{5}(\bm{m})}(\bm{m}))^{c}\,.
\end{cases}
\]
By \cite[Proposition B.1]{LLS-1st}, this vector field $\bm{b}_{0}^{\bm{m}}$
satisfies the hypotheses of \cite[Section 3]{LLS-1st}.

As shown in \cite[Section 3]{LLS-1st}, for each $\bm{m}\in\mathcal{M}_0$, there exists a positive definite
matrix $\mathbb{K}^{\bm{m}}$ such that
\[
\mathbb{H}^{\bm{m}}\mathbb{K}^{\bm{m}}+\mathbb{K}^{\bm{m}}\mathbb{H}^{\bm{m}}=-\mathbb{I}\,,
\]
where $\mathbb{I}$ denotes the identity. Then, there exists $r_{4}'(\bm{m})>0$
such that 
\[
\left\Vert \left(D\bm{b}_{0}^{\bm{m}}(\bm{x})-\mathbb{H}^{\bm{m}}\right)^{\dagger}\mathbb{K}^{\bm{m}}+\mathbb{K}^{\bm{m}}\left(D\bm{b}_{0}^{\bm{m}}(\bm{x})-\mathbb{H}^{\bm{m}}\right)\right\Vert \le\frac{1}{2}\ \text{for all}\ \bm{x}\in B_{r_{4}'(\bm{m})}(\bm{m})\,.
\]
For $\bm{m}\in\mathcal{M}_{0}$ and $r>0$, define
\[
{\color{blue}\mathcal{D}_{r}^{\bm{m}}}:=\left\{ \bm{x}\in\mathbb{R}^{d}:(\bm{x}-\bm{m})\cdot\mathbb{H}^{\bm{m}}(\bm{x}-\bm{m})\le r^{2}\right\} \,.
\]
Then, there exists s sufficiently small ${\color{blue}r_{4}(\bm{m}}\mathclose{\color{blue})}>0$
such that $\mathcal{D}_{2r_{4}(\bm{m})}^{\bm{m}}\subset B_{\min\{r_{4}'(\bm{m}),\,r_{5}(\bm{m})\}}(\bm{m})$.

\begin{acknowledgement*}
C. L. has been partially supported by FAPERJ CNE E-26/201.117/2021, by
CNPq Bolsa de Produtividade em Pesquisa PQ 305779/2022-2.  J. L. was
supported by the KIAS Individual Grant (HP093101) at Korea Institute
for Advanced Study.  J. L. is grateful for the invitation from IMPA,
where a fruitful discussion took place.
\end{acknowledgement*}

\end{document}